\DeclareMathOperator*{\esssup}{ess\,sup}
\let\hide\iffalse
\newtheorem{theorem}{Theorem}
\newtheorem{definition}[theorem]{Definition}
\newtheorem{lemma}[theorem]{Lemma}
\newtheorem{proposition}[theorem]{Proposition}
\theoremstyle{remark}
\newtheorem{remark}[theorem]{Remark}
\newcommand{\Bes}{\begin{eqnarray*}}
\newcommand{\Ees}{\end{eqnarray*}}
\newcommand{\Be}{\begin{equation}}
\newcommand{\Ee}{\end{equation}}
 \numberwithin{equation}{section}
 \numberwithin{theorem}{section}
\def\R{\mathbb{R}}
\def\B{\begin{equation}}
\def\E{\end{equation}}
\def\BN{\begin{eqnarray*}}
\def\EN{\end{eqnarray*}}
\DeclareMathOperator{\supp}{supp}
\definecolor{color1bg}{HTML}{FFB000}
\definecolor{color2bg}{HTML}{648FFF}
\begin{document}

\date{\today}
    
\title{On the Boltzmann-Fermi-Dirac Equation for Hard Potential: Global Existence and Uniqueness, Gaussian Lower Bound, and Moment Estimates}

 \author{Gayoung An}
\address{Department of Mathematics, Yonsei University, South Korea}
\email{gayoungan@yonsei.ac.kr}
\author{Sungbin Park}
\address{Department of Mathematics, Pohang University of Science and Technology, South Korea}
\email{parksb2942@postech.ac.kr}

\subjclass[2020]{35Q20, 35Q40, 82C40}


\keywords{Boltzmann-Fermi-Dirac equation, Quantum, Cutoff, Existence, Uniqueness, Stability, Gaussian lower bound, Moment estimate}

\begin{abstract}
    In this paper, we study the global existence and uniqueness, Gaussian lower bound, and moment estimates in the spatially homogeneous Boltzmann equation for Fermi-Dirac particles for hard potential ($0\leq \gamma\leq 2$) with angular cutoff $b$. Our results extend classical results to the Boltzmann-Fermi-Dirac setting. In detail, (1) we show existence, uniqueness, and $L^1_2$ stability of global-in-time solutions of the Boltzmann-Fermi-Dirac equation. (2) Assuming the solution is not a saturated equilibrium, we prove creation of a Gaussian lower bound for the solution. (3) We prove creation and propagation of $L^1$ polynomial and exponential moments of the solution under additional assumptions on the angular kernel $b$ and $0<\gamma\leq 2$. (4) Finally, we show propagation of $L^\infty$ Gaussian and polynomial upper bounds when $b$ is constant and $0<\gamma\leq 1$.
\end{abstract}

\maketitle
\tableofcontents

\section{Introduction}
The spatially homogeneous Boltzmann-Fermi-Dirac equation is a quantum modification of the classical Boltzmann equation for Fermi-Dirac particles and is written as
\begin{align}\label{eq:BT}
    \partial_t f  = Q_{FD}(f,f), \quad f(0,v)=f_0(v),
\end{align}
where $v\in\mathbb{R}^3$ and $t\geq0$. The collision operator $Q_{FD}(f,f)(t,v)$ is given by
\begin{align*}
    Q_{FD}(f,f)(t,v)&\coloneqq \int_{\mathbb{R}^3\times \mathbb{S}^2}B(v-v_*, \sigma)\big( f(t,v')f(t,v_*')(1-f(t,v))(1-f(t,v_*)) \\
     &\quad \quad -f(t,v)f(t,v_*)(1-f(t,v'))(1-f(t,v_*')\big) d \sigma dv_*.
\end{align*}
Here, the solution $f(t,v)$ represents the velocity distribution of the particles at time $t$, and $Q_{FD}$ describes the change in $f$ due to particle collisions. The velocities $v',\,v_*'\in\mathbb{R}^3$ represent the post-collision velocities of particles and are expressed in terms of the initial velocities $v,\, v_*\in\mathbb{R}^3$ and $\sigma \in \mathbb{S}^2$ by
\begin{align} \label{eq:def_v'}
    v' = \frac{v + v_*}{2} + \frac{|v - v_*|}{2}\sigma \quad \text{and} \quad v_*' = \frac{v + v_*}{2} - \frac{|v - v_*|}{2}\sigma.
\end{align}
In this paper, we consider the collision kernel $B$ given by 
\begin{align} \label{eq:B_defi}
    B(v-v_*, \sigma) = B(|v-v_*|, \cos\theta) = |v-v_*|^\gamma b(\cos \theta),
\end{align}
where $\cos\theta$ is defined by
\begin{align} \label{eq:theta_sigma}
   \cos \theta \coloneqq  \frac{v-v_*}{|v-v_*|} \cdot \sigma, \quad \theta \in [0, \pi].
\end{align}
We also impose the usual symmetry condition $b(\cos(\pi-\theta)) = b(\cos\theta)$.

For the angular collision kernel $b(\cos\theta)$, we will use various settings depending on the problem.

\textbf{(H1)} Throughout this paper, we consider Grad's cut-off assumption:
\begin{align} \label{eq:C_b:def}
    0<C_b \coloneqq 2\pi\int_0^\pi b(\cos\theta)\sin\theta\,d\theta<\infty.
\end{align}

\textbf{(H2)} For the Gaussian lower bound, we assume
\begin{align}\label{eq:b_lower_naer_pi/2}
    b(\cos\theta)>c_b>0
\end{align}
for some constant $c_b$ for $\theta\in [\pi/4, 3\pi/4]$. We use this condition to give a lower bound of $b(\cos\theta)$ near $\theta = \frac{\pi}{2}$.\\

\textbf{(H3)} For the $L^\infty$ Gaussian upper bound, we assume
\begin{align*}
    b(\cos\theta)\sin^\alpha \theta\leq C
\end{align*}
for some $\alpha<2$ and some constant $C>0$ on $\theta\in (0,\pi)$.\\

\textbf{(H4)} For $L^\infty$ polynomial moments estimates, we assume
\begin{align} \label{eq:b=const}
   b(\cos\theta) = const.
\end{align}
In particular, when $\gamma = 1$ together with \eqref{eq:b=const}, this is called the hard sphere model, describing collisions between two rigid spheres.\\

\textbf{(H5)} To handle some critical cases in the Boltzmann-Fermi-Dirac equation, we make an additional assumption
\begin{align*}
    b(\cos\theta)>0
\end{align*}
on $\theta\in(0,\pi)$. It will only be used in Proposition \ref{prop:entropy_production}. \\

The relation between the assumptions is (H4)$\Rightarrow$(H3)$\Rightarrow$(H1). When we assume (H4) or (H3), therefore, we implicitly assume (H1). Table \ref{table:1} summarizes the assumptions for the corresponding problems.

The assumptions (H1)-(H4) are in fact from the assumptions used in the classical Botlzmannn equation to derive the classical results in Table \ref{table:1}. We also note that the hard sphere model $B(v-v_*,\sigma) = |v-v_*|$ fulfills all the assumptions (H1)-(H5).
\begin{table}[htbp]
\begin{tabular}{|c|c|c|}
\hline
Problems & $\gamma$ & $b(\cos\theta)$ \\ \hline
Gaussian lower bound & $0\leq \gamma\leq 2$ & (H1), (H2), and (H5) \\ \hline
$L^1$ polynomial upper bound & \multirow{2}{*}{$0<\gamma\leq 2$} & (H1)\\ \cline{1-1} \cline{3-3} 
$L^\infty$ Gaussian upper bound  &  & (H3) \\ \hline
$L^\infty$ polynomial upper bound & $0<\gamma\leq 1$  & (H4) \\ \hline
\end{tabular}
\caption{Correspondence between the assumptions on $b(\cos\theta)$ and the problems.}
\label{table:1}
\end{table}

In (H1), we further reserve a constant $C_{b,2}$ by
\begin{align}\label{eq:def_C_b_2}
    C_{b,2}\coloneqq 2\pi\int_0^\pi b(\cos\theta)\sin^3\theta\,d\theta,
\end{align}
and a function $\varphi$ by
\begin{align}\label{eq:phi_def}
    \varphi(\epsilon)\coloneqq \int_{\mathbb{S}^2} b(\cos\theta)\left(\mathbf{1}_{\{0<\theta<\epsilon\}} + \mathbf{1}_{\{\pi-\epsilon<\theta<\pi\}}\right)\,d\sigma
\end{align}
for $0<\epsilon<1$. Since $b(\cos\theta)$ is integrable, it satisfies $0\leq \varphi(\epsilon)\leq C_b$ and $\lim_{\epsilon\rightarrow 0} \varphi(\epsilon) = 0$. These two will be used after Section \ref{sec:L1_estimate}.

As usual, we define the macroscopic quantities of $f(v)$ by
\begin{align}\label{eq:macro_quantity}
    \begin{split}
        \rho = \int_{\mathbb{R}^3}f(v)\,dv,\quad \rho u = \int_{\mathbb{R}^3} vf(v)\,dv,\quad 3\rho T= \int_{\mathbb{R}^3}|v-u|^2 f(v)\,dv.
    \end{split}
\end{align}
Like the classical Boltzmann equation, the collision operator $Q_{FD}$ satisfies
\begin{align*}
    \int_{\mathbb{R}^3}\begin{pmatrix}
        1\\v\\v^2
    \end{pmatrix}Q_{FD}(f,f)\,dv = 0
\end{align*}
for any compactly supported continuous function $f$. Therefore, we can consider a solution of the Boltzmann-Fermi-Dirac equation that conserves mass, momentum, and energy.

Even though there are many structural similarities between the Boltzmann-Fermi-Dirac equation and the classical Boltzmann equation, there are some important distinctions between the two equations inherent from their physical nature. First, the solution $f(t,v)$ of the Boltzmann-Fermi-Dirac equation satisfies
\begin{align}\label{eq:0f1}
    0\leq f(t,v)\leq 1
\end{align}
if $0\leq f_0(v)\leq 1$ due to the Pauli exclusion principle.

Secondly, the entropy functional in the Boltzmann-Fermi-Dirac equation is given by
\begin{align}\label{eq:entropy}
    S(f) = -\int_{\mathbb{R}^3} f\ln f + (1-f)\ln (1-f)\,dv.
\end{align}
Taking the time derivative on both sides and the time integral, we formally get
\begin{align}\label{eq:entropy_identity}
    S(f)(t) = S(f)(0) +\int_0^t \int_{\mathbb{R}^3} D(f)(\tau, v)\,dvd\tau,
\end{align}
where $D(f)(t, v)$ is defined by
\begin{align*}
    D(f)(t,v) = \frac{1}{4}\int_{\mathbb{R}^3\times \mathbb{S}^2} B(v-v_*,\sigma)\Gamma\left(f'f'_*(1-f)(1-f_*), ff_*(1-f')(1-f'_*)\right)\,d\sigma dv_*
\end{align*}
with the notations $f =f(v)$, $f_* =f(v_*)$, $f' = f(v')$, and $f_*' =f(v_*')$. The function $\Gamma(a, b)$ is given by
\begin{align*}
    \Gamma(a, b) = \begin{cases}
        (a-b)\ln\frac{a}{b} & a,b>0,\\
        +\infty &  a>b=0\text{ or } b>a=0,\\
        0 & a=b=0.
    \end{cases}
\end{align*}

Using this definition, we can easily check the H-theorem for the Boltzmann-Fermi-Dirac equation. The equilibrium function for the Boltzmann-Fermi-Dirac equation is given by
\begin{align*}
    f(v) = \frac{1}{e^{a|v-u|^2 + c} + 1}
\end{align*}
for some $a>0$, $c\in\mathbb{R}$, and $u$ given by \eqref{eq:macro_quantity}. Here, $a$ and $c$ are implicitly given by
\begin{align*}
    \frac{\rho}{(3\rho T)^{\frac{N}{N+2}}} = \frac{\int_{\mathbb{R}^3} \frac{1}{e^{|v|^2+c}+1}\,dv}{\left(\int_{\mathbb{R}^3} \frac{|v|^2}{e^{|v|^2+c}+1}\,dv\right)^{\frac{N}{N+2}}}
\end{align*}
and
\begin{align*}
    a = \left(\int_{\mathbb{R}^3} \frac{1}{e^{|v|^2+c}+1}\,dv\right)^{\frac{2}{N}}\rho^{-\frac{2}{N}}.
\end{align*}
We call this equilibrium \textit{Fermi-Dirac equilibrium}. It satisfies $Q_{FD}(f,f) = D(f) = 0$, so it is a time-stationary solution. Note that any Fermi-Dirac equilibrium converges to some Gaussian equilibrium in the classical Boltzmann equation if we can ignore the $1$ in the denominator; for example, $c>>1$ or $|v|\rightarrow \infty$ cases. In fact, such a limit matches the correspondence principle in high quantum numbers. It suggests that the solution of the Boltzmann-Fermi-Dirac equation may share the properties of the solution of the classical Boltzmann equation.

Interestingly, there is an exceptional collection of equilibrium functions, which will be called \textit{saturated Fermi-Dirac distribution}. By taking $a\rightarrow \infty$ and $\frac{c}{a}\rightarrow -r^2$, we have
\begin{align*}
    f(v) = \begin{cases}
        1 & |v-u|<r,\\
        \frac{1}{2} & |v-u|=r,\\
        0 & |v-u|>r.
    \end{cases}
\end{align*}
It satisfies $S(f) = 0$ and has the lowest energy under given $\rho$ and $u$ with the constraint \eqref{eq:0f1}. This distribution can be observed in a very low-temperature, non-interacting Fermi gas. Given $\rho$, the critical temperature $T_F$, which is usually called \textit{Fermi temperature}, and the critical radius $r_F$ for the saturated Fermi-Dirac distribution is given by
\begin{align*}
    T_F = \frac{1}{2}\left(\frac{3\rho}{4\pi}\right)^{2/3},\quad r_F = \left(\frac{3\rho}{4\pi}\right)^{1/3}
\end{align*}
(see \cite{Lu2001353} p. 383 with constant normalization).

Mathematically, the saturated Fermi-Dirac distributions usually make analysis harder due to the discontinuity near the critical radius.

\subsection{History}
The quantum Boltzmann equation is a quantum modification of the Boltzmann equation for the Fermi-Dirac or Bose-Einstein statistics. It was first heuristically formulated by Nordheim \cite{nordhiem1928kinetic} and Uehling and Uhlenbeck \cite{Uehling1933552}. Since then, some progress has been made in its mathematical analysis. Here, we briefly summarize the previous works focused on the Boltzmann-Fermi-Dirac equation. In a mathematical view, it is natural to ask about well-posedness and convergence to the equilibrium of the solution. For the results around this problem, we refer to Dolbeault \cite{Dolbeault1994}, Lions \cite{10.1215/kjm/1250518932}, a series of Lu's papers \cite{Lu2001353, Lu2006, LU20081705}, and Lu-Wennberg \cite{Lu20031}. Those results employ some standard techniques from the classical Boltzmann equation, such as Banach fixed point theorem, $L^1$ convergence theorem with velocity averaging compactness argument, and moments estimate. For the near-equilibrium setting, Ouyang-Wu \cite{Ouyang2022471} summarized the property of the collision operator in the setting. We further list some recent results. Wang-Ren \cite{Wang2023_2} used an $L^1_3$ moment technique to prove the global existence and stability of the classical solution. For the large amplitude problem, we refer to Wang-Xiao-Zhang \cite{Wang2023} and Li \cite{Li2023-rh}. Ziang-Zhou \cite{Jiang2025} dealt with a general collision kernel. Bae-Jang-Yun \cite{Bae2021} studied the relativistic quantum Boltzmann equation. Finally, Brosoni and Lods \cite{borsoni2024Cercignani} analyzed the convergence speed of the solution of the Boltzmann-Fermi-Dirac solution to the Fermi-Dirac equilibrium.

Derivation of the quantum Boltzmann equation from a model describing $N$-body quantum systems is a fundamental problem for validating the equation. We simply refer to Benedetto-Castella-Esposito-Pulvirenti \cite{Benedetto200755}, Colangeli-Pezzotti-Pulvirenti \cite{Colangeli2015}, and references therein. Moreover, we can consider some interesting limits such as $\hbar\rightarrow 0$, which is a limit from quantum mechanics to classical mechanics according to the correspondence principle, and a hydrodynamic limit as in the classical Boltzmann equation. For the readers interested, we refer to Dolbeault \cite{Dolbeault1994} and He-Lu-Pulvirenti \cite{He2024} for classical limit results and Jiang-Xiong-Zhou \cite{Jiang202277}, Ziang-Zhou \cite{Jiang2024}, and Jiang-Wang-Zhou \cite{jiang2025_2} for hydrodynamic limit results.

Although it is not our focus in this paper, the Boltzmann-Bose-Einstein equation has many interesting properties. One can also consider problems such as well-posedness problems or derivation and convergence problems, as in the Boltzmann-Fermi-Dirac equation. Furthermore, one can construct a solution that blows up and formulates a Dirac-delta distribution in finite time, which corresponds to the Bose-Einstein condensation in the equation. We mention \cite{Ouyang2022471, Jiang2025} to refer to the references with explanations therein for those who are interested in this equation.

Now, we turn to some classical Gaussian lower and upper bound results. The Gaussian lower bound problem is a problem asking whether there is an instantaneous vacuum filling and a Gaussian tail in the velocity space for arbitrary initial data. For the spatially homogeneous classical Boltzmann equation with the cutoff setting, Calerman proved an exponential type lower bound $f(t,v)\geq C_1 e^{-C_2 |v|^{2+\epsilon}}$ for arbitrarily small $\epsilon>0$ for $t>0$ in 1933 \cite{Carleman1933}. In 1997, it was improved to be a Gaussian lower bound by Pulvirenti and Wennberg \cite{P1997}; they effectively employed spreading and regularity properties of the gain operator of the Boltzmann equation to get the Gaussian lower bound result. For the spatially inhomogeneous case with a cutoff kernel, Mouhot \cite{Mouhot01052005} constructed a Gaussian lower bound in the torus, and Briant \cite{Briant2015_1, Briant2015_2} extended the result to domains with specular reflection or diffusive boundary conditions. The two authors also constructed an exponential lower bound in a non-cutoff collision kernel, but it was far from the Gaussian function. Finally, using some elliptic PDE arguments, Imbert, Mouhot, and Silvestre \cite{Imbert2020} proved the Gaussian lower bound for the spatially inhomogeneous and non-cutoff Boltzmann equation, assuming the local mass density is bounded both above and away from vacuum, and the local energy and entropy densities are bounded above. This result was later extended by \cite{Henderson2020}, removing the lower bound on the mass density and the upper bound on the entropy density. For another classical model, An and Lee constructed an exponential lower bound in the homogeneous inelastic Boltzmann equation in \cite{AL2023}. 

There are a few Gaussian lower bound results in the quantum Boltzmann equation. In \cite{Nguyen2019}, Nguyen and Tran constructed a Gaussian lower bound for the quantum Boltzmann equation describing the interaction between excited particles and particles in the Bose-Einstein condensation state. Recently, Borsoni \cite{Borsoni2024} constructed a Gaussian lower bound in the Boltzmann-Fermi-Dirac equation under the condition that $\hbar$ is small enough.

Next, we discuss previous works around the upper bound problem. There have been many works about the $L^1$ upper bound problem. We first consider the angular cutoff case. Under this setting, Desvillettes \cite{D1993} established the creation of $L^1$ polynomial moments under the assumption that the initial $(1+|v|^s)f_0(v)\in L^1$ for some $s>2$. After the work, there were several works extending and refining the $L^1$ moments bound; we refer to \cite{W1997, MISCHLER1999467, Lu1999}. Based on the $L^1$ polynomial estimates, Bobylev \cite{B1997} proved the $L^1$ Gaussian moment propagation. Later, Alonso-Ca{\~n}izo-Gamba-Mouhot \cite{ACGM2013} constructed $L^1$ exponential moments propagation and creation using a simple technique. For other works about $L^1$ exponential moments results, we refer to \cite{Mou2006, LU20123305}. There are parallel results in the angular non-cutoff settings; for example, the $L^1$ moments results in \cite{LU20123305} in fact includes some non-cutoff cases. We quote some recent literature \cite{TAGP2018, Fournier2021, CHJ2024} for the readers who are interested in.

There is relatively little literature dealing with the $L^\infty$ upper bound problem. It is mainly because it is hard to employ good techniques like the Povzner inequality. For the $L^\infty$ polynomial moments side, Carleman \cite{T1957} first proved that $L^\infty$ polynomial moments bounds propagate in time for the homogeneous Boltzmann equation with a cut-off collision kernel under the radially symmetric assumption $f = f(t,|v|)$. Arkeryd \cite{L1983} later extended this result to general hard potentials $0<\gamma\leq 1$. In the spatially inhomogeneous non-cutoff case, Imbert-Mouhot-Silvestre \cite{IMS2020} established polynomial moments $L^\infty$ bounds for hard and moderately soft potentials, assuming the local macroscopic quantities are bounded. For more recent works, we cite \cite{Cameron2020, Henderson2025}. There are also some works about the $L^\infty$ exponential moments problem. In 2009, Gamba-Panferov-Villani \cite{GPV2009} first proved a Gaussian upper bound for the solution of the classical Boltzmann equation if it is initially bounded above by some Gaussian function using a comparison technique. Later, it was extended to the pseudo-Maxwell molecule setting in \cite{BG2017} and to the angular non-cutoff setting in \cite{GPN2019}.

In other models like the classical inelastic Boltzmann equation dealing with the $L^1$ or $L^\infty$ moments estimates, we quote \cite{Bobylev2004, Mischler2006, AJL2024}. Furthermore, there are a few upper bound results in the quantum Boltzmann equations. The $L^1$ polynomial moments problem was dealt with in \cite{Lu2001353} for the Fermi-Dirac case and \cite{Lu2004, MR3493188} for the Bosonic case.

\subsection{Main results}
We first define the weighted norms used in this paper. For a measurable function $f(v)$ on $\mathbb{R}^3$, we define 
\begin{align*}
        \|f\|_{p,s} &\coloneqq
        \begin{dcases}
        \left(\int_{\mathbb{R}^3} \left(|f(v)|(1+|v|^2)^{\frac{s}{2}}\right)^p \,dv \right)^\frac{1}{p} & 1\leq p<\infty,\\
        \esssup_{v\in\mathbb{R}^3} |f(v)|(1+|v|^2)^{\frac{s}{2}} & p=\infty
        \end{dcases}
\end{align*}
for $s\geq 0$. The corresponding weighted spaces are defined as 
\begin{align*}
    L_s^p(\mathbb{R}^3) = \left\{f :\text{ $f$ is measurable on} \; \mathbb{R}^3, \; \|f\|_{p,s}<\infty \right\}.
\end{align*}
For $s=0$, we can simplify the notation as $ \|f\|_p \coloneqq \|f\|_{p,0}$ and $ L^p(\mathbb{R}^3) \coloneqq L_0^p(\mathbb{R}^3)$.

We define the solution of the Boltzmann-Fermi-Dirac equation as follows. For $f_0\in L^1_2$ with $0\leq f_0\leq 1$, we call $f$ is a solution of the Boltzmann-Fermi-Dirac equation \eqref{eq:BT} if it satisfies the following (1)-(3):
\begin{enumerate}
    \item It satisfies $f\in C([0,\infty), L^1_2(\mathbb{R}^3))$ and $0\leq f(t,v)\leq 1$ on $[0,\infty)\times\mathbb{R}^3$.
    \item It satisfies the mild version of \eqref{eq:BT}:
    \begin{align*}
        f(t,v) = f_0(v) + \int_0^t Q_{FD}(f,f)(\tau, v)\,d\tau
    \end{align*}
    for $t\in [0,\infty)$ and $v\in\mathbb{R}^3\setminus Z$ for some null set $Z$ independent to $t$.
    \item It is a conservative solution. In other words, it satisfies
        \begin{align*}
            \int_{\mathbb{R}^3}\begin{pmatrix}
                1\\v\\v^2
            \end{pmatrix}f(t,v)\,dv = \int_{\mathbb{R}^3}\begin{pmatrix}
                1\\v\\v^2
            \end{pmatrix}f_0(v)\,dv.
        \end{align*}
\end{enumerate}

Now, we display our main results and remarks. After stating the main theorems, we compare these results with the classical results.
\begin{theorem}\label{thm:exi_uni_thm}
    Assume the collision kernel satisfies $0\leq \gamma\leq 2$ and (H1). For $f_0\in L^1_2$ with $0\leq f_0\leq 1$ on $\mathbb{R}^3$, there exists a unique conservative solution of the Boltzmann-Fermi-Dirac equation. Also, the solution satisfies the entropy identity \eqref{eq:entropy_identity}.
\end{theorem}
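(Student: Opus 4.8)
The plan is to prove existence, uniqueness, $L^1_2$ stability, and the entropy identity for the Boltzmann-Fermi-Dirac equation by adapting the classical Boltzmann machinery of Mischler-Wennberg / Lu to the Fermi-Dirac setting, with the Pauli bound $0 \le f \le 1$ acting both as an additional structural constraint (to be preserved) and, conveniently, as a built-in a priori $L^\infty$ bound. The key observation driving everything is that the quartic collision operator $Q_{FD}$ can be split, using the algebraic identity $f'f'_*(1-f)(1-f_*) - ff_*(1-f')(1-f'_*) = f'f'_* - ff_* - f'f'_*(f+f_*) + ff_*(f'+f'_*)$, into the classical bilinear Boltzmann operator $Q(f,f)$ plus a cubic correction; crucially, on the set $0 \le f \le 1$ the cubic terms are controlled pointwise by the quadratic ones, so $Q_{FD}$ enjoys essentially the same $L^1_2$ continuity estimates as $Q$.

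\textbf{Step 1: A priori estimates and truncated problem.} First I would set up a truncated/regularized problem: replace $B$ by a bounded kernel $B_n = \min\{B, n\}$ (or truncate $|v-v_*|^\gamma$ and mollify), giving a Lipschitz vector field on a suitable bounded subset of $L^1_2$, and solve it by the Banach fixed point theorem on a short time interval. Here I must verify that the iteration preserves $0 \le f \le 1$: writing $Q_{FD}(f,f) = Q^+_{FD}(f,f) - f \, L_{FD}[f]$ in gain/loss form, one checks the loss rate is nonnegative and that when $f \to 1$ the gain term vanishes (since the gain integrand carries a factor $(1-f)(1-f_*)$), so the region $[0,1]$ is invariant — this is the Fermi-Dirac analogue of positivity preservation. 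Conservation of mass, momentum, energy for the truncated problem follows from the usual weak formulation and the collisional symmetries, which hold verbatim since the $(1-f)$-factors are symmetric under the $(v,v_*) \leftrightarrow (v',v'_*)$ and $(v,v_*) \leftrightarrow (v_*,v)$ exchanges.

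\textbf{Step 2: Uniform bounds and passage to the limit.} Using energy conservation I get a uniform-in-$n$ bound on $\|f_n(t)\|_{1,2}$, hence global existence for each truncated problem. To pass $n \to \infty$ I would establish equi-integrability / weak-$L^1$ compactness of $\{f_n\}$ in $v$ (the energy bound plus the $L^\infty$ bound $f_n \le 1$ gives tightness and bounds all moments below order $2$; a Dunford-Pettis argument handles the rest) together with time-equicontinuity in $L^1_2$ from the mild formulation and the uniform bound on $\int Q_{FD}$. Then extract a subsequence converging in $C([0,T], w\text{-}L^1_2)$, and pass to the limit in the mild equation; the quartic nonlinearity requires care, but products of weakly convergent sequences one of which is bounded in $L^\infty$ behave well, and the standard weak-stability lemma for the gain term (Mischler-Wennberg style, or the $L^1$-compactness theorem of Lu) carries over. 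This produces a conservative mild solution in $C([0,\infty), L^1_2)$ with $0 \le f \le 1$.

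\textbf{Step 3: Uniqueness and $L^1_2$ stability.} For two solutions $f, g$ with the same initial data, I estimate $\frac{d}{dt}\|f-g\|_{1,2}$ directly from the mild formulation. Using the gain/loss splitting and the Pauli bound, the difference $Q_{FD}(f,f) - Q_{FD}(g,g)$ is a sum of terms each of which is bilinear (or lower order) in $f-g$ and $f$ or $g$, with kernel $|v-v_*|^\gamma b$; weighting by $(1+|v|^2)$ and using $|v|^2 \le \langle v\rangle^2 + \langle v_*\rangle^2$ on the $\gamma$-growth together with the fact that $\|f(t)\|_{1,2}, \|g(t)\|_{1,2}$ are bounded (by energy conservation), I obtain $\frac{d}{dt}\|f-g\|_{1,2} \le C(t)\|f-g\|_{1,2}$ and conclude by Grönwall. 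The same computation with different initial data gives $L^1_2$-stability. \textbf{The entropy identity} \eqref{eq:entropy_identity} I would get by justifying the formal computation on the truncated level (where everything is bounded and smooth enough) and then passing to the limit using lower semicontinuity of the entropy and of the entropy-production functional $D(f)$ under weak-$L^1$ convergence — this gives one inequality; the reverse follows because the mild formulation already encodes enough regularity to differentiate $S(f)$ along the solution, or alternatively by the approximation being monotone in the truncation parameter.

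\textbf{Main obstacle.} The hardest part is the weak-stability step for the quartic gain term under only $L^1_2$ (plus $L^\infty \le 1$) control: one must show $\int_0^t \int Q^+_{FD}(f_n,f_n)\varphi \to \int_0^t\int Q^+_{FD}(f,f)\varphi$ for test functions $\varphi$, which involves passing to the limit in a product of four factors evaluated at $v, v_*, v', v'_*$. The classical tool is Lu's $L^1$-compactness theorem (velocity-averaging / renormalization-type argument showing the gain operator maps weakly convergent sequences to strongly convergent ones after integration against the kernel); the Fermi-Dirac correction terms are cubic and need the extra $(1-f)$ factors to be handled, but since these are bounded by $1$ and the underlying kernel is the same, the argument goes through. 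A secondary technical point is ensuring the null set $Z$ in the definition of mild solution can be chosen independent of $t$, which follows from a Fubini argument once $Q_{FD}(f,f) \in L^1_{loc}([0,\infty)\times\mathbb{R}^3)$, itself a consequence of the $L^1_2$ and $L^\infty$ bounds with $\gamma \le 2$.
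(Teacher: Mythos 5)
Your overall architecture (truncate the kernel, solve the bounded problem, get uniform moment bounds, pass to the limit, prove uniqueness by a difference estimate, regularize for the entropy identity) is the right skeleton, but in two places you take a different — and in one place an incomplete — route compared to the paper.

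\textbf{Existence: different route.} In Step~2 you propose a weak-$L^1$ compactness argument (Dunford--Pettis, velocity averaging, Lu's $L^1$-compactness theorem for the gain operator), and you correctly flag the weak stability of the quartic gain term as ``the hardest part.'' The paper bypasses this difficulty entirely: in Proposition~\ref{prop:exi_uni_prop}, after establishing $n$-uniform $L^1_s$ bounds on the truncated solutions $f_n$, it shows directly that $f_n$ is a \emph{Cauchy sequence in} $C([0,T],L^1_2)$, using the quantitative difference estimate of Lemma~\ref{lem:Q_FD_diff_int} (which relies on the refined $J$-splitting in Lemma~\ref{lem:f'f'_*_k}) together with the created moment bounds. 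Strong $L^1_2$ convergence then makes the passage to the limit in the quartic nonlinearity elementary (subsequence a.e.\ convergence plus $0\le f_n\le 1$), and no velocity-averaging machinery is needed. Your weak-compactness plan is the classical Lu route (used for $0\le\gamma\le1$); the paper's strong-Cauchy route is what lets it reach $\gamma\le 2$ cleanly and also yields the $L^1_2$ stability statement for free.

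\textbf{Uniqueness/stability: a genuine gap.} Your Step~3 asserts $\tfrac{d}{dt}\|f-g\|_{1,2}\le C(t)\|f-g\|_{1,2}$ and closes by Gr\"onwall, with $C(t)$ controlled ``by energy conservation.'' This does not work as stated for $0<\gamma\le 2$ with only $f_0\in L^1_2$: bounding $\int(1+|v|^2)\,|Q_{FD}(f,f)-Q_{FD}(g,g)|\,\mathbf{1}_{\{f\geq g\}}\,dv$ unavoidably produces terms proportional to $\|f\|_{1,2+\gamma}$, which for general $L^1_2$ data is created for $t>0$ with the blow-up rate $\|f\|_{1,2+\gamma}(t)\lesssim t^{-1}$ (Theorem~\ref{lem:poly_L1}). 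So the naive Gr\"onwall coefficient is \emph{not} locally integrable near $t=0$. The paper's fix (Proposition~\ref{prop:stability}) is essential: it uses the sharper Lemma~\ref{lem:Q_FD_diff_int}, which trades the $\|f\|_{1,2+\gamma}\|f-g\|_{1,2}$ term for $\|f\|_{1,2+\gamma}\|f-g\|_{1,0}$ and an interpolated piece; it then runs a two-layer Gr\"onwall by first controlling $\|f-g\|_{1,0}$, substituting into the $\|f-g\|_{1,2}$ inequality, and applying Fubini to convert $\int_r^t\tau^{-1}\int_0^\tau\cdots\,ds\,d\tau$ into an $\int_0^t|\ln s|\,\|f-g\|_{1,2}(s)\,ds$ term whose coefficient is locally integrable. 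Without this structure, your Gr\"onwall argument does not close.

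\textbf{Entropy identity: different and vaguer route.} You propose justifying the identity on the truncated level and passing to the limit via weak lower semicontinuity of $S$ and $D$, claiming the ``reverse'' inequality from the mild formulation. That gets at best one inequality cleanly, and the reverse direction is unsubstantiated. The paper instead (Proposition~\ref{prop:entropy_identity}) regularizes the entropy integrand directly by inserting $e^{-|v|}/n$, verifies that the regularized entropy production integral is absolutely convergent (this uses $\|f\|_{1,1+\gamma}\in L^1_{loc}((0,\infty))$, available from the moment-creation estimate), applies Fubini and the collision symmetry, and then passes $n\to\infty$ by a Fatou-then-dominated-convergence argument giving an equality rather than an inequality.

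In short: Step~1 matches the paper, Step~2 is an alternative classical route which would likely also work but is heavier than what the paper does, Step~3's Gr\"onwall closing has a real gap that the paper's two-layer argument with Fubini is designed to fill, and the entropy argument as you sketch it is not yet a proof.
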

\begin{theorem}\label{thm:L1_2_stability}
    Assume the collision kernel satisfies $0\leq \gamma\leq 2$ and (H1). For solutions $f(t,v)$ and $g(t,v)$ of the Boltzmann-Fermi-Dirac equation, there exist constants $C_1$ and $C_2$ and a increasing function $\Phi:\mathbb{R}^+\rightarrow \mathbb{R}^+$ with $\Phi(0) = 0$ such that
    \begin{align*}
        \|f(t,v)-g(t,v)\|_{1,2}\leq C_1 \Phi(\|f_0-g_0\|_{1,2})\exp\left(C_2(t+t^{1/3})\right).
    \end{align*}
    The constants $C_1$ and $C_2$ depend on $\gamma, C_b, C_{b,2}$, $\varphi(\epsilon)$, $\|f_0\|_{1,0}, \|f_0\|_{1,2}$, and $\|g_0\|_{1,2}$, and the function $\Phi$ is given by
    \begin{align*}
        \Phi(r) \coloneqq r + r^{1/3} + r|\ln r| + \|f_0\mathbf{1}_{\{|v|\geq r^{-1/3}\}}\|_{1,2}.
    \end{align*}
\end{theorem}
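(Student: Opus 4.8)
The plan is to establish a Gronwall inequality for $u(t):=\|f(t,\cdot)-g(t,\cdot)\|_{1,2}$, writing $h:=f-g$ and $h_0:=f_0-g_0$. By the mild formulation in the definition of a solution, for a.e.\ $v$ the map $t\mapsto h(t,v)$ is absolutely continuous with $\partial_t h=Q_{FD}(f,f)-Q_{FD}(g,g)$, so $\tfrac{d}{dt}|h(t,v)|=\operatorname{sgn}(h(t,v))\big(Q_{FD}(f,f)-Q_{FD}(g,g)\big)(t,v)$ a.e.; integrating against $\langle v\rangle^2:=1+|v|^2$ gives
\begin{align*}
  \frac{d}{dt}u(t)\ \le\ \int_{\mathbb R^3}\big(Q_{FD}(f,f)-Q_{FD}(g,g)\big)(t,v)\,\operatorname{sgn}\big(h(t,v)\big)\,\langle v\rangle^2\,dv .
\end{align*}
Since $f,g$ are only assumed in $L^1_2$, I would first run the whole argument with the bounded weight $\langle v\rangle^2\wedge R^2$ and a kernel truncated to $\{|v-v_*|\le R\}$, and remove the truncation only at the very end; that limiting procedure is exactly where the modulus $\Phi$ will enter.

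Next comes an algebraic reduction of the integrand. Expanding the product in $Q_{FD}$, the purely quartic terms cancel, so $Q_{FD}(f,f)=Q^{\mathrm{cl}}(f,f)+\mathcal C(f,f,f)$, with $Q^{\mathrm{cl}}$ the classical bilinear Boltzmann operator and $\mathcal C$ the symmetric trilinear operator collecting $ff_*(f'+f'_*)-f'f'_*(f+f_*)$. Polarizing in $h$,
\begin{align*}
  Q_{FD}(f,f)-Q_{FD}(g,g)=Q^{\mathrm{cl}}(g,h)+Q^{\mathrm{cl}}(h,g)+Q^{\mathrm{cl}}(h,h)+\big[\mathcal C(f,f,f)-\mathcal C(g,g,g)\big],
\end{align*}
and each summand is multilinear with at least one factor $h$, the other slots carrying $f$ or $g$, which satisfy $0\le f,g\le1$ and conserve mass and energy. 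I would estimate each summand tested against $\operatorname{sgn}(h)\langle v\rangle^2$ by the usual weak-form tools: the measure-preserving changes of variables $(v,v_*)\mapsto(v',v'_*)$ and $v_*\mapsto v'$; the Povzner identities $\langle v'\rangle^2+\langle v'_*\rangle^2=\langle v\rangle^2+\langle v_*\rangle^2$ and $\int_{\mathbb S^2}b(\cos\theta)(\langle v'\rangle^2-\langle v\rangle^2)\,d\sigma=\tfrac{C_b}{2}(\langle v_*\rangle^2-\langle v\rangle^2)$, with the $\sin^3\theta$-weighted second-order version (hence $C_{b,2}$) needed for the trilinear piece and $\varphi(\epsilon)$ entering when one splits off the near-$0$/near-$\pi$ angles; the bound $|v-v_*|^\gamma\le2^\gamma(\langle v\rangle^\gamma+\langle v_*\rangle^\gamma)$ for $0\le\gamma\le2$; and the conservation identities $\|f(t)\|_{1,2}=\|f_0\|_{1,2}$, $\|g(t)\|_{1,2}=\|g_0\|_{1,2}$, $\|f(t)\|_{1,0}=\|f_0\|_{1,0}$. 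For the ``aligned'' gain/loss pairs (such as $Q^{\mathrm{cl}}(g,h)$) the loss term is sign-definite negative and, via $\int|v-v_*|^\gamma g(t,v_*)\,dv_*\gtrsim\rho_g\langle v\rangle^\gamma$ for $|v|$ large, supplies a damping $-\kappa\|h\|_{1,2+\gamma}$, with the near-origin contribution absorbed into $C\|h\|_{1,2}$.

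The main obstacle will be that the ``misaligned'' summands — notably $Q^{\mathrm{cl}}(h,g)$, whose loss term is not sign-definite after testing against $\operatorname{sgn}(h)$, together with the associated gain terms — produce a residual $+C(\rho_f+\rho_g)\|h\|_{1,2+\gamma}$ that the damping $-\kappa\|h\|_{1,2+\gamma}$ does not dominate in general, while $\|h\|_{1,2+\gamma}$ is \emph{not} controlled by the $L^1_2$ data. I would deal with this exactly as the shape of $\Phi$ dictates: keeping the truncation radius $R$ finite, the $v_*$-integrals run over $\{|v_*|\le R\}$ so the $m_{2+\gamma}$-type factors are $\le\langle R\rangle^\gamma\|f_0\|_{1,2}$, and I split $\|h\|_{1,2+\gamma}\le\langle R\rangle^\gamma\|h\|_{1,2}+\int_{|v|>R}|h(t)|\langle v\rangle^{2+\gamma}\,dv$. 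The truncation errors and this last integral I would control by a propagation-of-tails lemma: using $0\le f,g\le1$ together with the $L^1$ polynomial moment bounds $\|f(t)\|_{1,s}\lesssim1+t^{-(s-2)/\gamma}$ for $s>2$ (available under (H1) and $0<\gamma\le2$; for $\gamma=0$ no truncation is needed and one gets Lipschitz dependence, which the stated $\Phi$ subsumes), the tail $\int_{|v|>R}(f(t)+g(t))\langle v\rangle^2\,dv$ is bounded by $\|f_0\mathbf 1_{\{|v|\ge cR\}}\|_{1,2}+\|g_0\mathbf 1_{\{|v|\ge cR\}}\|_{1,2}$ plus a factor polynomial in $R$ and in $1+t^{-\alpha}$ for some $\alpha<1$, after which the $g_0$-tail is traded for the $f_0$-tail at the cost of $\|f_0-g_0\|_{1,2}$. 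Feeding this into the inequality for $u$, integrating, and finally choosing $R\sim\|h_0\|_{1,2}^{-1/3}$ to balance $\langle R\rangle^\gamma\|h_0\|_{1,2}$ against $\|f_0\mathbf 1_{\{|v|\ge R\}}\|_{1,2}$ and the logarithmic losses, one reaches the factor $\exp(C_2(t+t^{1/3}))$ (the $t^{-\alpha}$ coefficient integrating to $t^{1-\alpha}$, with $\alpha$ tuned to $2/3$) and the modulus $\Phi(r)=r+r^{1/3}+r|\ln r|+\|f_0\mathbf 1_{\{|v|\ge r^{-1/3}\}}\|_{1,2}$. The quadratic term $Q^{\mathrm{cl}}(h,h)$ and the trilinear remainder are handled by the same scheme and are of lower order, each carrying an extra factor bounded by $1$ (and $Q^{\mathrm{cl}}(h,h)$ an extra power of $h$). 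I expect essentially all the difficulty to be concentrated in this last step: reconciling the uncontrolled $\|h\|_{1,2+\gamma}$, the partial damping, the $t\to0$ blow-up of the created moments, and the choice of $R$, so that the constants depend only on the listed quantities and the exponents in $\Phi$ come out precisely as stated.
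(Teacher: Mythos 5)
Your overall scaffolding (difference equation tested against $\operatorname{sgn}(h)\langle v\rangle^2$, polarization into multilinear pieces, time decomposition, moment creation for the $t^{1/3}$ in the exponent) is broadly consistent with the paper's, which works with $(f-g)^+$ and a symmetrized weak form (Lemmas~\ref{lem:fg+}, \ref{lem:fg_diff_sym}, \ref{lem:Q_FD_diff_int}). But you correctly identify the central obstruction — the residual $\|h\|_{1,2+\gamma}$ that neither conservation nor the damping controls — and then propose a truncation that cannot deliver the stated estimate. If you split $\|h\|_{1,2+\gamma}\leq \langle R\rangle^\gamma\|h\|_{1,2}+\text{tail}_R$ with $R$ fixed, the first term is \emph{multiplied by $u(t)=\|h(t)\|_{1,2}$}, so it enters the Gronwall rate: the differential inequality becomes $u'\leq (A+B\langle R\rangle^\gamma)u + \text{tail}$, giving $u(t)\lesssim (\ldots)\,e^{B R^\gamma t}$. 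With your choice $R\sim r^{-1/3}$ this is $e^{Cr^{-\gamma/3}t}$, i.e.\ the exponent blows up as $r\to 0$ — fatally incompatible with the form $C_1\Phi(r)\exp(C_2(t+t^{1/3}))$ where $C_2$ must be independent of $r$. You cannot fix this by keeping $R$ bounded either, since then $\|f_0\mathbf 1_{\{|v|\geq R\}}\|_{1,2}$ does not vanish as $r\to 0$ and $\Phi(0)\neq 0$.

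The paper avoids $\|h\|_{1,2+\gamma}$ \emph{entirely} rather than truncating it. The key missing ingredient is Lemma~\ref{lem:f'f'_*_k}: a Carleman/cancellation-style estimate on $\int B\, f'f'_*(1+|v_*|^2)^{k/2}\,d\sigma\,dv_*$ using Hölder with $kq=3$ inside the gain, which splits the weight so that only the exponent $(k+\gamma)(3-k)/3$ lands on $|f-g|$; for $k=2$ this is $(2+\gamma)/3\leq 2$, so only $\|f-g\|_{1,2}$ appears, multiplied by $\|f\|_{1,0}^{1/3}\|f\|_{1,2+\gamma}^{2/3}$. The Gronwall coefficient is then $\sim\tau^{-2/3}$ (from $\|f\|_{1,2+\gamma}^{2/3}$ via moment creation), which is integrable and produces $t^{1/3}$ in the exponent; the term $\|f\|_{1,2+\gamma}\|h\|_{1,0}\sim\tau^{-1}\|h\|_{1,0}$ is handled by a Fubini argument that yields the $r|\ln r|$ piece of $\Phi$; and the cutoff radius $R=r^{-1/3}$ only enters the \emph{additive} crude estimate on the initial interval $[0,r]$, producing the tail term in $\Phi$ — it never touches the Gronwall rate. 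Without this interpolation step your proposal would also run into trouble justifying the tail-propagation bound by initial tails, which the paper circumvents by controlling the short-time tail through the loss operator $Q_c^-$ and energy conservation on $[0,r]$ only.
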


\begin{theorem}\label{thm:Gaussian_lower_bound}
    We consider the collision kernel \eqref{eq:B_defi} for $0\leq \gamma\leq 2$, (H1), and (H2). Let $f$ be a solution of the Boltzmann-Fermi-Dirac equation, which is not a saturated Fermi-Dirac equilibrium.
    
    If $S(f_0)>0$, then there exist $C_1(t)>0$ and $C_2(t)<\infty$ for $t>0$ depending on $\gamma, C_b, c_b$, $f_0$ such that
    \begin{align*}
        C_1(t) e^{-C_2(t) |v|^2}\leq f(t,v)\leq 1- C_1(t) e^{-C_2(t) |v|^{2\frac{\ln 3}{\ln 2}}}.
    \end{align*}
    Also, $C_1(t)$ and $C_2(t)$ satisfy
    \begin{align*}
        \inf_{T^{-1}\leq t\leq T}C_1(t)>0,\quad \sup_{T^{-1}\leq t\leq T}C_2(t)<\infty
    \end{align*}
    for any $1\leq T<\infty$.

    If $S(f_0) = 0$, we further assume that the collision kernel satisfies (H5). Then, (1) there exists $T_0>0$ depending on $\gamma$, $b(\cos\theta)$, and $f_0$, and (2) there exist $C_1(t)>0$ and $C_2(t)<\infty$ for $t>0$ depending on $\gamma, C_b, c_b, f\left(\frac{1}{2}\min\{t/2,T_0\}, v\right)$, and $T_0$ such that
    \begin{align*}
        C_1(t) e^{-C_2(t) |v|^2}\leq f(t,v)\leq 1- C_1(t) e^{-C_2(t) |v|^{2\frac{\ln 3}{\ln 2}}}.
    \end{align*}
\end{theorem}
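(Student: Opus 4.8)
The plan is to adapt the Pulvirenti--Wennberg spreading argument \cite{P1997} to the Fermi--Dirac setting, exploiting one structural fact: writing $g\coloneqq 1-f$, a direct computation gives $Q_{FD}(f,f)=-Q_{FD}(g,g)$, so $g$ solves the \emph{same} equation $\partial_t g=Q_{FD}(g,g)$ with datum $1-f_0$, and $S(1-f_0)=S(f_0)$. Consequently the asserted upper bound for $f$ is exactly a \emph{lower} bound for $g$, and the theorem becomes a two-sided lower bound, proved first for $f$ and then, using $f$'s bound as an input, for $g$. The case $S(f_0)=0$ is reduced to $S(f_0)>0$: there $f_0=\mathbf 1_{E_0}$ a.e., and since $f$ is not a saturated equilibrium $E_0$ is not (a.e.) a ball, so Proposition \ref{prop:entropy_production}, which needs (H5), yields $T_0>0$ after which the solution is non-degenerate (equivalently $S>0$); one then runs the $S(f_0)>0$ argument from time $\tfrac12\min\{t/2,T_0\}<t$, which is why the constants end up depending on $f(\tfrac12\min\{t/2,T_0\},v)$ and $T_0$.

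For $S(f_0)>0$ I would first record two a priori facts. Writing $\partial_t f=P(f,f)-f\,(P(f,f)+N(f,f))$ with $P(f,f)(v)=\int B f'f_*'(1-f_*)\,d\sigma dv_*$ and $N(f,f)(v)=\int B f_*(1-f')(1-f_*')\,d\sigma dv_*$, the bounds $0\le f\le1$, $0\le\gamma\le2$ and conservation of mass and energy give $P(f,f)+N(f,f)\le C(1+|v|^2)$; hence the ``survival factor'' $\exp\!\big(-\!\int_s^t (P+N)(\tau,v)\,d\tau\big)$ has a \emph{Gaussian} lower bound $e^{-C(1+|v|^2)(t-s)}$, and the same holds for $g$ since, again by $0\le f\le1$ and $\gamma\le2$, $P(g,g)+N(g,g)\le C(1+|v|^2)$. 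Second, the \emph{seed}: since $S(f(t_1))\ge S(f_0)>0$ for a fixed small $t_1>0$ and the energy is finite at time $t_1$, there are $R_1,\eta_1>0$ with $\big|\{\eta_1\le f(t_1,\cdot)\le 1-\eta_1\}\cap B(0,R_1)\big|>0$; applying the spreading and gain-of-integrability properties of the gain operator (here (H2) enters, giving a positive lower bound on $b$ near $\theta=\pi/2$) around a Lebesgue density point of this set --- and noting the set is unchanged under $f\mapsto 1-f$ --- one produces $t_1'>t_1$, a ball $B(v_1,r_1)$, and $\delta_1>0$ with $\delta_1\le f(t_1',v)\le 1-\delta_1$ on $B(v_1,r_1)$, with all quantities depending only on $f_0$ and $t$.

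Next comes the iteration on $[t_1',t]$: split it into subintervals of length $\sim c_0\rho_n^{-2}$ (so the survival factor over each stays bounded below and the lengths are summable), set $\rho_{n+1}^2=(2-\epsilon_n)\rho_n^2$ with $\epsilon_n\downarrow 0$, and propagate the seed forward, maintaining $f\ge c_n\mathbf 1_{B_{\rho_n}}$ and $1-f\ge c_n'\mathbf 1_{B_{\rho_n}}$ on each subinterval. From the Duhamel inequality and $P(f,f)(s,v)\ge c_n^2\int_{B_{\rho_n}}(1-f(s,v_*))\,|v-v_*|^\gamma\big(\int_{\sigma\ \mathrm{good}}b\big)\,dv_*$, the crucial point is $\int_{B_\rho}(1-f)\ge|B_\rho|-\rho\gtrsim\rho^3$ (the \emph{infinite mass} of $1-f$), which makes the recursion for $c_n$ \emph{quadratic} with a positive polynomial gain, $c_{n+1}\gtrsim A_n c_n^2$; tuning $\epsilon_n$ so that $\rho_n^2\sim 2^n$ one gets $c_n\gtrsim e^{-C\rho_n^2}$ on every ball, hence the Gaussian lower bound $f(t,v)\ge C_1(t)e^{-C_2(t)|v|^2}$ (the exponent $2$ being preserved by the gain operator because $|v'|^2+|v_*'|^2=|v|^2+|v_*|^2$). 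Running the same scheme for $g$: now the third factor in $P(g,g)(v)=\int B g'g_*'\,f_*\,d\sigma dv_*$ is $f$, which has \emph{finite} mass, so it must be bounded below on $B_{\rho_n}$ by the current iterate rather than integrated out; this turns the recursion \emph{cubic}, $c_{n+1}'\gtrsim A_n'(c_n')^3$, which with $\rho_n^2\sim 2^n$ gives $c_n'\gtrsim e^{-B\cdot 3^n}$ and, since $3^n=(\rho_n/\rho_0)^{2\ln 3/\ln 2}$, the bound $g(t,v)\ge C_1(t)e^{-C_2(t)|v|^{2\ln3/\ln2}}$, i.e.\ $f(t,v)\le 1-C_1(t)e^{-C_2(t)|v|^{2\ln3/\ln2}}$. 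Tracking how $\delta_1$, $c_0$, $\rho_0$ and the number of steps needed to reach a given radius depend on $t$ yields the uniform control of $C_1(t),C_2(t)$ on $[T^{-1},T]$, and by the first paragraph the same constants (now depending on $f(\tfrac12\min\{t/2,T_0\},v)$) work in the $S(f_0)=0$ case.

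The hard parts will be: (i) the seed, i.e.\ converting the purely quantitative information $S(f(t_1))>0$ into a genuine two-sided pointwise bound on a ball, which requires the spreading and regularizing properties of $Q^+_{FD}$ together with a density-point argument and the confinement of the entropy density by the energy bound; (ii) making the two iterations close, i.e.\ controlling the Pauli factors $(1-f)$ and $(1-f_*)$ inside the gain terms --- it is precisely the contrast between the infinite mass of $1-f$ (a polynomial gain, hence the sharp Gaussian for $f$) and the finite mass of $f$ (an extra power of the iterate, hence the weaker exponent $2\ln3/\ln2$ for $1-f$) that produces the asymmetry in the statement; and (iii) the entropy-production input of Proposition \ref{prop:entropy_production} in the saturated case, where one must quantify a time by which a genuinely non-monotone set of positive measure with $0<f<1$ has appeared.
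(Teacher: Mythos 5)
Your architecture --- reduce the upper bound to a lower bound for $g=1-f$ via $Q_{FD}(1-f,1-f)=-Q_{FD}(f,f)$, build a two-sided seed from $S(f_0)>0$ and a density-point argument, iterate Pulvirenti--Wennberg spreading, and read off the exponents $2$ and $2\ln3/\ln2$ from the finite vs.\ infinite mass of $f$ and $1-f$ --- is exactly the mechanism of the paper, and the reduction of $S(f_0)=0$ via Proposition~\ref{prop:entropy_production} is also as in the paper.

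There is, however, a concrete gap in the iteration you propose for $f$. Your quadratic recursion $c_{n+1}\gtrsim A_n c_n^2$ relies on the polynomial gain $\int_{B_{\rho_n}}(1-f)\,dv_*\gtrsim\rho_n^3$, i.e.\ on $\{f>\tfrac12\}\cap B_{\rho_n}$ being small \emph{relative to $|B_{\rho_n}|$}. But the measure of $\{f>\tfrac12\}$ is controlled only by a fixed constant $\lesssim\|f_0\|_{1,2}^{3/5}$, so the gain is sign-definite only once $\rho_n^3$ dominates a quantity of the form $\|f_0\|_{1,2}^{3/5}/\eta_n^{5/2}$ --- this is precisely the competition between $C_1\eta^{5/2}$ and $C_2\min\{\delta^{-3}\|f\|_{1,2}^{3/5},1\}$ in Lemma~\ref{lem:spread}, inequality~\eqref{eq:epsilon_2}. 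The density-point argument gives no lower bound on the seed radius $r_1$, so on the initial steps $A_n$ may well be negative and the recursion produces nothing. The paper resolves this with a two-pass bootstrap: first run the \emph{cubic} recursion \eqref{eq:epsilon_3_f}, which propagates a single sequence $\min\{f,1-f\}\ge c_n$, uses the current iterate (not the energy bound) for both Pauli factors, and hence works from any seed radius; this yields $\min\{f,1-f\}\ge K_1 e^{-K_2|v|^{2\ln3/\ln2}}$. That bound then gives $f(t_0,\cdot)\ge\bar\epsilon(\bar\delta)$ on a ball $B_{\bar\delta}$ of \emph{arbitrary} radius, so one may pick $\bar\delta$ large enough that the error term in \eqref{eq:epsilon_2} is dominated and re-launch the quadratic iteration from $B_{\bar\delta}$ to obtain the Gaussian for $f$ alone. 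Without this intermediate step your quadratic recursion is vacuous on the initial segment. (You also compress the seed construction, which in the paper occupies all of Section~\ref{sec:Positivity} with iterated $Q_1$ operators, the Carleman representation, cube subdivision and covering lemmas, but since you flag that as the hard part I treat it as acknowledged rather than missed.)
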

\begin{remark}
    In contrast to the classical Gaussian lower bound result (for example, \cite{P1997}), which is uniform if the time $t$ is not near $0$, our choice of $C_1(t)$ and $C_2(t)$ can decay as $t\rightarrow \infty$. It is because the constants $C_1(t)$ and $C_2(t)$ depend not only on the conservative macroscopic quantities but also on the explicit shape of the initial function $f_0$. We conjecture this obstruction is not due to the physical nature but a technical issue.
\end{remark}
\begin{remark}
    When $f_0$ is a saturated equilibrium, it has no Fermi-Dirac lower bound. It makes it hard to consider a function $S(f_0)=0$, but $f_0$ is not a saturated equilibrium. The second part of Theorem \ref{thm:Gaussian_lower_bound} states that we can construct a Gaussian lower bound with worse $C_1(t)$ and $C_2(t)$ than the $S(f_0)>0$ case.

    By Theorem \ref{thm:exi_uni_thm}, $f\left(\frac{1}{2}\min\{t/2,T_0\}, v\right)$ is uniquely determined if $f_0$ is fixed. For more explanation, please refer to Remark \ref{remark:main4_remark_1} and \ref{remark:main4_remark_2}.
\end{remark}

\begin{theorem} \label{thm:L1_bound}
    We consider the collision kernel \eqref{eq:B_defi} for $0 < \gamma \leq 2$, (H1). Let $f(t,v)$ be a solution of the Boltzmann-Fermi-Dirac equation.
    
    \noindent \textbf{(1)} (Creation and propagation of $L^1$ polynomial moments) There exist constants $C_{1,s}$ for all $s\geq 2$ depending on $\|f_0\|_{1,0}, \|f_0\|_{1,2}, \gamma, s, C_b, C_{b,2}$, and $\varphi(\epsilon)$ such that
    \begin{align*}
        \int_{\mathbb{R}^3} f(t,v)|v|^s\,dv\leq C_{1,s} \max\left\{t^{\frac{2-s}{\gamma}}, 1\right\} \text{\quad for \quad} t > 0.
    \end{align*}
    If $\|f_0\|_{1,s}<\infty$ for some $s>2$, then there exists constant $C_{2,s}$ depending on $\|f_0\|_{1,0}, \|f_0\|_{1,2}, \|f_0\|_{1,s}, \gamma, s, C_b, C_{b,2}$, and $\varphi(\epsilon)$ such that
    \begin{align*}
        \int_{\mathbb{R}^3} f(t,v)|v|^s\,dv\leq C_{2,s} \text{\quad for \quad} t \geq 0.
    \end{align*}
    
    \noindent \textbf{(2)} (Creation and propagation of $L^1$ exponential moments) There exist constants $C_1, a>0$ depending on $\|f_0\|_{1,0}, \|f_0\|_{1,2}, \gamma$, and $b(\cos\theta)$ such that
    \begin{align*}
        \int_{\mathbb{R}^3} f(t,v) e^{a \min{\{t,1\}}|v|^\gamma} dv \leq C_1 \text{\quad for \quad} t \geq 0.
    \end{align*}
    If we further assume
    \begin{align*}
        \int_{\mathbb{R}^3} f_0(v) e^{a_0|v|^{s}} dv \leq C_2
    \end{align*}
    for some $s \in [\gamma,2]$ and $C_2>0$, then there exist constants $C_3, a > 0$ depending on $\|f_0\|_{1,0}, \|f_0\|_{1,2}, \gamma, b(\cos\theta), a_0$, and $C_2$ such that 
    \begin{align*}
        \int_{\mathbb{R}^3} f(t,v) e^{a |v|^{s}} dv \leq C_3 \text{\quad for \quad} t \geq 0.
    \end{align*}
    
    \noindent \textbf{(3)} (Propagation of a Gaussian upper bound) Further assume (H3) on collision kernel and let $f_0(v) \leq M_0(v):=e^{-a_0|v|^2+c_0}$ for almost every $v \in \mathbb{R}^3$, where $a_0>0, \; c_0 \in \mathbb{R}$. Then, there exist $a \in (0,a_0)$ and $ \;c \in \mathbb{R}$ depending on $\|f_0\|_{1,0}, \|f_0\|_{1,2}, \gamma, \alpha, C_b, a_0$, and $c_0$ such that
    \begin{align*} 
        f(t,v) \leq M(v)\coloneqq e^{-a|v|^2+c}
    \end{align*}
    for almost every $v \in \mathbb{R}^3$ and every $t \geq 0$.
    
    \noindent \textbf{(4)} (Propagation of $L^\infty$ weighted bound) Assume $0<\gamma\leq 1$ and (H4) on collision kernel. Suppose $f_0\in L^1_2\cap L^\infty_s$ for some $s>2$. If $s\leq 5$, set $s' = s$; otherwise, choose any $s'<s$. Then, there exists a constant $C_4(s')>0$ depending on $\|f_0\|_{\infty,s}, \|f_0\|_{1,0}, \|f_0\|_{1,2}, \gamma, b(\cos\theta)$, $s$, and $s'$ such that 
    \begin{align*}
        \esssup_{v \in \mathbb{R}^3} (1+|v|)^{s'} f(t,v) \leq C_4(s')
        \text{\quad for \quad} t \geq 0.
    \end{align*}
\end{theorem}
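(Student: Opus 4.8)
These four statements are the Fermi--Dirac analogues of classical moment results for the hard‑potential Boltzmann equation, so in every part my plan is to run the classical machinery and absorb the extra terms produced by the Pauli factors $(1-f)(1-f_*)$ and $(1-f')(1-f_*')$. \emph{Part (1).} I would test the mild equation against $\psi(v)=\langle v\rangle^s:=(1+|v|^2)^{s/2}$, symmetrize, and split $Q_{FD}$ into the classical gain/loss pair plus a remainder obtained by expanding the two Pauli products; the remainder is a sum of integrals each carrying a factor $ff_*f'$ or $ff_*f'f_*'$. A Povzner‑type inequality applied to the classical part gives $\tfrac{d}{dt}m_s\le -Km_{s+\gamma}+(\text{bilinear terms in moments of order }<s)$, with $m_s(t):=\int f(t,v)\langle v\rangle^s\,dv$. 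For the remainder I would split the $\sigma$‑integral at $\theta\in(0,\epsilon)\cup(\pi-\epsilon,\pi)$: on the grazing part $v',v_*'$ are close to $v,v_*$, so the cubic integrands nearly cancel and their difference is controlled by $\varphi(\epsilon)$; on the non‑grazing part one bounds them crudely using $0\le f\le 1$, $f\in L^1_\gamma$ and $\langle v'\rangle,\langle v_*'\rangle\lesssim\langle v\rangle+\langle v_*\rangle$, so that for $\epsilon$ small they are absorbed into $Km_{s+\gamma}$. Interpolating $m_s\le m_{s+\gamma}^\theta m_2^{1-\theta}$ turns this into $m_s'\le A_s-B_s m_s^{1+\gamma/(s-2)}$, and an ODE comparison yields the creation bound $m_s(t)\le C_{1,s}\max\{t^{(2-s)/\gamma},1\}$; with $\|f_0\|_{1,s}<\infty$ the same inequality gives propagation. \emph{Part (2)} follows by feeding this polynomial‑moment inequality into the summation scheme of \cite{ACGM2013}: set $E_{a,n}(t):=\sum_{k=0}^n\tfrac{a^k}{k!}\int f(t,v)|v|^{sk}\,dv$ (with $s=\gamma$ for creation, $s\in[\gamma,2]$ for propagation), sum the moment inequalities with weights $\tfrac{a^k}{k!}$, use the combinatorics of the Povzner sums so that the terms $-Km_{sk+\gamma}$ dominate for $a$ small, and let $n\to\infty$; the Fermi--Dirac remainder terms, being of the same bilinear/lower‑order type, are handled by the same mechanism after shrinking $a$.

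\emph{Part (3)} I would handle by a barrier/comparison argument. Writing $\partial_t f=(1-f)G[f]-f\,\nu[f]$ with $G[f](v)=\int\!\!\int B f'f_*'(1-f_*)\,d\sigma dv_*$ and $\nu[f](v)=\int\!\!\int B f_*(1-f')(1-f_*')\,d\sigma dv_*$, I would track $u(t):=\esssup_v f(t,v)/M(v)$ for $M(v)=e^{-a|v|^2+c}$ with $a<a_0$ and $c=c_0$ (so $M\ge M_0$). From $f\le uM$ and $1-f_*\le 1$ one gets $G[f](v)\le u^2 Q^{\mathrm{gain}}_{\mathrm{cl}}(M,M)(v)$, which by conservation of energy equals $u^2 M(v)\int\!\!\int B M(v_*)\,d\sigma dv_*$, and a Carleman‑type estimate bounds this by $C(a,c)\langle v\rangle^\gamma M(v)$; assumption (H3) enters precisely here, to control the possibly near‑singular $b$ in the gain kernel. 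For the loss I would show $\nu[f](v)\gtrsim\langle v\rangle^\gamma$: since $|v'|^2+|v_*'|^2=|v|^2+|v_*|^2$, for $|v|$ large at least one of $v',v_*'$ lies where $f$ (hence $\le uM$) is small, and the $(v_*,\sigma)$‑set on which the other one is near saturation has measure of lower order in $\langle v\rangle$, so $\nu[f](v)=\nu_{\mathrm{cl}}[f](v)-O(\langle v\rangle^{\gamma-2})\ge c'\langle v\rangle^\gamma$; for bounded $v$ either $c$ is enlarged to keep $M(v)$ away from $1$, or the factor $1-f\le 1-M(v)$ already absorbs the gain. At the supremum point $\tfrac{d}{dt}u\le u\langle v\rangle^\gamma\big(C(a,c)u-c'\big)$, which is $\le 0$ once $u\le c'/C(a,c)$; choosing $a$ so that $u(0)\le c'/C(a,c)$ keeps $u$ bounded for all time, which is the claimed Gaussian upper bound.

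\emph{Part (4)} (with $b$ constant and $0<\gamma\le1$): I would derive a differential inequality for $M_{s'}(t):=\esssup_v\langle v\rangle^{s'}f(t,v)$. The loss contributes the damping $-\langle v\rangle^{s'}f\,\nu[f](v)\le -c'\langle v\rangle^{s'+\gamma}f$ (the lower bound on $\nu[f]$ is immediate for constant $b$). For the gain I would use Carleman's representation of $Q^{\mathrm{gain}}_{\mathrm{cl}}(f,f)$ for hard spheres to obtain $\langle v\rangle^{s'}Q^{\mathrm{gain}}_{\mathrm{cl}}(f,f)(v)\lesssim\langle v\rangle^{s'+\gamma}\big(\epsilon M_{s'}(t)+C_\epsilon\big)$, the $\epsilon$‑term coming from the high‑velocity $L^\infty$ tail of $f$ and $C_\epsilon$ from the $L^1$ polynomial moments of part (1); the value $s=5$ is the threshold up to which the weight $\langle v\rangle^{s'}$ can be transferred across the $|v-v'|^{-1}$ kernel of Carleman's formula while keeping everything integrable, so for $s>5$ one takes any $s'<s$. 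Using $G[f]\le Q^{\mathrm{gain}}_{\mathrm{cl}}(f,f)$ and combining gives $\tfrac{d}{dt}M_{s'}\le\langle v\rangle^{s'+\gamma}\big((\epsilon-c')M_{s'}+C_\epsilon\big)$ at the supremum point; choosing $\epsilon<c'$, adding a short‑time argument near $t=0$ (using $f_0\in L^\infty_s$ and $\|f_0\|_{1,2}$ to bridge to the time where the created moments of part (1) become available) and an ODE comparison gives $M_{s'}(t)\le C_4(s')$. The step I expect to be the main obstacle is the one recurring in parts (1), (3) and (4): controlling the Fermi--Dirac remainder, and in particular showing that the loss frequency $\nu[f](v)=\int\!\!\int B f_*(1-f')(1-f_*')\,d\sigma dv_*$ keeps the lower bound $\nu[f](v)\gtrsim\langle v\rangle^\gamma$ despite the Pauli factors vanishing where $f$ saturates --- the point that rescues it being that conservation of energy forbids both post‑collisional velocities from being small simultaneously, so the failure of the classical lower bound is confined to a set whose contribution is of lower order in $\langle v\rangle$.
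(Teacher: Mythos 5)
Your overall architecture (Povzner machinery for the classical part plus angular splitting for the Pauli remainder in (1), the \cite{ACGM2013} summation scheme in (2), Arkeryd's bootstrap with Carleman's representation in (4)) matches the paper. The differences that matter are concentrated in how you handle the Fermi--Dirac remainder, and one of them is a genuine gap.

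In part (1) you have the roles of the grazing and non-grazing sets reversed, and the reversal does not close. In the paper's Lemma~\ref{lem:epsv}, the \emph{non-grazing} region $\{\epsilon\le\theta\le\pi-\epsilon\}$ is converted by the cancellation-lemma change of variables \eqref{eq:cancel} into a quantity $\le C\epsilon^{-3}\|f\|_{1,2}$ that is \emph{independent of $|v|$}; after pairing with the weight this goes into the coefficient of $\|f\|_{1,s}$ (the "linear" term), and the $\epsilon^{-3}$ is harmless there. The \emph{grazing} region is what produces the factor $|v|^\gamma$ — hence the term $\varphi(\epsilon)\|f\|_{1,0}\|f\|_{1,s+\gamma}$ — and this is the term that must be absorbed into the dissipation, which works precisely because $\varphi(\epsilon)\to 0$. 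Your version, which absorbs the non-grazing part into $Km_{s+\gamma}$, would force $\epsilon\to 0$ there, and the $\epsilon^{-3}$ blows up. Your alternative route for the grazing part (near-cancellation of the cubic integrands because $v'\to v$, $v_*'\to v_*$) also implicitly requires uniform modulus of continuity of $f$ in $v$, which an $L^1_2$ solution does not have; the paper needs only integrability of $b$ (the bound $\varphi(\epsilon)\to 0$), not any regularity of $f$.

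In part (3) the key new estimate is the lower bound $L_{FD}(f,1-f,1-f)(v)\gtrsim|v|^\gamma$ for $|v|\ge R$, and your energy-conservation argument does not establish it. You argue that for large $|v|$ one of $v',v_*'$ has modulus $\ge|v|/\sqrt2$ so the corresponding $f$-value is small, while "the set on which the other one is near saturation has measure of lower order in $\langle v\rangle$." The second half is not true in the form you need: even though $\{v: f(v)>1/2\}$ has bounded Lebesgue measure, its pullback to $(v_*,\sigma)$ through the collision map carries a Jacobian $\sim\sin^{-3}(\theta/2)$, so the pulled-back measure is \emph{not} small; this is exactly the obstruction the cancellation lemma quantifies. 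Correspondingly, the correction to the classical loss frequency is $O(1) + o(|v|^\gamma)$, not $O(|v|^{\gamma-2})$ as you claim — the lower bound holds for $|v|\ge R$ only after one plays $\epsilon$-smallness against $\epsilon^{-3}$ via Lemma~\ref{lem:epsv}, which is what the paper's Lemma~\ref{lem:Lf_lower} does. You also propose a pointwise-sup barrier for (3), whereas the paper uses the GPV-style $L^1$ comparison principle (Lemma~\ref{lem:comparison}: integrate $u^+$ against $\mathbf{1}_{\{u\ge 0\}}$ and exploit the sign of the resulting pre/post-collision difference); your barrier argument, aside from needing the loss-frequency lower bound you have not proved, would also need to justify differentiating an essential supremum in time, which the $L^1$ route avoids.

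Parts (2) and (4) are essentially the paper's route, with the caveat that both again rely on Lemma~\ref{lem:Lf_lower}, i.e.\ on the loss-frequency lower bound you would have to prove correctly first.
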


\begin{remark}
    For Theorem \ref{thm:exi_uni_thm} and \ref{thm:L1_bound}-(1), we refer to the \cite{Lu2001353} for the same results for the case $0\leq\gamma\leq 1$. Also, we refer to the \cite{Lu20031} for $L^1_2$ stability of the solution for the case $0\leq\gamma\leq 1$. When $\gamma = 0$, it is simpler; one can check it at Proposition \ref{prop:stability}.
\end{remark}

Our main results extend the classical results to the Fermi-Dirac case. In detail, Theorem \ref{thm:exi_uni_thm} corresponds to the existence and uniqueness result in \cite{MISCHLER1999467}, Theorem \ref{thm:L1_2_stability} corresponds to the $L^1_2$ stability result in \cite{LU20123305}, Theorem \ref{thm:Gaussian_lower_bound} corresponds to the Gaussian lower bound result in \cite{P1997}, and Theorem \ref{thm:L1_bound} corresponds to the results in \cite{LU20123305, ACGM2013, GPV2009, L1983}.\\

The paper proceeds as follows. In Section \ref{sec:Preliminary}, we present the preliminaries, introducing the basic properties of the collision operator and the relation between the velocity variables. In Section \ref{sec:Lemmas}, we derive some technical lemmas that will be used in Section \ref{sec:Positivity}. We construct a Gaussian lower bound in Section \ref{sec:Positivity} and \ref{sec:Gaussian_lower}.

After proving the Gaussian lower bound, we turn to the upper bound problem. In Section \ref{sec:L1_estimate}, we prove the creation and propagation of $L^1$ polynomial and exponential moments. Using the creation and propagation of $L^1$ polynomial moments, we prove the existence, uniqueness, and $L^1_2$ stability result in Section \ref{sec:Well-posedness}. We prove $L^\infty$ Gaussian upper bound in Section \ref{sec:L^infty_Gaussian_upper} and $L^\infty$ polynomial moments estimate in Section \ref{sec:L^infty_polynomial_upper}.

\subsection{Notation}
We enumerate some notations used in this paper.
\begin{itemize}
    \item The indicator function of a subset $S$ within a set $X$ is a function $\mathbf{1}_{S} : X \rightarrow \{0,1\}$, defined as
\begin{align*} 
    \mathbf{1}_{S}(x) \coloneqq
    \begin{cases}
        1,\quad x \in S, \\
        0,\quad x \notin S
    \end{cases} 
\end{align*}
for $x \in X$.

\item We introduce the usual notation $x\wedge y = \max\{x, y\}$ and $f^+ \coloneqq \max\{f, 0\}$.

\item In Section \ref{sec:Positivity}, we use some geometric notations. We denote $\mathcal{B}_R(x_0)$ by the compact ball with diameter $R$ with center $x_0$, $\mathcal{Q}_R(x_0)$ by the compact cube with side length $R$ with center $x_0$ having an axis parallel with the Cartesian coordinates, and $S_{x_1,x_2}$ by the sphere shell having two antipodal points $x_1$ and $x_2$. Abusing notation, we will use $|E|$ to denote the standard Borel measure on $\mathbb{R}^3$ for a Borel set $E$. Finally, we call a Borel-measurable $E$ is $(\epsilon, r)$-measurable for $0\leq \epsilon\leq 1$ and $r>0$ if there exists a ball $B_r(x_0)$ such that
\begin{align*}
    |E\cap B_r(x_0)|\geq \epsilon|B_r(x_0)|.
\end{align*}
This notation is borrowed from an article by Tao \cite{TTao2007}.
\end{itemize}

\section{Preliminary}\label{sec:Preliminary}
In this section, we will briefly review the basic properties of the Boltzmann equation. For detailed proof and computations, we refer to the well-known review paper \cite{V2002}.

Before starting, we write the classical collision operator $Q_c(f,f)$ by
\begin{align*}
    Q_c(f_1,f_2)(t,v)&\coloneqq \int_{\mathbb{R}^3  \times \mathbb{S}^2}B(v-v_*, \sigma) \left(f_1(t,v')f_2(t,v_*') -f_1(t,v)f_2(t,v_*)\right)\, d\sigma dv_*.
\end{align*}
If $b(\cos\theta)$ is integrable, then we can split the $Q_c$ operator by gain and loss operators $Q_c^+$ and $Q_c^-$ as follows.
\begin{align*}
    Q_c^+(f_1,f_2)(t,v)&\coloneqq \int_{\mathbb{R}^3  \times \mathbb{S}^2}B(v-v_*, \sigma)f_1(t,v')f_2(t,v_*')\,d\sigma dv_*,\\
    Q_c^-(f_1,f_2)(t,v)&\coloneqq \int_{\mathbb{R}^3  \times \mathbb{S}^2}B(v-v_*, \sigma)f_1(t,v)f_2(t,v_*)\,d\sigma dv_*.
\end{align*}

\subsection{The relationship between the variables \texorpdfstring{$v',v_*',v_*,$}{} and  \texorpdfstring{$v$}{}}
The collision velocities satisfy some special relations thanks to geometric properties of the elastic collision. Since the elastic collision is a time-reversible process, we can reverse the order between pre-collision velocity and post-collision velocity in the collision. As a result, we can obtain the well-known symmetry
\begin{align} \label{eq:bef_aft}
    \begin{split}
        &\int_{\mathbb{R}^3 \times \mathbb{R}^3\times\mathbb{S}^2} B(|v-v_*|, \cos\theta)  F(v,v_*,v',v_*') \,dvdv_* d\sigma\\
        &=\int_{\mathbb{R}^3 \times \mathbb{R}^3\times\mathbb{S}^2} B(|v-v_*|, \cos\theta) F(v',v_*',v,v_*) \,dvdv_* d \sigma
    \end{split}
\end{align}
for any non-negative measurable function $F$. Also, we can interchange $v'$ and $v'_*$ using the mapping $\sigma\mapsto -\sigma$ \eqref{eq:def_v'}, so we have
\begin{align*}
    \int_{\mathbb{S}^2} B(|v-v_*|, \cos \theta) f_1(v')f_2(v_*')\, d\sigma =\int_{\mathbb{S}^2} B(|v-v_*|, \cos (\pi-\theta)) f_1(v_*')f_2(v')\, d\sigma.
\end{align*}
Using this symmetry, we obtain
\begin{align} \label{eq:sym_sigma}
    \begin{split}
        Q_c^{+}(f,f)(v)&=2\int_{\mathbb{R}^3  \times \mathbb{S}^2}
        B(|v-v_*|, \cos \theta)  f(v')f(v_*') \mathbf{1}_{\{0\leq\theta \leq \frac{\pi}{2}\}}\, d\sigma dv_*\\
        &=2\int_{\mathbb{R}^3  \times \mathbb{S}^2}
        B(|v-v_*|, \cos \theta)  f(v')f(v_*') \mathbf{1}_{\{\frac{\pi}{2}\leq\theta \leq \pi\}}\, d\sigma dv_*.
    \end{split}
\end{align}

The collision velocities enjoy more interesting identities. For example, we have
\begin{align}\label{eq:sin}
    \begin{split}
        \frac{v-v_*'}{|v-v_*'|}\cdot \sigma&=\cos \frac{\theta}{2} = \frac{|v'-v_*|}{|v-v_*|}=\frac{|v-v_*'|}{|v-v_*|},\\
        \frac{v_*-v_*'}{|v_*-v_*'|}\cdot \sigma&=\sin \frac{\theta}{2} = \frac{|v_*-v'_*|}{|v-v_*|} =\frac{|v-v'|}{|v-v_*|}.
    \end{split}
\end{align}
One can directly check these identities from the definition \eqref{eq:def_v'} or derive from the geometric relations in Figure \ref{fig:sigma_omega_diagram}.

Using these relations between the variables, we can prove the integral equalities that appear in the proof of the cancellation lemma in \cite{AD2000}. It is written by
\begin{align}\label{eq:cancel}
    \begin{split}
        &\int_{\mathbb{R}^3 \times\mathbb{S}^2}
        B(|v-v_*|, \cos \theta) f(v_*')\mathbf{1}_{\{ 0 \leq \theta \leq \theta_0\}}\, dv_* d\sigma=|\mathbb{S}^1|\int_{\mathbb{R}^3} f(v)\int_0^{\theta_0}
        \frac{\sin \theta}{\cos^3 \frac{\theta}{2}}
        B\left(\frac{|v-v_*|}{\cos \frac{\theta}{2}}, \cos\theta \right) \,d\theta dv,\\
        &\int_{\mathbb{R}^3 \times\mathbb{S}^2}
        B(|v-v_*|, \cos \theta) f(v')\mathbf{1}_{\{\theta_0\leq \theta \leq \pi\}}\, dv_* d\sigma=|\mathbb{S}^1|\int_{\mathbb{R}^3} f(v)\int_{\theta_0}^{\pi}
        \frac{\sin \theta}{\sin^3 \frac{\theta}{2}}
        B\left(\frac{|v-v_*|}{\sin \frac{\theta}{2}}, \cos\theta \right) \,d\theta dv
    \end{split}
\end{align}
for $0<\theta_0<\pi$. For proof, one can refer to Lemma 1 in \cite{AD2000} or Proposition 2.1 in \cite{LU20081705}.

\begin{figure}[htbp]
    \centering
    \begin{tikzpicture}
    \def\R{2.5}
    \def\myangle{60}
        \draw[thick, ->] (-3.5,0) -- (3.5,0);
        \draw[thick](0,0) circle (\R);
        \draw[thick] (\R,0) -- ({\R * cos(\myangle)},{\R * sin(\myangle)}) node[pos = 0.4, left] {$\vec{\omega}$};
        \draw[thick, ->] (\R,0) -- ({0.4*\R * cos(\myangle) + 0.6*\R},{0.4*\R * sin(\myangle)});
        \draw[thick] (-{\R * cos(\myangle)},-{\R * sin(\myangle)}) -- ({\R * cos(\myangle)},{\R * sin(\myangle)}) node[pos=0.7, right] {$\vec{\sigma}$};
        \draw[thick, ->] (0, 0) -- ({0.4*\R * cos(\myangle)},{0.4*\R * sin(\myangle)});
        \draw[] (-\R, 0) node[above left]{$v_*$};
        \draw[] (\R, 0) node[above right]{$v$};
        \draw[] (-{\R * cos(\myangle)},-{\R * sin(\myangle)}) node[left, xshift = -0.1cm]{$v_*'$};
        \draw[] ({\R * cos(\myangle)},{\R * sin(\myangle)}) node[right, yshift = 0.1cm]{$v'$};
        \draw[thick] (0,0) --  (0:0.5) arc(0:\myangle:0.5) -- cycle;
        \draw[thick] (\R,0) --  (\R-0.5, 0) arc(0:{-\myangle}:-0.5) -- cycle;
        \draw[] (0:0.5) node[above, xshift = 0.1cm]{$\theta$};
        \draw[] (\R-0.5, 0) node[above, xshift = -0.25cm, yshift = -0.05cm]{$\theta_\omega$};
    \end{tikzpicture}
    \caption{The collision diagram with pre- and post-collision velocities.}
    \label{fig:sigma_omega_diagram}
\end{figure}
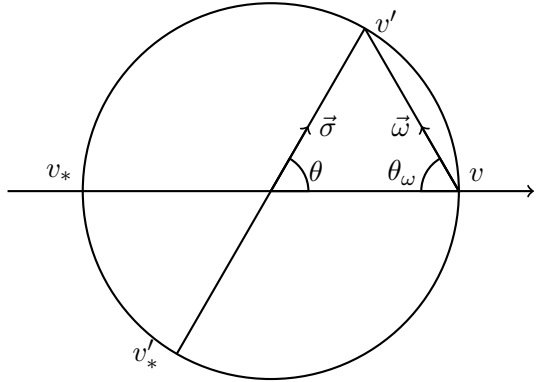

\subsection{\texorpdfstring{$\omega$}{}-representation}
In the definition of the post-collision velocities \eqref{eq:def_v'}, we used the pre-collision velocities and the variable $\sigma$ to represent the post-collision velocities. This is called $\sigma$-representation, and there is another widely used representation: $\omega$-representation. It write $v'$ and $v'_*$ by
\begin{align}\label{eq:post_vel_ome}
    v' = v + ((v_* - v) \cdot \omega)\omega,\quad v_*' = v_* - ((v_* - v) \cdot \omega)\omega
\end{align}
using $\omega \in \mathbb{S}^2_+\coloneqq \{\omega\in\mathbb{S}^2:(v_* - v)\cdot \omega\geq 0\}$.
Likewise \eqref{eq:theta_sigma}, $\theta_{\omega}$ is defined as
\begin{align*}
    \cos \theta_\omega \coloneqq \frac{v_*-v}{|v_*-v|} \cdot \omega \quad \text{for } \quad\theta_{\omega} \in \left[ 0,\frac{\pi}{2} \right].
\end{align*}
The relation between $\sigma$-representation and $\omega$-representation is graphically described in Figure \ref{fig:sigma_omega_diagram}. From this diagram, we can easily see the variables $\theta$ and $\theta_\omega$ satisfy the relation $\theta=\pi-2\theta_{\omega}$. Using the spherical coordinates for $\sigma$ (resp. $\omega)$ with the variable $\theta$ (resp. $\theta_\omega$), we compute the Jacobian between $\sigma$ and $\omega$ as
\begin{align} \label{eq:det_w,n}
    \left|\frac{d\sigma}{d \omega} \right|= 4\cos \theta_\omega=4 \sin\frac{\theta}{2}
\end{align}
We can rewrite $b(\cos\theta)$ by $h(\cos\theta_\omega)$ using $\omega$-representation and the Jacobian:
\begin{align*}
    h(\cos\theta_\omega) = 4\cos \theta_\omega b(\cos(\pi-2\theta_\omega)).
\end{align*}

It is usually convenient to extend the $\omega$ domain from $\mathbb{S}^2_+$ to $\mathbb{S}^2$; from \eqref{eq:post_vel_ome}, we easily see that $v'$ and $v'_*$ are invariant under $\omega\mapsto -\omega$ in \eqref{eq:det_w,n}. It suggests to extend $h$ on $\theta_\omega\in [\pi/2, \pi]$ by
\begin{equation*}
    h(\cos\theta_\omega) = h(\cos(\pi-\theta_\omega)).
\end{equation*}
By this extension, the integral over $\theta_\omega$ on $\mathbb{S}^2$ is doubled compared to $\theta_\sigma$. To make the computation clear, we will divide $h$ by $2$ to compensate for this doubling and redefine it as $h$. Including this compensation, we finally write
\begin{align}\label{eq:h_condi}
    h(\cos\theta_\omega) = 2\cos \theta_\omega b(\cos(\pi-2\theta_\omega)).
\end{align}
One can ignore these constants since it does not essentially change the results.

Under these settings, we rewrite the collision operators $Q_c$ and $Q_{FD}$ in $\omega$-representation by
\begin{align}\label{eq:def_Q_omega}
    \begin{split}
        Q_c(f_1,f_2)(v) &\coloneqq \int_{\mathbb{R}^3\times\mathbb{S}^2} |v-v_*|^\gamma h(\cos\theta_\omega)\left( f_1(v')f_2(v_*')-f_1(v)f_2(v_*)\right)\,d\omega dv_*,\\
        Q_{FD}(f,f)(v)
        &\coloneqq \int_{\mathbb{R}^3 \times \mathbb{S}^2}|v-v_*|^\gamma h(\cos\theta_{\omega})\big( f(v')f(v_*')(1-f(v))(1-f(v_*)) \\
        &\quad \quad -f(v)f(v_*)(1-f(v'))(1-f(v_*')\big)\,d\omega dv_*.
    \end{split} 
\end{align}
Also, we rephrase the assumption \eqref{eq:b_lower_naer_pi/2} for $h$ by
\begin{align}\label{eq:b cut2'}
    h(\cos \theta_\omega)\geq \sqrt{2-\sqrt{2}}c_b
\end{align}
for $\theta_\omega\in [\pi/8, 3\pi/8]\cup [5\pi/8, 7\pi/8]$.

\subsection{The Calreman representation}
One of the benefits of using $\omega$-representation is easy construction of the Calreman representation for the collision operator. In \cite{T1957}, starting from the $\omega$-representation of $Q_c$ in \eqref{eq:def_Q_omega}, Carleman found that the gain term $Q^+_c$ can be rewritten by
\begin{align} \label{eq:def_cal_Q+}
    Q_c^+(f_1,f_2)(v)=\int_{\mathbb{R}^3}f_1(v')\frac{1}{|v' - v|^{2-\gamma}}\int_{v + E_{v'-v}} \frac{h(\cos\theta_\omega)}{\cos^\gamma  \theta_\omega}f_2(v'_*)\,dv'_* dv'.
\end{align}
Here, $v+E_{v'-v}$ is the plane through $v$ and perpendicular to $v'-v$: in equation form, it is given by
\begin{align*}
    E_{v_0}\coloneqq \left\{v\in \mathbb{R}^3 : v \perp v_0 \right\},\quad v_0\in \mathbb{R}^3\setminus \{0\}.
\end{align*}
In contrast to the original gain term operator, it directly integrates the functions by the post-collision velocities, and its special structure makes it easier to use the regularity property of the gain term.

Especially in Section \ref{sec:Positivity}, we will substitute $\tilde{u}_\parallel = v'$ and $\tilde{u}_\perp = v'_*$ to emphasize the geometric structure around the collision velocities and avoid confusion.

\section{Some lemmas for Gaussian lower bound}\label{sec:Lemmas}
In this section, we state and prove some technical lemmas for Section \ref{sec:Positivity} to reduce its complexity of proof.

In contrast to the classical $Q_c$ operator, there are several ways to decompose the positive and negative terms in the operator $Q_{FD}$ due to the complicated structure. Here, we present one decomposition.
\begin{definition}
    Let $v \in \mathbb{R}^3$. We define
    \begin{align*} 
        Q_1(f_1,f_2,f_3)(v) \coloneqq \int_{\mathbb{R}^3 \times\mathbb{S}^2} B(v-v_*,\sigma) f_1(v')f_2(v_*')f_3(v_*)\,d\sigma dv_*.
    \end{align*}
    We also define
    \begin{align*}
        \overline{Q}_1(f,f,f)(v)&\coloneqq Q_1(f,f,1-f)(v)+Q_1(1-f,1-f,f)(v),\\
        \intertext{and}
        G_{t_1}^{t_2}(v)&\coloneqq e^{-\int_{t_1}^{t_2} \overline{Q}_1(f,f,f) (\tau,v)  \,d\tau}  \text{\quad for  \quad } t_2 > t_1 \geq 0
    \end{align*}
    for a function $f(t,v)$.
\end{definition}

Using $Q_1(f_1,f_2,f_3)$, we can rewrite the Boltzmann equation \eqref{eq:BT} as
\begin{align*}
    \partial_t f(t,v) =Q_{FD}(f,f)(t,v)=Q_1(f,f,1-f) (t,v) - f(t,v) \overline{Q}_1(f,f,f)(t,v).
\end{align*}
Since $-Q_{FD}(f,f)(t,v) = Q_{FD}(1-f,1-f)(t,v)$, we also write
\begin{align*}
    \partial_t (1-f(t,v)) =Q_1(1-f,1-f,f) (t,v)-(1-f(t,v))\overline{Q}_1(f,f,f)(t,v).
\end{align*} 
The Duhamel's form of the solutions $f(t,v)$ and $1-f(t,v)$ are written by
\begin{align} \label{eq:duha_f}
    f(t,v) &= f_0(v) G_0^t(v) + \int_0^t G_\tau^t (v) Q_1(f,f,1-f)(\tau,v)\,d\tau,
    \intertext{and}
    1-f(t,v) &= (1-f_0(v)) G_0^t(v) + \int_0^t G_\tau^t(v) Q_1(1-f,1-f,f)(\tau,v)\,d\tau.\label{eq:duha_1-f}
\end{align} 

\begin{lemma} \label{lem:upper_v}
    We consider the collision kernel $B$ satisfying $0\leq \gamma\leq 2$ with (H1) and assume $\|f\|_{1,2} < +\infty$ with $0\leq f\leq 1$. Then, there exists a constant $C>0$ depending on $\|f\|_{1,2}$, $\gamma$, and $C_b$ such that
    \begin{align*}  
          \overline{Q}_1(f,f,f)(v) \leq C(1+|v|^\gamma).
    \end{align*}
\end{lemma}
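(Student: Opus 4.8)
The plan is to bound $\overline{Q}_1(f,f,f)(v) = Q_1(f,f,1-f)(v) + Q_1(1-f,1-f,f)(v)$ term by term, using the fact that $0\leq f\leq 1$ to throw away the factors that are bounded by $1$, and then controlling what remains by a convolution-type integral. Concretely, since $0\leq 1-f\leq 1$ and $0\leq f(v')f(v_*')\leq 1$, and likewise $0\leq (1-f(v'))(1-f(v_*'))\leq 1$ and $0\leq f\leq 1$, both $Q_1(f,f,1-f)$ and $Q_1(1-f,1-f,f)$ are dominated by
\begin{align*}
    \int_{\mathbb{R}^3\times\mathbb{S}^2} B(v-v_*,\sigma)\, g(v')g(v_*')\, h_3(v_*)\, d\sigma\, dv_*
\end{align*}
with each of $g,h_3$ bounded by $1$; so in fact it suffices to bound $\int_{\mathbb{R}^3\times\mathbb{S}^2} B(v-v_*,\sigma)\, d\sigma\, dv_*$ over the region where the relevant factors are nonzero — but that integral over all of $\mathbb{R}^3$ diverges, so one must instead keep one honest $L^1$ factor. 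The clean choice is to keep $f(v_*)$ (resp.\ $f(v_*)$ again after the before/after change of variables) so as to integrate against something in $L^1_2$.

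First I would treat $Q_1(1-f,1-f,f)(v)$: bounding $(1-f(v'))(1-f(v_*'))\leq 1$ gives
\begin{align*}
    Q_1(1-f,1-f,f)(v) \leq \int_{\mathbb{R}^3\times\mathbb{S}^2} B(v-v_*,\sigma)\, f(v_*)\, d\sigma\, dv_* = C_b\int_{\mathbb{R}^3} |v-v_*|^\gamma f(v_*)\, dv_*,
\end{align*}
using (H1) to perform the $\sigma$-integration. Then $|v-v_*|^\gamma\leq 2^\gamma(|v|^\gamma + |v_*|^\gamma)$ (valid since $0\leq\gamma\leq 2$, using $(a+b)^\gamma\le 2^{\gamma}(a^\gamma+b^\gamma)$ or just $(a+b)^\gamma\le a^\gamma+b^\gamma$ for $\gamma\le 1$ and convexity for $\gamma\ge 1$), and since $|v_*|^\gamma\leq 1+|v_*|^2$, this is bounded by $C(\|f\|_{1,0}+\|f\|_{1,2})(1+|v|^\gamma)\leq C(1+|v|^\gamma)$ with $C$ depending only on $\|f\|_{1,2}$, $\gamma$, and $C_b$. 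For $Q_1(f,f,1-f)(v)$, the factor $f(v')f(v_*')$ is harder to discard usefully because it involves post-collision velocities, but bounding it crudely by $1$ and keeping $1-f(v_*)\leq 1$ leaves a divergent integral; instead I would bound $f(v')f(v_*')\le f(v_*')$ and then apply the before/after symmetry \eqref{eq:bef_aft} (equivalently the Carleman/cancellation identity \eqref{eq:cancel}, or simply the change of variables that swaps primed and unprimed) together with the cutoff, to convert $\int B(v-v_*,\sigma) f(v_*')\,\mathbf{1}_{\{0\le\theta\le\pi/2\}}\, d\sigma\, dv_*$ into an integral of the form $C\int_{\mathbb{R}^3} |v-v_*|^\gamma f(v_*)\, dv_*$ as well — this is exactly the content of the first line of \eqref{eq:cancel} with $\theta_0=\pi/2$, whose angular integral $\int_0^{\pi/2}\frac{\sin\theta}{\cos^3(\theta/2)}b(\cos\theta)\,\cos^{-\gamma}(\theta/2)\,d\theta$ is finite because $\gamma\le 2$ and $b$ is integrable. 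The symmetric half $\mathbf{1}_{\{\pi/2\le\theta\le\pi\}}$ is handled identically using the second line of \eqref{eq:cancel}, and one ends up again with $\leq C\int_{\mathbb{R}^3}|v-v_*|^\gamma f(v_*)\,dv_*\leq C(1+|v|^\gamma)$.

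Adding the two contributions gives $\overline{Q}_1(f,f,f)(v)\leq C(1+|v|^\gamma)$ with $C=C(\|f\|_{1,2},\gamma,C_b)$, as claimed. The only mildly delicate point — the "main obstacle" — is the handling of $Q_1(f,f,1-f)$: one cannot simply discard the post-collision factors and integrate, so one must route through a change of variables / cancellation-lemma identity to turn the $v'$ or $v_*'$ dependence back into a bona fide $L^1_2$ pairing in $v_*$, and check that the resulting angular weight is integrable precisely because $\gamma\le 2$; everything else is a routine application of $0\le f\le 1$, the elementary inequality for $|v-v_*|^\gamma$, and $|v_*|^\gamma\lesssim 1+|v_*|^2$.
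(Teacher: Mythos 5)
Your proposal is correct and follows essentially the same route as the paper's proof: for $Q_1(1-f,1-f,f)$ you drop the bounded post-collisional factors and use $|v-v_*|^\gamma\lesssim|v|^\gamma+|v_*|^\gamma$, and for $Q_1(f,f,1-f)$ you drop $1-f(v_*)\le 1$, discard one of $f(v'),f(v_*')$, and convert the survivor to a pre-collisional $L^1_2$ pairing via the cancellation identity, with $\gamma\le 2$ keeping the angular weight finite on the restricted $\theta$-range. One small imprecision to fix: the second line of \eqref{eq:cancel} treats $f(v')$, not $f(v_*')$, so on $\theta\in[\pi/2,\pi]$ you should bound $f(v')f(v_*')\le f(v')$ (rather than $\le f(v_*')$) before invoking it — equivalently apply $\sigma\mapsto-\sigma$, which is precisely what the paper's \eqref{eq:sym_sigma} packages as a factor of $2$ on $\theta\in[0,\pi/2]$ — otherwise the angular weight $\sin\theta/\cos^{3+\gamma}(\theta/2)$ would be pushed to where it diverges.
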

\begin{proof}
    Since $0 \leq f \leq 1$ and $0\leq \gamma\leq 2$, there exists a constant $C_1>0$ depending on $\|f\|_{1,2}, \gamma$, and $C_b$ such that
    \begin{align}\label{eq:upper_v_1}
        \begin{split}
            Q_1(1-f,1-f,f)(v)&= \int_{\mathbb{R}^3 \times \mathbb{S}^2} |v-v_*|^\gamma b(\cos\theta) (1-f(v'))(1-f(v_*'))f(v_*) \,d\sigma dv_*\\
            &\leq 4\pi\int_0^{\pi} b(\cos\theta)\sin\theta\,d\theta\int_{\mathbb{R}^3}  (|v|^\gamma+|v_*|^\gamma)f(v_*) \,dv_*\\
            &\leq 2C_b\|f\|_{1,2}(1+|v|^\gamma).
        \end{split}
    \end{align}
    In the middle, we used $|v-v_*|^\gamma\leq 2(|v|^\gamma + |v_*|^\gamma)$.
    
    Next, we estimate $Q_1(f,f,1-f)$. Since $0 \leq f \leq 1$, it is bounded by
    \begin{align}
        Q_1(f,f,1-f)(v) 
        &=\int_{\mathbb{R}^3 \times \mathbb{S}^2}B(|v-v_*|, \cos \theta) f(v')f(v_*')(1-f(v_*))\,d\sigma dv_*   \notag  \\
        &\leq \int_{\mathbb{R}^3 \times \mathbb{S}^2}B(|v-v_*|, \cos \theta) f(v')f(v_*')\,d\sigma dv_*. \label{eq:sym v,v_*}  
    \end{align}
    Using \eqref{eq:sym_sigma}, $0 \leq f \leq 1$, and the change of variable in \eqref{eq:cancel} in order, we obtain
    \begin{align}
        \eqref{eq:sym v,v_*} &=2 \int_{\mathbb{R}^3 \times \mathbb{S}^2}  B(|v-v_*|, \cos \theta) f(v')f(v_*') \mathbf{1}_{\left\{0 \leq \theta \leq \frac{\pi}{2}\right\}}\,d\sigma dv_* \notag \\
        &\leq 2 \int_{\mathbb{R}^3 \times \mathbb{S}^2} B(|v-v_*|, \cos \theta) f(v_*') \mathbf{1}_{\left\{0 \leq \theta \leq \frac{\pi}{2}\right\}}\,d\sigma dv_*\notag\\
        &=4\pi\int_{\mathbb{R}^3}f(v_*)\int_0^{\frac{\pi}{2}} \frac{\sin \theta}{\cos^{3+\gamma} \frac{\theta}{2}} |v-v_*|^\gamma b(\cos \theta)\,d\theta dv_*.\label{eq:Q1_temp1}
    \end{align}
    Since $\cos\frac{\theta}{2} \geq \frac{1}{\sqrt{2}}$ for $0\leq \theta\leq \frac{\pi}{2}$, the integral is bounded by
    \begin{align}
        \eqref{eq:Q1_temp1} &\leq 2^{\frac{7+\gamma}{2}}\pi\int_0^{\frac{\pi}{2}} b(\cos \theta)\sin\theta\,d\theta \int_{\mathbb{R}^3}|v-v_*|^\gamma f(v_*)\,dv_*\notag\\
        &\leq 2^{\frac{5+\gamma}{2}}C_b\int_{\mathbb{R}^3}|v-v_*|^\gamma f(v_*)\,dv_*\notag\\
        &\leq 2^{\frac{7+\gamma}{2}}C_b\|f\|_{1,2}(1+|v|^\gamma).\label{eq:Q1_temp2}
    \end{align}
    Combining \eqref{eq:upper_v_1} and \eqref{eq:sym v,v_*}-\eqref{eq:Q1_temp2}, we get the lemma.
\end{proof}

From Lemma \ref{lem:upper_v}, we get
\begin{align}\label{eq:G_upper_bound}
    G^{t_2}_{t_1}(v) \geq \exp\left(-c(t_2-t_1)(1+|v|^\gamma)\right)
\end{align}
for some constant $c>0$, which depends on $\|f_0\|_{1,2}$, $\gamma$, and $C_b$.

\begin{lemma}\label{lem:third_term_bound}
    Assume the collision kernel $B$ satisfies $0\leq \gamma\leq 2$ with (H1), and let $0\leq f_i\leq 1$ be a measurable function for $i=1,2,3$. Then, there exists a constant $C>0$ depending on $\gamma$ such that
    \begin{align*}
        &\int_{\mathbb{R}^6\times\mathbb{S}^2} B(|v-v_*|, \cos \theta)f_1(v)f_2(v_*)f_3(v'_*)\,d\sigma dv_*dv\\
        &\leq C\int_{\mathbb{R}^6\times\mathbb{S}^2} B(|v-v_*|, \cos \theta) \left(f_1(v)f_3(v_*)+f_3(v)f_2(v_*)\right)\,d\sigma dv_*dv.
    \end{align*}
    As a result, we have
    \begin{align*}
        \int_{\mathbb{R}^3} Q_1(f_1,f_2,f_3)(v)\,dv\leq C\int_{\mathbb{R}^3} Q^+_c(f_1,f_3)(v) + Q^+_c(f_3,f_2)(v)\,dv.
    \end{align*}
\end{lemma}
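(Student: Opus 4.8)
The plan is to prove the displayed inequality first and then read off the ``As a result'' statement from it by a symmetrization.

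\textbf{Step 0 (reducing the corollary to the inequality).} Applying the pre/post-collision symmetry \eqref{eq:bef_aft} with $F(v,v_*,v',v_*')=f_1(v')f_2(v_*')f_3(v_*)$ gives
\begin{align*}
\int_{\mathbb{R}^3}Q_1(f_1,f_2,f_3)(v)\,dv=\int_{\mathbb{R}^6\times\mathbb{S}^2}B\,f_1(v)f_2(v_*)f_3(v_*')\,d\sigma\,dv_*\,dv ,
\end{align*}
and the same symmetry yields $\int_{\mathbb{R}^3}Q_c^+(f_1,f_3)\,dv=\int_{\mathbb{R}^6\times\mathbb{S}^2}B\,f_1(v)f_3(v_*)\,d\sigma\,dv_*\,dv$ and $\int_{\mathbb{R}^3}Q_c^+(f_3,f_2)\,dv=\int_{\mathbb{R}^6\times\mathbb{S}^2}B\,f_3(v)f_2(v_*)\,d\sigma\,dv_*\,dv$. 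So the second assertion is exactly the first one rewritten, and it is enough to bound $\int B\,f_1(v)f_2(v_*)f_3(v_*')$ by $C\int B\big(f_1(v)f_3(v_*)+f_3(v)f_2(v_*)\big)$, all integrals over $\mathbb{R}^6\times\mathbb{S}^2$ in $d\sigma\,dv_*\,dv$.

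\textbf{Step 1 (folding onto $\{\theta\le\pi/2\}$).} Under the swap $(v,v_*)\mapsto(v_*,v)$ one has $\theta\mapsto\pi-\theta$; the kernel $B$ is invariant because $b$ is even in $\cos\theta$, and $v_*'=\tfrac{v+v_*}{2}-\tfrac{|v-v_*|}{2}\sigma$ is invariant because it is symmetric in its first two arguments, while $f_1(v)f_2(v_*)\mapsto f_2(v)f_1(v_*)$. Hence
\begin{align*}
\int_{\mathbb{R}^6\times\mathbb{S}^2}B\,f_1(v)f_2(v_*)f_3(v_*')\,d\sigma\,dv_*\,dv=\int_{\mathbb{R}^6\times\mathbb{S}^2}B\big(f_1(v)f_2(v_*)+f_2(v)f_1(v_*)\big)f_3(v_*')\,\mathbf{1}_{\{\theta\le\pi/2\}}\,d\sigma\,dv_*\,dv .
\end{align*}

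\textbf{Step 2 (estimating each half).} Each of the two terms has the form $\int_{\mathbb{R}^6\times\mathbb{S}^2} B\,h(v)g(v_*)f_3(v_*')\mathbf{1}_{\{\theta\le\pi/2\}}$ with $\{h,g\}=\{f_1,f_2\}$; bound $g(v_*)\le1$ (this is where $0\le f_i\le1$ enters), reducing to $\int_{\mathbb{R}^3}h(v)K(v)\,dv$ with $K(v):=\int_{\mathbb{R}^3\times\mathbb{S}^2}B(|v-v_*|,\cos\theta)f_3(v_*')\mathbf{1}_{\{\theta\le\pi/2\}}\,dv_*\,d\sigma$. This is exactly the substitution used in \eqref{eq:Q1_temp1}--\eqref{eq:Q1_temp2}: by the cancellation identity \eqref{eq:cancel} (first line, $\theta_0=\pi/2$),
\begin{align*}
K(v)=|\mathbb{S}^1|\int_{\mathbb{R}^3}f_3(w)\,|v-w|^\gamma\left(\int_0^{\pi/2}\frac{\sin\theta}{\cos^{3+\gamma}\frac\theta2}\,b(\cos\theta)\,d\theta\right)dw\le 2^{5/2}C_b\int_{\mathbb{R}^3}|v-w|^\gamma f_3(w)\,dw ,
\end{align*}
using $\cos^{-(3+\gamma)}\tfrac\theta2\le 2^{(3+\gamma)/2}\le 2^{5/2}$ on $[0,\pi/2]$ and $|\mathbb{S}^1|\int_0^{\pi/2}b(\cos\theta)\sin\theta\,d\theta\le C_b$. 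Since $\int_{\mathbb{R}^6}|v-w|^\gamma h(v)f_3(w)\,dv\,dw=C_b^{-1}\int_{\mathbb{R}^6\times\mathbb{S}^2}B\,h(v)f_3(v_*)\,d\sigma\,dv_*\,dv$, the factors $C_b$ cancel, leaving $\int_{\mathbb{R}^6\times\mathbb{S}^2} B\,h(v)g(v_*)f_3(v_*')\mathbf{1}_{\{\theta\le\pi/2\}}\le 2^{5/2}\int_{\mathbb{R}^6\times\mathbb{S}^2}B\,h(v)f_3(v_*)$. Applying this with $h=f_1$ in the first term and $h=f_2$ in the second (and $\int B\,f_2(v)f_3(v_*)=\int B\,f_3(v)f_2(v_*)$ by $v\leftrightarrow v_*$) and adding gives the inequality with $C=2^{5/2}$, which depends only on $\gamma$.

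\textbf{Main obstacle.} The delicate point is the interplay of Step 1 with the restriction $\theta\le\pi/2$ in Step 2: the change of variables $v_*\mapsto v_*'$ behind \eqref{eq:cancel} degenerates at $\theta=\pi$ (there $v_*'\to v$ and the Jacobian weight $\sin\theta/\cos^3\tfrac\theta2$ is non-integrable), so one cannot let $\theta$ run over all of $[0,\pi]$; the remedy is to keep only $[0,\pi/2]$, where that weight is harmlessly bounded, and recover the discarded half of the sphere via the reflection $(v,v_*)\mapsto(v_*,v)$, which is legitimate precisely because $v_*'$ is symmetric in $(v,v_*)$ and $b$ is even in $\cos\theta$. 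Everything else is the bookkeeping of these two changes of variables, plus the cutoff hypothesis (H1), which makes $C_b$ finite and the $Q_c=Q_c^+-Q_c^-$ splitting meaningful.
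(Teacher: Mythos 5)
Your proof is correct and follows essentially the same route as the paper: split the $\sigma$-sphere at $\theta=\pi/2$, bound one of the two factors $f_i$ by $1$, apply the cancellation identity \eqref{eq:cancel} on $[0,\pi/2]$ (where the weight $\sin\theta/\cos^{3+\gamma}(\theta/2)$ is bounded), and use the $v\leftrightarrow v_*$ symmetry to handle the other hemisphere. The only cosmetic differences are that you fold the whole integral onto $\{\theta\le\pi/2\}$ first and then estimate (the paper splits and swaps variables on the second piece afterwards, and in doing so records $f_3(v')$ rather than the invariant $f_3(v_*')$ — harmless, but your bookkeeping is cleaner), and that you spell out the reduction from the $Q_1$/$Q_c^+$ formulation to the raw $\mathbb{R}^6\times\mathbb{S}^2$ inequality via \eqref{eq:bef_aft}, which the paper leaves implicit.
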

\begin{proof}
    Dividing the range of $\theta$ and using $0 \leq f_i \leq 1$, we have
    \begin{align}
        &\int_{\mathbb{R}^6\times\mathbb{S}^2}  B(|v-v_*|, \cos \theta)f_1(v)f_2(v_*)f_3(v'_*)\,d\sigma dv_*dv \notag\\
        &=\int_{\mathbb{R}^6\times\mathbb{S}^2}  B(|v-v_*|, \cos \theta)f_1(v)f_2(v_*)f_3(v_*')  \left(\mathbf{1}_{\{0\leq \theta \leq \frac{\pi}{2}\}} + \mathbf{1}_{\{\frac{\pi}{2}\leq \theta \leq \pi \}}\right)\,d\sigma dv_*dv \notag\\
        &\leq 
        \int_{\mathbb{R}^3} f_1(v)\int_{\mathbb{R}^3\times\mathbb{S}^2}  B(|v-v_*|, \cos \theta)f_3(v_*') \mathbf{1}_{\left\{0 \leq \theta \leq \frac{\pi}{2}\right\}}\,d\sigma dv_*dv \label{eq:f1f3} \\
        &\quad + 
        \int_{\mathbb{R}^3} f_2(v_*)\int_{\mathbb{R}^3\times\mathbb{S}^2}  B(|v-v_*|, \cos \theta)f_3(v_*') \mathbf{1}_{\{\frac{\pi}{2}\leq \theta \leq \pi\}}\,d\sigma dv_*dv.\label{eq:f2f3}
    \end{align} 
    
    First, we estimate \eqref{eq:f1f3}. As in \eqref{eq:Q1_temp1}, we can bound it by
    \begin{align*}
        \int_{\mathbb{R}^3\times\mathbb{S}^2} B(|v-v_*|, \cos \theta)f_3(v_*') \mathbf{1}_{\{0\leq \theta \leq \frac{\pi}{2} \} }  \,dv_* d\sigma
        \leq C\int_{\mathbb{R}^3\times \mathbb{S}^2} B(|v-v_*|, \cos \theta) f_3(v_*)  \,dv_* d\sigma
    \end{align*}
    for some constant $C$ depending on $\gamma$. Therefore,
    \begin{align}\label{eq:3.3_res1}
        \begin{split}
            \eqref{eq:f1f3} &\leq C\int_{\mathbb{R}^3\times\mathbb{R}^3\times\mathbb{S}^2} B(|v-v_*|,\cos\theta) f_1(v)f_3(v_*) \,dvdv_*d\sigma\\
            &=C\int Q_c^+(f_1,f_3)(v) \,dv.
        \end{split}
    \end{align}

    For \eqref{eq:f2f3}, using the variable interchange $v\leftrightarrow v_*$ with Fubini's theorem, we have
    \begin{align*}
        &\int_{\mathbb{R}^3} f_2(v_*)\int_{\mathbb{R}^3\times\mathbb{S}^2}  B(|v-v_*|, \cos \theta)f_3(v_*') \mathbf{1}_{\{\frac{\pi}{2}\leq \theta \leq \pi\}}\,d\sigma dv_*dv\\
        &=\int_{\mathbb{R}^3} f_2(v_*)\int_{\mathbb{R}^3\times\mathbb{S}^2}  B(|v-v_*|, \cos \theta)f_3(v_*') \mathbf{1}_{\left\{\frac{v-v_*}{|v-v_*|}\cdot \sigma\leq 0\right\}}\,d\sigma dv_*dv \\
        &=\int_{\mathbb{R}^3} f_2(v)\int_{\mathbb{R}^3\times\mathbb{S}^2}  B(|v-v_*|, \cos \theta)f_3(v') \mathbf{1}_{\left\{\frac{v-v_*}{|v-v_*|}\cdot \sigma\geq 0\right\}}\,d\sigma dvdv_*\\
        &=\int_{\mathbb{R}^3} f_2(v)\int_{\mathbb{R}^3\times\mathbb{S}^2}  B(|v-v_*|, \cos \theta)f_3(v') \mathbf{1}_{\{0\leq \theta\leq \frac{\pi}{2}\}}\,d\sigma dv_*dv.
    \end{align*}
    Therefore, we can bound it in the same way as \eqref{eq:f1f3} and get
    \begin{align}\label{eq:3.3_res2}
        \eqref{eq:f2f3}\leq C\int Q_c^+(f_2,f_3)(v) \,dv.
    \end{align}
    Combining \eqref{eq:3.3_res1} and \eqref{eq:3.3_res2}, we get the lemma.
\end{proof}

The next lemma is designed to approximate $Q_1(f_1,Q_1(f_2, f_3, f_4),f_5)$ by a limit of approximate functions $f_{i,\delta}$ which are a continuous approximation of $f_i$ with error $\delta$. One can easily derive this lemma using Lebesgue's dominated convergence theorem. In the next proof, we present an alternative proof giving quantitative convergence speed in $L^1$.
\begin{lemma}\label{lem:Q_1_approx}
    Consider the collision kernel $B$ satisfying $0\leq \gamma\leq 2$ and (H1). Let $f_{i,j}\rightarrow f_i$ in $L^1$ as $j\rightarrow \infty$ for $i=1,2,3,4,5$, $0\leq f_{i,j}\leq 1$ for all $i$ and $j$, and $\cup_{i,j}\supp f_{i,j}$ is a bounded set. Then, there exists a subsequence $f_{i,j_n}$ for all $i$ such that
    \begin{align*}
        \lim_{n\rightarrow \infty} Q_1(f_{1,j_n},Q_1(f_{2,j_n}, f_{3,j_n}, f_{4,j_n}),f_{5,j_n})(v)\rightarrow Q_1(f_1,Q_1(f_2, f_3, f_4),f_5)(v)
    \end{align*}
    a.e. $v$.
\end{lemma}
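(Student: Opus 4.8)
The plan is to upgrade the conclusion to a quantitative $L^1$ statement: I will show
\[
\bigl\|Q_1\bigl(f_{1,j},Q_1(f_{2,j},f_{3,j},f_{4,j}),f_{5,j}\bigr)-Q_1\bigl(f_1,Q_1(f_2,f_3,f_4),f_5\bigr)\bigr\|_{1}\le C\sum_{i=1}^{5}\|f_{i,j}-f_i\|_{1},
\]
where $C$ depends only on $\gamma$, $C_b$, and a fixed bounded set $K$ containing $\bigcup_{i,j}\supp f_{i,j}$ (hence also $\supp f_i$). Since the right-hand side tends to $0$, the left-hand expression converges in $L^1$, and an $L^1$-convergent sequence always has a subsequence converging a.e.; choosing such a subsequence $j_n$ gives the lemma. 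Thus everything rests on one trilinear estimate for $Q_1$.

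\textbf{Step 1: a trilinear $L^1$ bound.} I claim that if $g_1,g_2,g_3$ are measurable with $\supp g_i\subseteq K$ for a bounded $K$, then
\[
\|Q_1(g_1,g_2,g_3)\|_{1}\le C(K,\gamma,C_b)\,\|g_k\|_{1}\prod_{i\ne k}\|g_i\|_{\infty}\qquad(k=1,2,3),
\]
and moreover $\|Q_1(g_1,g_2,g_3)\|_{\infty}\le C(K,\gamma,C_b)\prod_i\|g_i\|_{\infty}$ with $\supp Q_1(g_1,g_2,g_3)\subseteq K+K-K$ (the integrand forces $v',v_*',v_*\in K$ and $v=v'+v_*'-v_*$). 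Since $|Q_1(g_1,g_2,g_3)|\le Q_1(|g_1|,|g_2|,|g_3|)$ pointwise, I may assume $g_i\ge0$. For $k=1$, bound $g_2(v_*')\le\|g_2\|_\infty\mathbf 1_{\{v_*'\in K\}}$ and $g_3(v_*)\le\|g_3\|_\infty\mathbf 1_{\{v_*\in K\}}$, use the pre/post-collision symmetry \eqref{eq:bef_aft} to relocate the $v'$-dependence onto a pre-collision velocity, then bound the leftover by $\int_{\mathbb S^2}b\,d\sigma=C_b$ and $|v-v_*|^\gamma\le C(K)$ (legitimate since now $v\in\supp g_1\subseteq K$ and $v_*\in K$), leaving $\int g_1=\|g_1\|_1$. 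The case $k=2$ follows from $k=1$ because $\sigma\mapsto-\sigma$ gives $Q_1(g_1,g_2,g_3)=Q_1(g_2,g_1,g_3)$ (the symmetry used just before \eqref{eq:sym_sigma}). For $k=3$, bound $g_1(v')\le\|g_1\|_\infty\mathbf 1_{\{v'\in K\}}$ and $g_2(v_*')\le\|g_2\|_\infty\mathbf 1_{\{v_*'\in K\}}$; the elastic energy identity $|v'|^2+|v_*'|^2=|v|^2+|v_*|^2$ confines $v$ to a fixed ball once $v',v_*'\in K$, so $|v-v_*|^\gamma$ and the angular integral are again controlled and the remaining $\int g_3(v_*)\,dv_*=\|g_3\|_1$ survives.

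\textbf{Step 2: telescoping.} Set $F_j:=Q_1(f_{2,j},f_{3,j},f_{4,j})$ and $F:=Q_1(f_2,f_3,f_4)$; by Step 1 these are supported in a fixed bounded set $K'$ and satisfy $\|F_j\|_\infty,\|F\|_\infty\le C_0$. Writing $F_j-F$ as the sum of the three terms obtained by replacing one argument $f_{i,j}$ by $f_i$ at a time, applying Step 1 (with $K$, placing the $L^1$-norm on the replaced slot), and using $\|f_{i,j}-f_i\|_\infty\le1$ and $\supp(f_{i,j}-f_i)\subseteq K$, gives $\|F_j-F\|_1\le C\sum_{i=2}^4\|f_{i,j}-f_i\|_1$. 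Telescoping the outer triple $Q_1(f_{1,j},F_j,f_{5,j})-Q_1(f_1,F,f_5)$ the same way — now with $K\cup K'$ in place of $K$ and using $\|F_j\|_\infty\le C_0$ for the untouched middle slot — produces the displayed quantitative bound, hence $L^1$ convergence; extracting an a.e.-convergent subsequence finishes the proof.

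\textbf{Main obstacle.} The only place that requires genuine care is Step 1: choosing the right distribution of norms (two $L^\infty$'s and one $L^1$, in whichever slot) and tracking supports so that every appearance of $|v-v_*|^\gamma$ and of the angular integral is uniformly bounded. The two mechanisms that make this work — moving the $v'$-argument to a pre-collision velocity via \eqref{eq:bef_aft}, and using energy conservation to confine $v$ when $v',v_*'$ are restricted — are exactly what is needed in place of the softer, additive bound of Lemma \ref{lem:third_term_bound}, which is not by itself strong enough to close the telescoping. The remainder is routine bookkeeping; the only mild subtleties are the reduction to nonnegative functions and keeping all constants independent of $j$.
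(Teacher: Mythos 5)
Your proof is correct, and while the overall strategy (establish $L^1$ convergence of the iterated operators, then extract an a.e.-convergent subsequence) matches the paper's, the main estimate and its organization are genuinely different. The paper telescopes the difference into five terms $I_1,\ldots,I_5$, then bounds each by iterated classical gain operators $Q_c^+(g_1,Q_c^+(g_2,g_3))$: it uses the crude bound $0\le f\le1$ to drop the third argument of $Q_1$ when the replaced slot sits in the first or second position (reducing $Q_1$ to $Q_c^+$), and invokes Lemma~\ref{lem:third_term_bound} when the replaced slot is the third one; it then shows $\int Q_c^+(g_1,Q_c^+(g_2,g_3))\,dv\le C\prod_i\|g_i\|_1$ on bounded supports. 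You instead prove a single trilinear Lipschitz-type inequality for $Q_1$ itself, $\|Q_1(g_1,g_2,g_3)\|_1\le C\|g_k\|_1\prod_{i\ne k}\|g_i\|_\infty$, valid for each slot $k$, using the pre/post-collision symmetry \eqref{eq:bef_aft} for $k=1$, the $\sigma\mapsto-\sigma$ symmetry for $k=2$, and energy conservation together with $v=v'+v_*'-v_*$ (to confine $v$) for $k=3$, and then telescope both the inner and outer $Q_1$ with this one estimate. Your route is cleaner and more symmetric, since it treats all three slots on the same footing and avoids Lemma~\ref{lem:third_term_bound} entirely; the paper's route is more modular in that it reuses that lemma and reduces everything to the classical gain operator. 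One small inaccuracy in your closing remarks: Lemma~\ref{lem:third_term_bound} is in fact strong enough to close the telescoping once paired with the iterated $Q_c^+$ estimate, as the paper's computations for $I_4$ and $I_5$ show; it is not too weak, just a differently packaged version of the same control.
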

\begin{proof}
    We will prove
    \begin{align*}
        \lim_{j\rightarrow \infty} \int_{\mathbb{R}^3} \left|Q_1(f_{1,j},Q_1(f_{2,j}, f_{3,j}, f_{4,j}),f_{5,j})(v) - Q_1(f_1,Q_1(f_2, f_3, f_4),f_5)(v)\right|\,dv = 0.
    \end{align*}
    If we show it, we can take a subsequence of $Q_1(f_{1,j},Q_1(f_{2,j}, f_{3,j}, f_{4,j}),f_{5,j})$ such that it converges to $Q_1(f_1,Q_1(f_2, f_3, f_4),f_5)$ a.e. $v$.

    We decompose the difference as follows:
    \begin{align*}
        &\left|Q_1(f_{1,j},Q_1(f_{2,j}, f_{3,j}, f_{4,j}),f_{5,j})(v) - Q_1(f_1,Q_1(f_2, f_3, f_4),f_5)(v)\right|\\
        &\leq \left|Q_1(f_{1,j} - f_1,Q_1(f_{2,j}, f_{3,j}, f_{4,j}), f_{5,j})(v)\right| + \left|Q_1(f_1,Q_1(f_{2,j}-f_2, f_{3,j}, f_{4,j}), f_{5,j})(v)\right| \\
        &\quad + \left|Q_1(f_1,Q_1(f_2, f_{3,j}-f_3, f_{4,j}), f_{5,j})(v)\right| + \left|Q_1(f_1,Q_1(f_2, f_3, f_{4,j}-f_4), f_{5,j})(v)\right|\\
        &\quad + \left|Q_1(f_1,Q_1(f_2, f_3, f_4), f_{5,j}-f_5)(v)\right|\\
        &\leq Q_1(|f_{1,j} - f_1|,Q_1(f_{2,j}, f_{3,j}, f_{4,j}), f_{5,j})(v) + Q_1(f_1,Q_1(|f_{2,j}-f_2|, f_{3,j}, f_{4,j}), f_{5,j})(v)\\
        &\quad + Q_1(f_1,Q_1(f_2, |f_{3,j}-f_3|, f_{4,j}), f_{5,j})(v) + Q_1(f_1,Q_1(f_2, f_3, |f_{4,j}-f_4|), f_{5,j})(v)\\
        &\quad + Q_1(f_1,Q_1(f_2, f_3, f_4), |f_{5,j}-f_5|)(v)\\
        &\eqqcolon I_1+I_2+I_3+I_4+I_5.
    \end{align*}
    We bound each $I_i$.
    
    Since $0\leq f^4_j, f^5_j\leq 1$, we get
    \begin{equation}\label{eq:lem3_I123}
        \begin{alignedat}{3}
            I_1&\leq Q_c^+(|f_{1,j} - f_1|,Q_c^+(f_{2,j}, f_{3,j}))(v),\quad &&I_2\leq Q_c^+(f_1,Q_c^+(|f_{2,j}-f_2|, f_{3,j}))(v),\\
            I_3&\leq Q_c^+(f_1,Q_c^+(f_2, |f_{3,j}-f_3|))(v), &&I_4\leq Q_c^+(f_1,Q_1(f_2, f_3, |f_{4,j}-f_4|))(v),\\
            I_5&\leq Q_1(f_1,Q_c^+(f_2, f_3), |f_{5,j}-f_5|)(v).
        \end{alignedat}
    \end{equation}
    For the $I_4$, we use Lemma \ref{lem:third_term_bound} as follows.
    \begin{align}\label{eq:lem3_I4}
        \begin{split}
            \int_{\mathbb{R}^3} I_4\,dv&=\int_{\mathbb{R}^3} Q_c^+(f_1,Q_1(f_2, f_3, |f_{4,j}-f_4|))(v)\,dv \\
            &=\int_{\mathbb{R}^6\times\mathbb{S}^2} B(v-v_*,\sigma) f_1(v)Q_1(f_2, f_3, |f_{4,j}-f_4|)(v_*)\,dv_* dv d\sigma\\
            &\leq C\int_{\mathbb{R}^6\times\mathbb{S}^2} B(v-v_*,\sigma) f_1(v)\left(Q_c^+(f_2, |f_{4,j}-f_4|) + Q_c^+(|f_{4,j}-f_4|, f_3)\right)(v_*)\,dv_*dvd\sigma\\
            &=C\int_{\mathbb{R}^3} Q_c^+(f_1,Q_c^+(f_2, |f_{4,j}-f_4|))(v) + Q_c^+(f_1,Q_c^+(|f_{4,j}-f_4|, f_3))(v)\,dv.
        \end{split}
    \end{align}
    Similarly for the $I_5$, we get
    \begin{align}\label{eq:lem3_I5}
        \begin{split}
            \int_{\mathbb{R}^3} I_5\,dv&=\int_{\mathbb{R}^3} Q_1(f_1,Q_c^+(f_2, f_3), |f_{5,j}-f_5|)(v)\,dv\\
            &\leq C\int_{\mathbb{R}^3} Q_c^+(f_1, |f_{5,j}-f_5|)(v) + Q_c^+(|f_{5,j}-f_5|, Q_c^+(f_2, f_3))(v)\,dv.
        \end{split}
    \end{align}
    It shows that all the $I_i$ can be decomposed into the iterated $Q_c^+$.
    
    The classical $Q^+_c$ satisfies
    \begin{align*}
        \int_{\mathbb{R}^3} Q_c^+(g_1,Q_c^+(g_2, g_3))(v)\,dv &= \int_{\mathbb{R}^6\times\mathbb{S}^2} B(v-v_*,\sigma_1) g_1(v')Q_c^+(g_2, g_3)(v'_*)\,dvdv_*d\sigma_1\\
        &=\int_{\mathbb{R}^6\times\mathbb{S}^2} B(v-v_*,\sigma_1) g_1(v)Q_c^+(g_2, g_3)(v_*)\,dvdv_*d\sigma_1\\
        &=\int_{\mathbb{R}^9\times\mathbb{S}^2\times\mathbb{S}^2} B(v-w',\sigma_1)B(w-w_*,\sigma_2) g_1(v)g_2(w)g_3(w_*)\,dvdwdw_*d\sigma_1 d\sigma_2.
    \end{align*}
    In the final line, we set $w = v_*$. Since $\cup_{i,j} \supp f_{i,j}$ is a bounded set, $|v-w'|$ and $|w-w_*|$ is bounded in the integral. So, we can bound the collision kernel by
    \begin{align*}
        B(|v-v_*|,\cos\theta)\leq C b(\cos\theta),
    \end{align*}
    and
    \begin{align*}
        &\int_{\mathbb{R}^3} Q_c^+(g_1,Q_c^+(g_2, g_3))(v)\,dv\\
        &\leq C(2\pi)^2\int_{\mathbb{R}^9\times \mathbb{S}^2\times\mathbb{S}^2}\int_0^{\pi}\int_0^{\pi} b(\cos\theta_1) b(\cos\theta_2) g_1(v)g_2(w)g_3(w_*)\sin\theta_1\sin\theta_2\,d\sigma_1 d\sigma_2 dvdwdw_*\\
        &\leq CC_b^2\int_{\mathbb{R}^9}g_1(v)g_2(w)g_3(w_*)dvdwdw_*.
    \end{align*}
    Therefore, we get
    \begin{align*}
        \int_{\mathbb{R}^3} Q^+_c(g_1,Q^+_c(g_2, g_3))(v)\,dv\leq C\prod_{i=1}^3 \|g_i\|_1
    \end{align*}
    for some constant $C$ depending on $\gamma, C_b$, and the diameter of the set $\cup_{i,j}\supp f_{i,j}$. Combining this bound with \eqref{eq:lem3_I123}, \eqref{eq:lem3_I4}, and \eqref{eq:lem3_I5}, we get
    \begin{align*}
        \lim_{j\rightarrow \infty}\int_{\mathbb{R}^3} \sum_{i=1}^5 I_i\,dv &= C\lim_{j\rightarrow \infty}\sum_{i=1}^5 \|f_{i,j}-f_i\|_1 = 0,
    \end{align*}
    where $C$ depends on $\gamma, C_b$, and the set $\cup_{i,j}\supp f_{i,j}$. It proves the lemma.
\end{proof}

\section{Positivity}\label{sec:Positivity}

To prove the positivity of the solution $f(t,v)$, we first compute the lower bound of the iterated $Q_1$ function.
\begin{lemma}\label{lem:Positivity}
    Let $f(t,v)$ be a solution of the Boltzmann-Fermi-Dirac equation. Suppose there exists $\mathcal{B}_R(v_{-1})$ for some $v_{-1}\in\mathbb{R}^3$ and $R>0$ and $\mathfrak{c}>0$ such that
    \begin{align}\label{eq:lem_Posivity_cond}
        \begin{split}
            Q_1(f_0\mathbf{1}_{\mathcal{B}_R(v_{-1})}, Q_1(f_0\mathbf{1}_{\mathcal{B}_R(v_{-1})}, f_0\mathbf{1}_{\mathcal{B}_R(v_{-1})}, (1-f_0)\mathbf{1}_{\mathcal{B}_R(v_{-1})}), (1-f_0)\mathbf{1}_{\mathcal{B}_R(v_{-1})})(v)&>\mathfrak{c},\\
            Q_1((1-f_0)\mathbf{1}_{\mathcal{B}_R(v_{-1})}, Q_1((1-f_0)\mathbf{1}_{\mathcal{B}_R(v_{-1})}, (1-f_0)\mathbf{1}_{\mathcal{B}_R(v_{-1})}, f_0\mathbf{1}_{\mathcal{B}_R(v_{-1})}), f_0\mathbf{1}_{\mathcal{B}_R(v_{-1})})(v)&>\mathfrak{c}
        \end{split}
    \end{align}
    on some set $E\ni v$ such that $E\subset \mathcal{B}_R(v_{-1})$. Then, there exists $\delta>0$ and $T_0>0$ depending on $R$, $\mathfrak{c}$, $C_b$, $\|f_0(v)\|_{1,2}$, and $|v_{-1}|$ such that
    \begin{align*}
        f(t,v)>\delta t^2,\quad (1-f)(t,v)>\delta t^2
    \end{align*}
    for $t\in (0, T_0]$ and $v\in E$.
\end{lemma}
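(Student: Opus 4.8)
The plan is to bootstrap the Duhamel representations \eqref{eq:duha_f} and \eqref{eq:duha_1-f} twice, using the positivity of the iterated $Q_1$ hypothesis \eqref{eq:lem_Posivity_cond} together with the exponential lower bound \eqref{eq:G_upper_bound} on the semigroup factor $G_\tau^t$. The key structural observation is that \eqref{eq:duha_f} and \eqref{eq:duha_1-f} are coupled: dropping the $f_0 G_0^t$ term (which is nonnegative) and keeping only one time-integration, we get $f(t,v) \ge \int_0^t G_\tau^t(v)\, Q_1(f,f,1-f)(\tau,v)\,d\tau$, and symmetrically $1-f(t,v)\ge \int_0^t G_\tau^t(v)\, Q_1(1-f,1-f,f)(\tau,v)\,d\tau$. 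To produce a $t^2$ lower bound, I will feed a first-order lower bound back in. The main obstacle, which I address below, is that $Q_1$ is an integral operator and a pointwise lower bound on the integrand does not immediately survive restriction to the relevant velocity set; the hypothesis \eqref{eq:lem_Posivity_cond} is precisely engineered to make this work by pre-localizing everything to $\mathcal{B}_R(v_{-1})$.

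Here are the steps. \textbf{Step 1 (first-order bound).} For $t$ small and $v\in E\subset\mathcal B_R(v_{-1})$, I first establish a crude lower bound of order $t$ for both $f$ and $1-f$ on a slightly enlarged region, or directly on $\mathcal B_R(v_{-1})$ after restricting. Since $f_0\ge f_0\mathbf 1_{\mathcal B_R(v_{-1})}$ and $1-f_0\ge (1-f_0)\mathbf 1_{\mathcal B_R(v_{-1})}$, and $G_0^t(v)\ge e^{-ct(1+|v|^\gamma)}\ge e^{-cT_0(1+(|v_{-1}|+R)^\gamma)}=:\kappa_0>0$ for $t\le T_0$ and $v\in\mathcal B_R(v_{-1})$, the term $f_0G_0^t$ alone gives $f(t,v)\ge \kappa_0 f_0(v)$, which is not yet bounded below pointwise (since $f_0$ may vanish somewhere). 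Instead I use that $Q_1$ is monotone in each argument and that by continuity-in-time of the solution in $L^1_2$ we have $f(\tau,\cdot)\to f_0$, $1-f(\tau,\cdot)\to 1-f_0$ in $L^1_2$, so after possibly extracting subsequences I can invoke Lemma \ref{lem:Q_1_approx}: the iterated operator $Q_1(f,Q_1(f,f,1-f),1-f)$ evaluated on the solution converges a.e.\ (along a subsequence of times $\tau\to0$) to the quantity in the first line of \eqref{eq:lem_Posivity_cond} with $f_0$ in place of the truncated $f_0\mathbf 1_{\mathcal B_R(v_{-1})}$, which is $\ge$ the truncated version $>\mathfrak c$ on $E$ by monotonicity. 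Hence for all small $\tau$, $Q_1(f,Q_1(f,f,1-f),1-f)(\tau,v)>\mathfrak c/2$ on (most of) $E$.

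\textbf{Step 2 (double Duhamel).} Insert the lower Duhamel estimate for $f(\tau_1,v)$ and $1-f(\tau_1,v)$ into the innermost slots. Concretely, using $f(\tau_1,v)\ge\int_0^{\tau_1}G_{\tau_2}^{\tau_1}(v)Q_1(f,f,1-f)(\tau_2,v)\,d\tau_2$ inside $Q_1(f,f,1-f)(\tau_1,\cdot)$, and then iterating $f(t,v)\ge\int_0^tG_{\tau_1}^t(v)Q_1(f,f,1-f)(\tau_1,v)\,d\tau_1$ once more, the positivity of $G$ bounds (\ref{eq:G_upper_bound}) pull out the factors $\kappa^{\#}$ uniformly on $\mathcal B_R(v_{-1})$ for $t\le T_0$, and the two successive time integrals produce $\int_0^t\int_0^{\tau_1}(\cdots)\,d\tau_2\,d\tau_1\asymp t^2$ against the constant $\mathfrak c/2$ from Step 1. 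Matching the gain term slots to the structure of \eqref{eq:lem_Posivity_cond} — the first line for $f$, the second line for $1-f$ — yields $f(t,v)>\delta t^2$ and $(1-f)(t,v)>\delta t^2$ on $E$ for $t\in(0,T_0]$, with $\delta$ depending only on $R,\mathfrak c,C_b,\|f_0\|_{1,2},|v_{-1}|$ through $\kappa_0$ and the time bounds. \textbf{Step 3 (cleanup of the a.e.\ issue).} The subsequence extraction in Lemma \ref{lem:Q_1_approx} gives the bound a.e.\ $v\in E$; since the final claimed inequality is pointwise but the solution is only defined up to the null set $Z$, and $E$ can be taken modulo null sets throughout, this is harmless — I state the conclusion for a.e.\ $v\in E$, matching the solution concept in the paper.

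The hard part is Step 1: rigorously passing from the algebraic hypothesis \eqref{eq:lem_Posivity_cond}, which is stated at time $0$ with hard truncations $f_0\mathbf 1_{\mathcal B_R(v_{-1})}$, to a genuine positive lower bound on $Q_1(f,Q_1(f,f,1-f),1-f)(\tau,\cdot)$ for the actual solution at small positive times $\tau$. This requires (i) the $L^1$-continuity of $t\mapsto f(t,\cdot)$ from the solution definition, (ii) monotonicity of $Q_1$ in each argument to compare truncated and untruncated data, and (iii) the quantitative/a.e.\ convergence of iterated $Q_1$ from Lemma \ref{lem:Q_1_approx}, for which the uniform boundedness $0\le f\le1$ and a common compact support (supplied here by the truncation to $\mathcal B_R(v_{-1})$, since we may freely replace $f$ by $f\mathbf 1_{\mathcal B_R}$ in the lower bounds by monotonicity) are exactly the hypotheses. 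Everything else is a matter of unwinding the two Duhamel iterations and collecting constants.
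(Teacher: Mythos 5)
Your plan to bootstrap the Duhamel formula twice and collect a $t^{2}$ factor is the right shape, and Step 2 and Step 3 are essentially what the paper does. But Step 1 — which you yourself flag as ``the hard part'' — is where the argument breaks, and in fact the difficulty you perceive there is not present in the paper's proof.

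The gap: you try to show that $Q_{1}\bigl(f,Q_{1}(f,f,1-f),1-f\bigr)(\tau,v)>\mathfrak c/2$ on most of $E$ ``for all small $\tau$'' by combining $L^{1}_{2}$ continuity of $\tau\mapsto f(\tau,\cdot)$ with Lemma~\ref{lem:Q_1_approx}. That lemma delivers only \emph{subsequential} a.e.\ convergence of the iterated $Q_{1}$, and under a compact-support hypothesis that $f(\tau,\cdot)$ does not satisfy (truncation by monotonicity fixes the support, but not the subsequence problem). Subsequential a.e.\ convergence as $\tau_{n}\to0$ does not give a pointwise lower bound valid for \emph{every} small $\tau$: the subsequence and the exceptional null set may vary. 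So the inference ``Hence for all small $\tau$, \dots'' is a non-sequitur, and the double-Duhamel in Step 2 inherits this gap because it uses the conclusion of Step 1 as input. There is also a secondary imprecision in Step 2: the iterated $Q_{1}$ in \eqref{eq:lem_Posivity_cond} has the inner $Q_{1}$ in the \emph{middle} slot, but your iteration as described substitutes the Duhamel bound into the first slot, so the structure you produce does not match the hypothesis.

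What makes the paper's proof work without any of this machinery is that the lower bound on $f(\tau,\cdot)$ needed for the bootstrap is \emph{not} $f_{0}$ up to a continuity error, but the algebraic, every-$\tau$, pointwise bound
$f(\tau,v)\ge f_{0}(v)\,G_{0}^{\tau}(v)$ and $1-f(\tau,v)\ge(1-f_{0})(v)\,G_{0}^{\tau}(v)$,
which follow directly from \eqref{eq:duha_f} and \eqref{eq:duha_1-f} by dropping the nonnegative $Q_{1}$ integral. On $\mathcal B_{\sqrt 2 R}(v_{-1})$ one has $G_{0}^{\tau}(v)\ge e^{-c\tau(1+(|v_{-1}|+\sqrt2 R)^{\gamma})}$ by \eqref{eq:G_upper_bound}, a \emph{deterministic} exponential factor. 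Inserting this into $Q_{1}(f,f,1-f)$ via monotonicity of $Q_{1}$ and the collision-geometry fact $|v-v_{-1}|^{2}\le|v'-v_{-1}|^{2}+|v'_{*}-v_{-1}|^{2}$ (so arguments localize to $\mathcal B_{\sqrt 2 R}$ when $v\in\mathcal B_{R}$) yields a first-order lower bound in terms of $Q_{1}\bigl(f_{0}\mathbf 1_{\mathcal B_{R}},f_{0}\mathbf 1_{\mathcal B_{R}},(1-f_{0})\mathbf 1_{\mathcal B_{R}}\bigr)$. Repeating this once more, substituting that first-order bound into the \emph{middle} slot of $Q_{1}$, produces exactly the iterated operator in \eqref{eq:lem_Posivity_cond} multiplied by $t^{2}/8$ and an explicit exponential. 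No continuity-in-time, no subsequences, no Lemma~\ref{lem:Q_1_approx}. If you replace your Step~1 with this algebraic lower bound on $f(\tau,\cdot)$ and feed it into both passes of Step~2, the argument closes cleanly.
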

\begin{proof}
    From \eqref{eq:G_upper_bound}, we have
    \begin{align}\label{eq:G_upper_bound_2}
        G_{t_1}^{t_2}(v)\geq \exp(-c(t_2-t_1)(1+(|v_{-1}| + |v-v_{-1}|)^\gamma))
    \end{align}
    for $v\in \mathbb{R}^3$, where $c$ depends on $\|f_0\|_{1,2}$, $\gamma$, and $C_b$.
    
    Since $Q_1$ is a positive function, from \eqref{eq:duha_f} and \eqref{eq:duha_1-f}, we get $g(t,v)\geq g_0(v) G_0^t(v)$ and $1-g(t,v)\geq (1-f_0(v))G_0^t(v)$. Also, using \eqref{eq:G_upper_bound_2} for $|v-v_{-1}|\leq R$, we have $G^{t_2}_{t_1}(v) \geq \exp\left(-c(t_2-t_1)(|v_{-1}|+R)^\gamma\right)$. Inserting these lower bounds to $Q_1$ in \eqref{eq:duha_f}, we get
    \begin{align}\label{eq:f_bound_1}
        \begin{split}
            f(t,v) &= f_0(v)G_0^t(v) + \int_0^t G^t_\tau(v)Q_1(f,f,1-f)(\tau, v)\,d\tau\\
            &\geq f_0(v)G_0^t(v) + \int_0^t G^t_\tau(v) Q_1(f_0G^\tau_0\mathbf{1}_{\mathcal{B}_R(v_{-1})},f_0G^\tau_0\mathbf{1}_{\mathcal{B}_R(v_{-1})},(1-f_0)G^\tau_0\mathbf{1}_{\mathcal{B}_R(v_{-1})})(v)\,d\tau\\
            &\geq f_0(v)e^{-ct(1+(|v_{-1}|+R)^\gamma)} \\
            &\quad + \int_0^t e^{-c(t-\tau)(1+(|v_{-1}|+R)^\gamma)}e^{-3c\tau(1+(|v_{-1}|+R)^\gamma)} Q_1(f_0\mathbf{1}_{\mathcal{B}_R(v_{-1})},f_0\mathbf{1}_{\mathcal{B}_R(v_{-1})},(1-f_0)\mathbf{1}_{\mathcal{B}_R(v_{-1})})(v)\,d\tau\\
            &\geq f_0(v)e^{-ct(1+(|v_{-1}|+R)^\gamma)} \\
            &\quad + e^{-ct(1+(|v_{-1}|+R)^\gamma)}\frac{1-e^{-2ct(1+(|v_{-1}|+R)^\gamma)}}{2c(1+(|v_{-1}|+R)^\gamma)}Q_1(f_0 \mathbf{1}_{\mathcal{B}_R(v_{-1})},f_0 \mathbf{1}_{\mathcal{B}_R(v_{-1})},(1-f_0)\mathbf{1}_{\mathcal{B}_R(v_{-1})})(v)\\
            &\geq f_0(v)e^{-ct(1+(|v_{-1}|+R)^\gamma)} + \frac{t}{2} e^{-ct(1+(|v_{-1}|+R)^\gamma)} Q_1(f_0 \mathbf{1}_{\mathcal{B}_R(v_{-1})},f_0 \mathbf{1}_{\mathcal{B}_R(v_{-1})},(1-f_0)\mathbf{1}_{\mathcal{B}_R(v_{-1})})(v)    
        \end{split}
    \end{align}
    for small enough $t$ making $\frac{1-e^{-2ct(1+(|v_{-1}|+R)^\gamma)}}{2c(1+(|v_{-1}|+R)^\gamma)}\geq \frac{t}{2}$ and $v\in \mathcal{B}_R(v_{-1})$. Repeating the same computation for \eqref{eq:duha_1-f}, we get
    \begin{align}\label{eq:f_bound_2}
        \begin{split}
            1-f(t,v) &= (1-f_0)(v)G_0^t(v) + \int_0^t G_\tau^t(v) Q_1(1-f,1-f,f)(\tau, v)\,d\tau\\
            &\geq (1-f_0)(v)e^{-ct(1+(|v_{-1}|+R)^\gamma)} \\
            &\quad + \frac{t}{2} e^{-ct(1+(|v_{-1}|+R)^\gamma)}Q_1((1-f_0)\mathbf{1}_{\mathcal{B}_R(v_{-1})},(1-f_0)\mathbf{1}_{\mathcal{B}_R(v_{-1})},f_0\mathbf{1}_{\mathcal{B}_R(v_{-1})})(v)
        \end{split}
    \end{align}
    for $v\in \mathcal{B}_R(v_{-1})$ and for small enough $t$. If we replace $R$ by $\sqrt{2}R$, we also get
    \begin{align}\label{eq:f_bound_3}
        \begin{split}
            f(t,v) &\geq f_0(v)e^{-ct(1+(|v_{-1}|+\sqrt{2}R)^\gamma)} \\
            &\quad + \frac{t}{2} e^{-ct(1+(|v_{-1}|+\sqrt{2}R)^\gamma)} Q_1(f_0 \mathbf{1}_{\mathcal{B}_{\sqrt{2}R}(v_{-1})},f_0 \mathbf{1}_{\mathcal{B}_{\sqrt{2}R}(v_{-1})},(1-f_0)\mathbf{1}_{\mathcal{B}_{\sqrt{2}R}(v_{-1})})(v),\\
            1-f(t,v) &\geq (1-f_0)(v)e^{-ct(1+(|v_{-1}|+\sqrt{2}R)^\gamma)}\\
            &\quad + \frac{t}{2} e^{-ct(1+(|v_{-1}|+\sqrt{2}R)^\gamma)}Q_1((1-f_0)\mathbf{1}_{\mathcal{B}_{\sqrt{2}R}(v_{-1})},(1-f_0)\mathbf{1}_{\mathcal{B}_{\sqrt{2}R}(v_{-1})},f_0\mathbf{1}_{\mathcal{B}_{\sqrt{2}R}(v_{-1})})(v)
        \end{split}
    \end{align}
    for small enough $t$ and $v\in \mathcal{B}_{\sqrt{2}R}(v_{-1})$.

    We again apply the lower bounds \eqref{eq:f_bound_1}-\eqref{eq:f_bound_3} to $Q_1$ in \eqref{eq:duha_f} and \eqref{eq:duha_1-f}. Then,
    \begin{align*}
        &f(t,v) \\
        &= f_0(v)G_0^t(v) + \int_0^t G_\tau^t(v) Q_1(f,f,1-f)(\tau, v)\,d\tau\\
        &\geq f_0(v)G_0^t(v) + \int_0^t G_\tau^t(v) Q_1(f\mathbf{1}_{\mathcal{B}_R(v_{-1})},f\mathbf{1}_{\mathcal{B}_{\sqrt{2}R}(v_{-1})},(1-f)\mathbf{1}_{\mathcal{B}_R(v_{-1})})(\tau, v)\,d\tau\\
        &\geq f_0(v)e^{-ct(1+(|v_{-1}|+R)^\gamma)} \\
        &\quad + \frac{1}{2}\int_0^t e^{-c(t-\tau)(1+(|v_{-1}|+R)^\gamma)}e^{-2c\tau(1+(|v_{-1}|+R)^\gamma)}e^{-c\tau(1+(|v_{-1}|+\sqrt{2}R)^\gamma)}\tau \\
        &\qquad \times Q_1\left(f_0\mathbf{1}_{\mathcal{B}_R(v_{-1})},Q_1(f_0\mathbf{1}_{\mathcal{B}_{\sqrt{2}R}(v_{-1})},f_0\mathbf{1}_{\mathcal{B}_{\sqrt{2}R}(v_{-1})},(1-f_0)\mathbf{1}_{\mathcal{B}_{\sqrt{2}R}(v_{-1})})\mathbf{1}_{\mathcal{B}_{\sqrt{2}R}(v_{-1})},(1-f_0)\mathbf{1}_{\mathcal{B}_R(v_{-1})}\right)(v)\,d\tau\\
        &\geq f_0(v)e^{-ct(1+(|v_{-1}|+R)^\gamma)} \\
        &\quad + \frac{t^2}{8}e^{-ct(1+(|v_{-1}|+R)^\gamma)} Q_1\left(f_0\mathbf{1}_{\mathcal{B}_R(v_{-1})},Q_1(f_0\mathbf{1}_{\mathcal{B}_{\sqrt{2}R}(v_{-1})},f_0\mathbf{1}_{\mathcal{B}_{\sqrt{2}R}(v_{-1})},(1-f_0)\mathbf{1}_{\mathcal{B}_{\sqrt{2}R}(v_{-1})})\mathbf{1}_{\mathcal{B}_{\sqrt{2}R}(v_{-1})},\right.\\
        &\quad\qquad\qquad\left.(1-f_0)\mathbf{1}_{\mathcal{B}_R(v_{-1})}\right)(v),
    \end{align*}
    and
    \begin{align*}
        &1-f(t,v)\\
        &=(1-f_0)(v)G_0^t(v) + \int_0^t G_\tau^t(v) Q_1(1-f,1-f,f)(\tau, v)\,d\tau\\
        &\geq (1-f_0)(v)G_0^t(v) + \int_0^t G_\tau^t(v) Q_1((1-f)\mathbf{1}_{\mathcal{B}_R(v_{-1})},(1-f)\mathbf{1}_{\mathcal{B}_{\sqrt{2}R}(v_{-1})},f\mathbf{1}_{\mathcal{B}_R(v_{-1})})(\tau, v)\,d\tau\\
        &\geq (1-f_0)(v)e^{-ct(1+(|v_{-1}|+R)^\gamma)} + \frac{t^2}{8}e^{-ct(1+(|v_{-1}|+R)^\gamma)}\\
        &\qquad \times Q_1\left((1-f_0)\mathbf{1}_{\mathcal{B}_R(v_{-1})},Q_1((1-f_0)\mathbf{1}_{\mathcal{B}_{\sqrt{2}R}(v_{-1})},(1-f_0)\mathbf{1}_{\mathcal{B}_{\sqrt{2}R}(v_{-1})},f_0\mathbf{1}_{\mathcal{B}_{\sqrt{2}R}(v_{-1})})\mathbf{1}_{\mathcal{B}_{\sqrt{2}R}(v_{-1})},\right.\\
        &\quad\qquad\qquad\left.f_0\mathbf{1}_{\mathcal{B}_R(v_{-1})}\right)(v)
    \end{align*}
    for a small enough $t$ making
    \begin{align*}
        \int_0^t \tau e^{-c\tau(1+(|v_{-1}| + \sqrt{2}R)^\gamma)}e^{-c\tau(1+(|v_{-1}| + \sqrt{2}R)^\gamma)}\,d\tau\geq \frac{t^2}{4}
    \end{align*}
    and $v\in \mathcal{B}_R(0)$. Finally, we bound
    \begin{align*}
        &Q_1(f_0\mathbf{1}_{\mathcal{B}_{\sqrt{2}R}(v_{-1})},f_0\mathbf{1}_{\mathcal{B}_{\sqrt{2}R}(v_{-1})},(1-f_0)\mathbf{1}_{\mathcal{B}_{\sqrt{2}R}(v_{-1})})\mathbf{1}_{\mathcal{B}_{\sqrt{2}R}(v_{-1})}\\
        &\geq Q_1(f_0\mathbf{1}_{\mathcal{B}_R(v_{-1})},f_0\mathbf{1}_{\mathcal{B}_R(v_{-1})},(1-f_0)\mathbf{1}_{\mathcal{B}_R(v_{-1})}),\\
        &Q_1((1-f_0)\mathbf{1}_{\mathcal{B}_{\sqrt{2}R}(v_{-1})},(1-f_0)\mathbf{1}_{\mathcal{B}_{\sqrt{2}R}(v_{-1})},f_0\mathbf{1}_{\mathcal{B}_{\sqrt{2}R}(v_{-1})})\mathbf{1}_{\mathcal{B}_{\sqrt{2}R}(v_{-1})}\\
        &\geq Q_1((1-f_0)\mathbf{1}_{\mathcal{B}_R(v_{-1})},(1-f_0)\mathbf{1}_{\mathcal{B}_R(v_{-1})},f_0\mathbf{1}_{\mathcal{B}_R(v_{-1})}).
    \end{align*}
    In the middle, we used
    \begin{align*}
        |v-v_{-1}|^2\leq |v'-v_{-1}|^2+|v'_*-v_{-1}|^2\leq 2R^2,
    \end{align*}
    so we can remove the step function $\mathbf{1}_{\mathcal{B}_{\sqrt{2}R(v_{-1})}}(v)$. From the assumption \eqref{eq:lem_Posivity_cond}, we get
    \begin{align*}
        f(t,v)>\delta t^2,\quad (1-f)(t,v-v_{-1}) = (1-f)(t,v)>\delta t^2\quad t\in (0,T_0]
    \end{align*}
    for some $\delta$ and small enough $T_0$ depending on $R$, $\mathfrak{c}$, $C_b$, $\|f_0\|_{1,2}$, and $|v_{-1}|$ on the set $v\in E$. It proves the lemma.
\end{proof}

The following lemma is a covering lemma for Borel sets. In the proof of Proposition \ref{prop:4.4}, we will approximate a Borel set $E$ by a countable union of closed balls and continue the analysis for each closed ball.
\begin{lemma}[Theorem 5.5.2 of \cite{MR2267655}]\label{lem:Covering_lemma}
    Let $E\subset \mathbb{R}^n$ be a Borel set. Suppose that for every point $x\in E$ and every $\epsilon>0$, we are given a closed ball $\mathcal{B}_{<\epsilon}(x)$ of positive diameter less than $\epsilon$. Then, this family of balls contains at most a countable subfamily of pairwise disjoint balls $\mathcal{B}_k$ such that
    \begin{align*}
        \left|E\setminus \cup_{k=1}^\infty \mathcal{B}_k\right| = 0.
    \end{align*}
\end{lemma}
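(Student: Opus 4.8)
The statement is a Vitali-type covering theorem, and the plan is to run the classical greedy selection argument after first reducing to a bounded configuration. \textbf{Step 1 (reduction to a bounded open set).} I would write $\mathbb{R}^n$ as the union of countably many bounded open cubes $Q_j$ together with the null set $N_0 := \bigcup_j \partial Q_j$. Fixing $j$, restrict attention to those given balls $\mathcal{B}_{<\epsilon}(x)$ with $x \in E \cap Q_j$ that additionally satisfy $\mathcal{B}_{<\epsilon}(x) \subset Q_j$; since $Q_j$ is open and $x \in Q_j$, the ball $\mathcal{B}_{<\epsilon}(x)$ (centered at $x$, diameter $< \epsilon$) lies in $Q_j$ as soon as $\epsilon < 2\,\mathrm{dist}(x, Q_j^c)$, so this restricted family still furnishes balls of arbitrarily small diameter around every point of $E \cap Q_j$. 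If for each $j$ I obtain a countable pairwise-disjoint subfamily $\{\mathcal{B}^{(j)}_k\}_k \subset Q_j$ with $|(E \cap Q_j) \setminus \bigcup_k \mathcal{B}^{(j)}_k| = 0$, then $\{\mathcal{B}^{(j)}_k\}_{j,k}$ is countable and pairwise disjoint (balls with $j \ne j'$ are disjoint because $Q_j \cap Q_{j'} = \emptyset$), and $E \setminus \bigcup_{j,k} \mathcal{B}^{(j)}_k$ lies in $N_0$ together with a countable union of null sets, hence is null. Thus it suffices to treat a family $\mathcal{F}$ of closed balls, each contained in a fixed bounded open set $U$ of finite measure, furnishing arbitrarily small balls around every point of a set $E \subset U$.

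\textbf{Step 2 (greedy selection).} Put $d_1 := \sup\{\mathrm{diam}\,\mathcal{B} : \mathcal{B} \in \mathcal{F}\} \le \mathrm{diam}\,U < \infty$ and choose $\mathcal{B}_1 \in \mathcal{F}$ with $\mathrm{diam}\,\mathcal{B}_1 > d_1/2$. Inductively, given pairwise-disjoint $\mathcal{B}_1, \dots, \mathcal{B}_k \in \mathcal{F}$, set $d_{k+1} := \sup\{\mathrm{diam}\,\mathcal{B} : \mathcal{B} \in \mathcal{F},\ \mathcal{B} \cap \mathcal{B}_i = \emptyset \text{ for all } i \le k\}$. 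If the set on the right is empty the process stops, and then for any $x \in E \setminus \bigcup_{i \le k}\mathcal{B}_i$ the closed set $\bigcup_{i \le k}\mathcal{B}_i$ is at positive distance from $x$, so some $\mathcal{B}_{<\epsilon}(x) \in \mathcal{F}$ with $\epsilon$ small is disjoint from it, a contradiction; hence $E \subset \bigcup_{i \le k}\mathcal{B}_i$ and there is nothing left to prove. Otherwise choose $\mathcal{B}_{k+1} \in \mathcal{F}$ disjoint from $\mathcal{B}_1, \dots, \mathcal{B}_k$ with $\mathrm{diam}\,\mathcal{B}_{k+1} > d_{k+1}/2$. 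The $\mathcal{B}_k$ are pairwise disjoint and contained in $U$, so $\sum_k |\mathcal{B}_k| \le |U| < \infty$, and therefore $\mathrm{diam}\,\mathcal{B}_k \to 0$.

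\textbf{Step 3 (the leftover set is null).} Fix $N$ and $x \in E \setminus \bigcup_{k \le N}\mathcal{B}_k$. Taking $\epsilon$ below twice the positive distance from $x$ to $\bigcup_{k \le N}\mathcal{B}_k$ yields a ball $\mathcal{B} = \mathcal{B}_{<\epsilon}(x) \in \mathcal{F}$ with $x \in \mathcal{B}$ and $\mathcal{B} \cap \mathcal{B}_i = \emptyset$ for all $i \le N$. Since $\mathrm{diam}\,\mathcal{B}_k \to 0$ while $\mathrm{diam}\,\mathcal{B} > 0$, $\mathcal{B}$ must meet some selected ball; let $k$ be minimal with $\mathcal{B} \cap \mathcal{B}_k \ne \emptyset$. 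Then $k > N$, and $\mathcal{B}$ is admissible at stage $k$ (being disjoint from $\mathcal{B}_1, \dots, \mathcal{B}_{k-1}$), so $\mathrm{diam}\,\mathcal{B} \le d_k < 2\,\mathrm{diam}\,\mathcal{B}_k$. Picking $y \in \mathcal{B} \cap \mathcal{B}_k$ and writing $c_k$ and $\rho_k = \tfrac{1}{2}\mathrm{diam}\,\mathcal{B}_k$ for the center and radius of $\mathcal{B}_k$, one gets $|x - c_k| \le |x - y| + |y - c_k| \le \mathrm{diam}\,\mathcal{B} + \rho_k < 5\rho_k$, so $x$ lies in the ball $5\mathcal{B}_k$ concentric with $\mathcal{B}_k$ of radius $5\rho_k$. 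Hence $E \setminus \bigcup_{k \le N}\mathcal{B}_k \subset \bigcup_{k > N} 5\mathcal{B}_k$, which gives $|E \setminus \bigcup_k \mathcal{B}_k| \le 5^n \sum_{k > N}|\mathcal{B}_k| \to 0$ as $N \to \infty$.

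The only points needing care are that restricting to balls contained in $U$ (or $Q_j$) preserves the property of furnishing arbitrarily small balls at each point — this is where openness is used — and the geometric bookkeeping of Step 3 that forces $\mathcal{B}$ to meet a \emph{later} selected ball and produces the dilation factor $5$; neither is a genuine obstacle. Since this is exactly Theorem 5.5.2 of \cite{MR2267655}, one may alternatively just invoke that reference.
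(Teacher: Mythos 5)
The paper does not supply a proof for this lemma: it simply cites Theorem~5.5.2 of Bogachev's \emph{Measure Theory}, so there is no in-paper argument to compare against. Your proof is a correct and complete self-contained derivation, and it follows the standard route for the Vitali covering theorem: localize to bounded open cubes (using openness to preserve the Vitali property), run the greedy selection choosing at each stage a disjoint ball of more than half the admissible supremal diameter, use the finite total measure of the disjoint selected balls to force diameters to zero, and then show that any point of $E$ missed by the first $N$ balls lies in the $5$-dilate of some later selected ball, which drives the leftover measure to zero. The geometric bookkeeping (positive distance to a finite union of closed balls, minimality of the meeting index $k>N$, the bound $\operatorname{diam}\mathcal{B}\le d_k<2\operatorname{diam}\mathcal{B}_k$ giving $|x-c_k|<5\rho_k$) is all correct, as is handling the case where the greedy process terminates finitely. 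This is essentially the argument in the cited reference, so there is nothing to flag beyond the observation that the factor $5$ can in fact be tightened to $3$ here since the family consists of balls centered at $x$; neither version affects the conclusion.
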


The next lemma demonstrates that there is a pair of well-separated sub-cubes among the subdivided cubes if we collect sufficiently many sub-cubes.
\begin{lemma}\label{lem:pair_of_subcubes}
    Let $\mathcal{Q}$ be a unit cube in $\mathbb{R}^3$ subdivided into $13^3$ sub-cubes. For any collection of $12^3 + 100$ sub-cubes $E$, there exists a pair of sub-cubes such that the distance between the centers of the two sub-cubes is at least $\frac{10}{13}$.
\end{lemma}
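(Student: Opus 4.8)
The plan is to argue by contradiction via a pigeonhole/volume estimate. Suppose $E$ is a collection of $12^3+100$ sub-cubes in which every pair of centers is at distance less than $\tfrac{10}{13}$. I would first observe that the centers of the $13^3$ sub-cubes form a $13\times 13\times 13$ grid with spacing $\tfrac{1}{13}$, so two centers are at distance $\geq \tfrac{10}{13}$ precisely when they differ by at least $10$ grid steps in some coordinate direction. Thus the assumption "all pairwise center-distances are $<\tfrac{10}{13}$" forces the set of grid indices appearing in $E$ to have diameter at most $9$ in the $\ell^\infty$ sense along each axis: concretely, if we write each sub-cube's index as $(i,j,k)\in\{1,\dots,13\}^3$, then for any two sub-cubes in $E$ the difference in each coordinate is at most $9$ (if some coordinate difference were $\geq 10$, the Euclidean distance would already be $\geq \tfrac{10}{13}$).

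Next I would bound the number of index triples contained in such a "diameter $\leq 9$ per coordinate" box. A set of integers in $\{1,\dots,13\}$ with diameter at most $9$ fits inside an interval of length $9$, hence contains at most $10$ values; so $E$ is contained in a sub-box of the grid of size at most $10\times 10\times 10$, giving $|E|\leq 10^3 = 1000$. But $12^3+100 = 1728+100 = 1828 > 1000$, a contradiction. Hence no such collection exists, and in any collection of $12^3+100$ sub-cubes there must be a pair whose center-distance is at least $\tfrac{10}{13}$.

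The one point requiring a little care — and the main (mild) obstacle — is the passage from "$\ell^\infty$ diameter $\leq 9$ in grid steps" to the distance bound: I should verify the converse direction used above, namely that if the coordinate-wise index differences are all $\leq 9$ then the Euclidean distance between centers is $<\tfrac{10}{13}$ is \emph{not} what we need; rather we only need the contrapositive, that a coordinate difference of $\geq 10$ forces distance $\geq \tfrac{10}{13}$, which is immediate since then the distance is at least $\tfrac{10}{13}$ in that single coordinate alone. So the logic is clean: all pairs close $\Rightarrow$ all coordinate index-differences $\leq 9$ $\Rightarrow$ $|E|\leq 10^3 < 12^3+100$. I would also double-check the arithmetic $12^3=1728$, $13^3=2197$, $10^3=1000$ so that the numerology $12^3+100>10^3$ is transparent, and note that the constant $100$ is not tight — any slack beyond $1000-1728<0$, i.e. any $|E|>1000$, already works, and $12^3+100$ is simply a convenient explicit choice exceeding $1000$.
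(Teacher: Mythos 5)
Your proof is correct, and it takes a genuinely different — and considerably simpler — route than the paper's. The paper argues by contradiction via a continuous volume estimate: it takes the maximizing pair $v_1,v_2$, observes that all centers lie in the lens-shaped intersection of the two balls $\overline{B}(v_1,r)\cap\overline{B}(v_2,r)$, further clips that region between two parallel planes, computes the maximal volume of the resulting body, inflates the radius by $\sqrt{3}/13$ to absorb the sub-cubes protruding past the boundary, and finally checks that $(12^3+100)(1/13)^3$ exceeds this volume. Your argument instead treats the sub-cube centers as a $13\times13\times13$ integer grid with spacing $1/13$ and runs a discrete pigeonhole: if all pairwise Euclidean distances are $<10/13$, then (since a coordinate gap of $\geq 10$ alone already forces distance $\geq 10/13$) each coordinate projection of $E$ has index-diameter $\leq 9$, hence at most $10$ values, so $|E|\leq 10^3=1000<1828$, a contradiction. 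This avoids the spherical-cap computation entirely, is transparent about where the bound comes from, and — as you note — shows that the constant $12^3+100$ is far from tight (any $|E|>1000$ suffices). One small wording caveat: your initial "precisely when they differ by at least $10$ grid steps in some coordinate" is an overstatement (two coordinates differing by $9$ each already gives $9\sqrt{2}/13>10/13$), but you correctly flag later that only the one-sided implication is needed, so the logic as executed is sound.
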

\begin{proof}
    We will prove the lemma by a contradiction argument. Let us assume that there exists a collection $E$ which does not satisfy the lemma. Also, let $E_c$ be a collection of center points of the sub-cubes in $E$. Choose $\mathcal{Q}_{\frac{1}{13}}(v_1)$ and $\mathcal{Q}_{\frac{1}{13}}(v_2)$ in $E$ such that the distance between the centers is maximized. We set $r = |v_1-v_2|$ and draw closed spheres having center $v_1$ (resp. $v_2$) with radius $r$. By the assumption, $E_c$ should be contained in the intersection of the two spheres. Now, let $h(p)$ be the distance between $p\in E$ and the longitudinal bisection plane of the intersection of the sphere passing $v_1$ and $v_2$; we take the minus distance if $p$ is under the bisection plane. We refer to Figure \ref{fig:ball_intersection} for the geometric description. We define $h_1 = \max_{p\in E_c}\{h(p)\}$ and $h_2 = \max_{p\in E_c}\{-h(p)\}$, then $h_1+h_2\leq r$ and $h(p)\in [-h_2, h_1]$ for any $p\in E$ by the definition of $E$. Now, we will maximize the area of the domain bounded by the spheres and the plane having distances $h_1$ and $h_2$ from the bisection plane. The volume of the bounded domain is maximized when $h_1=h_2 = r/2$, and the volume is given by
    \begin{align*}
        V = \frac{11}{12}\left(\frac{2\pi}{3} - \frac{\sqrt{3}}{2}\right)r^3.
    \end{align*}
    We choose $r = \frac{10}{13} + \frac{\sqrt{3}}{13}$, where $\frac{\sqrt{3}}{13}$ is added to cover the sub-cubes having partial intersection with the domain. Since
    \begin{align*}
        (12^3+100)\left(\frac{1}{13}\right)^3 - \frac{11}{12}\left(\frac{2\pi}{3} - \frac{\sqrt{3}}{2}\right)\left(\frac{10}{13} + \frac{\sqrt{3}}{13}\right)^3>0,
    \end{align*}
    it means that there is a cube such that the center is not contained in the domain. It makes a contradiction to the choice of the $E$, and we prove the lemma.

    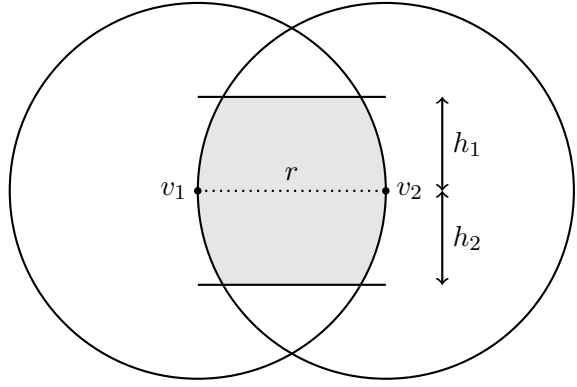
\begin{figure}[htbp]
        \centering
        \begin{tikzpicture}[thick]
            \def\mystep{5};
            \begin{scope}[even odd rule]
                \clip (-\mystep/4, -\mystep/4) rectangle (\mystep/4, \mystep/4);
                \fill[fill=lightgray!40] (0, 0) circle (\mystep) (\mystep/4, 0) circle (\mystep/2) (-\mystep/4, 0) circle (\mystep/2);
            \end{scope}
            \draw (-\mystep/4, 0) circle (\mystep/2);
            \draw (\mystep/4, 0) circle (\mystep/2);
            \draw[dotted] (-\mystep/4, 0) -- (\mystep/4, 0) node[midway, above] {$r$};
            \draw (-\mystep/4, \mystep/4) -- (\mystep/4, \mystep/4);
            \draw (-\mystep/4, -\mystep/4) -- (\mystep/4, -\mystep/4);
            \node[fill=black,circle, inner sep=1] at (-\mystep/4, 0) {};
            \node[fill=black,circle, inner sep=1] at (\mystep/4, 0) {};
            \node[left] at (-\mystep/4, 0) {$v_1$};
            \node[right] at (\mystep/4, 0) {$v_2$};
            \draw[<->] (\mystep/4 + 0.75, -\mystep/4) -- (\mystep/4 + 0.75, 0) node[midway, right] {$h_2$};
            \draw[<->] (\mystep/4 + 0.75, 0) -- (\mystep/4 + 0.75, \mystep/4)node[midway, right] {$h_1$};
        \end{tikzpicture}
        \caption{The intersection of two balls with radius $r$ and center distance $r$. The intersection area is clipped by the height from the bisection plane by $h_1$ and $h_2$. The gray domain contains the domain such that the distance between any two points is smaller than $r$.}
        \label{fig:ball_intersection}
    \end{figure}
\end{proof}

For an initial function $0\leq f_0(v)\leq 1$, suppose that there exist a $\epsilon>0$ and a set $E$ such that $E\subset\{v:\epsilon\leq f(v)\leq 1-\epsilon\}$ has a positive measure. By the Lebesgue density theorem, for any $0<\mathfrak{a}_0<1$, $\{r:\text{$E$ is $(1-\mathfrak{a}_0, r)$-measurable}\}$ is not an empty set. Therefore, we can choose a ball $\mathcal{B}_{4R_0}(v_{-1})$ for some $v_{-1}\in E$ and $R_0>0$ such that
\begin{align}\label{eq:E_density}
    \frac{|\mathcal{B}_{4R_0}(v_{-1})\cap E^c|}{|\mathcal{B}_{4R_0}(v_{-1})|}\leq \mathfrak{a}_0.
\end{align}
Under this setting, we prove that the condition \eqref{eq:lem_Posivity_cond} can be fulfilled if we choose $\mathfrak{a}_0$ small enough.
\begin{proposition}\label{prop:4.4}
    Suppose the collision kernel \eqref{eq:B_defi} satisfies $0\leq \gamma\leq 2$, (H1), and (H2). Also, assume $0\leq f\leq 1$ and that there exists $\epsilon>0$ such that $E\subset \{v:\epsilon\leq f(v)\leq 1-\epsilon\}$ has a positive measure. Then, there exists an explicit small $\mathfrak{a}_0<1$ satisfying the following. For $v_{-1}\in E$ and $R_0>0$ satisfying \eqref{eq:E_density}, we can choose a constant $C$, which depends on $\mathfrak{a}_0$, $c_b$ in \eqref{eq:b_lower_naer_pi/2}, and $\gamma$, such that
    \begin{align*}
        Q_1(f_0\mathbf{1}_{\mathcal{B}_{4R_0}(v_{-1})}, Q_1(f_0\mathbf{1}_{\mathcal{B}_{4R_0}(v_{-1})},f_0\mathbf{1}_{\mathcal{B}_{4R_0}(v_{-1})},(1-f_0)\mathbf{1}_{\mathcal{B}_{4R_0}(v_{-1})}),(1-f_0)\mathbf{1}_{\mathcal{B}_{4R_0}(v_{-1})})(v)&\geq C R_0^{2\gamma+6}\epsilon^3,\\
        Q_1((1-f_0)\mathbf{1}_{\mathcal{B}_{4R_0}(v_{-1})}, Q_1((1-f_0)\mathbf{1}_{\mathcal{B}_{4R_0}(v_{-1})},(1-f_0)\mathbf{1}_{\mathcal{B}_{4R_0}(v_{-1})},f_0\mathbf{1}_{\mathcal{B}_{4R_0}(v_{-1})}),f_0\mathbf{1}_{\mathcal{B}_{4R_0}(v_{-1})})(v)&\geq C R_0^{2\gamma+6}\epsilon^3.
    \end{align*}
\end{proposition}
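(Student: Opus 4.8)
\emph{Strategy.} The two asserted inequalities are mirror images: $E\subset\{\epsilon\le f_0\le 1-\epsilon\}$ is unchanged under $f_0\leftrightarrow 1-f_0$ and \eqref{eq:E_density} involves only $E$, so the second estimate follows from the first by exchanging the roles of $f_0$ and $1-f_0$, and I will only discuss the first one. The plan has three parts: (a) discard $f_0$ and $1-f_0$ in favour of the indicator of $E':=E\cap\mathcal B_{4R_0}(v_{-1})$, paying one factor $\epsilon$ for each place where a genuine pointwise lower bound is needed; (b) use \eqref{eq:E_density} together with the combinatorial Lemma \ref{lem:pair_of_subcubes} to produce, for each of the two collisions hidden in the iterated $Q_1$, a pair of $E$-heavy subcubes whose centres are $\gtrsim R_0$ apart; and (c) evaluate the iterated integral of $\mathbf 1_{E'}$ from below through the Carleman representation \eqref{eq:def_cal_Q+}, which is where the factor $R_0^{2\gamma+6}$ comes out. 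The bound will be established for $v$ in a suitable subcube of $\mathcal B_{4R_0}(v_{-1})$ — precisely the set on which \eqref{eq:lem_Posivity_cond} will later be invoked — with an implicit constant depending only on $\gamma$, $c_b$, and $\mathfrak a_0$.

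\emph{From $f_0$ to $\mathbf 1_{E'}$.} On $E$ we have $f_0\ge\epsilon$ and $1-f_0\ge\epsilon$, hence $f_0\mathbf 1_{\mathcal B_{4R_0}(v_{-1})}\ge\epsilon\mathbf 1_{E'}$ and likewise for $1-f_0$, and by monotonicity of $Q_1$ in each nonnegative slot the quantity to be bounded is at least $\epsilon^{k}$ times a purely geometric integral built from $\mathbf 1_{E'}$. Replacing all five factors crudely would give $k=5$, which is too weak; instead one replaces three factors by $\epsilon\mathbf 1_{E'}$ and, at the remaining two positions, keeps the factor and integrates it out of its collision integral using the cancellation identities \eqref{eq:cancel}, which turns those two slots into positive kernels rather than pointwise lower bounds. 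Tracking which two of the five positions admit this treatment — and checking that the resulting kernels remain bounded below on the region constructed in the next step — is the delicate part of this reduction, and it is what brings $k$ down to $3$.

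\emph{Geometry and the power of $R_0$.} By \eqref{eq:E_density}, $|E'|\ge(1-\mathfrak a_0)|\mathcal B_{4R_0}(v_{-1})|$; subdividing an inscribed cube of side $\asymp R_0$ into $13^3$ subcubes, for $\mathfrak a_0$ small enough (explicitly in terms of $13^3$ and the coarea constants below) $E$ fills at least half of each of $\ge 12^3+100$ of them, and Lemma \ref{lem:pair_of_subcubes} supplies two such $E$-heavy subcubes with centre distance $\gtrsim R_0$; refining the subdivision produces such a pair for each of the two collisions. Now apply the Carleman representation \eqref{eq:def_cal_Q+} to each collision: $Q_1(h_1,h_2,h_3)(w)$ becomes an integral over an outer post-collision velocity $w'\in\mathbb R^3$ and an in-plane velocity $w_*'\in w+E_{w'-w}$, against the weight $|w'-w|^{\gamma-2}\,h(\cos\theta_\omega)\cos^{-\gamma}\theta_\omega$, with $h_3$ evaluated at $w_*=w'+w_*'-w$. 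Restricting $w'$ to an $E$-heavy subcube and $w$ to its well-separated partner forces $|w'-w|\asymp R_0$, so the weight is $\asymp R_0^{\gamma-2}$, and forces $\theta_\omega$ into a subinterval of $(\pi/8,3\pi/8)$ on which $h(\cos\theta_\omega)\cos^{-\gamma}\theta_\omega\gtrsim c_b$ by \eqref{eq:b cut2'}; the outer and in-plane integrations then run over regions of measure $\asymp R_0^{3}$ and $\asymp R_0^{2}$, giving $\asymp R_0^{\gamma+3}$ per collision and $R_0^{2\gamma+6}$ for the nested pair. Combining with the reduction of the previous paragraph yields the stated bound $\ge C R_0^{2\gamma+6}\epsilon^{3}$.

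\emph{Main obstacle.} The crux is that the post-collision velocities $w',w_*'$ sweep out lower-dimensional subsets, so the density bound \eqref{eq:E_density} does not by itself place the collision products in $E$. One must therefore, after writing $w_*=w'+w_*'-w$ and $w_*'$ in terms of the $3$-dimensional data, run a coarea change of variables — the same mechanism underlying \eqref{eq:cancel} — to see that the set of configurations for which some product lands in $\mathcal B_{4R_0}(v_{-1})\setminus E$ has measure at most $C\mathfrak a_0$ times the full $\asymp R_0^{\gamma+3}$; choosing $\mathfrak a_0$ small then leaves at least half the configurations good. Lemma \ref{lem:Covering_lemma} is used here to decompose $E'$ into a countable disjoint union of small balls and carry out this coarea estimate ball by ball. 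Keeping the $R_0$-powers aligned across the two nested collisions, and verifying that exactly three (not five) factors of $\epsilon$ survive the cancellation step above, is the core technical work of the proof.
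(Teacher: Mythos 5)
Your overall plan — subcube combinatorics, Carleman representation, then a coarea/covering argument to control the defect set — matches the paper's architecture, and the ``main obstacle'' you identify (lower-dimensional spreading of post-collision velocities) is exactly the one the authors attack. But step~(a) has a real gap. You claim to avoid paying $\epsilon$ at two of the five slots of the iterated $Q_1$ by ``integrating the factor out using the cancellation identities~\eqref{eq:cancel}, which turns those two slots into positive kernels.'' That identity evaluates $\int_{\mathbb{R}^3\times\mathbb S^2} B\,g(v_*')\,dv_*\,d\sigma$ only because the sole $(v_*,\sigma)$-dependence is through $g$; inside $Q_1(f_1,f_2,f_3)$ the other two factors also depend on $(v_*,\sigma)$, so no single slot can be isolated and integrated out. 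And even where cancellation applies it yields an identity, not a positive lower kernel: since $f_0$ may vanish off $E$, $\int_{\mathbb S^2} B\,f_0(v_*')\,d\sigma$ admits no useful pointwise lower bound. What the paper actually does is pay $\epsilon$ at every slot: the two $(1-f_0)$ factors give $\epsilon^2$ in the first display of the proof, and the three $f_0$ factors — restricted to the balls $\mathcal B_{R_i}(v_i)$ and intersected with $E$ via Lusin approximants $G_{i,\delta}$ — give another $\epsilon^3$ on the good configuration set $D_{1,\delta}\cap\prod_i G_{i,\delta}$. So the argument as written produces $\epsilon^5$; the $\epsilon^3$ in the statement of Proposition~\ref{prop:4.4} appears to be a slip (the last line of the paper's proof omits the $\epsilon^2$ factored out at the start), but since Lemma~\ref{lem:Positivity} uses only the positivity of this lower bound, the discrepancy does not propagate. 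Your proposal is built to hit the stated $\epsilon^3$ via a mechanism that does not exist, which is a genuine error; paying $\epsilon$ everywhere, as the paper does, is both correct and sufficient.

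Separately, step~(c) compresses the quantitative defect control too much. The key input — that the five-dimensional set of configurations $(\tilde u_\parallel, w_1, w_2)$ for which the Carleman circle $S_{w_1,w_2}\cap(v+E_{\tilde u_\parallel-v})$ meets a covering ball $\mathcal B_j$ has measure $\lesssim R_0^7|\mathcal B_j|$ — requires the three separate geometric estimates of Lemmas~\ref{lem:intersection_lemma1}, \ref{lem:intersection_lemma2}, and~\ref{lem:intersection_lemma3} (plane vs.\ ball, sphere shell vs.\ ball, circle vs.\ ball), assembled in Lemma~\ref{lem:final_form_bound}, plus a regularisation step (Lusin's theorem together with Lemma~\ref{lem:Q_1_approx}) to make the in-plane Carleman integral meaningful for merely measurable data; Lemma~\ref{lem:Covering_lemma} enters here but only as the outermost layer. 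These steps constitute the bulk of the paper's proof of Proposition~\ref{prop:4.4} and cannot be reduced to a one-sentence coarea claim.
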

\begin{proof}
    We will assume the angular collision kernel $b$ is continuous on $\mathbb{S}^2$. We will relax this condition at the end of the proof. Also, we will only prove the first one in the proposition; the second one follows by taking the symmetry $f\leftrightarrow 1-f$ in the proof.
    
    First, we consider the cube $\mathcal{Q}_{R_0}(v_{-1})$ having an axis parallel with the Cartesian coordinates. From \eqref{eq:E_density}, the density of the defect set $\mathcal{Q}_{R_0}(v_{-1})\cap E^c$ is bounded by
    \begin{align*}
        \frac{|\mathcal{Q}_{R_0}(v_{-1})\cap E^c|}{|\mathcal{Q}_{R_0}(v_{-1})|}\leq \frac{\mathfrak{a}_0|\mathcal{B}_{4R_0}(v_{-1})|}{|\mathcal{Q}_{R_0}(v_{-1})|} \leq 280 \mathfrak{a}_0.
    \end{align*}
    We subdivide the cube $\mathcal{Q}_{R_0}(v_{-1})$ by $13^3$ sub-cubes. We claim that there exist at least $12^3+100$ sub-cubes having a density of the defect set smaller than $\left(1-\frac{12^3+100}{13^3}\right)^{-1}(280\mathfrak{a}_0)$. Indeed,
    \begin{align*}
        13^3\left(280\mathfrak{a}_0\right) -(13^3 - (12^3+100))\left(1-\frac{12^3+100}{13^3}\right)^{-1}(280\mathfrak{a}_0)=0.
    \end{align*}
    
    By Lemma \ref{lem:pair_of_subcubes}, there exist at least two cells $\mathcal{Q}_{R_1}(v_1)$ and $\mathcal{Q}_{R_2}(v_2)$ with $R_1 = R_2 = \frac{R_0}{13}$ in the $12^3+100$ sub-cubes satisfying that the distance between $v_1$ and $v_2$ satisfies $\frac{10}{13}R_0\leq |v_1-v_2|\leq \sqrt{3}R_0$. We now draw $\mathcal{B}_{R_1}(v_1)$ and $\mathcal{B}_{R_2}(v_2)$ inside the sub-cubes. Also, let $v_0 = \frac{v_1+v_2}{2}$ and $\mathcal{B}_{R_0}(v_0)$ be a ball having radius $\frac{R_0}{2}$ and center $v_0$. For the detailed geometric picture, we refer to Figure \ref{fig:choice_of_balls}.
    
    By \eqref{eq:E_density}, the density of the defect set in $\mathcal{B}_{R_0}(v_0)$, $\mathcal{B}_{R_1}(v_1)$, and $\mathcal{B}_{R_2}(v_2)$ are bounded by
    \begin{align}\label{eq:density_defect_set}
        \begin{split}
            \frac{|\mathcal{B}_{R_0}(v_0)\cap E^c|}{|\mathcal{B}_{R_0}(v_0)|}&\leq \frac{\mathfrak{a}_0|\mathcal{B}_{4R_0}(v_{-1})|}{|\mathcal{B}_{R_0}(v_0)|}\leq 4^3\mathfrak{a}_0,\\
            \frac{|\mathcal{B}_{R_1}(v_1)\cap E^c|}{|\mathcal{B}_{R_1}(v_1)|}&\leq \frac{(1-(12^3+100)/13^3)^{-1}280\mathfrak{a}_0|\mathcal{Q}_{R_1}(v_1)|}{|\mathcal{B}_{R_1}(v_1)|}\leq 3184\mathfrak{a}_0,\\
            \frac{|\mathcal{B}_{R_2}(v_2)\cap E^c|}{|\mathcal{B}_{R_2}(v_2)|}&\leq \frac{(1-(12^3+100)/13^3)^{-1}280\mathfrak{a}_0|\mathcal{Q}_{R_2}(v_2)|}{|\mathcal{B}_{R_2}(v_2)|}\leq 3184\mathfrak{a}_0.
        \end{split}
    \end{align}
    \begin{figure}[htbp]
        \centering
        \begin{tikzpicture}
            \def\subdiv{13};
            \def\tot{16};
            \def\mystep{0.5};
            \draw[step=\mystep cm,gray, very thin] (-\mystep*\tot/2,-\mystep*\tot/2) grid (\mystep*\tot/2-\mystep,\mystep*\tot/2-\mystep);
            {\foreach \i in {0, ..., 12}
                \foreach \j in {0, ..., 12}
                    \draw[fill = gray!70, opacity=0.4] (\mystep*\i -\mystep*\tot/2+ \mystep,\mystep*\j -\mystep*\tot/2+ \mystep) rectangle (\mystep*\i -\mystep*\tot/2 + 2*\mystep,\mystep*\j-\mystep*\tot/2+ 2*\mystep);
            \foreach \i in {0}
                \foreach \j in {2, 4, 5, 8, 9}
                    \draw[fill = color1bg, opacity=0.4] (\mystep*\i -\mystep*\tot/2+ \mystep,\mystep*\j -\mystep*\tot/2+ \mystep) rectangle (\mystep*\i -\mystep*\tot/2 + 2*\mystep,\mystep*\j-\mystep*\tot/2+ 2*\mystep);
            \foreach \i in {1}
                \foreach \j in {2, 4, 5, 7, 8, 9, 10}
                    \draw[fill = color1bg, opacity=0.4] (\mystep*\i -\mystep*\tot/2+ \mystep,\mystep*\j -\mystep*\tot/2+ \mystep) rectangle (\mystep*\i -\mystep*\tot/2 + 2*\mystep,\mystep*\j-\mystep*\tot/2+ 2*\mystep);
            \foreach \i in {2}
                \foreach \j in {0, 1, 3, 4, 5, 6, 7, 8, 10, 12}
                    \draw[fill = color1bg, opacity=0.4] (\mystep*\i -\mystep*\tot/2+ \mystep,\mystep*\j -\mystep*\tot/2+ \mystep) rectangle (\mystep*\i -\mystep*\tot/2 + 2*\mystep,\mystep*\j-\mystep*\tot/2+ 2*\mystep);
            \foreach \i in {3}
                \foreach \j in {1, 3, 4, 5, 6, 7, 10, 12}
                    \draw[fill = color1bg, opacity=0.4] (\mystep*\i -\mystep*\tot/2+ \mystep,\mystep*\j -\mystep*\tot/2+ \mystep) rectangle (\mystep*\i -\mystep*\tot/2 + 2*\mystep,\mystep*\j-\mystep*\tot/2+ 2*\mystep);
            \foreach \i in {4}
                \foreach \j in {1, 3, 6, 7, 8, 9, 10, 11, 12}
                    \draw[fill = color1bg, opacity=0.4] (\mystep*\i -\mystep*\tot/2+ \mystep,\mystep*\j -\mystep*\tot/2+ \mystep) rectangle (\mystep*\i -\mystep*\tot/2 + 2*\mystep,\mystep*\j-\mystep*\tot/2+ 2*\mystep);
            \foreach \i in {5}
                \foreach \j in {0, 1, 2, 3, 6, 7, 8, 10, 11}
                    \draw[fill = color1bg, opacity=0.4] (\mystep*\i -\mystep*\tot/2+ \mystep,\mystep*\j -\mystep*\tot/2+ \mystep) rectangle (\mystep*\i -\mystep*\tot/2 + 2*\mystep,\mystep*\j-\mystep*\tot/2+ 2*\mystep);
            \foreach \i in {6}
                \foreach \j in {1, 3, 5, 6, 7, 9, 10, 11}
                    \draw[fill = color1bg, opacity=0.4] (\mystep*\i -\mystep*\tot/2+ \mystep,\mystep*\j -\mystep*\tot/2+ \mystep) rectangle (\mystep*\i -\mystep*\tot/2 + 2*\mystep,\mystep*\j-\mystep*\tot/2+ 2*\mystep);
            \foreach \i in {7}
                \foreach \j in {0, 1, 2, 3, 4, 5, 6, 8, 10, 11, 12}
                    \draw[fill = color1bg, opacity=0.4] (\mystep*\i -\mystep*\tot/2+ \mystep,\mystep*\j -\mystep*\tot/2+ \mystep) rectangle (\mystep*\i -\mystep*\tot/2 + 2*\mystep,\mystep*\j-\mystep*\tot/2+ 2*\mystep);
            \foreach \i in {8}
                \foreach \j in {0, 1, 3, 4, 5, 7,  8, 10, 11}
                    \draw[fill = color1bg, opacity=0.4] (\mystep*\i -\mystep*\tot/2+ \mystep,\mystep*\j -\mystep*\tot/2+ \mystep) rectangle (\mystep*\i -\mystep*\tot/2 + 2*\mystep,\mystep*\j-\mystep*\tot/2+ 2*\mystep);
            \foreach \i in {9}
                \foreach \j in {1, 2, 3, 4, 5, 7, 9,  10, 11, 12}
                    \draw[fill = color1bg, opacity=0.4] (\mystep*\i -\mystep*\tot/2+ \mystep,\mystep*\j -\mystep*\tot/2+ \mystep) rectangle (\mystep*\i -\mystep*\tot/2 + 2*\mystep,\mystep*\j-\mystep*\tot/2+ 2*\mystep);
            \foreach \i in {10}
                \foreach \j in {0, 3, 4, 5, 8, 9, 10, 11, 12}
                    \draw[fill = color1bg, opacity=0.4] (\mystep*\i -\mystep*\tot/2+ \mystep,\mystep*\j -\mystep*\tot/2+ \mystep) rectangle (\mystep*\i -\mystep*\tot/2 + 2*\mystep,\mystep*\j-\mystep*\tot/2+ 2*\mystep);
            \foreach \i in {11}
                \foreach \j in {0, 2, 3, 6, 7, 9, 11, 12}
                    \draw[fill = color1bg, opacity=0.4] (\mystep*\i -\mystep*\tot/2+ \mystep,\mystep*\j -\mystep*\tot/2+ \mystep) rectangle (\mystep*\i -\mystep*\tot/2 + 2*\mystep,\mystep*\j-\mystep*\tot/2+ 2*\mystep);
            \foreach \i in {12}
                \foreach \j in {2, 3, 5, 6, 9, 11}
                    \draw[fill = color1bg, opacity=0.4] (\mystep*\i -\mystep*\tot/2+ \mystep,\mystep*\j -\mystep*\tot/2+ \mystep) rectangle (\mystep*\i -\mystep*\tot/2 + 2*\mystep,\mystep*\j-\mystep*\tot/2+ 2*\mystep);
                    }
            \draw[fill = color2bg] (\mystep*9 -\mystep*\tot/2+ \mystep + \mystep/2, \mystep*1 -\mystep*\tot/2+ \mystep + \mystep/2) circle (\mystep/2);
            \draw[fill = color2bg] (\mystep*9 -\mystep*\tot/2+ \mystep + \mystep/2,\mystep*12 -\mystep*\tot/2+ \mystep + \mystep/2) circle (\mystep/2);
            \draw[very thick] (\mystep*9 -\mystep*\tot/2+ 3*\mystep/2,\mystep*1 -\mystep*\tot/2+ 3*\mystep/2) -- (\mystep*9 -\mystep*\tot/2 + 3*\mystep/2,\mystep*12-\mystep*\tot/2+ 3*\mystep/2);
            \node[fill=black,circle, inner sep=1] at (\mystep*9 -\mystep*\tot/2+ 3*\mystep/2,\mystep*1 -\mystep*\tot/2+ 3*\mystep/2) {};
            \node[fill=black,circle, inner sep=1] at (\mystep*9 -\mystep*\tot/2 + 3*\mystep/2,\mystep*12-\mystep*\tot/2+ 3*\mystep/2) {};
            \node[fill=black,circle, inner sep=1] at (\mystep*9 -\mystep*\tot/2 + 3*\mystep/2,\mystep*6.5-\mystep*\tot/2+ 3*\mystep/2) {};
            \node[right, xshift = 0.1cm, yshift = 0.1cm] at (\mystep*9 -\mystep*\tot/2 + 3*\mystep/2,\mystep*6.5-\mystep*\tot/2+ 3*\mystep/2) {$v_0$};
            \node[right, xshift = 0.1cm] at (\mystep*9 -\mystep*\tot/2+ 3*\mystep/2,\mystep*1 -\mystep*\tot/2+ 3*\mystep/2) {$\mathcal{B}_{R_1}(v_1)$};
            \node[right, xshift = 0.1cm] at (\mystep*9 -\mystep*\tot/2 + 3*\mystep/2,\mystep*11-\mystep*\tot/2+ 3*\mystep/2) {$\mathcal{B}_{R_2}(v_2)$};
            \node[right, xshift = 0.1cm] at (\mystep*6 -\mystep*\tot/2 + 11*\mystep/2,\mystep*13-\mystep*\tot/2+ 3*\mystep/2) {$\mathcal{Q}_{R_0}(v_{-1})$};
            \node[fill=black,circle, inner sep=1] at (\mystep*6 -\mystep*\tot/2 + 3*\mystep/2,\mystep*6-\mystep*\tot/2+ 3*\mystep/2) {};
            \node[right, xshift = 0cm] at (\mystep*6 -\mystep*\tot/2 + 3*\mystep/2,\mystep*6-\mystep*\tot/2+ 3*\mystep/2) {$v_{-1}$};
        \end{tikzpicture}
        \caption{The gray boxes represents $\mathcal{Q}_{R_0}(v_{-1})$. The orange boxes represent the boxes having relatively high density. The blue balls are $\mathcal{B}_{R_1}(v_1)$ and $\mathcal{B}_{R_2}(v_2)$ and have distance between the centers at least $\frac{10}{13}R_0$.}
        \label{fig:choice_of_balls}
    \end{figure}
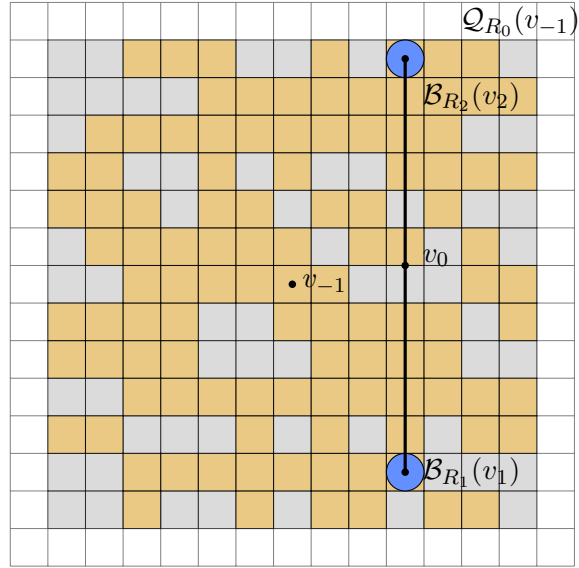
    
    From now on, we will take
    \begin{align*}
        f_1(v) = f_0(v)\mathbf{1}_{\mathcal{B}_{R_0}(v_0)},\quad f_2(v) = f_0(v)\mathbf{1}_{\mathcal{B}_{R_1}(v_1)},\quad f_3(v) = f_0(v)\mathbf{1}_{\mathcal{B}_{R_2}(v_2)}.
    \end{align*}
    
    Since $0\leq f_0\leq 1$ on whole space and $\epsilon\leq f_0\leq 1-\epsilon$ on $E$, we have
    \begin{align*}
        &Q_1(f_0\mathbf{1}_{\mathcal{B}_{4R_0}(v_{-1})}, Q_1(f_0\mathbf{1}_{\mathcal{B}_{4R_0}(v_{-1})},f_0\mathbf{1}_{\mathcal{B}_{4R_0}(v_{-1})},(1-f_0)\mathbf{1}_{\mathcal{B}_{4R_0}(v_{-1})}),(1-f_0)\mathbf{1}_{\mathcal{B}_{4R_0}(v_{-1})})(v)\\
        &\geq Q_1(f_1, Q_1(f_2,f_3,(1-f_0)\mathbf{1}_{\mathcal{B}_{4R_0}(v_{-1})}),(1-f_0)\mathbf{1}_{\mathcal{B}_{4R_0}(v_{-1})})(v)\\
        &\geq Q_1(f_1, Q_1(f_2,f_3,\epsilon\mathbf{1}_{\mathcal{B}_{4R_0}(v_{-1})} - \epsilon\mathbf{1}_{\mathcal{B}_{4R_0}(v_{-1})\cap E^c}),\epsilon\mathbf{1}_{\mathcal{B}_{4R_0}(v_{-1})} - \epsilon\mathbf{1}_{\mathcal{B}_{4R_0}(v_{-1})\cap E^c})(v)\\
        &=\epsilon^2 Q_1(f_1, Q_1(f_2,f_3,\mathbf{1}_{\mathcal{B}_{4R_0}(v_{-1})} - \mathbf{1}_{\mathcal{B}_{4R_0}(v_{-1})\cap E^c}),\mathbf{1}_{\mathcal{B}_{4R_0}(v_{-1})} - \mathbf{1}_{\mathcal{B}_{4R_0}(v_{-1})\cap E^c})(v).
    \end{align*}
    
    Now, for arbitrary small $0<\delta\leq \frac{1}{2}$, using Lusin's theorem, we choose compact sets $G_{i,\delta}$ and open sets $O_{i,\delta}$ as follows:
    \begin{align}\label{eq:G_O_construction_1}
        \begin{split}
            &G_{0,\delta} \subset E\cap \mathcal{B}_{R_0}(v_0)\subset O_{0,\delta}\subset \mathcal{B}_{(1+2\delta)R_0}(v_0),\\
            &G_{1,\delta} \subset E\cap \mathcal{B}_{R_1}(v_1)\subset O_{1,\delta}\subset \mathcal{B}_{(1+2\delta)R_1}(v_1),\\
            &G_{2,\delta} \subset E\cap \mathcal{B}_{R_2}(v_2)\subset O_{2,\delta}\subset \mathcal{B}_{(1+2\delta)R_2}(v_2),\\
            &G_{3,\delta} \subset E\subset O_{3,\delta}\subset \mathcal{B}_{(1+2\delta)4R_0}(v_{-1})
        \end{split}
    \end{align}
    such that
    \begin{align}\label{eq:G_O_construction_2}
        \begin{split}
            &(1+\mathfrak{a}_0\delta)|G_{0,\delta}|\geq |E\cap \mathcal{B}_{R_0}(v_0)|\geq (1-\mathfrak{a}_0\delta)|O_{0,\delta}|,\\
            &(1+\mathfrak{a}_0\delta)|G_{1,\delta}|\geq |E\cap \mathcal{B}_{R_1}(v_1)|\geq (1-\mathfrak{a}_0\delta)|O_{1,\delta}|,\\
            &(1+\mathfrak{a}_0\delta)|G_{2,\delta}|\geq |E\cap \mathcal{B}_{R_2}(v_2)|\geq (1-\mathfrak{a}_0\delta)|O_{2,\delta}|,\\
            &(1+\mathfrak{a}_0\delta)|G_{3,\delta}|\geq |E|\geq (1-\mathfrak{a}_0\delta)|O_{3,\delta}|.
        \end{split}
    \end{align}
    Using Lusin's theorem for the compact sets $G$, the Tietze extension theorem, and Uryshon's lemma if necessary, we choose continuous functions $f_{i,\delta},\varphi_\delta:\mathbb{R}^3\rightarrow \mathbb{R}$ such that $0\leq f_{i,\delta},\varphi_\delta\leq 1$ and
    \begin{align*}
        f_{1,\delta}|G_{0,\delta} &= f_1,\quad \supp f_{1,\delta}\subset \mathcal{B}_{(1+2\delta)R_0}(v_0),\\
        f_{2,\delta}|G_{1,\delta} &= f_2,\quad \supp f_{2,\delta}\subset \mathcal{B}_{(1+2\delta)R_1}(v_1),\\
        f_{3,\delta}|G_{2,\delta} &= f_3,\quad \supp f_{3,\delta}\subset \mathcal{B}_{(1+2\delta)R_2}(v_2),\\
        \varphi_\delta|G_{3, \delta} &= 1,\quad \supp \varphi_\delta\subset \mathcal{B}_{(1+2\delta)4R_0}(v_{-1}).
    \end{align*}
    Now, we will compute the lower bound of 
    \begin{align*}
        Q_1(f_{1, \delta}, Q_1(f_{2, \delta},f_{3, \delta},\varphi_\delta),\varphi_\delta)(v).
    \end{align*}
    We first reserve some variables. We write
    \begin{align*}
            &Q_1(f_{1, \delta}, Q_1(f_{2, \delta},f_{3, \delta},\varphi_\delta),\varphi_\delta)(v)\\
            &=\int_{\mathbb{R}^3}d\tilde{u}_\parallel\,\frac{1}{|\tilde{u}_\parallel - v|^{2-\gamma}}f_{1, \delta}(\tilde{u}_\parallel)\int_{v + E_{\tilde{u}_\parallel - v}}dw_1 \frac{h(\cos\theta_\omega)}{(\cos\theta_\omega)^\gamma}\\
        &\qquad \times \int_{\mathbb{R}^3\times\mathbb{S}^2} dw_2 d\tilde{\omega}\,B(w_1-w_2,\tilde{\omega}) f_{2, \delta}(w_1')f_{3, \delta}(w_2')\varphi_\delta(w_2) \varphi_\delta(\tilde{u}_\parallel + w_1 - v),
    \end{align*}
    using the Carleman representation, where $\tilde{u}_\parallel = v'$, $w_1 = v'_*$, and $w_1',w_2'$ are post-collision velocities generated by $(w_1,w_2, \tilde{\omega})$ corresponding to $Q_1(f_2,f_3,\varphi_\delta)$. The $\cos\theta_\omega$ is explicitly given by
    \begin{align*}
        \cos\theta_\omega = \frac{|\tilde{u}_\parallel-v|}{|\tilde{u}_\parallel+w_1-2v|}.
    \end{align*}
    
    We will denote $\Theta_{v+E_{\tilde{u}_\parallel-v}}$ be a distribution satisfying
    \begin{align*}
        \int_{\mathbb{R}^3} f(x)\Theta_{v+E_{\tilde{u}_\parallel-v}}(x)\,dx = \int_{v+E_{\tilde{u}_\parallel-v}} f(x)\,dx
    \end{align*}
    for any compactly supported continuous function $f$. Also, we define $\Theta_{\epsilon, v+E_{\tilde{u}_\parallel-v}}$ by the characteristic function such that the supporting set is the collection of the points whose distance from the plane $v+E_{\tilde{u}_\parallel-v}$ is not greater than $\epsilon$. If $f$ is a compactly supported continuous function, we have
    \begin{align*}
        \int_{v+E_{\tilde{u}_\parallel-v}} f(x)\,dx = \lim_{\epsilon\rightarrow 0}\frac{1}{2\epsilon}\int_{\mathbb{R}^3} f(x)\Theta_{\epsilon, v+E_{\tilde{u}_\parallel-v}}(x)\,dx.
    \end{align*}
    
    Using these notations, we will take some change of variable for $Q_1(f_{1, \delta}, Q_1(f_{2, \delta},f_{3, \delta},\varphi_\delta),\varphi_\delta)(v)$. Since all the functions in $Q_1$ are continuous, we have
    \begin{align}\label{eq:L_q_comp1}
        \begin{split}
            &Q_1(f_{1, \delta}, Q_1(f_{2, \delta},f_{3, \delta},\varphi_\delta),\varphi_\delta)(v)\\
            &=\int_{\mathbb{R}^3}d\tilde{u}_\parallel\,\frac{1}{|\tilde{u}_\parallel - v|^{2-\gamma}}f_{1, \delta}(\tilde{u}_\parallel)\int_{v + E_{\tilde{u}_\parallel - v}}dw_1\,\frac{h(\cos\theta_\omega)}{|\cos\theta_\omega|^\gamma}\\
            &\qquad \times \left(\int_{\mathbb{R}^3\times\mathbb{S}^2} dw_2 d\tilde{\omega}\,B(w_1-w_2,\tilde{\omega}) f_{2, \delta}(w_1')f_{3, \delta}(w_2')\varphi_\delta(w_2)\right) \varphi_\delta(\tilde{u}_\parallel + w_1 - v)\\
            &=\int_{\mathbb{R}^3}d\tilde{u}_\parallel\,\frac{1}{|\tilde{u}_\parallel - v|^{2-\gamma}}f_{1, \delta}(\tilde{u}_\parallel)\lim_{\epsilon\rightarrow \infty}\frac{1}{2\epsilon}\int_{\mathbb{R}^3}dw_1\,\frac{h(\cos\theta_\omega)}{|\cos\theta_\omega|^\gamma}\Theta_{\epsilon, v + E_{\tilde{u}_\parallel - v}}(w_1) \\
            &\qquad \times \left(\int_{\mathbb{R}^3\times\mathbb{S}^2} dw_2 d\tilde{\omega}\,B(w_1-w_2,\tilde{\omega})f_{2, \delta}(w_1')f_{3, \delta}(w_2')\varphi_\delta(w_2)\right) \varphi_\delta(\tilde{u}_\parallel + w_1 - v)\\
            &=\int_{\mathcal{B}_{(1+2\delta)R_0}(v_0)}d\tilde{u}_\parallel\,\frac{1}{|\tilde{u}_\parallel - v|^{2-\gamma}}f_{1, \delta}(\tilde{u}_\parallel)\int_{\mathcal{B}_{(1+2\delta)R_1}(v_1)}\,dw_1f_{2, \delta}(w_1)\int_{\mathcal{B}_{(1+2\delta)R_2}(v_2)}\,dw_2f_{3, \delta}(w_2)\\
            &\qquad \times \lim_{\epsilon\rightarrow 0}\frac{1}{2\epsilon}\int_{\mathbb{S}^2} d\tilde{\omega}\, \frac{h(\cos\theta_\omega)}{|\cos\theta_\omega|^\gamma} B(w_1-w_2, \tilde{\omega})\varphi_\delta(w_2')\varphi_\delta(\tilde{u}_\parallel + w_1' - v)\Theta_{\epsilon, v + E_{\tilde{u}_\parallel - v}}(w'_1)\\
            &=\int_{\mathcal{B}_{(1+2\delta)R_0}(v_0)}d\tilde{u}_\parallel\,\frac{1}{|\tilde{u}_\parallel - v|^{2-\gamma}}f_{1, \delta}(\tilde{u}_\parallel)\int_{\mathcal{B}_{(1+2\delta)R_1}(v_1)}\,dw_1f_{2, \delta}(w_1)\int_{\mathcal{B}_{(1+2\delta)R_2}(v_2)}\,dw_2f_{3, \delta}(w_2)\\
            &\qquad \times 2\lim_{\epsilon\rightarrow 0}\frac{1}{2\epsilon}\int_0^{2\pi}d\tilde{\phi}\,\int_0^{\pi/2} d\tilde{\theta}_\omega \, \sin\tilde{\theta}_\omega \frac{h(\cos\theta_\omega)}{|\cos\theta_\omega|^\gamma}|w_1-w_2|^\gamma h(\cos\tilde{\theta}_\omega )\varphi_\delta(w_2')\varphi_\delta(\tilde{u}_\parallel + w_1' - v)\Theta_{\epsilon, v + E_{\tilde{u}_\parallel - v}}(w'_1).
        \end{split}
    \end{align}
    Here, $\cos\tilde{\theta}_\omega $ is given by
    \begin{align*}
        \cos\tilde{\theta}_\omega  = \frac{w_2-w_1}{|w_2-w_1|}\cdot \tilde{\omega}.
    \end{align*}
    In the final step, we used spherical coordinates $(\tilde{\theta}_\omega, \tilde{\phi})$ for $\mathbb{S}^2$ and use $\tilde{\theta}_\omega$ symmetry about $\pi-\tilde{\theta}_\omega$ so that we use domain $\tilde{\theta}_\omega\in[0,\pi/2]$.
    
    Now, we use change of variable $\tilde{\theta}_\omega = \frac{\pi-\theta'}{2}$, which corresponds to the change of variable from $\omega$-representation to $\sigma$-representation. Using \eqref{eq:h_condi} to replace $h(\cos\tilde{\theta}_\omega)$ by $b(\cos\theta')$,
    \begin{align*}
        |w_1-w_2|^2 h(\cos\theta_\omega)\sin\tilde{\theta}_\omega d\tilde{\theta}_\omega d\tilde{\phi} &= 2|w_1-w_2|^2 b(\cos(\pi-2\theta_\omega))\sin\tilde{\theta}_\omega \cos\tilde{\theta}_\omega d\tilde{\theta}_\omega d\tilde{\phi}\\
        &=\frac{1}{2}|w_1-w_2|^2b(\cos\theta')\sin\theta' d\theta'd\tilde{\phi}.
    \end{align*}
    Note that $|w_1-w_2|^2 \sin\theta'd\theta'd\tilde{\phi}$ is the Jacobian of the spherical coordinates for $S_{w_1,w_2}$, so we denote this measure by $dw_1'$. We rewrite the integral using $dw_1'$ with $\sigma$-representation and get
    \begin{align}\label{eq:L_q_comp2}
        \begin{split}
            \eqref{eq:L_q_comp1}&=\int_{\mathcal{B}_{(1+2\delta)R_0}(v_0)}\,d\tilde{u}_\parallel\,\frac{1}{|\tilde{u}_\parallel - v|^{2-\gamma}}f_{1, \delta}(\tilde{u}_\parallel)\int_{\mathcal{B}_{(1+2\delta)R_1}(v_1)}\,dw_1f_{2, \delta}(w_1)\int_{\mathcal{B}_{(1+2\delta)R_2}(v_2)}\,dw_2f_{3, \delta}(w_2)\\
            &\qquad \times |w_1-w_2|^{\gamma-2}\lim_{\epsilon\rightarrow 0}\frac{1}{2\epsilon}\int_{S_{w_1,w_2}}dw_1'\, \frac{h(\cos\theta_\omega)}{|\cos\theta_\omega|^\gamma} b(\cos\theta')\varphi_\delta(w_1+w_2-w_1')\\
            &\qquad \qquad \times \varphi_\delta(\tilde{u}_\parallel + w_1' - v)\Theta_{\epsilon, v+E_{\tilde{u}_\parallel-v}}(w_1').
        \end{split}
    \end{align}
    
    At the start of the proof, we assumed that $b$ is a continuous function on $\mathbb{S}^2$. Therefore, the integrand consists of continuous and compactly supported functions, so we can easily take limit $\epsilon\rightarrow 0$.
    \begin{align}\label{eq:L_q_comp3}
        \begin{split}
            \eqref{eq:L_q_comp2}&= \int_{\mathcal{B}_{(1+2\delta)R_0}(v_0)}\,d\tilde{u}_\parallel\,\frac{1}{|\tilde{u}_\parallel - v|^{2-\gamma}}f_{1, \delta}(\tilde{u}_\parallel)\int_{\mathcal{B}_{(1+2\delta)R_1}(v_1)}\,dw_1f_{2, \delta}(w_1)\int_{\mathcal{B}_{(1+2\delta)R_2}(v_2)}\,dw_2f_{3, \delta}(w_2)\\
            &\qquad \times |w_1-w_2|^{\gamma-2}\int_{S_{w_1,w_2}}dw_1'\,\frac{h(\cos\theta_\omega)}{|\cos\theta_\omega|^\gamma}b(\cos\theta')\varphi_\delta(w_1+w_2-w_1')\varphi_\delta(\tilde{u}_\parallel + w_1' - v)\Theta_{v+E_{\tilde{u}_\parallel-v}}(w_1').
        \end{split}
    \end{align}
    
    The $\theta_\omega$ and $\theta'$ term in \eqref{eq:L_q_comp3} are explicitly given by
    \begin{align}\label{eq:theta'}
        |\cos\theta_\omega| = \frac{|\tilde{u}_\parallel-v|}{|\tilde{u}_\parallel+w_1'-2v|},\quad |\cos\frac{\theta'}{2}| = \frac{|w_1'-w_1|}{|w_1-w_2|}.
    \end{align}
    For a more graphical illustration, we refer to Figure \ref{fig:balls}.

    By the assumption $b(\cos\theta)\geq c_b$ on $\theta\in (\pi/4, 3\pi/4)$ and \eqref{eq:b cut2'}, we can write
    \begin{align}\label{eq:L_q_comp4}
        \begin{split}
            \eqref{eq:L_q_comp3}&\geq \int_{\mathcal{B}_{(1+2\delta)R_0}(v_0)}\,d\tilde{u}_\parallel\,\frac{1}{|\tilde{u}_\parallel - v|^{2-\gamma}}f_{1, \delta}(\tilde{u}_\parallel)\int_{\mathcal{B}_{(1+2\delta)R_1}(v_1)}\,dw_1f_{2, \delta}(w_1)\int_{\mathcal{B}_{(1+2\delta)R_2}(v_2)}\,dw_2f_{3, \delta}(w_2)\\
            &\qquad \times \sqrt{2-\sqrt{2}}\left(\frac{2}{\sqrt{2+\sqrt{2}}}\right)^\gamma c_b^2 |w_1-w_2|^{\gamma-2}\\
            &\qquad\qquad \times \int_{S_{w_1,w_2}}dw_1'\,\mathbf{1}_{\{\frac{1}{4}\pi<\theta'<\frac{3}{4}\pi\}}\mathbf{1}_{\{\frac{1}{8}\pi<\theta_\omega<\frac{3}{8}\pi\}}\varphi_\delta(w_1+w_2-w_1')\varphi_\delta(\tilde{u}_\parallel + w_1' - v)\Theta_{v+E_{\tilde{u}_\parallel-v}}(w_1').
        \end{split}
    \end{align}

    For later use, we estimate the distance between variables. For $v_0 = \frac{v_1+v_2}{2}$, $\tilde{u}_\parallel\in \mathcal{B}_{(1+2\delta)R_0}(v_0)$, $(w_1, w_2)\in \mathcal{B}_{(1+2\delta)R_1}(v_1)\times \mathcal{B}_{(1+2\delta)R_2}(v_2)$, $v\in \mathcal{B}_{\frac{R_0}{13}}(v_0)$, and $w_1'\in S_{w_1,w_2}$, $\frac{10}{13}R_0\leq |v_1-v_2|\leq \sqrt{3}R_0$, we have
    \begin{align*}
        &|\tilde{u}_\parallel - v|\leq |\tilde{u}_\parallel - v_0| + |v_0 - v|\leq (1+2\delta)\frac{R_0}{2} + \frac{R_0}{26},\\
        &\frac{10}{13}R_0 - (1+2\delta)\frac{R_0}{13}\leq |v_1-v_2| - |v_1-w_1| - |v_2-w_2|\leq |w_1-w_2|\leq \sqrt{3}R_0 + (1+2\delta)\frac{R_0}{13},\\
        &\begin{dcases}
            \frac{5}{13}R_0 - (1+2\delta)\frac{R_0}{26} - \frac{R_0}{26}\leq |w'_1 - \frac{w_1+w_2}{2}| - |\frac{w_1+w_2}{2} - v_0|\leq |w'_1 - v_0|,\\
            |w'_1 - v_0|\leq |w'_1 - \frac{w_1+w_2}{2}| + |\frac{w_1+w_2}{2} - v_0|\leq \frac{\sqrt{3}}{2}R_0 + (1+2\delta)\frac{R_0}{26} + \frac{R_0}{26},
        \end{dcases}\\
        &\left|\frac{w_1+w_2}{2} - v\right|\leq \left|\frac{w_1+w_2}{2} - v_0\right| + |v-v_0|\leq (1+2\delta)\frac{R_0}{26} + \frac{R_0}{26},\\
        &|w'_1 - v_0| - \frac{R_0}{26}\leq |w'_1 - v_0| - |v_0 - v|\leq |w'_1 - v|\leq |w'_1 - v_0| + |v_0 - v|\leq |w'_1 - v_0| + \frac{R_0}{26}.
    \end{align*}
    We choose
    \begin{align}\label{eq:delta_cond1}
        \delta\leq \frac{1}{104}.
    \end{align}
    Under this small $\delta$, we can choose the lower and upper bounds by
    \begin{align}\label{eq:dist_list}
        \begin{split}
            &|\tilde{u}_\parallel - v|\leq \frac{57}{104}R_0,\\
            &\frac{467}{676}R_0\leq |w_1-w_2|\leq \left(\sqrt{3} + \frac{53}{676}\right)R_0,\\
            &\frac{415}{1352}R_0\leq |w'_1 - v_0|\leq \left(\frac{\sqrt{3}}{2} + \frac{105}{1352}\right)R_0,\\
            &\left|\frac{w_1+w_2}{2} - v\right|\leq \frac{105}{1352}R_0,\\
            &\frac{363}{1352}R_0\leq |w'_1 - v|\leq \left(\frac{\sqrt{3}}{2} + \frac{157}{1352}\right)R_0.
        \end{split}
    \end{align}
    
    We also prove that $|v_0-v_{-1}|\leq 0.8R_0$ by a simple argument. Consider the triangles consisting of $\{v_{-1}, v_0, v_1\}$ and $\{v_{-1}, v_0, v_2\}$. One of the angles $\angle v_{-1}v_0v_1$ and $\angle v_{-1}v_0v_2$ is an obtuse angle, so we assume $\angle v_{-1}v_0v_1$ is without loss of generality. It means that
    \begin{align*}
        |v_1-v_{-1}|^2\geq |v_0-v_{-1}|^2 + |v_0-v_1|^2 = |v_0-v_{-1}|^2 + \left|\frac{v_2-v_1}{2}\right|^2\geq |v_0-v_{-1}|^2 + (\frac{5}{13}R_0)^2.
    \end{align*}
    Since $v_1\in \mathcal{Q}_{R_0}(v_{-1})$, $|v_1-v_{-1}|\leq \frac{\sqrt{3}}{2}R_0$, so we finally get
    \begin{align}\label{eq:dist_list2}
        |v_0-v_{-1}|\leq \sqrt{\frac{3}{4}R^2_0 - (\frac{5}{13}R_0)^2}\leq 0.8R_0.
    \end{align}
    
    By the construction of $\varphi_\delta$, it satisfies
    \begin{align*}
        \varphi_\delta\geq \mathbf{1}_{\mathcal{B}_{4R_0}(v_{-1})}-\mathbf{1}_{G_{3,\delta}^c\cap \mathcal{B}_{4R_0}(v_{-1})}.
    \end{align*}
    
    Using Lemma \ref{lem:Covering_lemma}, \eqref{eq:E_density}, \eqref{eq:G_O_construction_1}, and \eqref{eq:G_O_construction_2}, we cover the set $G_{3,\delta}^c\cap \mathcal{B}_{4R_0}(v_{-1})$ by a countable collection of closed balls $\{\mathcal{B}_i\}_{i=1}^\infty$ such that $G_{3,\delta}^c\cap \mathcal{B}_{4R_0}(v_{-1})\subset \cup_i \mathcal{B}_i$ and
    \begin{align}\label{eq:B_i_control}
        \sum_{i=1}^\infty |\mathcal{B}_i|\leq (1+\delta)|G_{3,\delta}^c\cap \mathcal{B}_{4R_0}(v_{-1})|&= (1+\delta)\left(|E^c\cap \mathcal{B}_{4R_0}(v_{-1})| + |E\setminus G_{3,\delta}|\right)\leq (1+\delta)^2\mathfrak{a}_0|\mathcal{B}_{4R_0}(v_{-1})|.
    \end{align}
    Using this covering, we have
    \begin{align}\label{eq:final_form}
        \begin{split}
            \eqref{eq:L_q_comp4}&=\int_{\mathcal{B}_{(1+2\delta)R_0}(v_0)}\,d\tilde{u}_\parallel\,\frac{1}{|\tilde{u}_\parallel - v|^{2-\gamma}}f_{1, \delta}(\tilde{u}_\parallel)\int_{\mathcal{B}_{(1+2\delta)R_1}(v_1)}\,dw_1f_{2, \delta}(w_1)\int_{\mathcal{B}_{(1+2\delta)R_2}(v_2)}\,dw_2f_{3, \delta}(w_2)\\
            &\qquad \times \sqrt{2-\sqrt{2}}\left(\frac{2}{\sqrt{2+\sqrt{2}}}\right)^\gamma c_b^2 |w_1-w_2|^{\gamma-2}\\
            &\qquad\qquad \times \int_{S_{w_1,w_2}}dw_1'\,\mathbf{1}_{\{\frac{1}{4}\pi<\theta'<\frac{3}{4}\pi\}}\mathbf{1}_{\{\frac{1}{8}\pi<\theta_\omega<\frac{3}{8}\pi\}}\varphi_\delta(w_1+w_2-w_1')\varphi_\delta(\tilde{u}_\parallel + w_1' - v)\Theta_{v+E_{\tilde{u}_\parallel-v}}(w_1')\\
            &\geq\int_{\mathcal{B}_{(1+2\delta)R_0}(v_0)}\,d\tilde{u}_\parallel\,\frac{1}{|\tilde{u}_\parallel - v|^{2-\gamma}}f_{1, \delta}(\tilde{u}_\parallel)\int_{\mathcal{B}_{(1+2\delta)R_1}(v_1)}\,dw_1f_{2, \delta}(w_1)\int_{\mathcal{B}_{(1+2\delta)R_2}(v_2)}\,dw_2f_{3, \delta}(w_2)|w_1-w_2|^{\gamma-2}\\
            &\qquad \times \sqrt{2-\sqrt{2}}\left(\frac{2}{\sqrt{2+\sqrt{2}}}\right)^\gamma c_b^2\int_{S_{w_1,w_2}}dw_1'\,\mathbf{1}_{\{\frac{1}{4}\pi<\theta'<\frac{3}{4}\pi\}}\mathbf{1}_{\{\frac{1}{8}\pi<\theta_\omega<\frac{3}{8}\pi\}}\Theta_{v+E_{\tilde{u}_\parallel-v}}(w_1')\\
            &\qquad\qquad \times \left(\mathbf{1}_{\mathcal{B}_{4R_0}(v_{-1})}-\mathbf{1}_{\cup_j \mathcal{B}_j}\right)(w_1+w_2-w_1')\left(\mathbf{1}_{\mathcal{B}_{4R_0}(v_{-1})}-\mathbf{1}_{\cup_j \mathcal{B}_j}\right)(\tilde{u}_\parallel + w_1' - v)\\
            &=\int_{\mathcal{B}_{(1+2\delta)R_0}(v_0)}\,d\tilde{u}_\parallel\,\frac{1}{|\tilde{u}_\parallel - v|^{2-\gamma}}f_{1, \delta}(\tilde{u}_\parallel)\int_{\mathcal{B}_{(1+2\delta)R_1}(v_1)}\,dw_1f_{2, \delta}(w_1)\int_{\mathcal{B}_{(1+2\delta)R_2}(v_2)}\,dw_2f_{3, \delta}(w_2)|w_1-w_2|^{\gamma-2}\\
            &\qquad \times \sqrt{2-\sqrt{2}}\left(\frac{2}{\sqrt{2+\sqrt{2}}}\right)^\gamma c_b^2\int_{S_{w_1,w_2}}dw_1'\,\mathbf{1}_{\{\frac{1}{4}\pi<\theta'<\frac{3}{4}\pi\}}\mathbf{1}_{\{\frac{1}{8}\pi<\theta_\omega<\frac{3}{8}\pi\}}\Theta_{v+E_{\tilde{u}_\parallel-v}}(w_1')\\
            &\qquad\qquad \times \mathbf{1}_{\mathcal{B}_{4R_0}(v_{-1})}(w_1+w_2-w_1')\mathbf{1}_{\mathcal{B}_{4R_0}(v_{-1})}(\tilde{u}_\parallel + w_1' - v)\\
            &\quad -\int_{\mathcal{B}_{(1+2\delta)R_0}(v_0)}\,d\tilde{u}_\parallel\,\frac{1}{|\tilde{u}_\parallel - v|^{2-\gamma}}f_{1, \delta}(\tilde{u}_\parallel)\int_{\mathcal{B}_{(1+2\delta)R_1}(v_1)}\,dw_1f_{2,\delta}(w_1)\int_{\mathcal{B}_{(1+2\delta)R_2}(v_2)}\,dw_2f_{3, \delta}(w_2)|w_1-w_2|^{\gamma-2}\\
            &\qquad \times \sqrt{2-\sqrt{2}}\left(\frac{2}{\sqrt{2+\sqrt{2}}}\right)^\gamma c_b^2\int_{S_{w_1,w_2}}\,dw_1' \mathbf{1}_{\{\frac{1}{4}\pi<\theta'<\frac{3}{4}\pi\}}\mathbf{1}_{\{\frac{1}{8}\pi<\theta_\omega<\frac{3}{8}\pi\}}\Theta_{v+E_{\tilde{u}_\parallel-v}}(w_1')\\
            &\qquad \qquad \times \left(\mathbf{1}_{\cup_j \mathcal{B}_j}(w_1+w_2-w_1')+\mathbf{1}_{\cup_j \mathcal{B}_j}(\tilde{u}_\parallel + w_1' - v) - \mathbf{1}_{\cup_j \mathcal{B}_j}(w_1+w_2-w_1')\mathbf{1}_{\cup_j \mathcal{B}_j}(\tilde{u}_\parallel + w_1' - v)\right)
        \end{split}
    \end{align}
    If $\tilde{u}_\parallel\in \mathcal{B}_{(1+2\delta)R_0}(v_0)$ and $(w_1,w_2)\in \mathcal{B}_{(1+2\delta)R_1}(v_1)\times \mathcal{B}_{(1+2\delta)R_2}(v_2)$, by \eqref{eq:dist_list} and \eqref{eq:dist_list2},
    \begin{align*}
        |w_1+w_2-w_1'-v_{-1}|&\leq |w_1-v_1|+|w_2-v_2| + |v_1+v_2 - w_1'-v_0| + |v_0-v_{-1}|\\
        &=|w_1-v_1|+|w_2-v_2| + |- w_1' + v_0| + |v_0-v_{-1}|\\
        &\leq (1+2\delta)R_1 +  \left(\frac{\sqrt{3}}{2} + \frac{105}{1352}\right)R_0 + 0.8R_0< 2R_0
    \end{align*}
    for $\delta$ satisfying \eqref{eq:delta_cond1}. Since $\tilde{u}_\parallel - v$ and $w_1' - v$ are perpendicular to each other, using \eqref{eq:dist_list} and \eqref{eq:dist_list2},
    \begin{align*}
        |\tilde{u}_\parallel + w_1' - v - v_{-1}|&\leq |(\tilde{u}_\parallel - v) + (w_1' - v)| + |v - v_0| + |v_0 - v_{-1}|\\
        &=\sqrt{(\tilde{u}_\parallel - v)^2 + (w_1' - v)^2} + |v - v_0| + |v_0 - v_{-1}|\\
        &\leq \sqrt{(|\tilde{u}_\parallel - v_0| + |v_0 - v|)^2 + (|w_1' - v_0| + |v_0 - v|)^2} + |v - v_0| + |v_0 - v_{-1}|\\
        &\leq \sqrt{\left(\frac{57}{104}R_0\right)^2 + \left(\frac{\sqrt{3}}{2}R_0 + \frac{157}{1352}R_0\right)^2} + \frac{1}{26}R_0 + 0.8R_0<2R_0.
    \end{align*}
    
    Therefore, we obtain $w_1+w_2-w_1', \tilde{u}_\parallel + w_1' - v\in \mathcal{B}_{4R_0}(v_{-1})$ for $(\tilde{u}_\parallel, w_1, w_2)\in \mathcal{B}_{(1+2\delta)R_0}(v_0)\times \mathcal{B}_{(1+2\delta)R_1}(v_1)\times \mathcal{B}_{(1+2\delta)R_2}(v_2)$ under \eqref{eq:delta_cond1}. So, we can write the first integral in \eqref{eq:final_form} by (ignoring the constant)
    \begin{align}\label{eq:final_form_1}
        \begin{split}
            &\int_{\mathcal{B}_{(1+2\delta)R_0}(v_0)}\,d\tilde{u}_\parallel\,\frac{1}{|\tilde{u}_\parallel - v|^{2-\gamma}}f_{1, \delta}(\tilde{u}_\parallel)\int_{\mathcal{B}_{(1+2\delta)R_1}(v_1)}\,dw_1f_{2, \delta}(w_1)\int_{\mathcal{B}_{(1+2\delta)R_2}(v_2)}\,dw_2f_{3, \delta}(w_2)|w_1-w_2|^{\gamma-2}\\
            &\qquad \times \int_{S_{w_1,w_2}}dw_1'\,\mathbf{1}_{\{\frac{1}{4}\pi<\theta'<\frac{3}{4}\pi\}}\mathbf{1}_{\{\frac{1}{8}\pi<\theta_\omega<\frac{3}{8}\pi\}}\Theta_{v+E_{\tilde{u}_\parallel-v}}(w_1')\mathbf{1}_{\mathcal{B}_{4R_0}(v_{-1})}(w_1+w_2-w_1')\mathbf{1}_{\mathcal{B}_{4R_0}(v_{-1})}(\tilde{u}_\parallel + w_1' - v)\\
            &=\int_{\mathcal{B}_{(1+2\delta)R_0}(v_0)}\,d\tilde{u}_\parallel\,\frac{1}{|\tilde{u}_\parallel - v|^{2-\gamma}}f_{1, \delta}(\tilde{u}_\parallel)\int_{\mathcal{B}_{(1+2\delta)R_1}(v_1)}\,dw_1f_{2, \delta}(w_1)\int_{\mathcal{B}_{(1+2\delta)R_2}(v_2)}\,dw_2f_{3, \delta}(w_2)|w_1-w_2|^{\gamma-2}\\
            &\qquad \times \int_{S_{w_1,w_2}}dw_1'\,\mathbf{1}_{\{\frac{1}{4}\pi<\theta'<\frac{3}{4}\pi\}}\mathbf{1}_{\{\frac{1}{8}\pi<\theta_\omega<\frac{3}{8}\pi\}}\Theta_{v+E_{\tilde{u}_\parallel-v}}(w_1').
        \end{split}
    \end{align}
    
    Now, we need to bound the second integral in \eqref{eq:final_form}: (again, ignoring the constant)
    \begin{align}\label{eq:final_form_2}
        \begin{split}
            &\int_{\mathcal{B}_{(1+2\delta)R_0}(v_0)}\,d\tilde{u}_\parallel\,\frac{1}{|\tilde{u}_\parallel - v|^{2-\gamma}}f_{1, \delta}(\tilde{u}_\parallel)\int_{\mathcal{B}_{(1+2\delta)R_1}(v_1)}\,dw_1f_{2,\delta}(w_1)\int_{\mathcal{B}_{(1+2\delta)R_2}(v_2)}\,dw_2f_{3, \delta}(w_2)|w_1-w_2|^{\gamma-2}\\
            &\qquad \times \int_{S_{w_1,w_2}}\,dw_1' \mathbf{1}_{\{\frac{1}{4}\pi<\theta'<\frac{3}{4}\pi\}}\mathbf{1}_{\{\frac{1}{8}\pi<\theta_\omega<\frac{3}{8}\pi\}}\Theta_{v+E_{\tilde{u}_\parallel-v}}(w_1')\\
            &\qquad \qquad \times \left(\mathbf{1}_{\cup_j \mathcal{B}_j}(w_1+w_2-w_1')+\mathbf{1}_{\cup_j \mathcal{B}_j}(\tilde{u}_\parallel + w_1' - v) - \mathbf{1}_{\cup_j \mathcal{B}_j}(w_1+w_2-w_1')\mathbf{1}_{\cup_j \mathcal{B}_j}(\tilde{u}_\parallel + w_1' - v)\right).
        \end{split}
    \end{align}
    In other words, we will bound the set such that $w_1+w_2-w_1'\in \cup_j \mathcal{B}_j$ or $\tilde{u}_\parallel + w_1' - v\in \cup_j \mathcal{B}_j$.\\

    Suppose $\mathcal{B}_{R_3}(v_3)\in \{\mathcal{B}_i\}$. By \eqref{eq:B_i_control}, we have
    \begin{align*}
        |\mathcal{B}_{R_3}(v_3)|\leq (1+\delta)^2\mathfrak{a}_0|\mathcal{B}_{4R_0}(v_{-1})| = \frac{4}{3}\pi(1+\delta)^2\mathfrak{a}_0 (2R_0)^3,
    \end{align*}
    so
    \begin{align}\label{eq:R_3}
        R_3 = 2\left(\frac{3}{4\pi}|\mathcal{B}_{R_3}(v_3)|\right)^{1/3}\leq 4(1+\delta)^{2/3}\mathfrak{a}_0^{1/3}R_0.
    \end{align}
    
    To bound \eqref{eq:final_form_2}, we need to use the geometric properties of the Calerman representation. From now on, we state and prove some technical geometric lemmas to bound the integral.

    The next lemma is a slight modification of Lemma 5.3 in \cite{LPY2024}.
    \begin{lemma}\label{lem:intersection_lemma1}
        Let $a\in\mathbb{R}^3$, and $r\geq 0$ satisfies $r\leq |a|/4$. Then, there exists a set $C_{a, r}$ which satisfies
        \begin{align*}
            \{x:\{y:y\perp x\}\cap \{y:|a+y|\leq r\}\neq \emptyset\} \subset C_{a,r}
        \end{align*}
        and for any isotropic function $f(x) = f(|x|)$,
        \begin{align*}
            \int_{\{|x|\leq R\}\cap C_{a, r}} f(x)\,dx \leq \frac{5}{2}\pi\frac{r}{|a|}\int_{\{|x|\leq R\}}f(x)\,d|x|
        \end{align*}
        for any $R>0$.
    \end{lemma}
    \begin{proof}
        We will slightly modify the proof in \cite{LPY2024}. By rotation, it is enough to assume $a = (0,0,|a|)$. $|a+y|\leq r$ is a closed ball centered at $-a$ with radius $r$ about $y$. When the distance between $-a$ and the plane $\{y:y\perp x\}$ is smaller than $r$, the plane intersects with the ball. For fixed $x$, the distance is given by
        \begin{align*}
            \left|a\cdot \hat{x}\right| = \left|a\cdot \hat{x}\right|\leq r.
        \end{align*}
        It shows that the sufficient condition to make an intersection is 
        \begin{align*}
            -r\leq a\cdot \hat{x}\leq r.
        \end{align*}
        We consider the spherical coordinates with $\theta$ to be the angle about the $z$-axis. Let $\theta_0 = \frac{\pi}{2}$ and $\delta\theta= \frac{5}{4}\frac{r}{|a|}$. Since $\delta\theta\leq \frac{5}{16}$,
        \begin{align*}
            \cos(\theta_0+\delta\theta)& = - \sin \delta\theta\leq -\frac{r}{|a|},\\
            \cos(\theta_0-\delta\theta)&= \sin \delta\theta \geq \frac{r}{|a|}.
        \end{align*}
        Therefore, we have
        \begin{align*}
            \{x:-r\leq a\cdot \hat{x}\leq r\}\subset \{x:\theta\in [\theta_0-\delta\theta, \theta_0+\delta\theta]\}\eqqcolon C_{a, r}.
        \end{align*}
    
        Finally, for any isotropic function $f(x) = f(|x|)$ and $R\geq 0$,
        \begin{align*}
            \int_{\{|x|\leq R\}\cap C_{a, r}} f(x)\,dx &\leq 2\pi\int_0^R f(|x|)|x|^2\,d|x|\int_{\theta_0-\delta\theta}^{\theta_0+\delta\theta}\sin\theta\,d\theta\\
            &\leq 4\pi\delta\theta\int_0^{R}f(|x|)|x|^2\,d|x|\\
            &=2\pi\delta\theta\int_0^{R}\int_0^\pi f(|x|)|x|^2\sin\theta\,d\theta d|x|\\
            &= \frac{5}{2}\pi\frac{r}{|a|}\int_{|x|\leq R}f(x)\,dx.
        \end{align*}
    \end{proof}
    
    The next lemma bounds the size of the set $\{(w_1,w_2)\in \mathcal{Q}_R(v_1)\times \mathcal{Q}_R(v_2)\}$ making $S_{w_1,w_2}\cap (\cup_j \mathcal{B}_j)\neq \emptyset$.
    \begin{lemma}\label{lem:intersection_lemma2}
    Let $|v_1-v_2|\geq \frac{10}{13}R_0$, $\frac{R_0}{13}\leq R\leq \frac{2R_0}{13}$, $R_3\leq \frac{1}{104}R_0$, and $v_3$ be an arbitrary point in $\mathbb{R}^3$. Then,
        \begin{align*}
            \frac{|\{(w_1,w_2)\in \mathcal{Q}_R(v_1)\times \mathcal{Q}_R(v_2):S_{w_1,w_2}\cap \mathcal{B}_{R_3}(v_3)\neq \emptyset\}|}{|\mathcal{Q}_{R}(v_1)||\mathcal{Q}_{R}(v_2)|}\leq C\frac{R_3}{R_0}
        \end{align*}
        for some constant $C$.
    \end{lemma}
    \begin{proof}
        Let
        \begin{align*}
            A \coloneqq \{(w_1,w_2)\in \mathcal{Q}_R(v_1)\times \mathcal{Q}_R(v_2):S_{w_1,w_2}\cap \mathcal{B}_{R_3}(v_3)\neq \emptyset\}.
        \end{align*}
        We choose new coordinates by
        \begin{align*}
            U = w_1 + w_2,\quad V = w_1-w_2.
        \end{align*}
        Using these coordinates, we can rephrase the condition $S_{w_1,w_2}\cap \mathcal{B}_{R_3}(v_3)\neq \emptyset$ by
        \begin{align*}
            \left|\left|\frac{U}{2} - v_3\right|-\frac{|V|}{2}\right|\leq \frac{R_3}{2}
        \end{align*}
        as $|V|>R_3$. For fixed $U$, it is equivalent to say that
        \begin{align*}
            \left|\frac{U}{2} - v_3\right| - \frac{R_3}{2}\leq \frac{|V|}{2}\leq \left|\frac{U}{2} - v_3\right| + \frac{R_3}{2}.
        \end{align*}
    
        We write $\mathcal{Q}_{R}(v_1) = [a_{x,1}, a_{x,2}]\times [a_{y,1}, a_{y,2}]\times [a_{z,1}, a_{z,2}]$ and $\mathcal{Q}_{R}(v_2) = [b_{x,1}, b_{x,2}]\times [b_{y,1}, b_{y,2}]\times [b_{z,1}, b_{z,2}]$. Then, the domain of $U$ is given by
        \begin{align*}
            [a_{x,1} + b_{x,1}, a_{x,2} + b_{x,2}]\times [a_{y,1} + b_{y,1}, a_{y,2} + b_{y,2}]\times [a_{z,1} + b_{z,1}, a_{z,2} + b_{z,2}].
        \end{align*}
        For each fixed $U$ in the domain, the $V$ domain is given by
        \begin{align*}
            V\in X\times Y\times Z,
        \end{align*}
        where
        \begin{align*}
            X&=\begin{cases}
                [2a_{x,1} - U_x, U_x - 2b_{x,1}] & U_x\leq a_{x,1} + b_{x,2},\\
                [U_x - 2b_{x,2}, 2a_{x,2} - U_x] & U_x\geq a_{x,1} + b_{x,2},
            \end{cases}\\
            Y&=\begin{cases}
                [2a_{y,1} - U_y, U_y - 2b_{y,1}] & U_y\leq a_{y,1} + b_{y,2},\\
                [U_x - 2b_{y,2}, 2a_{y,2} - U_y] & U_y\geq a_{y,1} + b_{y,2},
            \end{cases}\\
            Z&=\begin{cases}
                [2a_{z,1} - U_z, U_z - 2b_{z,1}] & U_z\leq a_{z,1} + b_{z,2},\\
                [U_z - 2b_{z,2}, 2a_{z,2} - U_z] & U_z\geq a_{z,1} + b_{z,2}.
            \end{cases}
        \end{align*}
        $X\times Y\times Z$ a cuboid centered at $(a_{x,1}-b_{x,1}, a_{y,1}-b_{y,1}, a_{z,1}-b_{z,1})$ having side length at most $2(a_{x,2}-a_{x,1}) = 2R$.
    
        Now, we will compute the volume of the set
        \begin{align*}
            A_2\coloneqq \left\{V:\left|\frac{U}{2} - v_3\right| - \frac{R_3}{2}\leq \frac{|V|}{2}\leq \left|\frac{U}{2} - v_3\right| + \frac{R_3}{2}\right\}\cap \{V:V\in X\times Y\times Z\}.
        \end{align*}
        By the assumption of the lemma,
        \begin{align*}
            \frac{10-2\sqrt{3}}{13}R_0\leq \frac{10}{13}R_0 - \sqrt{3}R\leq |V|\leq \frac{10}{13}R_0 + \sqrt{3}R\leq \frac{10+2\sqrt{3}}{13}R_0.
        \end{align*}
        It implies that
        \begin{align*}
            2\left(\left|\frac{U}{2} - v_3\right|-\frac{R_3}{2}\right)&\geq 2\left(\frac{|V|}{2}-R_3\right)\geq \frac{39-8\sqrt{3}}{52}R_0\\
            2\left(\left|\frac{U}{2} - v_3\right|+\frac{R_3}{2}\right)&\leq 2\left(\frac{|V|}{2}+R_3\right)\leq \frac{41+8\sqrt{3}}{52}R_0
        \end{align*}
        in $A_2$. We compute the upper bound of the area of the intersection between the spherical shell
        \begin{align*}
            \left\{V:\max\left\{2\left(\left|\frac{U}{2} - v_3\right| - \frac{R_3}{2}\right), \frac{39-8\sqrt{3}}{52}R_0\right\}\leq |V|\leq 2\left(\left|\frac{U}{2} - v_3\right| + \frac{R_3}{2}\right)\right\}
        \end{align*}
        and $X\times Y\times Z$. To make the analysis easier, we cover $X\times Y\times Z$ by a sphere centered at $(a_{x,1}-b_{x,1}, a_{y,1}-b_{y,1}, a_{z,1}-b_{z,1})$ with diameter $2\sqrt{3}R$. Figure \ref{fig:spherical_shell_intersection} illustrates the intersection between a spherical shell with radius $|V|$ and a ball with diameter $2\sqrt{3}R$. Let $2\theta$ be the angle of the intersection arc in the section passing through the centers of the spheres.
        
        To bound the area of the spherical cap in Figure \ref{fig:spherical_shell_intersection}, we first bound the $\theta$. The triangle, which consists of two intersection points and the center of the sphere, has two side lengths $|V|$ and one side length bonded by $2\sqrt{3}R$. Therefore, the $\theta$ satisfies
        \begin{align*}
            \cos(2\theta)\geq \frac{2|V|^2 - (2\sqrt{3}R)^2}{2|V|^2}\geq 1-\frac{1}{2}\left(\frac{2\sqrt{3}R}{\frac{39-8\sqrt{3}}{52}R_0}\right)^2 \geq 1-\frac{1}{2}\left(\frac{16\sqrt{3}}{39-8\sqrt{3}}\right)^2.
        \end{align*}
        Therefore, the area of the spherical cap is bounded by
        \begin{align*}
            2\pi r^2(1-\cos\theta)\leq \frac{1}{3}\pi \left(\frac{41+8\sqrt{3}}{52}R_0\right)^2.
        \end{align*}
        The volume of $A_2$ is bounded by the area of the spherical cap times $2R_3$, which is the interval length of the possible $|V|$. Thus, we get
        \begin{align*}
            |A_2|\leq \frac{1}{3}\pi \left(\frac{41+8\sqrt{3}}{52}R_0\right)^2(2R_3).
        \end{align*}
        
        The total volume of the set $A$ is bounded by the volume of the domain of $U$ times the upper bound of the volume of the set $|A_2|$, so
        \begin{align*}
            \frac{|A|}{|\mathcal{Q}_R(v_1)||\mathcal{Q}_R(v_2)|}\leq \frac{(2R)^3 \frac{1}{3}\pi \left(\frac{41+8\sqrt{3}}{52}R_0\right)^2(2R_3)}{|\mathcal{Q}_R(v_1)||\mathcal{Q}_R(v_2)|}\leq \frac{\frac{2^4}{3}\pi \left(\frac{41+8\sqrt{3}}{52}R_0\right)^2 R_3}{R^3} \leq C\frac{R_3}{R_0}
        \end{align*}
        for some constant $C$.
        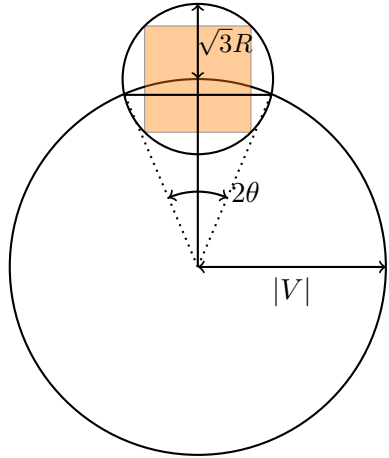
\begin{figure}[htbp]
            \centering
            \begin{tikzpicture}
                \draw[thick](0,0) circle (2.5);
                \draw[fill = orange, opacity=0.4] (-1/1.41421356237, 2.5-1/1.41421356237) rectangle (1/1.41421356237, 2.5+1/1.41421356237);
                \draw[thick](0,3-0.5) circle (1);
                \draw[thick] (0,0) -- (0,2.5);
                \draw[dotted, thick, rotate=-23] (0,0) -- (0,2.5);
                \draw[dotted, thick, rotate=23] (0,0) -- (0,2.5);
                \draw[<->, thick]  (23+90:1) arc(23+90:-23+90:1) node[midway,right, xshift = 0.3cm]{$2\theta$};
                \draw[<->, thick] (0,0) -- (2.5,0) node[midway, below]{$|V|$};
                \draw[thick] (-0.98,2.29) -- (.98,2.29);
                \draw[<->, thick, rotate around={90:(0,2.5)}] (0,2.5) -- (1,2.5) node[midway, right, xshift = -0.15cm]{\small $\sqrt{3}R$};
            \end{tikzpicture}
            \caption{Intersection of spherical shell with radius $\|V\|$ and a ball with diameter $2\sqrt{3}R$.}
            \label{fig:spherical_shell_intersection}
        \end{figure}
    \end{proof}
    
    Let $C_{w_1,w_2}$ be a circle having antipodals $w_1$ and $w_2$ in $\mathbb{R}^3$. Note that it is not unique in $\mathbb{R}^3$. The next lemma is to bound the size of the set $C_{w_1,w_2}\cap \mathcal{B}_{R_3}(v_3)$.
    \begin{lemma}\label{lem:intersection_lemma3}
        Suppose $|v_1-v_2|\geq \frac{10}{13}R_0$, $(w_1,w_2)\in \mathcal{B}_R(v_1)\times \mathcal{B}_R(v_2)$, $\frac{R_0}{13}\leq R\leq \frac{2R_0}{13}$, and $R_3<\frac{R_0}{104}$. Let $v_3$ be an arbitrary point in $\mathbb{R}^3$. Then, $|C_{w_1,w_2}\cap \mathcal{B}_{R_3}(v_3)|\leq \frac{3}{\sqrt{2}}R_3$.
    \end{lemma}
    \begin{proof}
        The intersection $|C_{w_1,w_2}\cap \mathcal{B}_{R_3}(v_3)|$ is maximized when the circle and $v_3$ is on the same plane, so we assume $v_3$ is on the plane. Suppose $C_{w_1,w_2}\cap \mathcal{B}_{R_3}(v_3)\neq \emptyset$, so $A$ and $B$ be the two intersection points between $C_{w_1,w_2}$ and $\partial \mathcal{B}_{R_3}(v_3)$. Let $2\theta = \angle A w_0 B$, where $w_0 = \frac{w_1+w_2}{2}$. Since $\frac{8}{13}R_0\leq |w_1-w_2|\leq \frac{12}{13}R_0$, we have
        \begin{align*}
            1-\theta^2\geq \cos 2\theta\geq 1-\frac{1}{2}\left(\frac{R_3}{4R_0/13}\right)^2,
        \end{align*}
        so $\theta\leq \frac{1}{\sqrt{2}}\frac{R_3}{4R_0/13}$. Therefore,
        \begin{align*}
            l = 2r\theta \leq \frac{1}{\sqrt{2}}\frac{12R_0}{13}\frac{R_3}{4R_0/13}\leq \frac{3}{\sqrt{2}}R_3.
        \end{align*}
    \end{proof}

    Now, we state the main lemma.
    \begin{lemma}\label{lem:final_form_bound}
        We have
        \begin{align}\label{eq:final_form_bound}
            \begin{split}
                &\iint_{\mathcal{B}_{(1+2\delta)R_0}(v_0)\times \mathcal{B}_{(1+2\delta)R_1}(v_1)\times \mathcal{B}_{(1+2\delta)R_2}(v_2)}d\tilde{u}_\parallel dw_1dw_2\,\int_{S_{w_1,w_2}}dw_1'\,\Theta_{v+E_{\tilde{u}_\parallel+v}}(w_1')\\
                &\qquad\times \left(\mathbf{1}_{\cup_j \mathcal{B}_j}(w_1+w_2-w_1')+\mathbf{1}_{\cup_j \mathcal{B}_j}(\tilde{u}_\parallel + w_1' - v) -\mathbf{1}_{\cup_j \mathcal{B}_j}(w_1+w_2-w_1')\mathbf{1}_{\cup_j \mathcal{B}_j}(\tilde{u}_\parallel + w_1' - v)\right)\\
                &\leq CR_0^7\sum_{j=1}^\infty |\mathcal{B}_j|
            \end{split}
        \end{align}
        for $\delta$ satisfying \eqref{eq:delta_cond1}, 
        \begin{align}\label{eq:alpha0_cond1}
            \mathfrak{a}_0\leq \frac{1}{4^3(1+1/104)^2}\left(\frac{1}{104}\right)^3,
        \end{align}
        and an uniform constant $C$ about $\delta$ and $\mathfrak{a}_0$ in the region.
    \end{lemma}
    
\begin{figure}[htbp]
    \centering
    \begin{tikzpicture}
        \def\R{3.5}
        \def\myangle{40}
        \draw[thick, fill=black!20](0,\R) circle (\R/10) node[above right, xshift = 0.2cm, yshift = -0.1cm]{$w_1\in B_{R_1}(v_1)$};
        \draw[thick, fill=black!20](0,-\R) circle (\R/10) node[below right, xshift = 0.2cm, yshift = 0.1cm]{$w_2\in B_{R_2}(v_2)$};
        \draw[thick](0,0) circle (\R);
        \draw[thick, fill=black!20](0,0) circle (\R/10) node[below left, xshift = -0.2cm, yshift = -0.1cm]{$v\in B_{R_1}(v_0)$};
        \draw[thick] (-5,-0.3+2.5) -- (5,-0.3-2.5);
        \draw[] (5,-2.6) node[right]{$v+E_{\tilde{u}_\parallel-v}$};
        \draw[thick, ->](0,-0.3) -- (2.5/3,5/3) node[right]{$\tilde{u}_\parallel$};
        \draw[thick, fill=black!20](4.5,0.7) circle (\R/20) node[above right, xshift = 0.1cm, yshift = -0.2cm]{$B_{R_3}(v_3)$};
        \draw[] ({sqrt(6089)/25 - 3/25}, {-6/25 - sqrt(6089)/50}) node[right, xshift = 0.1cm, yshift = 0.15cm]{$w_1'\in S_{w_1,w_2}$};
        \draw[thick]({sqrt(6089)/25 - 3/25}, {-6/25 - sqrt(6089)/50}) -- (2.5/3,5/3) node[right]{$\tilde{u}_\parallel$};
        \draw[thick, dotted](0, \R) -- (0, -\R);
        \draw[thick](0, -\R) -- ({sqrt(6089)/25 - 3/25}, {-6/25 - sqrt(6089)/50});
        \draw[thick, ->](0,-0.3) -- (2.5/3,5/3) node[right]{$\tilde{u}_\parallel$};
        \draw[thick] (2.5/3,5/3) -- (2.5/3,5/3) ++(-114:0.6) arc(-116:-58:0.6) node[midway, xshift = 0.2cm, yshift = -0.25cm]{$\theta_\omega$};
        \draw[thick] (0, -\R) -- (0, -\R) ++(90:0.6) arc(90:30:0.6) node[midway, xshift = 0.15cm, yshift = 0.25cm]{$\tilde{\theta}_\omega$};
        \draw[thick, dotted] ({sqrt(6089)/25 - 3/25}, {-6/25 - sqrt(6089)/50}) -- (0, 0);
        \draw[thick] (0, 0) -- (0, 0) ++(-90:0.6) arc(-90:-30:0.6) node[midway, xshift = 0.05cm, yshift = -0.2cm]{$\theta'$};
    \end{tikzpicture}
    \caption{Geometry of the balls used in the proof.}
    \label{fig:balls}
\end{figure}
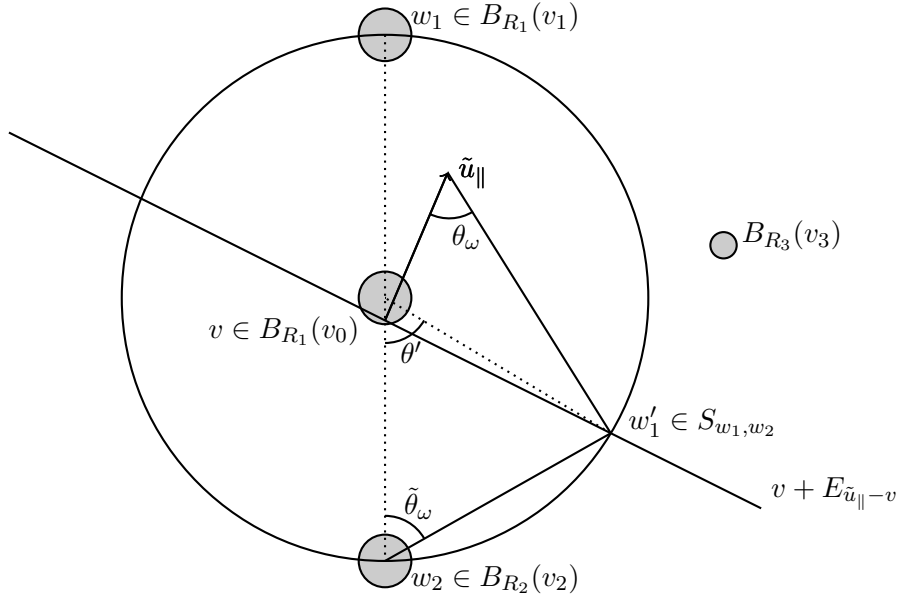

\begin{proof}
    Figure \ref{fig:balls} illustrates the position and roles of the balls in the integration and the proof of this lemma. We first note that for any $B_{R_3}(v_3)\in \cup_j \mathcal{B}_j$, by \eqref{eq:R_3} and \eqref{eq:alpha0_cond1}, we have
    \begin{align}\label{eq:R_3_cond1}
        R_3\leq \frac{1}{104}R_0.
    \end{align}
    
    We divide the cases (i) $w_1+w_2-w_1'\in \cup_j \mathcal{B}_j$ and (ii) $\tilde{u}_\parallel + w_1' - v\in \cup_j \mathcal{B}_j$.\\
    
    \noindent (i) By Fubini's theorem, for the first term in \eqref{eq:final_form_bound}, we have
    \begin{align*}
        &\iint_{\mathcal{B}_{(1+2\delta)R_0}(v_0)\times \mathcal{B}_{(1+2\delta)R_1}(v_1)\times \mathcal{B}_{(1+2\delta)R_2}(v_2)}d\tilde{u}_\parallel dw_1dw_2\,\int_{S_{w_1,w_2}}\,dw_1' \mathbf{1}_{\cup_j \mathcal{B}_j}(w_1+w_2-w_1')\Theta_{v+E_{\tilde{u}_\parallel-v}}(w_1')\\
        &=\iint_{\mathcal{B}_{(1+2\delta)R_1}(v_1)\times \mathcal{B}_{(1+2\delta)R_2}(v_2)}\,dw_1dw_2 \int_{\mathcal{B}_{(1+2\delta)R_0}(v_0)}d\tilde{u}_\parallel\,\int_{S_{w_1,w_2}}dw_1'\, \mathbf{1}_{\cup_j \mathcal{B}_j}(w_1+w_2-w_1')\Theta_{v+E_{\tilde{u}_\parallel-v}}(w_1').
    \end{align*}
    
    We choose a ball $\mathcal{B}_{R_3}(v_3) = \mathcal{B}_j \in \cup_j\mathcal{B}_j$. Define
    \begin{align*}
        \mathcal{B}_{1, j,1}&\coloneqq \{(w_1,w_2)\in \mathcal{B}_{(1+2\delta)R_1}(v_1)\times \mathcal{B}_{(1+2\delta)R_2}(v_2):S_{w_1,w_2}\cap (w_1+w_2-\mathcal{B}_j)\neq \emptyset\},\\
        \mathcal{B}_{1, j,2}(w_1,w_2) &\coloneqq \{\tilde{u}_\parallel\in \mathcal{B}_{(1+2\delta)R_0}(v_0):(v+E_{\tilde{u}_\parallel-v})\cap (w_1+w_2-\mathcal{B}_j)\neq \emptyset\}.
    \end{align*}
    Note that the first $\mathcal{B}_{1, j,1}$ can be defined independent to $\tilde{u}_\parallel$.
    
    For any $\tilde{u}_\parallel$, there exists a intersection between $S_{w_1,w_2}$ and $w_1+w_2-\mathcal{B}_j$ if and only if there is a intersection between $S_{w_1,w_2}$ and $\mathcal{B}_j$ considering point symmetry for $\frac{w_1+w_2}{2}$. By Lemma \ref{lem:intersection_lemma2}, therefore, we obtain
    \begin{equation}\label{eq:1_intersection_1}
        \begin{split}
            |\mathcal{B}_{1, j,1}| &= |\{(w_1,w_2)\in \mathcal{B}_{(1+2\delta)R_1}(v_1)\times \mathcal{B}_{(1+2\delta)R_2}(v_2):S_{w_1,w_2}\cap \mathcal{B}_j\neq \emptyset\}|\\
            &\leq |\{(w_1,w_2)\in \mathcal{Q}_{(1+2\delta)R_1}(v_1)\times \mathcal{Q}_{(1+2\delta)R_2}(v_2):S_{w_1,w_2}\cap \mathcal{B}_j\neq \emptyset\}|\\
            &\leq C_1(1+2\delta)^6 R_0^5R_3
        \end{split}
    \end{equation}
    for some constant $C_1$.
    
    For fixed $w_1$ and $w_2$ satisfying $S_{w_1,w_2}\cap \mathcal{B}_{R_3}(v_3) = S_{w_1,w_2}\cap \mathcal{B}_j\neq\emptyset$, the distance between $v_3$ and $v\in \mathcal{B}_{\frac{R_0}{13}}(v_0)$ is larger than $\frac{3}{13}R_0$. Indeed, let $w\in S_{w_1,w_2}\cap \mathcal{B}_j$. By \eqref{eq:dist_list},
    \begin{align*}
        |w-v|&\geq \frac{363}{1352}R_0
    \end{align*}
    for $\delta$ satisfying \eqref{eq:delta_cond1}. Therefore, the distance between $v_3$ and $v$ is lower-bounded by
    \begin{align*}
        |v_3-v|\geq |w-v|-|v_3-w| \geq \frac{363}{1352}R_0 - \frac{R_3}{2}\geq \frac{3}{13}R_0.
    \end{align*}
    Combining it with \eqref{eq:delta_cond1}, we also get
    \begin{align*}
        |w_1+w_2 - v_3-v|&\geq |v-v_3| - 2\left|\frac{w_1+w_2}{2} - v\right|\geq \frac{363}{1352}R_0 - \frac{R_3}{2} - \frac{210}{1352}R_0\\
        &\geq \frac{1}{13}R_0
    \end{align*}
    for $R_3$ satisfying \eqref{eq:R_3_cond1}.

    We enlarge $\mathcal{B}_{(1+2\delta)R_0}(v_0)$ by $\mathcal{B}_{\frac{15}{13}R_0}(v)$ as $|v-v_0|\leq \frac{R_0}{26}$ and \eqref{eq:delta_cond1} and define
    \begin{align*}
        \mathcal{B}_{1, j,2}'(w_1,w_2) &\coloneqq \{\tilde{u}_\parallel\in \mathcal{B}_{\frac{15}{13}R_0}(v):(v+E_{\tilde{u}_\parallel-v})\cap (w_1+w_2-\mathcal{B}_j)\neq \emptyset\}.
    \end{align*}
    Since the Borel measure is translation-invariant, we can write
    \begin{align*}
        |\mathcal{B}_{1, j,2}'(w_1,w_2)| = \{\tilde{u}_\parallel\in \mathcal{B}_{\frac{15}{13}R_0}(0):E_{\tilde{u}_\parallel}\cap (w_1+w_2-\mathcal{B}_j - v)\neq \emptyset\}.
    \end{align*}
    Here, we can apply Lemma \ref{lem:intersection_lemma1} for $a = w_1+w_2 - v_3 - v$ and $r = \frac{R_3}{2}$, we get
    \begin{equation}\label{eq:1_intersection_2}
        \begin{split}
            |\mathcal{B}_{1, j,2}(w_1,w_2)|\leq |\mathcal{B}_{1, j,2}'(w_1,w_2)|&\leq \frac{5\pi}{2}\frac{R_3/2}{\frac{1}{13}R_0}\int_{|x|\leq \frac{15}{13}R_0}\,dx = \frac{65\pi}{4}\frac{R_3}{R_0}\left|\mathcal{B}_{\frac{15}{13}R_0}(v)\right|\\
            &=C_2 R_0^2 R_3
        \end{split}
    \end{equation}
    for some constant $C_2$ uniform about $w_1$ and $w_2$.
    
    Finally, for fixed $v\in \mathcal{B}_{\frac{R_0}{13}}(v_0)$,  $(w_1,w_2)\in \mathcal{B}_{1,j,1}$, and $\tilde{u}_\parallel\in \mathcal{B}_{1,j,2}(w_1,w_2)$, the intersection portion between $S_{w_1,w_2}\cap (v+E_{\tilde{u}_\parallel - v})$ and $w_1+w_2-\mathcal{B}_j$ is bounded by $C_3R_3$ by Lemma \ref{lem:intersection_lemma3}. Combining \eqref{eq:1_intersection_1} and \eqref{eq:1_intersection_2} with this bound, we have
    \begin{align*}
        B_{1,j} &\coloneqq \iint_{\mathcal{B}_{(1+2\delta)R_0}(v_0)\times \mathcal{B}_{(1+2\delta)R_1}(v_1)\times \mathcal{B}_{(1+2\delta)R_2}(v_2)}d\tilde{u}_\parallel dw_1dw_2\,\int_{S_{w_1,w_2}}dw_1'\,\mathbf{1}_{\mathcal{B}_j}(w_1+w_2-w_1')\Theta_{v+E_{\tilde{u}_\parallel+v}}(w_1')\\
        &\leq |\mathcal{B}_{1,j,1}||\mathcal{B}_{1,j,2}|(C_3R_3)\\
        &\leq 3C_1(1+2\delta)^6 C_2 (R_0^2 R_3)(R_0^5 R_3)C_3R_3\leq CR_0^7 |\mathcal{B}_j|
    \end{align*}
    for some uniform constant $C$ about $\delta$ and $\mathfrak{a}_0$. Now, we get
    \begin{align}\label{eq:final_form_bound_1}
        \begin{split}
            &\iint_{\mathcal{B}_{(1+2\delta)R_0}(v_0)\times \mathcal{B}_{(1+2\delta)R_1}(v_1)\times \mathcal{B}_{(1+2\delta)R_2}(v_2)}d\tilde{u}_\parallel dw_1dw_2\,\int_{S_{w_1,w_2}}dw_1'\,\mathbf{1}_{\cup_j \mathcal{B}_j}(w_1+w_2-w_1')\Theta_{v+E_{\tilde{u}_\parallel+v}}(w_1')\\
            &\leq \sum_{j=1}^\infty B_{1,j} \leq CR_0^7 \sum_{j=1}^\infty |\mathcal{B}_j|.
        \end{split}
    \end{align}\\
    
    \noindent (ii) Using Fubini's theorem again, we bound the second term in \eqref{eq:final_form_bound}:
    \begin{align*}
        \int_{\mathcal{B}_{(1+2\delta)R_0}(v_0)}d\tilde{u}_\parallel\,\iint_{\mathcal{B}_{(1+2\delta)R_1}(v_1)\times \mathcal{B}_{(1+2\delta)R_2}(v_2)}dw_1dw_2\, f_3(w_2) \int_{S_{w_1,w_2}}\,dw_1'\,\mathbf{1}_{\cup_j \mathcal{B}_j}(w_1' + \tilde{u}_\parallel - v)\Theta_{v+E_{\tilde{u}_\parallel-v}}(w_1').
    \end{align*}

    We again choose a ball $\mathcal{B}_{R_3}(v_3)  = \mathcal{B}_j \in \cup_j \mathcal{B}_j$ and define
    \begin{align*}
        \mathcal{B}_{2, j,1}(\tilde{u}_\parallel)&\coloneqq \{(w_1,w_2)\in \mathcal{B}_{(1+2\delta)R_1}(v_1)\times \mathcal{B}_{(1+2\delta)R_2}(v_2):S_{w_1,w_2}\cap (\mathcal{B}_j + \tilde{u}_\parallel - v)\neq \emptyset\},\\
        \mathcal{B}_{2, j,2} &\coloneqq \{\tilde{u}_\parallel\in \mathcal{B}_{(1+2\delta)R_0}(v_0):(v+E_{\tilde{u}_\parallel-v})\cap (\mathcal{B}_j+ \tilde{u}_\parallel - v) \neq \emptyset\}.
    \end{align*}
    Here, $\mathcal{B}_{2, j,2}$ can be defined independent to $w_1$ and $w_2$.
    
    For fixed $\tilde{u}_\parallel$ and $v$, we apply Lemma \ref{lem:intersection_lemma2} to  $\mathcal{B}_j+v-\tilde{u}_\parallel$. Therefore, we get
    \begin{equation}\label{eq:2_intersection_1}
        \begin{split}
            |\mathcal{B}_{2, j,1}| &\leq |\{(w_1,w_2)\in \mathcal{Q}_{(1+2\delta)R_1}(v_1)\times \mathcal{Q}_{(1+2\delta)R_2}(v_2):S_{w_1,w_2}\cap (\mathcal{B}_j + \tilde{u}_\parallel - v)\neq \emptyset\}|\\
            &\leq C_1(1+2\delta)^6 R_0^5R_3.
        \end{split}
    \end{equation}

    The second bound requires a different approach from (i). We choose a slightly larger ball $\mathcal{B}_{\frac{15}{13}R_0}(v)$ containing $\mathcal{B}_{(1+2\delta)R_0}(v_0)$ since $|v-v_0|\leq \frac{R_0}{26}$. We choose a spherical coordinates of $\tilde{u}_\parallel\in \mathcal{B}_{\frac{15}{13}R_0}(v)$. We write $\tilde{u}_\parallel - v = r\omega$. Note that the plane $v+E_{\tilde{u}_\parallel-v}$ is invariant if we do not change the direction of $\omega$. Therefore, for any fixed direction of $\omega$, if we change the length of the vector, $\mathcal{B}_j+v-\tilde{u}_\parallel$ orthogonally passes through the plane. Therefore, there exists $0\leq r_1(\omega)\leq r_2(\omega)\leq \frac{15}{13}R_0$ such that $|r_2(\omega)-r_1(\omega)|\leq 2R_3$ and $(v+E_{\tilde{u}_\parallel - v})\cap (\mathcal{B}_j + \tilde{u}_\parallel - v) = \emptyset$ if $r\not\in [r_1, r_2]$. Using this analysis, we get
    \begin{equation}\label{eq:2_intersection_2}
        \begin{split}
            |\mathcal{B}_{2, j,2}| &\leq |\{\tilde{u}_\parallel\in \mathcal{B}_{\frac{15}{13}R_0}(v):(v+E_{\tilde{u}_\parallel-v})\cap (\mathcal{B}_j+ \tilde{u}_\parallel - v) \neq \emptyset\}|\\
            &\leq \int d\omega\,\int_{r_1(\omega)}^{r_2(\omega)}dr\,r^2\leq C_4 R_0^2 R_3
        \end{split}
    \end{equation}
    for some constant $C_4$.
    
    Finally, for fixed $v\in \mathcal{B}_{\frac{R_0}{13}}(v_0)$, $\tilde{u}_\parallel\in \mathcal{B}_{2,j,1}$, and $(w_1,w_2)\in \mathcal{B}_{2,j,2}(\tilde{u}_\parallel)$, the intersection portion between $S_{w_1,w_2}\cap (v+E_{\tilde{u}_\parallel - v})$ and $\mathcal{B}_j+ \tilde{u}_\parallel - v$ is again bounded by $C_3R_3$ by Lemma \ref{lem:intersection_lemma3}. Combining \eqref{eq:2_intersection_1} and \eqref{eq:2_intersection_2} with this bound, we have
    \begin{align*}
        B_{2, j} &\coloneqq \iint_{\mathcal{B}_{(1+2\delta)R_0}(v_0)\times \mathcal{B}_{(1+2\delta)R_1}(v_1)\times \mathcal{B}_{(1+2\delta)R_2}(v_2)}d\tilde{u}_\parallel dw_1dw_2\,\int_{S_{w_1,w_2}}dw_1'\,\mathbf{1}_{\mathcal{B}_j}(w_1' + \tilde{u}_\parallel - v)\Theta_{v+E_{\tilde{u}_\parallel+v}}(w_1')\\
        &\leq |\mathcal{B}_{2,j,1}||\mathcal{B}_{2,j,2}|(C_3R_3)\\
        &\leq 3C_1C_4 (R_0^2 R_3)(R_0^5 R_3)C_3R_3\leq CR_0^7 |\mathcal{B}_j|.
    \end{align*}
    for some uniform constant $C$ about $\delta$ and $\mathfrak{a}_0$. Therefore, we get
    \begin{align}\label{eq:final_form_bound_2}
        \begin{split}
            &\iint_{\mathcal{B}_{(1+2\delta)R_0}(v_0)\times \mathcal{B}_{(1+2\delta)R_1}(v_1)\times \mathcal{B}_{(1+2\delta)R_2}(v_2)}d\tilde{u}_\parallel dw_1dw_2\,\int_{S_{w_1,w_2}}dw_1'\,\mathbf{1}_{\cup_j \mathcal{B}_j}(w_1' + \tilde{u}_\parallel - v)\Theta_{v+E_{\tilde{u}_\parallel+v}}(w_1')\\
            &\leq \sum_{j=1}^\infty B_{2,j}\leq CR_0^7 \sum_{j=1}^\infty |\mathcal{B}_j|.
        \end{split}
    \end{align}
    Combining \eqref{eq:final_form_bound_1} and \eqref{eq:final_form_bound_2}, we get the lemma.
    \end{proof}
    
    In \eqref{eq:final_form}, we restricted the domain of the angular variables to $\theta_\omega\in (\pi/8, 3\pi/8)$ and $\theta'\in (\pi/4, 3\pi/4)$. To employ these conditions, we again use Figure \ref{fig:balls}. We first consider the domain of $w'_1$ satisfying $\theta'\in (\pi/4, 3\pi/4)$ and $w'_1\in v+E_{\tilde{u}_\parallel+v}$. It corresponds to the intersection between a curved cylinder near the great circle at $\theta'= \pi/2$ and a plane through $v$. The smallest circle given by $S_{w_1,w_2}\cap (v+E_{\tilde{u}_\parallel - v})$ has radius greater than $\sqrt{\left(\frac{1}{2}\frac{467}{676}\right)^2 - \left(\frac{1}{26}\right)^2} = \frac{\sqrt{215358}}{1352}R_0$ by \eqref{eq:dist_list}. Also, the intersection length between the curved cylinder $\theta'\in (\pi/4, 3\pi/4)$ and the circle is minimized when the circle perpendicularly meets the great circle given by $\theta' = \pi/2$, and the length is greater than $\pi\frac{\sqrt{215358}}{1352}R_0$. Therefore, we get
    \begin{align}\label{eq:theta'_cond}
        \left|\left\{w_1'\in S_{w_1,w_2}\cap (v+E_{\tilde{u}_\parallel - v}): \theta'\in (\pi/4, 3\pi/4)\right\}\right|\geq \pi\frac{\sqrt{215358}}{1352}R_0.
    \end{align}
    
    Next, we consider $\theta_\omega$. From \eqref{eq:theta'}, we need to restrict
    \begin{align*}
        \cos \frac{3\pi}{8}<\frac{|\tilde{u}_\parallel - v|}{|\tilde{u}_\parallel + w_1' - 2v|}<\cos \frac{\pi}{8}.
    \end{align*}
    By \eqref{eq:dist_list}, we have
    \begin{align*}
        \frac{|\tilde{u}_\parallel-v|}{\sqrt{|\tilde{u}_\parallel-v|^2 + \left(\frac{\sqrt{3}}{2} + \frac{105}{1352}\right)^2R^2_0}}\leq \frac{|\tilde{u}_\parallel-v|}{|\tilde{u}_\parallel+w_1'-2v|} = \frac{|\tilde{u}_\parallel-v|}{\sqrt{|\tilde{u}_\parallel-v|^2 + |w_1'-v|^2}}\leq \frac{|\tilde{u}_\parallel-v|}{\sqrt{|\tilde{u}_\parallel-v|^2 + \left(\frac{415}{1352}\right)^2R_0^2}}.
    \end{align*}
    The left and right terms correspond to $\cos\frac{3\pi}{8}$ and $\cos\frac{\pi}{8}$. Therefore, $|\tilde{u}_\parallel - v|$ should satisfy
    \begin{align}\label{eq:E_1_condition}
        \left(\frac{\sqrt{3}}{2} + \frac{105}{1352}\right)\sqrt{\frac{2-\sqrt{2}}{2+\sqrt{2}}}R_0<|\tilde{u}_\parallel-v|< \frac{415}{1352}\sqrt{\frac{2+\sqrt{2}}{2-\sqrt{2}}}R_0.
    \end{align}
    Let the collection $\tilde{u}_\parallel$ satisfying \eqref{eq:E_1_condition} by $D_{\tilde{u}_\parallel}(v)$.
    
    Now, let
    \begin{align*}
        C_{1,\delta}(v)\coloneqq \frac{|D_{\tilde{u}_\parallel}(v)|}{|\mathcal{B}_{(1+2\delta)R_0}(v_0)|}.
    \end{align*}
    Since $C_{1,\delta}(v)$ is a uniformly lower bounded constant about $0<\delta\leq \frac{1}{104}$ and $v\in \mathcal{B}_{\frac{R_0}{13}}(v_0)$, we can take lower bound $0<C_1 = \inf_{\delta, v}C_{1,\delta}(v)$.
    
    Combining with \eqref{eq:theta'_cond}, we get
    \begin{align}\label{eq:E_1_upper_bound1}
        \begin{split}
            &\iint_{\mathcal{B}_{(1+2\delta)R_0}(v_0)\times \mathcal{B}_{(1+2\delta)R_1}(v_1)\times \mathcal{B}_{(1+2\delta)R_2}(v_2)}d\tilde{u}_\parallel dw_1dw_2\, \mathbf{1}_{D_{\tilde{u}_\parallel}(v)}\\
            &\qquad \times \int_{S_{w_1,w_2}}dw_1'\,\mathbf{1}_{\{\frac{1}{4}\pi<\theta'<\frac{3}{4}\pi\}}\Theta_{v+E_{\tilde{u}_\parallel+v}}(w_1')\\
            &\geq \pi\frac{\sqrt{215358}}{1352}C_{1,\delta}(v)|\mathcal{B}_{(1+2\delta)R_0}(v_0)||\mathcal{B}_{(1+2\delta)R_1}(v_1)||\mathcal{B}_{(1+2\delta)R_2}(v_2)|R_0.
        \end{split}
    \end{align}

    For notational convenience, we denote
    \begin{align*}
        &\chi(v, \tilde{u}_\parallel, w_1, w_2, w_1') \\
        &=\mathbf{1}_{\cup_j \mathcal{B}_j}(w_1+w_2-w_1')+\mathbf{1}_{\cup_j \mathcal{B}_j}(\tilde{u}_\parallel + w_1' - v) -\mathbf{1}_{\cup_j \mathcal{B}_j}(w_1+w_2-w_1')\mathbf{1}_{\cup_j \mathcal{B}_j}(\tilde{u}_\parallel + w_1' - v).
    \end{align*}
    By \eqref{eq:B_i_control} and Lemma \ref{lem:final_form_bound}, there exists a constant $C_2$, which is uniformly bound about $\delta$, such that
    \begin{align}\label{eq:E_1_lower_bound1}
        \begin{split}
            &\iint_{\mathcal{B}_{(1+2\delta)R_0}(v_0)\times \mathcal{B}_{(1+2\delta)R_1}(v_1)\times \mathcal{B}_{(1+2\delta)R_2}(v_2)}d\tilde{u}_\parallel dw_1dw_2\,\int_{S_{w_1,w_2}}dw_1'\,\chi\Theta_{v+E_{\tilde{u}_\parallel+v}}(w_1')\\
            &\leq C_2|\mathcal{B}_{(1+2\delta)R_0}(v_0)||\mathcal{B}_{(1+2\delta)R_1}(v_1)||\mathcal{B}_{(1+2\delta)R_2}(v_2)| R_0 \mathfrak{a}_0.
        \end{split}
    \end{align}    
    Adding \eqref{eq:E_1_upper_bound1} and \eqref{eq:E_1_lower_bound1} (from \eqref{eq:final_form_1} and \eqref{eq:final_form_2}), we have
    \begin{align*}
        &\iint_{\mathcal{B}_{(1+2\delta)R_0}(v_0)\times \mathcal{B}_{(1+2\delta)R_1}(v_1)\times \mathcal{B}_{(1+2\delta)R_2}(v_2)}d\tilde{u}_\parallel dw_1dw_2\,\mathbf{1}_{D_{\tilde{u}_\parallel}(v)}\\
        &\qquad \times \int_{S_{w_1,w_2}}dw_1'\,(1-\chi)\mathbf{1}_{\{\frac{1}{4}\pi<\theta'<\frac{3}{4}\pi\}}\Theta_{v+E_{\tilde{u}_\parallel+v}}(w_1')\\
        &\geq \iint_{\mathcal{B}_{(1+2\delta)R_0}(v_0)\times \mathcal{B}_{(1+2\delta)R_1}(v_1)\times \mathcal{B}_{(1+2\delta)R_2}(v_2)}d\tilde{u}_\parallel dw_1dw_2\,\mathbf{1}_{D_{\tilde{u}_\parallel}(v)}\\
        &\qquad \times \int_{S_{w_1,w_2}}dw_1'\,\mathbf{1}_{\{\frac{1}{4}\pi<\theta'<\frac{3}{4}\pi\}}\Theta_{v+E_{\tilde{u}_\parallel+v}}(w_1')\\
        &\quad -\iint_{\mathcal{B}_{(1+2\delta)R_0}(v_0)\times \mathcal{B}_{(1+2\delta)R_1}(v_1)\times \mathcal{B}_{(1+2\delta)R_2}(v_2)}d\tilde{u}_\parallel dw_1dw_2\,\\
        &\qquad \times \int_{S_{w_1,w_2}}dw_1'\,\chi\Theta_{v+E_{\tilde{u}_\parallel+v}}(w_1')\\
        &\geq |\mathcal{B}_{(1+2\delta)R_0}(v_0)||\mathcal{B}_{(1+2\delta)R_1}(v_1)||\mathcal{B}_{(1+2\delta)R_2}(v_2)| R_0\left(\pi\frac{\sqrt{215358}}{1352}C_{1,\delta}(v) - C_2\mathfrak{a}_0\right).
    \end{align*}
    Now, we choose 
    \begin{align}\label{eq:alpha0_cond2}
        \mathfrak{a}_0\leq \min\left\{\frac{C_1}{C_2}\left(\pi\frac{\sqrt{215358}}{1352} - \frac{1}{2}\right), \frac{1}{4^3(1+1/104)^2}\left(\frac{1}{104}\right)^3\right\}
    \end{align}
    so that
    \begin{align*}
        &\iint_{D_{\tilde{u}_\parallel}(v)\times \mathcal{B}_{(1+2\delta)R_1}(v_1)\times \mathcal{B}_{(1+2\delta)R_2}(v_2)}d\tilde{u}_\parallel dw_1dw_2\,\\
        &\qquad \times \int_{S_{w_1,w_2}}dw_1'\,(1-\chi)\mathbf{1}_{\{\frac{1}{4}\pi<\theta'<\frac{3}{4}\pi\}}\Theta_{v+E_{\tilde{u}_\parallel+v}}(w_1')\\
        &\geq \frac{C_{1,\delta}}{2}(v)|\mathcal{B}_{(1+2\delta)R_0}(v_0)||\mathcal{B}_{(1+2\delta)R_1}(v_1)||\mathcal{B}_{(1+2\delta)R_2}(v_2)| R_0
    \end{align*}
    for $\delta$ and $\alpha_0$ satisfying \eqref{eq:delta_cond1} and \eqref{eq:alpha0_cond2}. At the last, we used $\pi\frac{\sqrt{215358}}{1352} - \frac{1}{2}>\frac{1}{2}$. In \eqref{eq:alpha0_cond2}, the constant $C_2$ from Lemma \ref{lem:final_form_bound} is a uniform constant about $\alpha_0$ under $\alpha_0\leq \frac{1}{4^3(1+1/104)^2}\left(\frac{1}{104}\right)^3$, so it is not a circular logic. As a result, we computed a lower bound of the size of the set $\{(\tilde{u}_\parallel, w_1,w_2,w_1')\}$ which makes $(1-\chi)=1$ and $\frac{1}{4}\pi<\theta'<\frac{3}{4}\pi$.
    
    Now, let
    \begin{align*}
        D_{1,\delta}(v)&\coloneqq \left\{(\tilde{u}_\parallel,w_1,w_2)\in D_{\tilde{u}_\parallel}(v)\times \prod_{i=1}^2\mathcal{B}_{(1+2\delta)R_i}(v_i): \int_{S_{w_1,w_2}} dw_1'\,(1-\chi)\mathbf{1}_{\{\frac{1}{4}\pi<\theta'<\frac{3}{4}\pi\}}\Theta_{v+E_{\tilde{u}_\parallel + v}}(w_1')> \frac{R_0}{4}\right\},\\
        D_{2,\delta}(v)&\coloneqq \left\{(\tilde{u}_\parallel,w_1,w_2)\in D_{\tilde{u}_\parallel}(v)\times \prod_{i=1}^2\mathcal{B}_{(1+2\delta)R_i}(v_i): \int_{S_{w_1,w_2}} dw_1'\,(1-\chi)\mathbf{1}_{\{\frac{1}{4}\pi<\theta'<\frac{3}{4}\pi\}}\Theta_{v+E_{\tilde{u}_\parallel + v}}(w_1')\leq\frac{R_0}{4}\right\}.
    \end{align*}
    By the definition of $D_{1,\delta}(v)$ and $D_{2,\delta}(v)$, we have
    \begin{align*}
        &\iint_{D_{1,\delta}(v)}d\tilde{u}_\parallel dw_1dw_2\,\int_{S_{w_1,w_2}}dw_1'\,(1-\chi)\mathbf{1}_{\{\frac{1}{4}\pi<\theta'<\frac{3}{4}\pi\}}\Theta_{v+E_{\tilde{u}_\parallel+v}}(w_1')\\
        &\geq \iint_{D_{\tilde{u}_\parallel}(v)\times \mathcal{B}_{(1+2\delta)R_1}(v_1)\times \mathcal{B}_{(1+2\delta)R_2}(v_2)}d\tilde{u}_\parallel dw_1dw_2\,\int_{S_{w_1,w_2}}dw_1'\,(1-\chi)\Theta_{v+E_{\tilde{u}_\parallel+v}}(w_1')\\
        &\quad -\iint_{D_{2,\delta}(v)}d\tilde{u}_\parallel dw_1dw_2\,\int_{S_{w_1,w_2}}dw_1'\,(1-\chi)\mathbf{1}_{\{\frac{1}{4}\pi<\theta'<\frac{3}{4}\pi\}}\Theta_{v+E_{\tilde{u}_\parallel+v}}(w_1')\\
        &\geq \frac{C_{1,\delta}(v)}{2}|\mathcal{B}_{(1+2\delta)R_0}(v_0)||\mathcal{B}_{(1+2\delta)R_1}(v_1)||\mathcal{B}_{(1+2\delta)R_2}(v_2)|R_0 \\
        &\quad - C_{1,\delta}(v)|\mathcal{B}_{(1+2\delta)R_0}(v_0)||\mathcal{B}_{(1+2\delta)R_1}(v_1)||\mathcal{B}_{(1+2\delta)R_1}(v_2)|\frac{R_0}{4}\\
        &=\frac{C_{1,\delta}(v)}{4}|\mathcal{B}_{(1+2\delta)R_0}(v_0)||\mathcal{B}_{(1+2\delta)R_1}(v_1)||\mathcal{B}_{(1+2\delta)R_2}(v_2)|R_0.
    \end{align*}
    As a consequence, we get
    \begin{align}\label{eq:E_1_lower_bound2}
        \begin{split}
            |D_{1,\delta}(v)|&\geq \frac{\frac{C_{1,\delta}}{4} R_0\prod_{i=1}^3 |\mathcal{B}_{(1+2\delta)R_i}(v_i)|}{\int_{S_{w_1,w_2}} dw_1'\,\Theta_{v+E_{\tilde{u}_\parallel + v}}(w_1')}\\
            &\geq \frac{\frac{C_{1,\delta}}{4} R_0\prod_{i=1}^3 |\mathcal{B}_{(1+2\delta)R_i}(v_i)|}{\pi|w_1-w_2|}\\
            &\geq \frac{C_1}{8\pi}|\prod_{i=1}^3 |\mathcal{B}_{R_i}(v_i)|.
        \end{split}
    \end{align}
    In the middle, we used \eqref{eq:dist_list}.
    
    We collect \eqref{eq:dist_list}, \eqref{eq:final_form}, \eqref{eq:E_1_condition}, and the definition of $D_{1,\delta}(v)$ to compute the lower bound:
    \begin{align}\label{eq:3_final_1}
        \begin{split}
            &\int_{\mathcal{B}_{(1+2\delta)R_0}(v_0)}\,d\tilde{u}_\parallel\,\frac{1}{|\tilde{u}_\parallel - v|^{2-\gamma}}f_{1, \delta}(\tilde{u}_\parallel)\int_{\mathcal{B}_{(1+2\delta)R_1}(v_1)}\,dw_1f_{2,\delta}(w_1)\int_{\mathcal{B}_{(1+2\delta)R_2}(v_2)}\,dw_2f_{3, \delta}(w_2)\\
            &\qquad \times |w_1-w_2|^{\gamma-2}\int_{S_{w_1,w_2}}\,dw_1' \frac{h(\cos\theta_\omega)}{|\cos\theta_\omega|^\gamma}b(\cos\theta')(1-\chi)\Theta_{v+E_{\tilde{u}_\parallel-v}}(w_1')\\
            &\geq \sqrt{2-\sqrt{2}}\left(\frac{2}{\sqrt{2+\sqrt{2}}}\right)^\gamma c_b^2 \iint_{D_{1,\delta}(v)}d\tilde{u}_\parallel dw_1dw_2\,\left(\frac{57}{104}R_0\right)^{\gamma-2}\\
            &\qquad \times f_{1, \delta}(\tilde{u}_\parallel)f_{2,\delta}(w_1)f_{3, \delta}(w_2) \left(\left(\sqrt{3}+\frac{53}{676}\right)R_0\right)^{\gamma-2}\frac{R_0}{4}\\
            &\geq C R_0^{2\gamma - 3}\iint_{D_{1,\delta}(v)}d\tilde{u}_\parallel dw_1dw_2\,f_{1, \delta}(\tilde{u}_\parallel)f_{2,\delta}(w_1)f_{3, \delta}(w_2),
        \end{split}
    \end{align}
    where the last constant $C$ only depends on $c_b$ and $\gamma$ under $\delta$ and $\alpha$ conditions \eqref{eq:delta_cond1} and \eqref{eq:alpha0_cond2}.
    
    Now, we need to compute the last integral. From \eqref{eq:density_defect_set} and \eqref{eq:G_O_construction_2}, we have
    \begin{align*}
        (1+\mathfrak{a}_0\delta)|G_{0,\delta}|&\geq |E\cap \mathcal{B}_{R_0}(v_0)|\geq (1-4^3\mathfrak{a}_0)|\mathcal{B}_{R_0}(v_0)|,\\
        (1+\mathfrak{a}_0\delta)|G_{1,\delta}|&\geq |E\cap \mathcal{B}_{R_1}(v_1)|\geq (1-3184\mathfrak{a}_0)|\mathcal{B}_{R_1}(v_1)|,\\
        (1+\mathfrak{a}_0\delta)|G_{2,\delta}|&\geq |E\cap \mathcal{B}_{R_2}(v_2)|\geq (1-3184\mathfrak{a}_0)|\mathcal{B}_{R_2}(v_2)|.
    \end{align*}
    Therefore, using \eqref{eq:E_1_lower_bound2}, we conclude
    \begin{align*}
        &|D_{1,\delta}(v)\cap \prod_i G_{i,\delta}|\\
        &\geq |D_{1,\delta}(v)\cap \prod_i B_{(1+2\delta)R_i}(v_i)| - |\prod_i \mathcal{B}_{(1+2\delta)R_i}(v_i)\setminus \prod_i G_{i,\delta}|\\
        &\geq \frac{C_1}{8\pi}|\mathcal{B}_{R_0}(v_0)||\mathcal{B}_{R_1}(v_1)||\mathcal{B}_{R_2}(v_2)| - \left(|\prod_i \mathcal{B}_{(1+2\delta)R_i}(v_i)\setminus \prod_i \mathcal{B}_{R_i}(v_i)| + |\prod_i \mathcal{B}_{R_i}(v_i)\setminus \prod_i G_{i,\delta}|\right)\\
        &\geq \frac{C_1}{8\pi}|\mathcal{B}_{R_0}(v_0)||\mathcal{B}_{R_1}(v_1)||\mathcal{B}_{R_2}(v_2)| - C_3\left(\delta + \mathfrak{a}_0\right)|\mathcal{B}_{R_0}(v_0)||\mathcal{B}_{R_1}(v_1)||\mathcal{B}_{R_2}(v_2)|
    \end{align*}
    for some fixed constant $C_3>0$. We impose the final condition for $\delta$ and $\mathfrak{a}_0$ to meet
    \begin{align}\label{eq:alpha0_cond3}
        \frac{C_1}{16\pi}\geq C_3\max\{\delta, \mathfrak{a}_0\}.
    \end{align}
    
    Under this choice of $\delta$ and $\mathfrak{a}_0$, we get
    \begin{align}\label{eq:3_final_2}
        \begin{split}
            &\iint_{D_{1,\delta}}d\tilde{u}_\parallel dw_1dw_2\,f_{1, \delta}(\tilde{u}_\parallel)f_{2,\delta}(w_1)f_{3, \delta}(w_2)\\
            &\geq\iint_{D_{1,\delta}\cap \prod_i G_{i,\delta}} d\tilde{u}_\parallel dw_1dw_2\, \epsilon^3\\
            &\geq \epsilon^3\frac{C_1}{8\pi}|\mathcal{B}_{R_0}(v_0)||\mathcal{B}_{R_1}(v_1)||\mathcal{B}_{R_2}(v_2)|.
        \end{split}
    \end{align}
    Note that $C_1$ and $C_3$ are fixed constants under $\delta$ and $\mathfrak{a}_0$ conditions \eqref{eq:delta_cond1}, \eqref{eq:alpha0_cond1}, and \eqref{eq:alpha0_cond2}.
    
    Combining \eqref{eq:3_final_1} and \eqref{eq:3_final_2}, we get
    \begin{align*}
        &Q_1(f_{1, \delta}, Q_1(f_{2,\delta},f_{3,\delta},\varphi_\delta),\varphi_\delta)(v)\\
        &\geq CR_0^{2\gamma+6}\epsilon^3
    \end{align*}
    for some constant $C$ depending on $c_b$ and $\gamma$ and for $\mathfrak{a}_0$ and $\delta$ satisfying \eqref{eq:delta_cond1}, \eqref{eq:alpha0_cond2}, and \eqref{eq:alpha0_cond3}. We let $\delta\rightarrow 0$ and use Lemma \ref{lem:Q_1_approx} to conclude
    \begin{align*}
        Q_1(f_1, Q_1(f_2,f_3,\chi),\chi)(v)\geq CR_0^{2\gamma+6}\epsilon^3
    \end{align*}
    for a.e. $v\in \mathcal{B}_{R_0/13}(v_0)$. Using a similar limiting argument, we can replace the continuity condition on the angular collision kernel $b$ by the measurability condition. It ends the proof.
\end{proof}

If $S(f_0)>0$, which is defined in \eqref{eq:entropy}, $f_0$ is strictly above $0$ and below $1$ in some set. In this case, we can directly apply  Proposition \ref{prop:4.4}. However, we can also consider some initial functions such that $S(f_0)=0$, but $f_0$ is not a saturated Fermi-Dirac equilibrium. One such example is $f(t,v) = 1$ on $1\leq |v|\leq 2$ and $0$ otherwise. Since $f_0$ is not an equilibrium, it should collapse to an intermediate distribution and eventually may converge to the equilibrium with the corresponding macroscopic quantities. Therefore, we can guess that it also has a Fermi-Dirac lower bound and satisfies the results in this section for positive time $t>0$. The next lemma proves it.

\begin{proposition}\label{prop:entropy_production}
    Suppose the collision kernel \eqref{eq:B_defi} satisfies (H1), (H5), and $0\leq \gamma\leq 2$. Also, suppose $f$ is a solution of the Boltzmann-Fermi-Dirac equation that satisfies the entropy identity \eqref{eq:entropy_identity}. If $f_0(v)$ only has values $0$ or $1$ but is not a saturated equilibrium, then there exists $t_0>0$ depending on $f_0$, $\gamma$, and $b(\cos\theta)$ such that a solution $f(t,v)$ having initial data $f_0(v)$ meets
    \begin{align*}
        S(f)(t)\geq t
    \end{align*}
    for $0\leq t\leq t_0$.
\end{proposition}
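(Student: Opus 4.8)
The plan is to use the entropy identity \eqref{eq:entropy_identity} directly. Since $f_0$ takes only the values $0$ and $1$, we have $f_0\ln f_0=(1-f_0)\ln(1-f_0)=0$ a.e., so $S(f_0)=0$ and \eqref{eq:entropy_identity} becomes $S(f)(t)=\int_0^t\big(\int_{\mathbb{R}^3}D(f)(\tau,v)\,dv\big)d\tau$. Thus it suffices to show that $\tau\mapsto\int_{\mathbb{R}^3}D(f)(\tau,v)\,dv$ stays $\ge 1$ on a right neighborhood of $0$; in fact I will show it tends to $+\infty$ as $\tau\to0^+$. Granting this, pick $t_0>0$ with $\int_{\mathbb{R}^3}D(f)(\tau,v)\,dv\ge 1$ for all $\tau\in(0,t_0]$, whence $S(f)(t)\ge\int_0^t 1\,d\tau=t$ for $0\le t\le t_0$, and $t_0$ depends only on the (uniquely determined, by Theorem \ref{thm:exi_uni_thm}) solution $f$, hence on $f_0,\gamma,b$.

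The first key point is that $\int_{\mathbb{R}^3}D(f_0)(v)\,dv=+\infty$. Writing $f_0=\mathbf{1}_A$ and abbreviating $A_1:=f_0'f_{0,*}'(1-f_0)(1-f_{0,*})$, $A_2:=f_0 f_{0,*}(1-f_0')(1-f_{0,*}')$, both $A_1$ and $A_2$ are $\{0,1\}$-valued and cannot both equal $1$ at the same $(v,v_*,\sigma)$; hence $\Gamma(A_1,A_2)$ takes only the values $0$ (when $A_1=A_2=0$) and $+\infty$ (when $A_1\neq A_2$). Under (H5) we have $B(v-v_*,\sigma)>0$ for a.e.\ $(v,v_*,\sigma)$, so $\int D(f_0)\,dv=+\infty$ unless $A_1=A_2$ for a.e.\ $(v,v_*,\sigma)$, i.e.\ unless $D(f_0)(v)=0$ for a.e.\ $v$. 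But $D(f_0)\equiv 0$ forces $Q_{FD}(f_0,f_0)\equiv0$ (the integrand of $Q_{FD}$ is $B(A_1-A_2)$), so $f_0$ would be a stationary conservative solution, hence — by the classification of equilibria of the Boltzmann–Fermi–Dirac equation (the only $\{0,1\}$-valued equilibria being the saturated Fermi–Dirac distributions, i.e.\ indicators of balls) — a saturated equilibrium, contradicting the hypothesis. Therefore $\int D(f_0)\,dv=+\infty$.

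The second key point is a lower-semicontinuity argument transferring this to small times. Let $\tau_n\to0^+$. Since $f\in C([0,\infty),L^1_2)$, $f(\tau_n)\to f_0$ in $L^1$, so along a subsequence $f(\tau_n)\to f_0$ a.e.\ on $\mathbb{R}^3$; because the maps $(v,v_*,\sigma)\mapsto v'$ and $(v,v_*,\sigma)\mapsto v_*'$ send Lebesgue-null sets to null sets (a Fubini argument over the spheres centered at $\tfrac{v+v_*}{2}$), we also have $f(\tau_n)(v')\to f_0(v')$ and $f(\tau_n)(v_*')\to f_0(v_*')$ for a.e.\ $(v,v_*,\sigma)$. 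As $\Gamma$ is lower semicontinuous on $[0,\infty)^2$ with values in $[0,+\infty]$ and $B\,\Gamma(\cdot)\ge0$, Fatou's lemma gives
\[
\liminf_{n\to\infty}\int_{\mathbb{R}^3}D(f)(\tau_n,v)\,dv\ \ge\ \int_{\mathbb{R}^3}D(f_0)(v)\,dv\ =\ +\infty .
\]
Since every sequence $\tau_n\to0^+$ admits such a subsequence, $\int_{\mathbb{R}^3}D(f)(\tau,v)\,dv\to+\infty$ as $\tau\to0^+$, completing the proof as indicated in the first paragraph.

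I expect the main obstacle to be the assertion in the second paragraph that a $\{0,1\}$-valued $f_0$ with $D(f_0)=0$ a.e.\ must be (a.e.)\ the indicator of a ball. One route is the one above — $D(f_0)=0\Rightarrow Q_{FD}(f_0,f_0)=0\Rightarrow f_0$ is a stationary equilibrium — combined with the standard classification of saturated Fermi–Dirac states; an alternative route invokes convergence to equilibrium (so that the stationary $f_0$ equals its limiting Fermi–Dirac equilibrium, which can be $\{0,1\}$-valued only if saturated); a fully self-contained route would analyze, sphere by sphere, the condition that for a.e.\ sphere $\partial B_\rho(m)$ either $A\cap\partial B_\rho(m)$ or $A^c\cap\partial B_\rho(m)$ contains no pair of points antipodal with respect to $m$, and deduce that $A$ is a ball. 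The remaining ingredients — the entropy-identity reduction, the dichotomy $\Gamma(A_1,A_2)\in\{0,+\infty\}$, and the Fatou/lower-semicontinuity passage — are routine.
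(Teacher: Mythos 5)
Your proof is correct and shares the paper's two opening ingredients: the reduction $S(f)(t)=\int_0^t\int_{\mathbb{R}^3} D(f)(\tau,v)\,dv\,d\tau$ via the entropy identity, and the observation that for $\{0,1\}$-valued $f_0$ the quantity $\Gamma(\cdot,\cdot)$ takes only the values $0$ and $+\infty$, so that $\int_{\mathbb{R}^3} D(f_0)\,dv=+\infty$ unless $f_0$ is a saturated equilibrium, by Lu's classification (which the paper cites; your passage through $Q_{FD}(f_0,f_0)\equiv 0$ is an unnecessary detour, since $D(f_0)=0$ already gives the pointwise identity Lu's result is stated for). Where you genuinely diverge from the paper is in propagating this unboundedness to small positive times. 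You argue softly: extract along any $\tau_n\to 0^+$ a subsequence with $f(\tau_n)\to f_0$ a.e., push this through the measure-preserving collision change of variables to get convergence in the post-collision arguments, then use lower semicontinuity of $\Gamma$ on $[0,\infty)^2$ and Fatou to conclude $\int_{\mathbb{R}^3} D(f)(\tau,v)\,dv\to+\infty$ as $\tau\to 0^+$. The paper instead argues quantitatively: it isolates a bounded positive-measure set $E_1'$ of triples $(v,v_*,\omega)$ with $f_0(v)=f_0(v_*)=1$ and $f_0(v')=f_0(v'_*)=0$, applies the Duhamel forms \eqref{eq:duha_f}, \eqref{eq:duha_1-f} and the bound $G_0^t\geq e^{-ct(1+R^\gamma)}$ to get $f(t,v),f(t,v_*)\geq e^{-ct(1+R^\gamma)}$ and $f(t,v'),f(t,v'_*)\leq 1-e^{-ct(1+R^\gamma)}$ on $E_1'$, and reads off an explicit lower bound on $\Gamma$ there that diverges as $t\to 0^+$. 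Your Fatou route is shorter and avoids the Duhamel machinery, at the price of giving no quantitative handle on $t_0$; the paper's route is longer but constructive, producing $t_0$ in terms of $\|f_0\|_{1,2}$, $\gamma$, $C_b$, and the cut-off radius $R$ for $E_1'$, in keeping with the explicit estimates elsewhere in the section.
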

\begin{proof}
    Since $f(t,v)$ satisfies the entropy identity \eqref{eq:entropy_identity}, $S(f)$ is given by
    \begin{align*}
        S(f)(t) = \int_0^t D(f)(\tau)\,d\tau.
    \end{align*}
    We first claim that $D(f_0) = \infty$. In \cite{Lu2001353} and \cite{lu2022}, Lu proved that any function $f_0$ satisfying $f_0'f'_{0,*}(1-f_0)(1-f_{0,*}) = f_0f_{0,*}(1-f'_0)(1-f'_{0,*})$ for all $v$, $v_*$, and $\omega$ is an equilibrium for the Boltzmann-Fermi-Dirac equation. Since $f_0$ is not a saturated equilibrium, it means that $f_0'f'_{0,*}(1-f_0)(1-f_{0,*}) \neq f_0f_{0,*}(1-f'_0)(1-f'_{0,*})$ on positive measure set $E\subset \mathbb{R}^6\times\mathbb{S}^2$. However, as $f_0=0$ or $1$, $\Gamma(f_0'f'_{0,*}(1-f_0)(1-f_{0,*}),f_0f_{0,*}(1-f'_0)(1-f'_{0,*})) = \infty$ on the set. By the assumption (H5), we get $D(f_0) = \infty$.

    There are two possibilities in $E$: $f_0(v)f_0(v_*) = 1$ with $f_0(v') + f_0(v'_*) = 0$ or $f_0(v')f_0(v'_*) = 1$ with $f_0(v) + f_0(v_*) = 0$. As
    \begin{align*}
        &\int_{\mathbb{R}^6\times\mathbb{S}^2}\mathbf{1}_{\{f_0(v')f_0(v'_*) = 1\}}\mathbf{1}_{\{f_0(v) + f_0(v_*) = 0\}}\,dvdv_*d\omega\\
        &=\int_{\mathbb{R}^6\times\mathbb{S}^2}\mathbf{1}_{\{f_0(v)f_0(v_*) = 1\}}\mathbf{1}_{\{f_0(v') + f_0(v'_*) = 0\}}\,dvdv_*d\omega,
    \end{align*}
    the set
    \begin{align*}
        E_1\coloneqq \{(v,v_*,\omega):f_0(v)f_0(v_*) = 1\text{ and }f_0(v') + f_0(v'_*) = 0\}
    \end{align*}
    satisfies $|E_1| = \frac{|E|}{2}$. Also, $E_1' = E_1\cap \{(v,v_*):|v|^2+|v_*|^2\leq R\}\times\mathbb{S}^2$ for $R = \left(\frac{17\pi\|f_0\|^2_{1,2}}{|E_1|}\right)^{1/2}$ has measure greater than $\frac{|E_1|}{2}$. Indeed, assume its measure is not greater than $\frac{|E_1|}{2}$. Then,
    \begin{align*}
        8\pi\|f_0\|_{1,2}^2 &\geq \int_{\mathbb{R}^6\times\mathbb{S}^2} (|v|^2+|v_*|^2)f(v)f(v_*)\,dv dv_*d\omega\\
        &\geq \int_{E_1\cap (\{|v|^2+|v_*^2|\leq R^2\}^c\times\mathbb{S}^2)} (|v|^2+|v_*|^2)f(v)f(v_*)\,dv dv_*d\omega\\
        &\geq R^2\int_{E_1\cap (\{|v|^2+|v_*^2|\leq R^2\}^c\times\mathbb{S}^2)}\,dv dv_*d\omega\geq \frac{R^2}{2}|E_1|\geq \frac{17}{2}\pi\|f_0\|^2_{1,2},
    \end{align*}
    which is a contradiction.

    Now, let $(v,v_*,\omega)\in E_1'$. From \eqref{eq:duha_f}, we get
    \begin{align*}
        f(t,v)&\geq f_0(v) G_0^t(v)\geq e^{-ct(1+|v|^\gamma)}\geq e^{-ct(1+R^\gamma)},\\
        f(t,v_*)&\geq f_0(v_*) G_0^t(v_*)\geq e^{-ct(1+|v_*|^\gamma)}\geq e^{-ct(1+R^\gamma)}.
    \end{align*}
    Since $|v'|^2+|v'_*|^2 = |v|^2+|v_*|^2$, we have $|v'|^2+|v'_*|^2\leq R^2$. Applying $\eqref{eq:duha_1-f}$ to $f(v')$ and $f(v'_*)$, we also have
    \begin{align*}
        f(t,v')&\leq 1-e^{-ct(1+R^\gamma)},\\
        f(t,v'_*)&\leq 1-e^{-ct(1+R^\gamma)}.
    \end{align*}
    Choose $t_1>0$ such that $e^{-ct(1+R^\gamma)}\geq \frac{1}{2}$ for $0\leq t\leq t_1$. For $0<t\leq t_1$, we get
    \begin{align*}
        \Gamma(f'f'_*(1-f)(1-f_*),ff_*(1-f')(1-f'_*)) \geq \frac{1}{2^4}\left[\left(\frac{1-e^{-ct(1+R^\gamma)}}{e^{-ct(1+R^\gamma)}}\right)^4 - 1\right]\ln\left(\frac{(1-e^{-ct(1+R^\gamma)})}{e^{-ct(1+R^\gamma)}}\right)
    \end{align*}
    for $(v,v_*,\omega)\in E_1'$. Therefore, there exists $t_0\leq t_1$ such that
    \begin{align*}
        D(f)(t)\geq 1
    \end{align*}
    for $0\leq t\leq t_0$, so $S(f)(t)\geq t$. Since the construction of $E_1'$ depends on $f_0$, and the integral $D(f)$ depends on $\gamma$ and $b(\cos\theta)$, the $t_0$ depends on $f_0$, $\gamma$, and $b(\cos\theta)$.
\end{proof}
Now, we can assume that $S(f)(t)>0$ for some $t\geq 0$ if $f$ is not a saturated equilibrium.

The next lemma proves that we can find some $\epsilon>0$ and $v_{-1}\in\mathbb{R}^3$ to fulfill the conditions in Proposition \ref{prop:4.4} using $S(f)>0$ and $\|f_0\|_{1,2}$.
\begin{lemma}\label{lem:construct_epsilon_v-1}
    Let $f(v)\in L^1_2$ satisfies $0\leq f\leq 1$ and $S(f)>0$. Then, there exists $\epsilon_0$ depending on $S(f)$ and $\|f\|_{1,2}$ such that $|\{v:\epsilon_0\leq f(v)\leq 1-\epsilon_0\}|\geq \frac{S(f)}{2\ln 2}$. Furthermore, we can choose $R>0$ depending on $\epsilon_0$, $S(f)$, and $\|f\|_{1,2}$ such that
    \begin{align*}
        \left|\{v:\epsilon_0\leq f(v)\leq 1-\epsilon_0\}\cap B_R(0)\right|>\frac{|\{v:\epsilon_0\leq f(v)\leq 1-\epsilon_0\}|}{2}.
    \end{align*}
\end{lemma}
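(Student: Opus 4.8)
The plan is to read the claim off the identity $S(f)=\int_{\mathbb R^3}\psi(f(v))\,dv$, where $\psi(x):=-x\ln x-(1-x)\ln(1-x)$ is the binary entropy function: $\psi$ is concave on $[0,1]$, vanishes at $0$ and $1$, is symmetric about $\tfrac12$, increasing on $[0,\tfrac12]$, and satisfies $0\le\psi\le\ln 2$. Set $B_\epsilon:=\{v:\epsilon\le f(v)\le 1-\epsilon\}$. Since $\psi\le\ln 2$, it suffices to find $\epsilon_0$, quantitatively in terms of $S(f)$ and $\|f\|_{1,2}$, with $\int_{B_{\epsilon_0}^c}\psi(f)\,dv\le\tfrac12 S(f)$; then $\int_{B_{\epsilon_0}}\psi(f)\ge\tfrac12 S(f)$ forces $|B_{\epsilon_0}|\ge S(f)/(2\ln 2)$. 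The point is thus a bound on $\int_{B_\epsilon^c}\psi(f)$ that vanishes as $\epsilon\to0$ — the naive pointwise bound $\psi(f)\le\psi(\epsilon)$ on $B_\epsilon^c$ is useless because $B_\epsilon^c$ is cofinite.

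The contribution of $\{f>1-\epsilon\}$ is easy: from $-(1-y)\ln(1-y)\le y$ and $-y\ln y\le\tfrac2e\sqrt y$ one gets $\psi(y)\le 2\sqrt y$ on $[0,1]$, so by symmetry $\psi(f)\le 2\sqrt{1-f}\le 2\sqrt\epsilon$ there, while Markov's inequality gives $|\{f>1-\epsilon\}|\le(1-\epsilon)^{-1}\|f\|_{1,0}\le 2\|f\|_{1,2}$ for $\epsilon\le\tfrac12$, hence $\int_{\{f>1-\epsilon\}}\psi(f)\le 4\sqrt\epsilon\,\|f\|_{1,2}$. The contribution of $\{f<\epsilon\}$ is delicate. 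Writing $\psi(x)=\int_0^x\psi'(s)\,ds$ with $\psi'(s)=\ln\frac{1-s}{s}\le\ln(1/s)$ on $(0,\tfrac12)$ and using Tonelli,
\[
\int_{\{f<\epsilon\}}\psi(f)\,dv=\int_0^\epsilon\psi'(s)\,\bigl|\{s<f<\epsilon\}\bigr|\,ds\le\int_0^\epsilon\ln(1/s)\,\bigl|\{f>s\}\bigr|\,ds .
\]
Splitting $\{f>s\}$ into $\{f>s\}\cap\mathcal{B}_M(0)$ and its complement and using $|\{f>s,\ |v|>M\}|\le(sM^2)^{-1}\int_{\mathbb R^3}f|v|^2\,dv\le\|f\|_{1,2}/(sM^2)$, then optimizing in $M$ (i.e. $M\sim(\|f\|_{1,2}/s)^{1/5}$), yields the interpolation estimate $|\{f>s\}|\le C\|f\|_{1,2}^{3/5}\,s^{-3/5}$ with $C$ absolute. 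Since $-3/5>-1$, the integral $\int_0^\epsilon\ln(1/s)\,s^{-3/5}\,ds$ converges and is $\le C'\epsilon^{2/5}(1+\ln(1/\epsilon))$, so $\int_{\{f<\epsilon\}}\psi(f)\le C''\|f\|_{1,2}^{3/5}\epsilon^{2/5}(1+\ln(1/\epsilon))$.

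Both pieces vanish as $\epsilon\to0$ at a rate depending only on $\|f\|_{1,2}$, so a choice $\epsilon_0=\epsilon_0(S(f),\|f\|_{1,2})\le\tfrac12$ with $\int_{B_{\epsilon_0}^c}\psi(f)\le\tfrac12 S(f)$ exists, proving the first assertion. For the second, $B_{\epsilon_0}\subset\{f\ge\epsilon_0\}$ gives $B_{\epsilon_0}\setminus\mathcal{B}_R(0)\subset\{f\ge\epsilon_0,\ |v|>R\}$, and the same Chebyshev trick yields $|\{f\ge\epsilon_0,\ |v|>R\}|\le\|f\|_{1,2}/(\epsilon_0R^2)$. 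Taking $R$ with $\|f\|_{1,2}/(\epsilon_0R^2)<\tfrac12|B_{\epsilon_0}|$ — any $R>\bigl(4\ln2\,\|f\|_{1,2}/(\epsilon_0 S(f))\bigr)^{1/2}$ works, since $|B_{\epsilon_0}|\ge S(f)/(2\ln 2)>0$ — gives $|B_{\epsilon_0}\cap\mathcal{B}_R(0)|\ge|B_{\epsilon_0}|-\|f\|_{1,2}/(\epsilon_0R^2)>\tfrac12|B_{\epsilon_0}|$. The only real obstacle is the estimate of $\int_{\{f<\epsilon\}}\psi(f)$: a bound using mass alone would produce the divergent integral $\int_0^\epsilon s^{-1}\ln(1/s)\,ds$, and it is precisely the extra decay $|\{f>s\}|\lesssim s^{-3/5}$, obtained by interpolating the $|v|^2$–moment against the mass, that renders it convergent — which is why the hypothesis is $f\in L^1_2$ rather than merely $f\in L^1$.
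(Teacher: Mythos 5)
Your argument is correct and follows essentially the same route as the paper: both hinge on the interpolation bound $|\{f>s\}|\lesssim\|f\|_{1,2}^{3/5}s^{-3/5}$ coming from the $|v|^2$-moment, used to tame the entropy contribution of the region where $f$ is small, and both handle $\{f>1-\epsilon\}$ by a plain mass/Markov estimate and pick $R$ by Chebyshev. The only cosmetic difference is that you run the low-$f$ estimate through the layer-cake formula $\int_0^\epsilon\psi'(s)|\{f>s\}|\,ds$ while the paper slices that set into dyadic shells $E_{3,n}=\{2^{-n}\epsilon\le f<2^{-(n-1)}\epsilon\}$ and sums — same estimate, different bookkeeping.
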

\begin{proof}
    Let $0<\epsilon<\frac{1}{4}$, it will be chosen later. Define
    \begin{align*}
        E_1 = \{f> 1-\epsilon\},\quad E_2 = \{\epsilon\leq f\leq 1-\epsilon\},\text{ and }E_{3, n} = \{\frac{1}{2^n}\epsilon\leq f<\frac{1}{2^{n-1}}\epsilon\}
    \end{align*}
    for $n\geq 1$. Since
    \begin{align*}
        \|f\|_{1,0}\geq \int_{E_1} f\,dv\geq (1-\epsilon)|E_1|,
    \end{align*}
    we have $|E_1|\leq \frac{\|f\|_{1,0}}{1-\epsilon}$. Also,
    \begin{align*}
        \frac{1}{2^n}\epsilon \frac{4\pi}{5}\left(\frac{3|E_{3,n}|}{4\pi}\right)^{5/3}\leq \frac{1}{2^n}\epsilon\int_{E_{3, n}} |v|^2\,dv\leq \int_{E_{3, n}} |v|^2 f(v)\,dv\leq \|f\|_{1,2},
    \end{align*}
    so
    \begin{align*}
        |E_{3,n}|\leq C 2^{\frac{3}{5}n}\epsilon^{-\frac{3}{5}}
    \end{align*}
    for some constant $C$ depending on $\|f\|_{1,2}$. In the middle, we used the Hardy-Littlewood inequality so that the integral is minimized when $E_{3,n}$ is a ball centered at $0$ with radius $r = \left(\frac{3|E_{3,n}|}{4\pi}\right)^{1/3}$. Therefore,
    \begin{align*}
        &-\int_{E_2} \left(f\ln f + (1-f)\ln(1-f)\right)\,dv\\
        &= S(f) + \int_{E_1} \left(f\ln f + (1-f)\ln(1-f)\right)\,dv + \sum_{n=1}^\infty \int_{E_{3,n}} \left(f\ln f + (1-f)\ln(1-f)\right)\,dv\\
        &\geq S(f) + (\epsilon\ln \epsilon + (1-\epsilon)\ln (1-\epsilon)) |E_1| + \sum_{n=1}^\infty \left(\frac{1}{2^{n-1}}\epsilon \ln\frac{1}{2^{n-1}}\epsilon + \left(1-\frac{1}{2^{n-1}}\epsilon\right)\ln\left(1-\frac{1}{2^{n-1}}\epsilon\right)\right)|E_{3,n}|\\
        &\geq S(f) + (\ln \epsilon - 2)\epsilon\|f\|_{1,0} + \sum_{n=1}^\infty \left(2\ln\frac{1}{2^{n-1}}\epsilon - 4\right)\frac{\epsilon}{2^n}|E_{3,n}|\\
        &\geq S(f) + (\epsilon^{1/2}\ln \epsilon - 2\epsilon^{1/2})\epsilon^{1/2}\|f\|_{1,0} + C\sum_{n=1}^\infty \left(2^{4/5}\left(\frac{\epsilon}{2^{n-1}}\right)^{\frac{1}{5}}\ln\frac{\epsilon}{2^{n-1}}- 4\left(\frac{\epsilon}{2^n}\right)^{\frac{1}{5}}\right)\left(\frac{\epsilon}{2^n}\right)^{\frac{1}{5}}.
    \end{align*}
    In the computation, we used
    \begin{align*}
        (x\ln x + (1-x)\ln(1-x))\frac{1}{(1-x)}\geq x(\ln x -2)
    \end{align*}
    and $\ln(1-x)\geq -2x$ for $0\leq x\leq \frac{1}{4}$. 
    
    Since $|x^{1/2}\ln x|, |x^{\frac{1}{5}}\ln (2x)|\leq C$ for some constant $C$ for $0<x\leq \frac{1}{4}$, we obtain
    \begin{align*}
        -\int_{E_2} \left(f\ln f + (1-f)\ln(1-f)\right)\,dv&\geq S(f) - C\left(\epsilon^{\frac{1}{2}} + \sum_{n=1}^\infty \left(\frac{\epsilon}{2^n}\right)^{\frac{1}{5}}\right)\\
        &\geq S(f) - C\left(\epsilon^{\frac{1}{2}} + \epsilon^{\frac{1}{5}}\right).
    \end{align*}
    for some constants $C$ depending on $\|f\|_{1,2}$ for $0<\epsilon\leq \frac{1}{4}$. As
    \begin{align*}
        -\int_{E_2} \left(f\ln f + (1-f)\ln(1-f)\right)\,dv\leq (\ln 2)|E_2|,
    \end{align*}
    if we choose small enough $\epsilon_0\leq \frac{1}{4}$ by
    \begin{align*}
        C\left(\epsilon_0^{\frac{1}{2}} + \epsilon_0^{\frac{1}{5}}\right) = (\ln 2)|E_2|,
    \end{align*}
    we can make
    \begin{align*}
        |E_2|\geq \frac{S(f)}{2\ln 2}.
    \end{align*}
    Finally, if we choose $R\geq \left(\frac{3\|f\|_{1,2}}{\epsilon_0|E_2|}\right)^{1/2}$, but $|E_2\cap B_R(0)|\leq \frac{|E_2|}{2}$, then
    \begin{align*}
        \int_{\mathbb{R}^3} |v|^2f(v)\,dv\geq \int_{B_R(0)^c} |v|^2f(v)\,dv\geq R^2\epsilon_0\frac{|E_2|}{2}\geq \frac{3}{2}\|f\|_{1,2},
    \end{align*}
    which is a contradiction. Therefore, $|E_2\cap B_R(0)|\geq \frac{|E_2|}{2}$ for such $R$.
\end{proof}

Finally, we prove the main theorem of this section.
\begin{theorem}\label{thm:main4}
    We consider the collision kernel \eqref{eq:B_defi} for $0\leq \gamma\leq 2$, (H1), and (H2). Let $f$ be a solution of the Boltzmann-Fermi-Dirac equation with $S(f_0)>0$. Then, there exist $C>0$, $r>0$, $v_0$, and $T_0>0$ depending on $\gamma$, $C_b$, $c_b$, and $f_0$ such that 
    \begin{align*}
        Ct^2\leq f(t,v),\quad Ct^2\leq 1-f(t,v)
    \end{align*}
    on $v\in B_r(v_0)$. Furthermore, we can control $R(t) = |v_0|$ using $S(f_0)$ and $\|f_0\|_{1,2}$.
     
    If $S(f_0) = 0$, but it is not a saturated equilibrium, we further assume that the collision kernel satisfies (H5) and $f$ satisfies the entropy identity \eqref{eq:entropy_identity}.
    Then, there exist (1) $T_0>0$ depending on $\gamma$, $b(\cos\theta)$, and $f_0$ and (2) $C(t)>0$, $r(t)>0$, and $v_0(t)$ depending on $\gamma$, $C_b$, $c_b$, and $f(t/2,v)$ for $0<t\leq T_0$ such that 
    \begin{align*}
        C(t)\leq f(t,v),\quad C(t)\leq 1-f(t,v)
    \end{align*}
    on $v\in B_{r(t)}(v_0(t))$ for each $0<t\leq T_0$. Furthermore, we can control $R = |v_0(t)|$ using $t$ and $\|f_0\|_{1,2}$.
\end{theorem}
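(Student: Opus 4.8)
The plan is to chain the three main ingredients of this section. Consider first $S(f_0)>0$. I would apply Lemma~\ref{lem:construct_epsilon_v-1} to $f_0$: it produces $\epsilon_0>0$ and $R>0$, depending only on $S(f_0)$ and $\|f_0\|_{1,2}$, such that $E\coloneqq\{v:\epsilon_0\le f_0(v)\le 1-\epsilon_0\}\cap B_R(0)$ has measure at least $\tfrac{S(f_0)}{4\ln 2}>0$. Feeding this $E$ and $\epsilon=\epsilon_0$ into Proposition~\ref{prop:4.4}: fix $\mathfrak a_0$ to be the explicit small constant of that proposition, apply the Lebesgue density theorem to $E$ to get $v_{-1}\in E$ and $R_0>0$ satisfying \eqref{eq:E_density} (taking $4R_0$ below the radius threshold of the density point, so in particular $R_0\le 1$), and obtain the two iterated-$Q_1$ lower bounds $\ge C R_0^{2\gamma+6}\epsilon_0^3\eqqcolon\mathfrak c$ on the ball $\mathcal B_{R_0/13}(v_0)$ constructed in the proof of Proposition~\ref{prop:4.4}, where $C$ depends on $\mathfrak a_0,c_b,\gamma$. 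Since $|v_0-v_{-1}|\le 0.8R_0$ by \eqref{eq:dist_list2}, one has $\mathcal B_{R_0/13}(v_0)\subset\mathcal B_{4R_0}(v_{-1})$, so the hypotheses of Lemma~\ref{lem:Positivity} hold with $R=4R_0$, this $\mathfrak c$, and the set $E$ of that lemma taken to be $\mathcal B_{R_0/13}(v_0)$. Lemma~\ref{lem:Positivity} then gives $\delta>0$ and $T_0>0$ with $f(t,v)>\delta t^2$ and $1-f(t,v)>\delta t^2$ for $t\in(0,T_0]$ and $v\in\mathcal B_{R_0/13}(v_0)$, which is the first assertion with $C=\delta$, $r=R_0/13$. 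Since $|v_0|\le|v_{-1}|+0.8R_0\le R+0.8R_0$ and $R,R_0,\epsilon_0$ are all controlled by $S(f_0)$ and $\|f_0\|_{1,2}$, this also gives the stated control of $|v_0|$.

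Now suppose $S(f_0)=0$ and $f_0$ is not a saturated equilibrium. Since $S(f_0)=0$ forces $f_0\in\{0,1\}$ a.e., Proposition~\ref{prop:entropy_production} applies and yields $t_0>0$, depending on $f_0,\gamma,b$, with $S(f)(s)\ge s$ for $0\le s\le t_0$. Set $T_0\le t_0$ and fix $t\in(0,T_0]$. By conservation of mass and energy, $\|f(s,\cdot)\|_{1,0}=\|f_0\|_{1,0}$ and $\|f(s,\cdot)\|_{1,2}=\|f_0\|_{1,2}$ for all $s$, and for any $s_0\in[t/2,t)$ the function $g\coloneqq f(s_0,\cdot)$ satisfies $0\le g\le 1$ and $S(g)\ge s_0\ge t/2>0$. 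Viewing $s\mapsto f(s_0+s,\cdot)$ as a solution of the Boltzmann-Fermi-Dirac equation with initial datum $g$, I apply the argument of the previous paragraph to it, obtaining $\delta(s_0)>0$, $r(s_0)>0$, $v_0(s_0)$ and a positivity window $T^\ast(s_0)>0$ with $f(s_0+s,v)>\delta(s_0)s^2$ and $1-f(s_0+s,v)>\delta(s_0)s^2$ for $s\in(0,T^\ast(s_0)]$ and $v\in B_{r(s_0)}(v_0(s_0))$. The remaining point is to choose $s_0=s_0(t)$ so that $t-s_0\le T^\ast(s_0)$; evaluating at $s=t-s_0$ then gives the claim with $C(t)=\delta(s_0)(t-s_0)^2$, $r(t)=r(s_0)$, $v_0(t)=v_0(s_0)$.

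To select $s_0(t)$, I track how $T^\ast(s_0)$ degrades. Running the chain for datum $g$, the radius $R$ of Lemma~\ref{lem:construct_epsilon_v-1} — and hence $|v_{-1}|$, which lies in $B_R(0)$ — is bounded by an explicit expression in $S(g)$ and $\|f_0\|_{1,2}$ that is monotone decreasing in $S(g)$; since $S(g)\ge t/2$ for all $s_0\in[t/2,t)$, this gives $|v_{-1}|\le M(t)$ uniformly over such $s_0$, where $M(t)$ depends only on $t$ and $\|f_0\|_{1,2}$. One may also keep $R_0\le 1$. Inspecting the smallness conditions in the proof of Lemma~\ref{lem:Positivity}, its $T^\ast$ is bounded below by a quantity of the form $c'\big(1+(M(t)+8)^\gamma\big)^{-1}\eqqcolon\tau(t)>0$, with $c'$ depending on $\|f_0\|_{1,2},\gamma,C_b$, again uniformly over $s_0\in[t/2,t)$. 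Taking $s_0(t)\coloneqq\max\{t/2,\,t-\tau(t)\}$ we get $t-s_0(t)=\min\{t/2,\tau(t)\}\le\tau(t)\le T^\ast(s_0(t))$, as required; and $|v_0(t)|\le M(t)+0.8R_0$ is controlled by $t$ and $\|f_0\|_{1,2}$, as claimed.

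The main obstacle is exactly this matching of scales. As $S(g)\to 0$ the set $\{\epsilon_0\le g\le 1-\epsilon_0\}$ both thins out and spreads to large velocities, so $\epsilon_0$ shrinks, $|v_{-1}|$ grows and the positivity window of Lemma~\ref{lem:Positivity} shortens; one must check that restarting from $s_0(t)$ close enough to $t$ — but not closer than $t/2$, so that $S(g)$ stays bounded below — still lets the window reach time $t$. Everything else is bookkeeping: verifying the geometric inclusion $\mathcal B_{R_0/13}(v_0)\subset\mathcal B_{4R_0}(v_{-1})$ that lets Proposition~\ref{prop:4.4} feed into Lemma~\ref{lem:Positivity}, and using conservation of mass and energy so that the constants generated for the restarted datum $g$ are controlled by $\|f_0\|_{1,2}$ and a lower bound for $S(g)$.
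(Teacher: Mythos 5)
Your proof is correct. For $S(f_0)>0$ it runs exactly the paper's chain --- Lemma~\ref{lem:construct_epsilon_v-1}, the Lebesgue density theorem to enter Proposition~\ref{prop:4.4}, then Lemma~\ref{lem:Positivity} --- and you usefully make explicit the inclusion $\mathcal{B}_{R_0/13}(v_0)\subset\mathcal{B}_{4R_0}(v_{-1})$ (via \eqref{eq:dist_list2}) that lets the conclusion of Proposition~\ref{prop:4.4} serve as the hypothesis set $E$ of Lemma~\ref{lem:Positivity}. For $S(f_0)=0$ your route genuinely differs. The paper restarts once at $t/2$; you observe, correctly, that the positivity window $T^*$ of Lemma~\ref{lem:Positivity} applied to $f(t/2,\cdot)$ can fall short of $t/2$ when $t$ is small (since $|v_{-1}|$ grows as $S(f(t/2,\cdot))\gtrsim t$ shrinks, through Lemma~\ref{lem:construct_epsilon_v-1}), and you close this by sliding the restart point $s_0(t)\in[t/2,t)$ and proving a uniform lower bound $T^*(s_0)\geq\tau(t)$ via monotonicity of the radius $R$ in $S$. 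This works, but it is heavier than needed. Once $f$ and $1-f$ both exceed $\delta(T^*)^2>0$ on $\mathcal{B}_r(v_0)$ at time $t/2+T^*$, the Duhamel forms \eqref{eq:duha_f}--\eqref{eq:duha_1-f} together with \eqref{eq:G_upper_bound} give $f(t,v)\geq f(t/2+T^*,v)\,G_{t/2+T^*}^{t}(v)>0$ on that same ball at once (and likewise for $1-f$), so the positivity simply propagates forward to time $t$ with no uniformity over $s_0$ required; this lighter step also keeps the constants tied to $f(t/2,\cdot)$ exactly as the theorem states, whereas yours depend on $f(s_0(t),\cdot)$ and need an extra appeal to uniqueness (Theorem~\ref{thm:exi_uni_thm}) to match.
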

\begin{proof}
    If $S(f_0)>0$, we apply Lemma \ref{lem:construct_epsilon_v-1}, Proposition \ref{prop:4.4}, and Lemma \ref{lem:Positivity} in sequence to get the theorem.
    
    If $S(f_0) = 0$, but $f_0$ is not a saturated equilibrium, we use Proposition \ref{prop:entropy_production} to get $S(f)(\frac{t}{2})\geq\frac{t}{2}$ for $0\leq t\leq T_0$, there $T_0$ depends on $\gamma$, $b(\cos\theta)$, and $f_0$. Taking $f(t/2, v)$ as initial data, we use the same proof for $S(f_0)>0$ and get the theorem.
\end{proof}
\begin{remark}\label{remark:main4_remark_1}
    The dependency on the shape of the initial function $f_0$ or $f(t/2, v)$ is necessary as the proof depends on the Lebesgue density theorem. To remove the dependency, we need to develop another technique which do not rely on the Lebesgue density theorem.
\end{remark}
\begin{remark}\label{remark:main4_remark_2}
    Suppose $S(f_0)=0$, but $f_0$ is not a saturated equilibrium. As Proposition \ref{prop:entropy_production} only tells us $S(f)(\frac{t}{2})\geq \frac{t}{2}$ and does not give any information about the shape of the set $\{v:\epsilon\leq f(\frac{t}{2}, v)\leq 1-\epsilon\}$, we can only guarantee that there exists a $R_0>0$ in \eqref{eq:E_density} at the time $t/2$ by the Lebesgue density theorem. In consequence, the dependency on not only the initial data but also $f(t/2, v)$ is indispensable. As pointed out in the remark below Theorem \ref{thm:Gaussian_lower_bound}, $f(t/2, v)$ is uniquely chosen when $f_0$ is fixed.
\end{remark}

\section{Creation of Gaussian lower bound}\label{sec:Gaussian_lower}

In this section, we establish a Gaussian lower bound for a solution of the Boltzmann-Fermi-Dirac equation. We first construct a spreading lemma for the $Q_1$ operator starting from the classical spreading lemma in \cite{P1997}, and then prove the main result. The next lemma consists of two parts: one assumes $f\leq 1-\epsilon$, and the other does not. The one assuming $f\leq 1-\epsilon$ is to construct an exponential lower bound for the solution $f$ and $1-f$, and the other one refines the exponential lower bound to a Gaussian lower bound for $f$ in the proof of the main theorem. We first cite the classical result.
\begin{lemma}[Lemma 3.2 of \cite{P1997}]\label{lem:spread_classical}
    We consider the collision kernel \eqref{eq:B_defi} satisfying $0 \leq \gamma \leq 2$ and (H2). Assume that there exists $\epsilon>0$ such that
    \begin{align*}
        f(v) \geq \epsilon, \text{\quad where \quad} |v-\bar{v}| \leq \delta
    \end{align*}
    for some $\bar{v} \in \mathbb{R}^3$ and $\delta>0$. Then there exists a constant $C$ depending on $\gamma, c_b$ such that
    \begin{align*}
        Q^+_c(f,f)(v) \geq  C\delta^{3+\gamma} \eta^{\frac{5}{2}}\epsilon^2,
    \end{align*}
    where $|v-\bar{v}|\leq \sqrt{2}\delta(1-\eta)$ for $0<\eta <1$.
\end{lemma}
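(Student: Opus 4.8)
This is Lemma~3.2 of \cite{P1997}, so the plan is essentially to recall its proof. After a translation we may take $v=0$ and write $\bar v=a$, so that $d:=|a|\le\sqrt2\,\delta(1-\eta)$. I would start from the Carleman representation \eqref{eq:def_cal_Q+} (equivalently, from the $\sigma$-form of $Q_c^+$ after the change of variables replacing $v_*$ by the collision midpoint $m=\tfrac12(v+v_*)$, which gives $v'=m+|v-m|\sigma$, $v_*'=m-|v-m|\sigma$, $\cos\theta=-\widehat{(m-v)}\cdot\sigma$, and a harmless Jacobian), so that in either form $Q_c^+(f,f)(v)$ is an integral of $|v-v_*|^\gamma\,b(\cos\theta)\,f(v')f(v_*')$ over the collision variables. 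I would then restrict the domain to the ``good set'' where simultaneously $v'\in\mathcal B_\delta(\bar v)$, $v_*'\in\mathcal B_\delta(\bar v)$, the scattering angle lies in $[\pi/4,3\pi/4]$, and $|v'-v|$ is comparable to $\delta$. On that set $f(v')f(v_*')\ge\epsilon^2$ by hypothesis; $b(\cos\theta)\ge c_b$ by (H2), see \eqref{eq:b_lower_naer_pi/2} (equivalently $h\ge\sqrt{2-\sqrt2}\,c_b$ by \eqref{eq:b cut2'}); and the remaining kernel and Jacobian factors are bounded below by a constant multiple of $\delta^\gamma$. Pulling these out reduces the lemma to the purely geometric claim that the good set of collision configurations $(m,\sigma)$ has measure bounded below by $c(\gamma)\,\delta^{3}\,\eta^{5/2}$.

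For the geometric claim I would first locate the \emph{extremal configuration}. Since $v'$ and $v_*'$ are antipodal on the sphere through $v=0$ with centre $m$ and radius $|m|$, asking both to lie in $\mathcal B_\delta(\bar v)$ forces the whole diameter $[v',v_*']$ into $\mathcal B_\delta(\bar v)$, and an elementary optimisation (using $|v'|^2+|v_*'|^2=|v_*|^2=4|m|^2$ and, after choosing $\sigma\perp(m-a)$, $|m-a|^2+|m|^2\le\delta^2$) shows that $d$ can be at most $|m|+\sqrt{\delta^2-|m|^2}$, maximised at $|m|=\delta/\sqrt2$; this gives the threshold $d=\sqrt2\,\delta$ that explains the radius $\sqrt2\,\delta(1-\eta)$ and the appearance of $\sqrt2\,\delta$ in the statement. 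At the maximiser one has $v',v_*'\in\partial\mathcal B_\delta(\bar v)$, $|v'|=|v_*'|$ (that is $\theta=\pi/2$, with $|v'|/|v_*'|=\tan(\theta/2)$), and $m$ on the segment $[v,\bar v]$, the extremal configurations forming a circle (the choice of $\sigma$ perpendicular to $a$). When $d=\sqrt2\,\delta(1-\eta)$ with $\eta>0$ there is a definite slack: one checks $|v'-\bar v|,|v_*'-\bar v|\le\delta\sqrt{1-2\eta+O(\eta^2)}$, so these distances fall $\Theta(\delta\eta)$ below $\delta$. I would then perturb the five-parameter family $(m,\sigma)$ around an extremal configuration and measure, direction by direction, how far one may move while keeping $v',v_*'\in\mathcal B_\delta(\bar v)$ and $\theta\in[\pi/4,3\pi/4]$. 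The angular window costs no power of $\eta$ (it allows $|v'|/|v_*'|$ to range over $[\sqrt2-1,\sqrt2+1]$); the good set is cut out essentially by the two scalar constraints $|v'-\bar v|\le\delta$ and $|v_*'-\bar v|\le\delta$, each of which is \emph{critical} at the extremal point, so along the directions that touch them the admissible displacement is of order $\delta\sqrt\eta$ rather than $\delta\eta$. Multiplying the contributions of the five directions (three carrying a power of $\delta$ from $m\in\mathbb R^3$, two dimensionless ones from $\sigma\in\mathbb S^2$) produces $\delta^{3}\eta^{5/2}$, hence the final bound $Q_c^+(f,f)(v)\ge C\delta^{3+\gamma}\eta^{5/2}\epsilon^2$.

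The main obstacle is exactly this last bookkeeping — getting the power of $\eta$ \emph{sharp}. A crude estimate that perturbs all five parameters inside a box of side $\sim\delta\eta$ only gives $\delta^3\eta^{5}$, which would propagate to a much weaker lower bound; to recover $\eta^{5/2}$ one must use that the two ball-constraints are stationary at the extremal configuration and perturb the correct (in particular, the critical) combinations of $m$ and $\sigma$. All the other ingredients — the change of variables, the lower bound on $b$ from (H2), and the comparability of the kernel factors with $\delta^\gamma$ — are routine. Since this is precisely Lemma~3.2 of \cite{P1997}, in practice I would just carry out their computation.
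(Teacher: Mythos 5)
The paper does not prove this lemma; it imports it verbatim as Lemma~3.2 of~\cite{P1997}, so there is no internal proof to compare against. Your sketch correctly identifies the framework (midpoint/Carleman representation, restriction to the good set where both post-collision velocities fall in $\mathcal B_\delta(\bar v)$ with scattering angle away from $0,\pi$, kernel lower bound from (H2), and a geometric measure estimate), and the derivation of the $\sqrt2\,\delta$ threshold via $|m-\bar v|^2+|m|^2\le\delta^2$ and $|m|+|m-\bar v|\le\sqrt2\sqrt{|m|^2+|m-\bar v|^2}$ is exactly right.

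However, the one step that actually carries the content of the lemma --- the measure estimate producing $\eta^{5/2}$ --- is asserted with a heuristic that is incorrect as stated. You claim that ``the two ball-constraints are stationary at the extremal configuration,'' but neither $g_1:=|v'-\bar v|^2$ nor $g_2:=|v'_*-\bar v|^2$ is critical there: a direct computation (with $v=0$, $v'=m+|m|\sigma$, $v'_*=m-|m|\sigma$, $m=\bar v/2$, $\sigma\perp\bar v$) gives $\nabla_m g_1=d\sigma$ and $\nabla_m g_2=-d\sigma$, both nonzero. What degenerates is the \emph{sum} $g_1+g_2=2|m-\bar v|^2+2|m|^2$: it is independent of $\sigma$ and its $m$-gradient vanishes at $m=\bar v/2$, so the constraint $g_1+g_2\le2\delta^2$ confines $m$ to a ball of radius $\sim\delta\sqrt\eta$ (contributing $\delta^3\eta^{3/2}$). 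The \emph{difference} $g_1-g_2=4|m|\,\sigma\cdot(m-\bar v)$ has nonvanishing $\sigma$-gradient and confines $\sigma$ to a band of relative measure $\sim\eta$ around the great circle orthogonal to $\bar v$, with the remaining $\sigma$-direction (rotation about $\bar v$) entirely free. Multiplying gives $\eta^{3/2}\cdot\eta=\eta^{5/2}$. Your version --- ``each constraint critical, displacement $\sim\delta\sqrt\eta$ in the directions that touch them'' --- would give $\eta^1$, not $\eta^{5/2}$, and obscures the fact that it is the symmetric/antisymmetric split, not the individual constraints, that controls the scaling. Since you explicitly defer to carrying out the PW computation, this is a gap in exposition rather than an irrecoverable error, but as written the sketch does not establish the claimed exponent.
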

We extend this lemma to the Fermi-Dirac case.
\begin{lemma} \label{lem:spread}
    We consider the collision kernel \eqref{eq:B_defi} for $0 \leq \gamma \leq 2$, (H1), and (H2). Assume $0\leq f\leq 1$ on $\mathbb{R}^3$ and that there exists $0< \epsilon<1$ such that
    \begin{align} \label{eq:f>epsilon}
        f(v) \geq \epsilon, \text{\quad where \quad} |v-\bar{v}| \leq \delta
    \end{align}
    for some $\bar{v} \in \mathbb{R}^3$ and $\delta>0$. Then there exist constants $C_1>0$ and $C_2>0$ depending on $\gamma, C_b,$ and $c_b$ such that
    \begin{align} \label{eq:epsilon_2}
        Q_1(f,f,1-f)(v) \geq  \delta^{3+\gamma} \epsilon^2 \left(C_1\eta^{\frac{5}{2}}-C_2 \min\left\{\delta^{-3}\|f\|_{1,2}^{\frac{3}{5}}, 1\right\} \right),
    \end{align}
    where $\delta<|v-\bar{v}|\leq \sqrt{2}\delta(1-\eta)$ for $0< \eta <1-\frac{1}{\sqrt{2}}$. If we further assume $f(v) \leq 1-\epsilon$ for $|v-\bar{v}| \leq \delta$, then there exists a constant $C_3>0$ depending only on $\gamma$ and $c_b$ such that
    \begin{align}\label{eq:epsilon_3_f}
        Q_1(f,f,1-f)(v) \geq C_3 \delta^{3+\gamma} \eta^{\frac{5}{2}} \epsilon^3,\quad Q_1(1-f,1-f,f)(v)\geq C_3 \delta^{3+\gamma} \eta^{\frac{5}{2}} \epsilon^3,
    \end{align}
    where $\delta<|v-\bar{v}|\leq \sqrt{2}\delta(1-\eta)$ for $0< \eta <1-\frac{1}{\sqrt{2}}$.
\end{lemma}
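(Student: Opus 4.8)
The plan is to prove the two assertions separately. For \eqref{eq:epsilon_2} I would use the algebraic identity $Q_1(f,f,1-f)(v)=Q_c^+(f,f)(v)-Q_1(f,f,f)(v)$ to reduce to the classical spreading lemma together with an upper bound on a loss-type term; for \eqref{eq:epsilon_3_f}, which involves neither $\|f\|_{1,2}$ nor that identity and is symmetric under $f\leftrightarrow 1-f$, I would re-run the Carleman-representation computation behind Lemma \ref{lem:spread_classical} with one extra ball constraint.

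\emph{Estimate \eqref{eq:epsilon_2}.} Since $0\le f\le 1$, the integrand of $Q_1(f,f,1-f)(v)$ is nonnegative, so I would first retain only the part of the $(v_*,\sigma)$-integral with $v'\in\mathcal{B}_\delta(\bar v)$ and $v_*'\in\mathcal{B}_\delta(\bar v)$; there \eqref{eq:f>epsilon} gives $f(v')f(v_*')\ge\epsilon^2$, whence $Q_1(f,f,1-f)(v)\ge\epsilon^2\big(Q_c^+(\mathbf{1}_{\mathcal{B}_\delta(\bar v)},\mathbf{1}_{\mathcal{B}_\delta(\bar v)})(v)-A\big)$ with $A:=\int B\,\mathbf{1}_{\mathcal{B}_\delta(\bar v)}(v')\,\mathbf{1}_{\mathcal{B}_\delta(\bar v)}(v_*')\,f(v_*)\,d\sigma\,dv_*$. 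Lemma \ref{lem:spread_classical}, applied with the constant $1$ in place of the lower bound $\epsilon$, yields $Q_c^+(\mathbf{1}_{\mathcal{B}_\delta(\bar v)},\mathbf{1}_{\mathcal{B}_\delta(\bar v)})(v)\ge C_1\delta^{3+\gamma}\eta^{5/2}$, which applies precisely because $|v-\bar v|\le\sqrt 2\,\delta(1-\eta)$. For $A$, the momentum identity $v_*=v'+v_*'-v$ forces $v_*\in\mathcal{B}_{2\delta}(2\bar v-v)$ on the support and $|v-v_*|\le 2|v-\bar v|+2\delta\le 5\delta$; dropping the two indicators and integrating trivially in $\sigma$ (using $\int_{\mathbb{S}^2}b(\cos\theta)\,d\sigma=C_b$) gives $A\le C_b\,5^\gamma\,\delta^\gamma\int_{\mathcal{B}_{2\delta}(2\bar v-v)}f$. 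Finally I would bound $\int_{\mathcal{B}_{2\delta}(w)}f$ both by the trivial volume estimate $|\mathcal{B}_{2\delta}|=C\delta^3$ and, by a symmetric-rearrangement argument (minimising $\int|v|^2 f$ under $0\le f\le 1$ and fixed mass gives a centred ball, and $\int|v|^2 f\le\|f\|_{1,2}$), by $C\|f\|_{1,2}^{3/5}$. Combining, $A\le C_2\delta^{3+\gamma}\min\{\delta^{-3}\|f\|_{1,2}^{3/5},1\}$, which is exactly \eqref{eq:epsilon_2}.

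\emph{Estimate \eqref{eq:epsilon_3_f}.} Here $\epsilon\le f\le 1-\epsilon$ on $\mathcal{B}_\delta(\bar v)$; since $1-f$ satisfies the same hypothesis and $0\le 1-f\le 1$, it suffices to prove the bound for $Q_1(f,f,1-f)(v)$ and then replace $f$ by $1-f$ to obtain the bound for $Q_1(1-f,1-f,f)(v)$. I would restrict the $(v_*,\sigma)$-integral to configurations with $v'$, $v_*'$ \emph{and} $v_*$ all in $\mathcal{B}_\delta(\bar v)$, where $f(v')f(v_*')(1-f(v_*))\ge\epsilon^3$, so that $Q_1(f,f,1-f)(v)\ge\epsilon^3\int B(v-v_*,\sigma)\,\mathbf{1}_{\mathcal{B}_\delta(\bar v)}(v')\,\mathbf{1}_{\mathcal{B}_\delta(\bar v)}(v_*')\,\mathbf{1}_{\mathcal{B}_\delta(\bar v)}(v_*)\,d\sigma\,dv_*$. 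The remaining triple-ball integral would be estimated from below by the Carleman-representation argument of \cite{P1997}: writing $v_*=v'+v_*'-v$, the additional requirement $v_*\in\mathcal{B}_\delta(\bar v)$ is equivalent to $v_*'\in\mathcal{B}_\delta(\bar v+v-v')$, so one must check that, for $v'$ in (a suitable subset of) the good region used to prove Lemma \ref{lem:spread_classical}, the plane $v+E_{v'-v}$ still meets $\mathcal{B}_\delta(\bar v)\cap\mathcal{B}_\delta(\bar v+v-v')$ in a set of the same order of size as in the single-ball case. The strict inequality $|v-\bar v|\le\sqrt 2\,\delta(1-\eta)<\sqrt 2\,\delta$ is exactly what makes $\mathcal{B}_\delta(\bar v)$ and $\mathcal{B}_\delta(\bar v+v-v')$ overlap in a lens of definite size and positions it so that the cutting plane passes through it; reproducing the constant bookkeeping of \cite{P1997} then gives $C_3\delta^{3+\gamma}\eta^{5/2}\epsilon^3$.

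\emph{Main obstacle.} Estimate \eqref{eq:epsilon_2} is essentially routine once Lemma \ref{lem:spread_classical} and the two elementary bounds on $\int_{\mathcal{B}_{2\delta}}f$ are in hand. The hard part is the Euclidean geometry in \eqref{eq:epsilon_3_f}: verifying quantitatively that intersecting the Pulvirenti--Wennberg good set with the extra ball $\mathcal{B}_\delta(\bar v+v-v')$ only costs a fixed constant and, in particular, does not degrade the power $\eta^{5/2}$.
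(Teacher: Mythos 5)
Your estimate for \eqref{eq:epsilon_2} is correct and closely parallel to the paper's: both arguments restrict the integral to $v',v_*'\in\mathcal{B}_\delta(\bar v)$, pull out the factor $\epsilon^2$, and then split $1-f(v_*)$ into a classical spreading term (handled by Lemma~\ref{lem:spread_classical}) plus an error controlled by mass of $f$ in a fixed ball, with the kernel factor bounded by $C\delta^\gamma$ since $|v-v_*|=|v'-v_*'|\le 2\delta$ on the support; the paper uses $|\{f>\tfrac12\}\cap\mathcal{B}_\delta(\bar v)|$ and you use $\int_{\mathcal{B}_{2\delta}}f$, but both are bounded by $\min\{C\delta^3,\,C\|f\|_{1,2}^{3/5}\}$ via the same rearrangement idea.

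For \eqref{eq:epsilon_3_f}, though, the ``main obstacle'' you flag is a non-issue, and noticing why is the whole point of the second part. Shift the reference frame to $\bar v$: since the collision map preserves $|v-\bar v|^2+|v_*-\bar v|^2=|v'-\bar v|^2+|v_*'-\bar v|^2$ (it is an isometry about the diagonal of $\mathbb{R}^3\times\mathbb{R}^3$, hence about any translated copy), the constraints $v',v_*'\in\mathcal{B}_\delta(\bar v)$ together with $|v-\bar v|>\delta$ force $|v_*-\bar v|^2\le 2\delta^2-|v-\bar v|^2<\delta^2$. Thus the third indicator $\mathbf{1}_{\mathcal{B}_\delta(\bar v)}(v_*)$ is \emph{redundant}, the triple-ball integral is identically $Q_c^+(\mathbf{1}_{\mathcal{B}_\delta(\bar v)},\mathbf{1}_{\mathcal{B}_\delta(\bar v)})(v)$, and \eqref{eq:epsilon_3_f} drops out of Lemma~\ref{lem:spread_classical} at once with the full power $\eta^{5/2}$; no new Carleman bookkeeping is required. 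The paper records exactly this remark immediately after \eqref{eq:spread_Q1} and uses it in both halves of the proof. On the Carleman plane $v+E_{v'-v}$ the lens $\mathcal{B}_\delta(\bar v)\cap\mathcal{B}_\delta(\bar v+v-v')$ you were worried about in fact coincides with the single ball $\mathcal{B}_\delta(\bar v)$ there, so your plan would have worked, but you would have rederived the energy identity the long way; you should also note that the same identity sharpens your part-one constraint from $v_*\in\mathcal{B}_{2\delta}(2\bar v-v)$ (momentum only) to $v_*\in\mathcal{B}_\delta(\bar v)$.
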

\begin{proof}
    We start with a set estimate; we define
    \begin{align*}
        E_1 \coloneqq \left\{ v \in \mathbb{R}^3 : |v-\bar{v}|\leq \delta \text{\; and \;} f(v) \leq \frac{1}{2} \right\}\quad \text{and} \quad  E_2 \coloneqq\left\{ v \in \mathbb{R}^3 : |v-\bar{v}|\leq \delta \text{\; and \;} f(v) > \frac{1}{2} \right\}.
    \end{align*}
    By the construction of $E_2$, we have
    \begin{align*}
        \|f\|_{1,2}\geq \int_{E_2} \frac{1}{2}|v|^2 \,dv.
    \end{align*}
    By Hardy-Littlewood inequality, for fixed $|E_2|$, the integral has its minimum when $\bar{v} = 0$ and $E_2 = \{v:|v|\leq c\}$ for some $c$. Therefore,
    \begin{align*}
        \int_{E_2} \frac{1}{2}|v|^2 \,dv \geq \int_{\{|v|\leq c\}}
        \frac{1}{2}|v|^2 \,dv = \frac{2}{5}\pi c^5.
    \end{align*}
    Combining the two inequalities, we obtain
    \begin{align} \label{eq:enengy}
        |E_2|\leq \min\left\{\frac{4\pi}{3}c^3, \frac{4\pi}{3}\delta^3\right\}= \min\left\{\frac{4\pi}{3}\left(\frac{5}{2\pi}\|f\|_{1,2}\right)^{\frac{3}{5}}, \frac{4\pi}{3}\delta^3\right\}.
    \end{align} 

    Now, we estimate $Q_1(f,f,1-f)$. Using the assumption \eqref{eq:f>epsilon}, we have
    \begin{align}\label{eq:spread_Q1}
        Q_1(f,f,1-f) (v)\geq  \epsilon^2 \int_{\mathbb{R}^3 \times \mathbb{S}^2} B(v-v_*, \omega) \mathbf{1}_{\{|v'-\bar{v}|\leq \delta\}}\mathbf{1}_{\{|v_*'-\bar{v}|\leq \delta \}} (1-f(v_*)) \,d\omega dv_*.
    \end{align}
    For given $\delta<|v-\bar{v}|\leq \sqrt{2}\delta(1-\eta)$, $|v'-\bar{v}|$ and $|v_*'-\bar{v}| \leq \delta$ imply $|v_*-\bar{v}|\leq\delta$. Therefore, the $v_*$ integral domain is confined in the set $|v_*-\bar{v}|\leq\delta$ and further can be split into the domains by $E_1$ and $E_2$. Now, we obtain
    \begin{align}\label{eq:E0}
        \begin{split}
            \eqref{eq:spread_Q1} &\geq \frac{1}{2} \epsilon^2 \int_{\mathbb{R}^3 \times \mathbb{S}^2} B(v-v_*, \omega) \mathbf{1}_{\{ |v'-\bar{v}|\leq \delta\}}\mathbf{1}_{\{ |v_*'-\bar{v}|\leq \delta \}} \mathbf{1}_{E_1}(v_*)\,d\omega dv_*\\
            &\geq \frac{1}{2} \epsilon^2 \int_{\mathbb{R}^3 \times \mathbb{S}^2} B(v-v_*, \omega) \mathbf{1}_{\{|v'-\bar{v}|\leq \delta\}}\mathbf{1}_{\{|v_*'-\bar{v}|\leq \delta \}} \,d\omega dv_*\\
            &\quad - \frac{1}{2} \epsilon^2 \int_{\mathbb{R}^3 \times \mathbb{S}^2}  B(v-v_*, \omega) \mathbf{1}_{\{|v'-\bar{v}|\leq \delta\}}\mathbf{1}_{\{|v_*'-\bar{v}|\leq\delta \}}\mathbf{1}_{E_2}(v_*) \,d\omega dv_*.
        \end{split}
    \end{align} 
    For the first term, by Lemma \ref{lem:spread_classical}, we obtain
    \begin{align}\label{eq:E1}
        \begin{split}
            &\frac{1}{2} \epsilon^2 \int_{\mathbb{R}^3 \times \mathbb{S}^2}  B(v-v_*, \omega) \mathbf{1}_{\{|v'-\bar{v}|\leq \delta\}}\mathbf{1}_{\{|v_*'-\bar{v}|\leq \delta \}} \,d\omega dv_*\\
            &=\frac{1}{2}Q_c^+\left( \epsilon\mathbf{1}_{\{|v-\bar{v}|\leq \delta\}}, \epsilon\mathbf{1}_{\{|v-\bar{v}|\leq \delta\}}\right)(v) \geq C_1\delta^{3+\gamma}\epsilon^2\eta^{\frac{5}{2}},
        \end{split}
    \end{align}
    where $C_1$ depends on $\gamma$ and $c_b$.
    
    Using $|v-v_*|=|v'-v_*'|\leq 2\delta$, \eqref{eq:C_b:def}, and \eqref{eq:enengy}, the second term is bounded by
    \begin{align}\label{eq:E2}
        \begin{split}
            &\frac{1}{2} \epsilon^2 \int_{\mathbb{R}^3 \times \mathbb{S}^2}  |v-v_*|^\gamma h(\cos\theta_{\omega})\mathbf{1}_{\{ |v'-\bar{v}| < \delta\}}\mathbf{1}_{\{ |v_*'-\bar{v}| < \delta \}}\mathbf{1}_{E_2}(v_*) \,d\omega dv_*\\
            &\leq 2^{\gamma-1}\delta^{\gamma}\epsilon^2
            \int_{\mathbb{R}^3 \times \mathbb{S}^2} h(\cos \theta_{\omega})\mathbf{1}_{E_2}(v_*) \; \,d\omega dv_* \\
            &\leq 2^{\gamma-1}C_b\delta^{\gamma}\epsilon^2 |E_2| \leq C\delta^{\gamma}\epsilon^2 \min\left\{\left(\frac{5}{2\pi}\|f\|_{1,2}\right)^{\frac{3}{5}}, \delta^3\right\}
        \end{split}
    \end{align}
    for some constant $C$ depending on $\gamma$ and $C_b$.
    
    Applying \eqref{eq:E1} and \eqref{eq:E2} to \eqref{eq:E0}, we conclude that there exist a constant $C_2>0$ depending on $C_b$ and $\gamma$ such that
    \begin{align*}
        Q_1(f,f,1-f) (v) \geq \delta^{3+\gamma} \epsilon^2 \left(C_1\eta^{\frac{5}{2}}-C_2 \min\left\{\delta^{-3}\|f\|_{1,2}^{\frac{3}{5}}, 1\right\}\right)
    \end{align*}
    for $\delta<|v-\bar{v}|<\sqrt{2}\delta(1-\eta)$.

    Next, we prove \eqref{eq:epsilon_3_f} under the condition $\epsilon\leq f\leq 1-\epsilon$ on $|v-\bar{v}|\leq \delta$. For $\delta<|v-\bar{v}|<\sqrt{2}\delta(1-\eta)$, we obtain
    \begin{align*}
        Q_1 \left(f\mathbf{1}_{\{ |v-\bar{v}| < \delta\}}, f\mathbf{1}_{\{ |v-\bar{v}| < \delta\}},(1-f)\mathbf{1}_{\{ |v-\bar{v}| < \delta\}} \right)(v)
        \geq\epsilon \cdot Q_c^+ \left(\epsilon\mathbf{1}_{\{ |v-\bar{v}| < \delta\}}, \epsilon\mathbf{1}_{\{ |v-\bar{v}| < \delta\}}\right)(v)
    \end{align*}
    for the same reason before. Directly applying Lemma \ref{lem:spread_classical} for $Q^+_c$, we get \eqref{eq:epsilon_3_f}. Using the symmetry $f\mapsto 1-f$, we get the corresponding result for $Q_1(1-f,1-f,f)$.
\end{proof}

Now, we are ready to prove the main theorem.

\begin{theorem}[Creation of a Gaussian lower bound] \label{thm:lower_maxi}
    We consider the collision kernel \eqref{eq:B_defi} satisfying $0 \leq \gamma \leq 2$, (H1), and (H2). Let $f$ be a solution of the Boltzmann-Fermi-Dirac equation. If there exists $0<\epsilon<1$ such that
    \begin{align} \label{eq:f>epsilon_0}
        \epsilon \leq f_0(v) \leq 1-\epsilon, \text{\quad where \quad} |v-\bar{v}| \leq \delta, \quad |\bar{v}|< r_0
    \end{align}
    for some $\bar{v}\in \mathbb{R}^3, r_0>0$ and $\delta>0$, then there exist constants $C_1(t)>0$ and $C_2(t)>0$ such that
    \begin{align*}
        C_1(t) e^{-C_2(t)|v|^2}\leq f(t,v) \leq 1-C_1(t) e^{-C_2(t)|v|^p}, \text{\quad where \quad} p=2\frac{\ln 3}{\ln 2} \approx 3.17
    \end{align*}
    for $t>0$ and $v \in \mathbb{R}^3$. These constants $C_1(t)$ and $C_2(t)$ depend on $\|f_0\|_{1,2},\gamma, C_b, c_b, \delta, \epsilon$, and $r_0$. Also, it satisfies
    \begin{align*}
        \inf_{T^{-1}\leq t\leq T} C_1(t)>0,\quad \sup_{T^{-1}\leq t\leq T} C_2(t)<\infty
    \end{align*}
    for any $1\leq T<\infty$.
\end{theorem}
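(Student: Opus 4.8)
The plan is to run the Pulvirenti--Wennberg spreading iteration \cite{P1997}, but with the classical gain operator replaced by $Q_1$ and the classical spreading estimate replaced by Lemma \ref{lem:spread}, iterating simultaneously on $f$ (to get the Gaussian lower bound) and on $1-f$ (to get the upper bound). First I convert the hypothesis \eqref{eq:f>epsilon_0}, which only controls $f_0$, into a space--time statement: since $Q_1\ge 0$, the Duhamel formulas \eqref{eq:duha_f}--\eqref{eq:duha_1-f} give $f(\tau,v)\ge f_0(v)G_0^\tau(v)$ and $1-f(\tau,v)\ge(1-f_0(v))G_0^\tau(v)$, and combining this with \eqref{eq:G_upper_bound} and $|v|\le r_0+\delta$ on $\mathcal B_\delta(\bar v)$ produces $t_1=t_1(\gamma,C_b,\|f_0\|_{1,2},\epsilon,\delta,r_0)>0$ with $\tfrac\epsilon2\le f(\tau,\cdot)\le 1-\tfrac\epsilon2$ on $\mathcal B_\delta(\bar v)$ for all $\tau\in[0,t_1]$. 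This is the base case of the induction.

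\textbf{The iteration.} Choose a summable sequence $\eta_n\to0^+$ with $\eta_n<1-\tfrac1{\sqrt2}$, and set $\delta_n=\delta\prod_{k\le n}\sqrt2(1-\eta_k)$, so that $\delta_n\sim c\,2^{n/2}\to\infty$ (the constraint $\eta_n<1-1/\sqrt2$ is precisely what makes the balls grow). Fix $t\in(0,t_1]$. I claim there are $a_n,b_n\in(0,1)$ with $b_n\le a_n$ and times $s_n\uparrow$, $s_n<t$, such that
\[
a_n\le f(\sigma,v),\qquad b_n\le 1-f(\sigma,v)\qquad\text{for }v\in\mathcal B_{\delta_n}(\bar v),\ \sigma\in[s_n,t].
\]
To pass from $n$ to $n+1$ I use that $b_n\le f\le 1-b_n$ on $\mathcal B_{\delta_n}(\bar v)$: estimate \eqref{eq:epsilon_3_f} of Lemma \ref{lem:spread}, applied at scale $\delta_n$ with the role of $\epsilon$ played by $b_n$ and of $\eta$ by $\eta_{n+1}$, bounds both $Q_1(f,f,1-f)$ and $Q_1(1-f,1-f,f)$ below by $\gtrsim\delta_n^{3+\gamma}\eta_{n+1}^{5/2}b_n^{\,3}$ on the annulus $\delta_n<|v-\bar v|\le\sqrt2(1-\eta_{n+1})\delta_n$; inserting these into \eqref{eq:duha_f}--\eqref{eq:duha_1-f}, using \eqref{eq:G_upper_bound} over a time increment $[s_n,s_{n+1}]$ of length $\Delta_n$ chosen as a small geometric fraction of $[s_n,t]$ (small enough, using $\gamma\le2$, that $\sum_n\Delta_n\delta_n^\gamma<\infty$), and gluing the new bound on the annulus with the old bound on $\mathcal B_{\delta_n}(\bar v)$, yields $a_{n+1},b_{n+1}$ on $\mathcal B_{\delta_{n+1}}(\bar v)$. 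For the first finitely many steps the cruder \eqref{eq:epsilon_3_f} also governs $a_n$, but once $\delta_n^{-3}\|f_0\|_{1,2}^{3/5}$ is small I switch the $f$-update to the sharper \eqref{eq:epsilon_2}, which needs only $f\ge a_n$ and gives $a_{n+1}\gtrsim\delta_n^{3+\gamma}\eta_{n+1}^{5/2}a_n^{\,2}$, i.e.\ quadratic rather than cubic dependence.

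\textbf{Reading off the exponents and finishing.} Taking logarithms, for large $n$ one has $\log(1/a_{n+1})\le 2\log(1/a_n)+C(1+\log n+\log(1/t))$ and $\log(1/b_{n+1})\le 3\log(1/b_n)+C(1+\log n+\log(1/t))$; since $\sum_k 2^{-k}\log k$ and $\sum_k 3^{-k}\log k$ converge, this gives $\log(1/a_n)\le C_2(t)\,2^{n}$ and $\log(1/b_n)\le C_2(t)\,3^{n}$ with $C_2(t)$ locally bounded on $(0,\infty)$ and diverging only as $t\to0$. Because $\delta_n\sim c\,2^{n/2}$, we have $2^{n}\sim c'\delta_n^{2}$ and $3^{n}=\big(2^{n/2}\big)^{2\log3/\log2}\sim c''\delta_n^{\,p}$ with $p=2\ln3/\ln2$; letting $n\to\infty$ and using $\mathcal B_{\delta_n}(\bar v)\supset\{|v|\le\delta_n-r_0\}$ gives $f(t,v)\ge C_1(t)e^{-C_2(t)|v|^2}$ and $1-f(t,v)\ge C_1(t)e^{-C_2(t)|v|^p}$ for $t\in(0,t_1]$. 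For $t>t_1$ one does not iterate again: $Q_1\ge0$ and \eqref{eq:duha_f} give $f(t,v)\ge f(t_1,v)G_{t_1}^t(v)\ge C_1(t_1)e^{-C_2(t_1)|v|^2}e^{-c(t-t_1)(1+|v|^\gamma)}$, still a Gaussian since $\gamma\le2$, and likewise $1-f(t,v)\ge(1-f(t_1,v))G_{t_1}^t(v)\ge C_1(t_1)e^{-C_2(t_1)|v|^p}e^{-c(t-t_1)(1+|v|^\gamma)}$, still of order $e^{-C|v|^p}$ because $\gamma\le2<p$. All resulting constants are continuous and strictly positive in $t$ on each $[T^{-1},T]$, which yields the claimed uniformity.

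\textbf{Main obstacle.} The delicate point is the coupled bookkeeping inside the iteration: one must choose the radius increments $\eta_n$, the time increments $\Delta_n$, and track the multiplicative constants so that simultaneously (i) the factors from \eqref{eq:G_upper_bound} multiply to a strictly positive limit, forcing $\sum_n\Delta_n\delta_n^\gamma<\infty$ — a genuine constraint when $\gamma=2$; (ii) the slowly growing corrections $\log n$ and $\log\eta_n^{-1}$ do not accumulate past a constant after the geometric weighting $\sum 3^{-k}$, so the exponent is \emph{exactly} $p=2\ln3/\ln2$ rather than only close to it; and (iii) the lower bound $b_n$ on $1-f$ stays $\le a_n$ throughout, so that the symmetric estimate \eqref{eq:epsilon_3_f} is legitimately applicable at every step with $\epsilon=b_n$. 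Balancing these three requirements, rather than any individual estimate, is the crux of the argument.
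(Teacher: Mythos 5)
Your proposal is correct and follows essentially the same route as the paper: a Pulvirenti--Wennberg spreading iteration built on Lemma~\ref{lem:spread}, using the cubic estimate \eqref{eq:epsilon_3_f} for $1-f$ (which is forced, since the quadratic bound \eqref{eq:epsilon_2} cannot be symmetrized because $\|1-f\|_{1,2}=\infty$; this is exactly what produces the exponent $p=2\ln 3/\ln 2$) and the quadratic estimate \eqref{eq:epsilon_2} for $f$ once $\delta_n$ is large, giving the Gaussian. The one organizational difference is that the paper runs two \emph{separate} passes — a cubic pass yielding $e^{-C|v|^p}$ lower bounds for both $f$ and $1-f$, followed by a fresh quadratic pass seeded on a large ball $\mathcal B_{\bar\delta}(0)$ with its own choice $\eta_k=\eta_0 2^{-k/4}$ — rather than switching from cubic to quadratic mid-iteration; the two-pass version is slightly easier to bookkeep (the $\eta_k$ and time-step sequences can be chosen independently in each pass, and one never has to verify $b_n\le a_n$ jointly), but your single-pass variant is a legitimate reorganization of the same argument.
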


\begin{proof}
    We will iteratively apply the Lemma \ref{lem:spread} for each small time length $t_i>0$ and small $\eta_i$ for each $i$ to get a Gaussian lower bound. Let us first consider the time interval $[0, t_1]$. For $|v-\bar{v}|\leq \sqrt{2}\delta(1-\eta_1)$, from \eqref{eq:G_upper_bound_2}, we have
    \begin{align*}
        G_{t_1}^{t_2}(v)\geq e^{-ct_1(1+2(r_0^\gamma + \sqrt{2}^\gamma\delta^\gamma))}.
    \end{align*}
    By the computation in \eqref{eq:f_bound_1}, we obtain    
    \begin{align} \label{eq:f_t0t1}
        f(t_1,v)\geq\left(f_0(v) + \frac{t_1}{2}Q_1 \left(f,f,1-f\right)(0,v)\right)e^{-ct_1(1+2(r_0^\gamma + \sqrt{2}^\gamma\delta^\gamma))}
    \end{align}
    for a small enough $t_1$ satisfying
    \begin{align*}
        \frac{1-e^{-2ct_1(1+2(r_0^\gamma + \sqrt{2}^\gamma\delta^\gamma))}}{2c(1+2(r_0^\gamma + \sqrt{2}^\gamma\delta^\gamma))}\geq \frac{t_1}{2}.
    \end{align*}
    It is satisfied when
    \begin{align*}
        t_1\leq \frac{3}{2}\frac{1}{2c(1+2(r_0^\gamma + \sqrt{2}^\gamma\delta^\gamma))}.
    \end{align*}
    For later analysis, we further impose the condition $t_1\leq \frac{1}{2}$. Using \eqref{eq:f>epsilon_0} and \eqref{eq:epsilon_3_f}, we have a lower bounds
    \begin{align*}
        f(t_1,v)\geq\begin{dcases}
            \epsilon e^{-ct_1(1+2(r_0^\gamma + \sqrt{2}^\gamma\delta^\gamma))} & |v-\bar{v}|\leq \delta\\
            C_1 \delta^{3+\gamma}\epsilon^3\eta_1^{\frac{5}{2}} t_1 e^{-ct_1(1+2(r_0^\gamma + \sqrt{2}^\gamma\delta^\gamma))} & \delta<|v-\bar{v}|\leq \sqrt{2}(1-\eta_1)\delta
        \end{dcases}
    \end{align*}
    
    To make the analysis clear, we will assume $C_1\delta^{3+\gamma}\leq 1$, so we always choose the second one to take a lower bound for $|v-\bar{v}|\leq \sqrt{2}(1-\eta_1)\delta$. The condition is achieved by taking $C_1$ or $\delta$ appropriately small. Also, we define
    \begin{align*}
        C_1':=C_1 e^{-ct_1(1+2r_0^\gamma)},\quad c':=2c
    \end{align*}
    to write
    \begin{align}\label{eq:step1_f}
        f(t_1, v)\geq C'_1 \delta^{3+\gamma}\epsilon^3\eta_1^{\frac{5}{2}} t_1 e^{-c' \sqrt{2}^\gamma\delta^\gamma t_1}.
    \end{align}
    
    Similarly, we can construct \eqref{eq:f_t0t1} for $1-f$ using \eqref{eq:f_bound_2} to get
    \begin{align*}
        (1-f)(t_1,v)\geq\left((1-f)(0, v) + \frac{t_1}{2}Q_1 \left(1-f,1-f,f\right)(0,v)\right)e^{-ct_1(1+2(r_0^\gamma + \sqrt{2}^\gamma\delta^\gamma))},
    \end{align*}
    so
    \begin{align}\label{eq:step1_1-f}
        1-f(t_1, v)\geq C'_1 \delta^{3+\gamma}\epsilon^3\eta_1^{\frac{5}{2}} t_1 e^{-c'\sqrt{2}^\gamma\delta^\gamma t_1}
    \end{align}
    for $|v-\bar{v}| \leq \sqrt{2}(1-\eta_1)\delta$.

    Next, we treat $f(t_1,v)$ as an initial function with lower bounds \eqref{eq:step1_f} and \eqref{eq:step1_1-f} and proceed the time by $t_2$. Applying the same step, we get
    \begin{align*}
        &f(t_1+t_2,v)\,(resp.\, 1-f(t_1+t_2,v))\\
        &\geq C'_1\left(\sqrt{2}\delta(1-\eta_1)\right)^{3+\gamma}\left(C'_1 \delta^{3+\gamma}\epsilon^3\eta_1^{\frac{5}{2}} t_1 e^{-c' \sqrt{2}^\gamma \delta^\gamma t_1}\right)^3\eta_2^{\frac{5}{2}}t_2 e^{-c \sqrt{2}^{2\gamma}\delta^\gamma t_2}\\
        &\geq C'_1\delta^{3+\gamma} (C'_1\delta^{3+\gamma})^3\epsilon^{3^2}\left(\sqrt{2}(1-\eta_1)\right)^{3+\gamma}(\delta^{3+\gamma})^3\left(\eta_1^{\frac{5}{2}} t_1 \right)^3\eta_2^{\frac{5}{2}}t_2 e^{-c' 3\sqrt{2}^\gamma \delta^\gamma t_1} e^{-c' \sqrt{2}^{2\gamma}\delta^\gamma t_2}
    \end{align*}
    for $|v-\bar{v}| \leq \sqrt{2}^2(1-\eta_1)(1-\eta_2)\delta$ and $t_2$ satisfying
    \begin{align*}
        t_2< \min\left\{\frac{3}{2}\frac{1}{2c(1+2(r_0^\gamma + \sqrt{2}^{2\gamma}\delta^\gamma))}, \frac{1}{2}\right\}.
    \end{align*}
    
    We further repeat this process for each $t_i$ for $i\geq 3$ and obtain the general formula
    \begin{align}\label{eq:lower_bound1}
        \begin{split}
            &f(\sum_{k=1}^n t_k, v)\, (resp.\, 1-f(\sum_{k=1}^n t_k, v))\\
            &\geq \epsilon^{3^n}(C'_1\delta^{3+\gamma})^{\sum_{k=1}^{n-1} 3^k}\left(\prod_{k=1}^{n-1}\left(\sqrt{2}^k\prod_{i=1}^k (1-\eta_i)\right)^{(3+\gamma)3^{n-1-k}}\right)\\
            &\qquad \times \left(\prod_{k=1}^{n}(\eta_k^{\frac{5}{2}}t_k)^{3^{n-k}}\right) \exp\left(-c' \delta^\gamma \sum_{k=1}^n 3^{n-k} \sqrt{2}^{\gamma k} t_k\right),
        \end{split}
    \end{align}
    where $|v-\bar{v}| \leq \sqrt{2}^2\delta\prod_{k=1}^n (1-\eta_k)$ and $t_k$ given by
    \begin{align}\label{eq:t_cond}
        t_k< \min\left\{\frac{3}{2}\frac{1}{2c(1+2(r_0^\gamma + \sqrt{2}^{k\gamma}\delta^\gamma))}, \frac{1}{2}\right\}.
    \end{align}
    From now on, we will only deal with $f$ and treat the $1-f$ case as a corollary of the $f$ case.
    
    Next, we plug $t_k \coloneqq  t_0^k$ and $\eta_k \coloneqq  \eta_0^k$ for $k = 1, 2, \ldots$ with
    \begin{align}\label{eq:t_0eta_0_cond}
        0<t_0\leq \min\left\{\frac{3}{2}\frac{1}{2c(1+2(r_0^\gamma + \sqrt{2}^\gamma\delta^\gamma))}, \frac{1}{2}\right\},\quad 0<\eta_0< 1-\frac{1}{\sqrt{2}}.
    \end{align}
    Note that it satisfies \eqref{eq:t_cond} since
    \begin{align*}
        t_k= t_0^k\leq \frac{1}{2^{k-1}}\frac{3}{2}\frac{1}{2c(1+2(r_0^\gamma + \sqrt{2}^{\gamma}\delta^\gamma))}\leq \frac{3}{2}\frac{1}{2c(1+2(r_0^\gamma + \sqrt{2}^{k\gamma}\delta^\gamma))}
    \end{align*}
    for $0\leq \gamma\leq 2$.
    
    Let us denote $D_{\eta, l}\coloneqq \prod_{k=1}^l (1-\eta_i)$ with $D_{\eta, 0} = 1$. It is a monotonic decreasing sequence with
    \begin{align*}
        D_\eta \coloneqq \lim_{l\rightarrow \infty} D_{\eta, l} = \lim_{l\rightarrow \infty} e^{\sum_{j=1}^l \ln(1-\eta_0^j)} \geq e^{-2\sum_{j=1}^\infty \eta_0^j} = e^{-\frac{2\eta_0}{1-\eta_0}}
    \end{align*}
    since $\ln(1-x) \geq -2x$ for $0\leq x\leq 1-\frac{1}{\sqrt{2}}$. Also, $\sqrt{2}^l D_{\eta,l}$ is a strictly increasing sequence by the choice of $\eta_0$, so there is only one $l$ for each $v$ such that $\sqrt{2}^{l-1} D_{\eta,l-1}\delta<|v-\bar{v}|\leq \sqrt{2}^l C_{\eta,l}\delta$ or $|v-\bar{v}|\leq \delta$.
    
    We bound each equation in the parenthesis in \eqref{eq:lower_bound1}. First,
    \begin{align*}
        \begin{split}
            \prod_{k=1}^{n-1}\left(\sqrt{2}^k\prod_{i=1}^k (1-\eta_i)\right)^{(3+\gamma)3^{n-1-k}}&\geq \prod_{k=1}^{n-1}\left(\sqrt{2}^k e^{-\frac{2\eta_0}{1-\eta_0}}\right)^{(3+\gamma)3^{n-1-k}}= \left(e^{-\frac{2\eta_0}{1-\eta_0}}\sqrt{2}\right)^{(3+\gamma)\sum_{k=1}^{n-1} k 3^{n-1-k}}\\
            &= \left(e^{-\frac{2\eta_0}{1-\eta_0}}\sqrt{2}\right)^{\frac{3+\gamma}{4}(3^n - 2n - 1)}.
        \end{split}
    \end{align*}
    Also, it holds that
    \begin{align*}
        \prod_{k=1}^n (t_k \eta_k^{\frac{5}{2}})^{3^{n-k}} = (t_0 \eta_0^{\frac{5}{2}})^{\sum_{k=1}^n k \cdot 3^{n-k}} = (t_0 \eta_0^{\frac{5}{2}})^{\frac{1}{4}(3^{n+1}-2n-3)},
    \end{align*}
    and
    \begin{align*}
        \sum_{k=1}^n 3^{n-k} \sqrt{2}^{\gamma k} t_k = 3^n\sum_{k=1}^n \left(\frac{\sqrt{2}^\gamma t_0}{3}\right)^k = 3^n\frac{\sqrt{2}^\gamma t_0}{3}\frac{1-\left(\frac{\sqrt{2}^\gamma t_0}{3}\right)^n}{1-\frac{\sqrt{2}^\gamma t_0}{3}}\leq 3^n \frac{\sqrt{2}^\gamma t_0}{3-\sqrt{2}^\gamma t_0}
    \end{align*}
    for $0 \leq \gamma \leq 2$. Therefore, we obtain
    \begin{align*}
        f(\sum_{k=1}^n t_k, v)&\geq \epsilon^{3^n}(C'_1\delta^{3+\gamma})^{\frac{3^n-1}{2}} \left(e^{-\frac{2\eta_0}{1-\eta_0}}\sqrt{2}\right)^{\frac{3+\gamma}{4}(3^n - 2n - 1)}(t_0 \eta_0^{\frac{5}{2}})^{\frac{1}{4}(3^{n+1}-2n-3)} \exp\left(-c'\frac{\sqrt{2}^\gamma t_0}{3-\sqrt{2}^\gamma t_0} \delta^\gamma 3^n\right).
    \end{align*}
    We take the logarithm function on both sides and get
    \begin{align*}
        \ln f(\sum_{k=1}^n t_k, v) &\geq 3^n\left[\ln \left(\epsilon (C'_1 \delta^{3+\gamma})^{\frac{1}{2}}\left(e^{-\frac{2\eta_0}{1-\eta_0}}\sqrt{2}\right)^{\frac{3+\gamma}{4}}(t_0 \eta_0^{\frac{5}{2}})^{\frac{3}{4}}\right) - c'\frac{\sqrt{2}^\gamma t_0/3}{1-\sqrt{2}^\gamma t_0/3} \delta^\gamma\right]\\
        &\quad - \frac{n}{2}\ln \left(\left(e^{-\frac{2\eta_0}{1-\eta_0}}\sqrt{2}\right)^{3+\gamma}(t_0 \eta_0^{\frac{5}{2}})\right) - C\\
        &\geq 3^n\left[\ln \left(\epsilon (C'_1 \delta^{3+\gamma})^{\frac{1}{2}}\left(e^{-\frac{2\eta_0}{1-\eta_0}}\sqrt{2}\right)^{\frac{3+\gamma}{4}}(t_0 \eta_0^{\frac{5}{2}})^{\frac{3}{4}}\right) - c'\frac{\sqrt{2}^\gamma t_0/3}{1-\sqrt{2}^\gamma t_0/3} \delta^\gamma\right]- C
    \end{align*}
    for some constant $C\geq 0$ and for $|v-\bar{v}| \leq \sqrt{2}^nD_{\eta, n}\delta$.

    There is an ambiguity in choosing $n$ since $n$ is dependent on both the final time $\sum_{k=1}^n t_i$ and $v$. For given $v$ with $|v-\bar{v}|>\delta$, we choose the largest $n$ such that $\sqrt{2}^{n-1} D_{\eta, n-1}\delta< |v-\bar{v}|$. Then,
    \begin{align*}
        3^n\leq \left(\frac{\sqrt{2}}{D_{\eta, n-1}}\frac{|v-\bar{v}|}{\delta} \right)^{2\frac{\ln3}{\ln2}},
    \end{align*}
    so
    \begin{align*}
        \begin{split}
            &\ln f(\sum_{k=1}^n t_k, v)\\
            &\geq \left(\frac{\sqrt{2}}{D_{\eta, n-1}}\frac{|v-\bar{v}|}{\delta} \right)^{2\frac{\ln3}{\ln2}}\Bigg[\ln \left(\epsilon (C'_1 \delta^{3+\gamma})^{\frac{1}{2}}\left(e^{-\frac{2\eta_0}{1-\eta_0}}\sqrt{2}\right)^{\frac{3+\gamma}{4}}(t_0 \eta_0^{\frac{5}{2}})^{\frac{3}{4}}\right)- c'\frac{\sqrt{2}^\gamma t_0/3}{1-\sqrt{2}^\gamma t_0/3} \delta^\gamma\Bigg] - C\\
            &\geq \left(\frac{\sqrt{2}}{D_\eta}\frac{|v-\bar{v}|}{\delta} \right)^{2\frac{\ln3}{\ln2}}\Bigg[\ln \left(\epsilon (C'_1 \delta^{3+\gamma})^{\frac{1}{2}}\left(e^{-\frac{2\eta_0}{1-\eta_0}}\sqrt{2}\right)^{\frac{3+\gamma}{4}}(t_0 \eta_0^{\frac{5}{2}})^{\frac{3}{4}}\right)- c'\frac{\sqrt{2}^\gamma t_0/3}{1-\sqrt{2}^\gamma t_0/3} \delta^\gamma\Bigg] - C.
        \end{split}
    \end{align*}
    
    Next, we choose $t_0$ satisfying
    \begin{align*}
        \sum_{k=1}^n t_0^k = T,
    \end{align*}
    where $T$ is the desired time. If such $t_0$ exists, we just choose it. In this case, $\frac{T}{1+T}\leq t_0\leq T$. Therefore, we have
    \begin{align*}
        &\ln f(T, v)\\
        &\geq \inf_{\frac{T}{1+T}\leq t_0\leq \frac{1}{2}}\left(\frac{\sqrt{2}}{D_\eta}\frac{|v-\bar{v}|}{\delta} \right)^{2\frac{\ln3}{\ln2}}\Bigg[\ln \left(\epsilon (C'_1 \delta^{3+\gamma})^{\frac{1}{2}}\left(e^{-\frac{2\eta_0}{1-\eta_0}}\sqrt{2}\right)^{\frac{3+\gamma}{4}}(t_0 \eta_0^{\frac{5}{2}})^{\frac{3}{4}}\right)- c'\frac{\sqrt{2}^\gamma t_0/3}{1-\sqrt{2}^\gamma t_0/3} \delta^\gamma\Bigg] - C.
    \end{align*}
    Taking the exponential function on both sides, we have
    \begin{align*}
        f(T, v)\geq K_1(T) e^{-K_2(T) |v-\bar{v}|^{2\frac{\ln 3}{\ln 2}}}
    \end{align*}
    for some $K_1(T)$ and $K_2(T)$.
    
    If $t_0$ satisfying $\sum_{k=1}^n t_0^k = T$ is greater than $\min\left\{\frac{3}{2}\frac{1}{2c(1+2(r_0^\gamma + \sqrt{2}^\gamma\delta^\gamma))}, \frac{1}{2}\right\}$, we just choose $t_0 = \min\left\{\frac{3}{2}\frac{1}{2c(1+2(r_0^\gamma + \sqrt{2}^\gamma\delta^\gamma))}, \frac{1}{2}\right\}$. To fill the time gap between $\sum_{k=1}^n t_k$ and $T$, we just put
    \begin{align*}
        f(T,v)\geq f(\sum_{k=1}^n t_k, v) e^{-c(T-\sum_{k=1}^n t_k)(1+|v|^\gamma)}
    \end{align*}
    from \eqref{eq:f_bound_1}. Since $\gamma\leq 2\leq 2\frac{\ln 3}{\ln 2}$, we again get
    \begin{align*}
        f(T, v)\geq K_1(T) e^{-K_2(T) |v-\bar{v}|^{2\frac{\ln 3}{\ln 2}}}
    \end{align*}
    for some $K_1(T)$ and $K_2(T)$. Combining two lower bounds, we finally get the exponential lower bound
    \begin{align}\label{eq:exponential_lower_bound_f}
        f(t,v)\geq K_1(t) e^{-K_2(t) |v|^{2\frac{\ln 3}{\ln 2}}}
    \end{align}
    with constants depending on $\|f_0\|_{1,2},\gamma, C_b, \delta, \epsilon$, and $r_0$. Here, we used $|v-\bar{v}|^{2\frac{\ln 3}{\ln 2}}\leq 5\left(|v|^{2\frac{\ln 3}{\ln 2}} + |\bar{v}|^{2\frac{\ln 3}{\ln 2}}\right)$ and absorbed $|\bar{v}|$ to the constants.
    
    Finally, replacing $f\mapsto 1-f$ in the previous proof, we also have
    \begin{align}\label{eq:exponential_lower_bound_1-f}
        1-f(t,v)\geq K_1(t) e^{-K_2(t) |v|^{2\frac{\ln 3}{\ln 2}}}
    \end{align}
    for the same $K_1(t)$ and $K_2(t)$. By the construction, we have
    \begin{align*}
        \inf_{T^{-1}\leq t\leq T} K_1(t)>0,\quad \sup_{T^{-1}\leq t\leq T} K_2(t)<\infty
    \end{align*}
    for any $1\leq T<\infty$.
    
    \vspace{5mm}
    We are now ready to establish a Gaussian lower bound. From \eqref{eq:exponential_lower_bound_f}, for any given $\bar{\delta}>0$ and $t_0>0$, we can find $0<\bar{\epsilon}<1$ depending on $\|f_0\|_{1,2},\gamma, C_b, c_b, \bar{\delta}, \epsilon,r_0$, and $t_0$ such that 
    \begin{align*}
        f(t_0,v) \geq \bar{\epsilon}, \text{\quad on \quad} |v| \leq \bar{\delta}.
    \end{align*}
    
    By \eqref{eq:epsilon_2}, there exist constants $\overline{C}_1, \overline{C}_2>0$ depending on $\|f_0\|_{1,2}$, $\gamma$, $C_b$, and $c_b$, such that
    \begin{align*}
        Q_1(f,f,1-f)(t_0,v) \geq  {\bar{\delta}}^{3+\gamma} \bar{\epsilon}^2 \left(\overline{C}_1\eta_0^{\frac{5}{2}}- \overline{C}_2\frac{1}{\bar{\delta}^3} \right),
    \end{align*}
    where $\bar{\delta}<|v| \leq \sqrt{2}(1-\eta_0)\bar{\delta}$ for $0<\eta_0<1-\frac{1}{\sqrt{2}}$. We choose
    \begin{align*}
        \eta_0 = \frac{1}{2}\left(1-\frac{1}{\sqrt{2}}\right),\quad \bar{\delta} = \left(2\frac{\overline{C_2}}{\overline{C_1}\eta_0^{5/2}}\right)^{1/3},
    \end{align*}
    Then, we get
    \begin{align}\label{eq:Q1_gaussian_bound1}
        Q_1(f,f,1-f)(t_0,v) \geq  \frac{1}{2}{\bar{\delta}}^{3+\gamma} \bar{\epsilon}^2 \overline{C}_1\eta_0^{\frac{5}{2}}.
    \end{align}
    
    Next, we define
    \begin{align*}
        t_k = t_1^k,\quad  \eta_k = \frac{1}{2^{\frac{k}{4}}}\eta_0,\quad \bar{\delta}_k=\sqrt{2}^{k}\left(\prod_{l=1}^{k} (1-\eta_l)\right)\bar{\delta}
    \end{align*}
    for $k\geq 1$, where $t_1$ satisfies
    \begin{align*}
        0<t_1\leq \min\left\{\frac{3}{2}\frac{1}{2c(1+2(r_0^\gamma + \sqrt{2}^\gamma\bar{\delta}^\gamma))}, \frac{1}{2}\right\}.
    \end{align*}
    Note that $t_1$ and $\eta_0$ satisfies \eqref{eq:t_0eta_0_cond}. By the choice of $\eta_k$ and $\bar{\delta}_k$, we get
    \begin{align*}
        \overline{C}_1\eta_k^{\frac{5}{2}}- \overline{C}_2\frac{1}{\bar{\delta}_k^3} &= \frac{1}{2^{\frac{5}{2}\frac{k}{4}}}\overline{C}_1\eta_0^{\frac{5}{2}}- \frac{1}{\sqrt{2}^{3k}\prod_{l=1}^k(1-\eta_l)^3}\overline{C}_2\frac{1}{\bar{\delta}^3}\\
        &=\frac{1}{2^{\frac{5}{2}\frac{k}{4}}}\left(\overline{C}_1\eta_0^{\frac{5}{2}}- \frac{1}{2^{\frac{7}{8}k}\prod_{l=1}^k (1-\eta_l)^3}\overline{C}_2\frac{1}{\bar{\delta}^3}\right)\\
        &\geq \frac{1}{2^{\frac{5}{2}\frac{k}{4}}}\left(\overline{C}_1\eta_0^{\frac{5}{2}}- \overline{C}_2\frac{1}{\bar{\delta}^3}\right)\\
        &=\frac{1}{2}\frac{1}{2^{\frac{5}{2}\frac{k}{4}}}\overline{C}_1\eta_0^{\frac{5}{2}} = \frac{1}{2}\overline{C}_1\eta_k^{\frac{5}{2}}.
    \end{align*}
    In the middle, we used $2^{\frac{7}{8}k}\prod_{l=1}^k(1-\eta_l)^3\geq 1$ for all $k\geq 1$.

    Under these settings, we can again apply the previous proof for the Gaussian case. The power of $\epsilon$ is changed from $3$ to $2$ in \eqref{eq:Q1_gaussian_bound1}, so it gives the Gaussian lower bound if we follow the previous proof lines. In fact, the only difference in variables is the choice of $\eta_k$, but it satisfies $\prod_{k=1}^l (1-\eta_i)>0$ and
    \begin{align*}
        \prod_{k=1}^n (1-\eta_k)&\geq\lim_{n\rightarrow \infty} \exp\left(\sum_{k=1}^n \ln\left(1-\frac{1}{2^{\frac{k}{4}}}\eta_0\right)\right) \geq \exp\left(-2\sum_{k=1}^\infty \frac{1}{2^{\frac{k}{4}}}\eta_0\right) = \exp\left(-\frac{2}{2^{\frac{1}{4}}-1}\eta_0\right),\\
        \prod_{k=1}^n (t_k \eta_k^{\frac{5}{2}})^{2^{n-k}} &= \left(\frac{t_1}{2^{5/8}}\right)^{\sum_{k=1}^n k \cdot 2^{n-k}}\left(\eta_0^{\frac{5}{2}}\right)^{\sum_{k=1}^n 2^{n-k}} = \left(\frac{t_1}{2^{5/8}}\right)^{2^{n+1}-n-2}\left(\eta_0^{\frac{5}{2}}\right)^{2^n-1}\\
        &\geq\left(\frac{t_1\eta_0^{\frac{5}{4}}}{2^{5/8}}\right)^{2^{n+1}-2}.
    \end{align*}
    Therefore, it modifies just some constants in the Gaussian function.
    
    As a result, we obtain
    \begin{align}\label{eq:Gau_lower}
        f(t,v) \geq K_3(t) e^{-K_4(t)|v|^2}
    \end{align}
    for constants $K_3(t), K_4(t)>0$, which depend on $\|f_0\|_{1,2}, \gamma, C_b,c_b, \bar{\epsilon},\delta,$ and $t\geq t_0$. Since $t_0>0$ is arbitrary, we finally get the Gaussian lower bound for $t>0$. $K_3$ and $K_4$ also fulfill
    \begin{align*}
        \inf_{T^{-1}\leq t\leq T} K_3(t)>0,\quad \sup_{T^{-1}\leq t\leq T} K_4(t)<\infty
    \end{align*}
    for any $1\leq T<\infty$.
    
    We combine \eqref{eq:exponential_lower_bound_1-f} and \eqref{eq:Gau_lower} to complete the proof.
\end{proof}

We write the proof of Theorem \ref{thm:Gaussian_lower_bound} here.

\begin{proof}[proof of Theorem \ref{thm:Gaussian_lower_bound}]
    Fix an arbitrary $t>0$. We first consider the $S(f_0)>0$ case. Using Theorem \ref{thm:main4}, we can find $C_0, r, v_0, r_0$, and $T_0$ depending on $\gamma, C_b, c_b$, and $f_0$ such that 
    \begin{align*}
        C_0t^2\leq f(\min\{t/2, T_0\},v),\quad C_0t^2\leq 1-f(\min\{t/2, T_0\},v)
    \end{align*}
    on $v\in B_r(v_0)$ with $|v_0|\leq r_0$. Using these positivity results at time $\min\left\{t/2, T_0\right\}$, we apply Theorem \ref{thm:lower_maxi} and construct a Gaussian lower bound and an exponential upper bound
    \begin{align*}
        C_1(t) e^{-C_2(t) |v|^2}\leq f(t,v)\leq 1-C_1(t) e^{-C_2(t) |v|^{2\frac{\ln 3}{\ln 2}}}
    \end{align*}
    Collecting all the dependencies, the constants $C_1(t)$ and $C_2(t)$ depend on $\gamma, C_b, c_b$, and $f_0$. It also satisfies
    \begin{align*}
        \inf_{T^{-1}\leq t\leq T}C_1(t)>0,\quad \sup_{T^{-1}\leq t\leq T}C_2(t)<\infty
    \end{align*}
    for any $1\leq T<\infty$ in this case.

    If $S(f_0) = 0$, but $f_0$ is not a saturated equilibrium, using again Theorem \ref{thm:main4}, we again find (1) $T_0$ depending on $\gamma, b(\cos\theta)$, and $f_0$ and (2) $C_0, r, v_0$, and $r_0$ at time $\min\left\{t/2, T_0\right\}$, depending on $\gamma, C_b, c_b, f\left(\frac{1}{2}\min\left\{t/2, T_0\right\}, v\right)$ such that
    \begin{align*}
        C_0\leq f(\min\{t/2, T_0\},v),\quad C_0\leq 1-f(\min\{t/2, T_0\},v)
    \end{align*}
    on $v\in B_r(v_0)$ with $|v_0|\leq r_0$. Taking $f(\min\{t/2, T_0\},v)$ as an initial function, we employ Theorem \ref{thm:lower_maxi} and get the lower and upper bound results.
\end{proof}

\begin{remark}
    This theorem does not guarantee a uniform Gaussian lower bound for a long time. In fact, the constants are improved when $t$ increases from $t=0$ and worsen as $t\rightarrow \infty$. It is because we can not repeatedly apply the theorem for time intervals; the theorem depends on $\delta$, which heavily relies on the shape of the set $\{\epsilon\leq f\leq 1-\epsilon\}$ and the Lebesgue density theorem.
\end{remark}

\section{Creation and propagation of \texorpdfstring{$L^1$}{} polynomial and exponential moments}\label{sec:L1_estimate}
In Section \ref{sec:L1_estimate}, we study polynomial and exponential weighted $L^1$ estimates for creation and propagation. In the first half, we prove polynomial $L^1$ estimates, adapting the classical inequalities in \cite{LU20123305} to the Fermi-Dirac case. In the remaining parts, we show exponential $L^1$ bounds following the classical estimates in \cite{ACGM2013}.

\begin{lemma}[Lemma 3.7 of \cite{LU20123305}]\label{lem:Lu_Mouhot_L1_p_bound}
Assume the collision kernel satisfies $0<\gamma\leq 2$ and (H1), and let $f\in L^1_q$ for all $q\geq 2$. Then, for $s\geq 6$,
\begin{align*}
    \int_{\mathbb{R}^3} Q_c(f,f)(1+|v|^2)^{s/2}\,dv\leq 2^{s+1} C_{b,2} \|f\|_{1,2}\|f\|_{1,s} - \frac{C_{b,2}}{4}\|f\|_{1,0}\|f\|_{1,s+\gamma},
\end{align*}
where $C_{b,2}$ is defined in \eqref{eq:def_C_b_2}.
\end{lemma}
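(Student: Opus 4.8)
The plan is to reproduce, in the Fermi–Dirac notation, the classical Povzner-type argument underlying Lemma 3.7 of \cite{LU20123305}. The starting point is the weak formulation of the classical collision operator: for $\psi(v)=(1+|v|^2)^{s/2}$,
\begin{align*}
    \int_{\mathbb{R}^3} Q_c(f,f)\psi\,dv = \frac{1}{2}\int_{\mathbb{R}^6\times\mathbb{S}^2} B(|v-v_*|,\cos\theta) f(v)f(v_*)\big(\psi(v')+\psi(v_*')-\psi(v)-\psi(v_*)\big)\,d\sigma dv_*dv.
\end{align*}
Since the weight depends only on $|v|^2$ and $|v'|^2+|v_*'|^2=|v|^2+|v_*|^2$, the bracket is governed by the classical Povzner lemma: writing $E=|v|^2+|v_*|^2$ and using the variable $\cos\theta$, one has the pointwise bound $\psi(v')+\psi(v_*')-\psi(v)-\psi(v_*)\le (1+E)^{s/2}\,\omega_s(\theta) - \text{(good terms)}$, and after integrating $b(\cos\theta)$ against the angular profile one extracts a positive constant multiplying $(1+|v|^2)^{(s-2)/2}(1+|v_*|^2) + (1+|v_*|^2)^{(s-2)/2}(1+|v|^2)$ with a negative sign (the coercive part), plus a remainder controlled by $(1+|v|^2)(1+|v_*|^2)^{1/2}+(1+|v|^2)^{1/2}(1+|v_*|^2)$ type mixed moments. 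The factor $C_{b,2}=2\pi\int_0^\pi b(\cos\theta)\sin^3\theta\,d\theta$ arises precisely because the leading-order Taylor expansion of $\psi(v')-\psi(v)$ in the deviation angle produces a $\sin^2(\theta/2)$ weight, whose integral against $b(\cos\theta)\sin\theta$ is a multiple of $C_{b,2}$.

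The key steps, in order, are: (1) use the pre/post-collision symmetry \eqref{eq:bef_aft} and the $\sigma\mapsto-\sigma$ symmetry to write the weak form as above and reduce to estimating $\psi(v')+\psi(v_*')-\psi(v)-\psi(v_*)$; (2) apply the sharp form of Povzner's inequality for the weight $(1+|v|^2)^{s/2}$ with $s\ge 6$ — this is where the constants $2^{s+1}$ and $\frac14$ and the $C_{b,2}$ normalization come from; one needs $s\ge 6$ so that the lower-order corrections in the Povzner expansion are genuinely dominated by $\|f\|_{1,2}\|f\|_{1,s}$ rather than contributing at order $s+\gamma$; (3) multiply through by $B=|v-v_*|^\gamma b(\cos\theta)$, bound $|v-v_*|^\gamma \le (1+|v|^2)^{\gamma/2}(1+|v_*|^2)^{\gamma/2} \cdot 2^{?}$ on the coercive term and $|v-v_*|^\gamma\le 2^{\gamma/2}\big((1+|v|^2)^{\gamma/2}+(1+|v_*|^2)^{\gamma/2}\big)$ on the remainder; (4) integrate in $v,v_*$ and identify the resulting products of moments: the coercive term yields $-\tfrac{C_{b,2}}{4}\|f\|_{1,0}\|f\|_{1,s+\gamma}$ after noting that the $(1+|v_*|^2)^{1+\gamma/2}$ factor times $\|f\|_{1,0}$ is bounded below by $\|f\|_{1,0}\|f\|_{1,s+\gamma}$-type quantities via $1+\gamma/2\le 2\le$ the available mass moment, while the remainder is absorbed into $2^{s+1}C_{b,2}\|f\|_{1,2}\|f\|_{1,s}$ by Young's inequality on the exponents ($s-2+\gamma$ split as a convex combination hitting $2$ and $s$, which is possible for $0<\gamma\le 2$).

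The main obstacle is step (2): getting the constants exactly as stated requires the refined Povzner inequality in the precise normalization used by Lu–Mouhot, rather than a crude version, and carefully tracking which part of the angular integral feeds the coercive $\|f\|_{1,s+\gamma}$ term versus the error term. Concretely, one must split the $\sigma$-integration near $\theta=0$ and near $\theta=\pi$, use the symmetrization $\theta\leftrightarrow\pi-\theta$ inherited from $b(\cos(\pi-\theta))=b(\cos\theta)$, and verify that the negative contribution survives with coefficient at least $\tfrac{C_{b,2}}{4}$ after all cancellations — this is the delicate bookkeeping. Since the weight and the collision geometry here are identical to the classical case (the Fermi–Dirac factors $(1-f)$ do not appear because the statement is about $Q_c$, not $Q_{FD}$), the proof is essentially a transcription of \cite[Lemma 3.7]{LU20123305}, and I would simply carry out that argument verbatim, citing the classical Povzner estimate, and remark that no new ideas are needed beyond the notation $C_{b,2}$ already reserved in \eqref{eq:def_C_b_2}.
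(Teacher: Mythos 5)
This lemma is not proved in the paper: it is imported verbatim from Lu--Mouhot as Lemma~3.7 of \cite{LU20123305}, which is why the theorem header carries the tag ``[Lemma~3.7 of \cite{LU20123305}]''. Your final paragraph correctly identifies this --- the statement concerns only $Q_c$, contains no Fermi--Dirac factors, and the right move is to cite rather than reprove. So at the level of what the paper actually does, you and the paper agree.

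That said, the sketch you supply along the way has some concrete slips that would derail an actual write-up. In step~(3) you propose to \emph{upper}-bound $|v-v_*|^\gamma$ on the coercive (negative) term. That is the wrong direction: if the Povzner step has produced a term of the form $-c\,|v-v_*|^\gamma\,(|v|^s+|v_*|^s)\,f(v)f(v_*)$, an upper bound for the whole expression requires a \emph{lower} bound on $|v-v_*|^\gamma$. The standard device is something like $|v-v_*|^\gamma\ge \tfrac12|v|^\gamma-|v_*|^\gamma$ on the region $|v|\ge 2|v_*|$ (this is exactly the trick the paper reuses in Lemma~\ref{lem:Lf_lower}, display~\eqref{eq:Lf>v}); your displayed inequality $|v-v_*|^\gamma\le(1+|v|^2)^{\gamma/2}(1+|v_*|^2)^{\gamma/2}\cdot 2^{?}$ would instead weaken the good term and cannot produce $-\tfrac{C_{b,2}}{4}\|f\|_{1,0}\|f\|_{1,s+\gamma}$. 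Second, your account of where $C_{b,2}$ comes from is off: you say the Taylor expansion of $\psi(v')-\psi(v)$ produces a $\sin^2(\theta/2)$ weight, but $\int_0^\pi b(\cos\theta)\sin^2(\theta/2)\sin\theta\,d\theta$ reduces, under the symmetry $b(\cos(\pi-\theta))=b(\cos\theta)$, to $\tfrac{1}{4\pi}C_b$, not to anything proportional to $C_{b,2}=2\pi\int_0^\pi b(\cos\theta)\sin^3\theta\,d\theta$. The $\sin^3\theta$ weight arises instead from the $(1-\cos^2\theta)=\sin^2\theta$ factor produced when one averages products of post-collisional energies (the $s=4$ model computation), which is a different mechanism. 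Third, in step~(4) the bookkeeping ``$(1+|v_*|^2)^{1+\gamma/2}$ times $\|f\|_{1,0}$ is bounded below by $\|f\|_{1,0}\|f\|_{1,s+\gamma}$-type quantities'' does not parse: a weight of order $|v_*|^{2+\gamma}$ gives $\|f\|_{1,2+\gamma}$, not $\|f\|_{1,s+\gamma}$; the $\|f\|_{1,s+\gamma}$ factor must come from the variable carrying the weight of order $s+\gamma$. None of this affects the paper, since the paper cites the lemma rather than proving it, but as a reconstruction of Lu--Mouhot's argument the sketch would need those three points repaired before it could be carried through.
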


From the definition of $Q_{FD}(f,f)$ and $Q_c(f,f)$, we obtain
\begin{align*}
    Q_{FD}(f,f)(v) &\leq Q_c(f,f)(v) + f(v)\int_{\mathbb{R}^3 \times \mathbb{S}^2} B(|v-v_*|,\cos \theta)  f(v_*) (f(v')+f(v'_*))\,d\sigma dv_*.
\end{align*}
The $s$th moment of the classical term $Q_c(f,f)$ can be bounded using Lemma \ref{lem:Lu_Mouhot_L1_p_bound}, so we need to control the second term. The next lemma is designed for this task.
\begin{lemma} \label{lem:epsv}
    We consider the collision kernel \eqref{eq:B_defi} for $0 < \gamma \leq 2$ and (H1). Assume $f \in L_2^1(\mathbb{R}^3)$ and $0\leq f\leq 1$. Then, there exist constants $C_1>0$ and $C_2>0$ depending on $\gamma$ and $C_b$ such that
    \begin{align*}
        &\int_{\mathbb{R}^3 \times \mathbb{S}^2} B(|v-v_*|,\cos \theta)  f(v_*) (f(v')+f(v'_*))\,d\sigma dv_* \leq \frac{C_1}{\epsilon^3}\|f\|_{1,2}+ C_2 \varphi(\epsilon)(\|f\|_{1,2}+|v|^\gamma\|f\|_{1,0})
    \end{align*}
    for every $0<\epsilon <1$ and $v\in\mathbb{R}^3$. Here, $\varphi(\epsilon)$ is defined in \eqref{eq:phi_def}.
\end{lemma}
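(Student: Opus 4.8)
The plan is to split the angular integral over $\sigma$ into a ``grazing'' part where $\theta$ is close to $0$ or $\pi$ and a ``non-grazing'' part where $\theta$ stays away from the poles, using the cutoff function $\varphi(\epsilon)$ from \eqref{eq:phi_def} as the threshold. Recall from \eqref{eq:sin} that $|v-v'| = |v-v_*|\sin\tfrac{\theta}{2}$ and $|v-v_*'| = |v-v_*|\cos\tfrac{\theta}{2}$; hence on the region $\epsilon \le \theta \le \pi-\epsilon$ we have both $\sin\tfrac{\theta}{2}$ and $\cos\tfrac{\theta}{2}$ bounded below by a constant multiple of $\epsilon$. This is exactly the regime where the cancellation lemma \eqref{eq:cancel} — with its Jacobian factors $\cos^{-3}\tfrac{\theta}{2}$ and $\sin^{-3}\tfrac{\theta}{2}$ — produces only a $\epsilon^{-3}$ loss rather than a divergence. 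On the complementary grazing region, we cannot control the change of variables, but there $v' \approx v$ (near $\theta = \pi$) or $v' \approx v_*$ (near $\theta = 0$), so we simply bound $f(v') \le 1$ or estimate crudely, and the smallness of $\varphi(\epsilon)$ compensates.

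Concretely, I would first write
\[
\int_{\mathbb{R}^3\times\mathbb{S}^2} B\, f(v_*)\big(f(v')+f(v_*')\big)\,d\sigma dv_*
= I_{\mathrm{ng}} + I_{\mathrm{g}},
\]
where $I_{\mathrm{ng}}$ carries the factor $\mathbf{1}_{\{\epsilon\le\theta\le\pi-\epsilon\}}$ and $I_{\mathrm{g}}$ carries $\mathbf{1}_{\{0<\theta<\epsilon\}}+\mathbf{1}_{\{\pi-\epsilon<\theta<\pi\}}$. For $I_{\mathrm{ng}}$: using $0\le f\le 1$ to drop the $f(v_*)$ factor where convenient, apply the symmetry \eqref{eq:sym_sigma} and then the cancellation identities \eqref{eq:cancel} with $\theta_0$ replaced by the truncated range, so that the $v_*$-integral of $f(v')$ or $f(v_*')$ becomes $\int f(v)\,dv = \|f\|_{1,0}$ times an angular integral $\int_\epsilon^{\pi-\epsilon}\frac{\sin\theta}{\sin^3(\theta/2)}|v-v_*|^\gamma b(\cos\theta)\,d\theta$; since $|v-v_*|/\sin(\theta/2)$ or $|v-v_*|/\cos(\theta/2)$ reappears as the argument of $B$, one bounds $|v-v_*|^\gamma \le 2^\gamma(|v|^\gamma+|v_*|^\gamma)$ and the angular weight by $C\epsilon^{-3}\int b(\cos\theta)\sin\theta\,d\theta \le C\epsilon^{-3}C_b$ (here $0<\gamma\le 2$ guarantees the $|v-v_*|^\gamma$ contributes no extra singularity). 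This yields the $C_1\epsilon^{-3}\|f\|_{1,2}$ term after absorbing the $|v|^\gamma\|f\|_{1,0}$ piece (which, for the non-grazing part, can also be folded into $\|f\|_{1,2}$ up to a constant depending on $\gamma$, or kept — either way it is controlled). For $I_{\mathrm{g}}$: bound $f(v')+f(v_*')\le 2$, so $I_{\mathrm{g}} \le 2\int_{\mathbb{R}^3}\big(\int_{\mathbb{S}^2} b(\cos\theta)(\mathbf{1}_{\{0<\theta<\epsilon\}}+\mathbf{1}_{\{\pi-\epsilon<\theta<\pi\}})\,d\sigma\big)|v-v_*|^\gamma f(v_*)\,dv_* = 2\varphi(\epsilon)\int |v-v_*|^\gamma f(v_*)\,dv_*$, and then $|v-v_*|^\gamma\le 2^\gamma(|v|^\gamma+|v_*|^\gamma)$ gives $C_2\varphi(\epsilon)(\|f\|_{1,2}+|v|^\gamma\|f\|_{1,0})$.

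The main obstacle I anticipate is the bookkeeping in $I_{\mathrm{ng}}$: one must be careful that both terms $f(v')$ and $f(v_*')$ are handled by the correct half of \eqref{eq:sym_sigma} and the correct line of the cancellation identity \eqref{eq:cancel}, and that the substituted collision kernel $B(|v-v_*|/\cos(\theta/2),\cos\theta) = (|v-v_*|/\cos(\theta/2))^\gamma b(\cos\theta)$ has its $\cos^{-\gamma}(\theta/2)$ factor (and likewise $\sin^{-\gamma}(\theta/2)$) absorbed into the $\epsilon^{-3}$ loss — this works precisely because $\gamma \le 2 < 3$, so the combined exponent $3+\gamma \le 5$ still gives a finite $\epsilon^{-(3+\gamma)}$, which we simply majorize by $C\epsilon^{-3}$ (note $\epsilon<1$) at the cost of enlarging $C_1$. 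A secondary point is keeping track of where the weight $(1+|v_*|^2)$ is needed: the integrand $|v_*|^\gamma f(v_*)$ with $\gamma\le 2$ is dominated by $(1+|v_*|^2)f(v_*)$, so $\|f\|_{1,2}$ suffices throughout. Once these substitutions are set up correctly, the rest is the routine estimate $2\pi\int_0^\pi b(\cos\theta)\sin\theta\,d\theta = C_b$ and elementary inequalities.
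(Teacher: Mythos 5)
Your decomposition into a grazing region (with indicator $\mathbf{1}_{\{0<\theta<\epsilon\}}+\mathbf{1}_{\{\pi-\epsilon<\theta<\pi\}}$) and a non-grazing region, and your treatment of the grazing part, agree with the paper. The gap is in the non-grazing term $I_{\mathrm{ng}}$. After applying the cancellation identity \eqref{eq:cancel} to $\int B(|v-v_*|,\cos\theta)\,f(v_*')\,d\sigma dv_*$ (restricted to $\epsilon\le\theta\le\pi-\epsilon$), the kernel's argument becomes $|v-w|/\cos(\theta/2)$ where $w$ is the new integration variable, so the $v$-integral does \emph{not} separate into $\|f\|_{1,0}$ times a pure angular factor; you still carry $|v-w|^\gamma$ inside the $w$-integral. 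Splitting $|v-w|^\gamma\le 2(|v|^\gamma+|w|^\gamma)$ at that stage produces a term $|v|^\gamma\|f\|_{1,0}$ multiplied by the \emph{non-grazing} angular constant of order $\epsilon^{-3}$, not by $\varphi(\epsilon)$. This cannot be ``folded into $\|f\|_{1,2}$'': $|v|^\gamma\|f\|_{1,0}$ is an unbounded function of $v$, while $\|f\|_{1,2}$ is a constant, so no $v$-independent $C_1$ makes $|v|^\gamma\|f\|_{1,0}\le C_1\|f\|_{1,2}$. Nor is ``just keeping it'' acceptable, because the lemma's whole point — and the way it is used in Lemma~\ref{lem:Quantum_L1_p_bound} — is that the coefficient of $|v|^\gamma\|f\|_{1,0}$ is $\varphi(\epsilon)$, which can be made small, rather than $\epsilon^{-3}$, which blows up.

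The paper sidesteps this by manipulating $|v-v_*|^\gamma$ \emph{before} any Jacobian change of variables: using \eqref{eq:sin}, it writes $|v-v_*|^\gamma=\bigl(|v_*'-v_*|/\sin(\theta/2)\bigr)^\gamma$ and then splits $|v_*'-v_*|^\gamma\le 2\bigl(|v_*'|^\gamma+|v_*|^\gamma\bigr)$. Because the resulting weights involve only $v_*'$ and $v_*$ — never $v$ — both pieces land in $\|f\|_{1,2}$ (one directly, one after \eqref{eq:cancel}), and no $|v|^\gamma$ ever appears in the non-grazing estimate. This rewriting is the essential idea your proposal is missing, and without it the claimed $C_1\epsilon^{-3}\|f\|_{1,2}$ bound for $I_{\mathrm{ng}}$ does not follow. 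A secondary slip: your remark that the combined angular singularity $\epsilon^{-(3+\gamma)}$ ``is majorized by $C\epsilon^{-3}$ since $\epsilon<1$'' is backwards (for $\epsilon<1$, $\epsilon^{-(3+\gamma)}>\epsilon^{-3}$); in fact the $\sin^{-\gamma}(\theta/2)$ and $\cos^{-3}(\theta/2)$ factors peak at opposite endpoints of $[\epsilon,\pi-\epsilon]$, so the maximum of their product over the truncated range is $O(\epsilon^{-\max\{3,\gamma\}})=O(\epsilon^{-3})$, which is how the paper gets $\epsilon^{-3}$ rather than $\epsilon^{-(3+\gamma)}$.
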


\begin{proof}
    By performing the change of variable $\sigma \rightarrow -\sigma$, we have
    \begin{align*}
           \int_{\mathbb{R}^3 \times \mathbb{S}^2} B(|v-v_*|,\cos \theta)  f(v_*) (f(v')+f(v'_*))\,d\sigma dv_*=2\int_{\mathbb{R}^3 \times \mathbb{S}^2}  B(|v-v_*|,\cos \theta)f(v_*) f(v_*')\,d\sigma dv_*.
    \end{align*}
    Next, we divide the interval of $\theta$ into $\epsilon\leq \theta\leq \pi-\epsilon$ and the remainder part for $0<\epsilon<1$ in the $\sigma$-integral.\\
    
    \noindent (1) First, we consider the set $\epsilon\leq \theta\leq \pi-\epsilon$. From \eqref{eq:sin} and $0\leq f\leq 1$, we have
    \begin{align}
        &\int_{\mathbb{R}^3 \times \mathbb{S}^2}
        |v-v_*|^\gamma  b(\cos\theta)f(v_*) f(v'_*) \mathbf{1}_{\{\epsilon\leq \theta\leq \pi-\epsilon\}} \,d\sigma dv_*  \notag \\
        &=\int_{\mathbb{R}^3 \times \mathbb{S}^2} \left(\frac{|v_*'-v_*|}{\sin \frac{\theta}{2}}\right)^\gamma  b(\cos \theta)
        f(v_*) f(v'_*) \mathbf{1}_{\{\epsilon\leq \theta\leq \pi-\epsilon\}} \,d\sigma dv_* \notag \\
        &\leq 2\int_{\mathbb{R}^3 \times \mathbb{S}^2} \frac{1}{\sin^\gamma  \frac{\theta}{2}} b(\cos \theta)  |v'_*|^\gamma  f(v'_*) \mathbf{1}_{\{\epsilon\leq \theta\leq \pi-\epsilon\}} \,d\sigma dv_*  \label{eq:L1_temp1}\\
        &\quad + 2\int_{\mathbb{R}^3 \times \mathbb{S}^2} \frac{1}{\sin^\gamma  \frac{\theta}{2}} b(\cos \theta) |v_*|^\gamma  f(v_*) \mathbf{1}_{\{\epsilon\leq \theta\leq \pi-\epsilon\}} \,d\sigma dv_*.  \label{eq:L1_temp2}
    \end{align} 

    Using \eqref{eq:cancel}, we can bound \eqref{eq:L1_temp1} as follows:
    \begin{align*}
        \begin{split}
            \eqref{eq:L1_temp1}&= 4\pi\int_{\mathbb{R}^3}|v_*|^\gamma f(v_*) \,dv_*\int_\epsilon^{\pi-\epsilon}\frac{1}{\sin^\gamma  \frac{\theta}{2}}\frac{1}{\cos^3 \frac{\theta}{2}} b(\cos \theta)\sin\theta\,d\theta.
        \end{split}
    \end{align*}
    Since $b(\cos \theta)\sin\theta$ is integrable from (H1),
    \begin{align*}
        \int_\epsilon^{\pi-\epsilon}\frac{1}{\sin^\gamma \frac{\theta}{2}}\frac{1}{\cos^3 \frac{\theta}{2}} b(\cos \theta)\sin\theta \,d\theta &\leq \frac{C_b}{2\pi}\max\left\{\frac{1}{\sin^\gamma  \frac{\epsilon}{2}}\frac{1}{\cos^3 \frac{\epsilon}{2}}, \frac{1}{\cos^\gamma  \frac{\epsilon}{2}}\frac{1}{\sin^3 \frac{\epsilon}{2}}\right\}\leq \frac{C}{\epsilon^3}
    \end{align*}
    for some constant $C$. Since $0<\gamma\leq 2$, we have $|v_*|^\gamma\leq 1+|v_*|^2$, so $\int_{\mathbb{R}^3}|v_*|^\gamma f(v_*) \,dv_*\leq \|f\|_{1,2}$. Combining these two, we obtain
    \begin{align*}
        \eqref{eq:L1_temp1} \leq \frac{C}{\epsilon^3}\|f\|_{1,2}
    \end{align*}
    for some $C$ depending on $C_b$ and $\gamma$. Bounding \eqref{eq:L1_temp2} is more simple: we have
    \begin{align*}
        \eqref{eq:L1_temp2}&\leq 4\pi\int_{\mathbb{R}^3} |v_*|^\gamma  f(v_*)\,dv_* \int_\epsilon^{\pi-\epsilon} \frac{1}{\sin^\gamma  \frac{\theta}{2}} b(\cos \theta)\sin\theta\,d\theta\\
        &\leq \frac{C}{\epsilon^\gamma} \|f\|_{1,2}.
    \end{align*}
    We add these two results and obtain an upper bound for the set $\epsilon\leq \theta\leq \pi-\epsilon$ by
    \begin{align*}
        \int_{\mathbb{R}^3 \times \mathbb{S}^2}
        |v-v_*|^\gamma  b(\cos\theta)f(v_*) f(v'_*) \mathbf{1}_{\{\epsilon\leq \theta\leq \pi-\epsilon\}} \,d\sigma dv_*\leq \frac{2C}{\epsilon^3}\|f_0\|_{1,2}.
    \end{align*}\\

    \noindent (2) Now, we consider the remainder. Since $b(\cos\theta)\sin\theta\,d\theta$ is integrable, we can define $\varphi(\epsilon)$ by \eqref{eq:phi_def}. Then,
    \begin{align*}
        &\int_{\mathbb{R}^3 \times \mathbb{S}^2} |v-v_*|^\gamma  b(\cos \theta)f(v_*) f(v'_*) \left(\mathbf{1}_{\{0<\theta<\epsilon\}}(\theta) + \mathbf{1}_{\{\pi-\epsilon<\theta<\pi\}}(\theta)\right)\,d\sigma dv_*\\
        &\leq 2 \int_{\mathbb{R}^3} (|v|^\gamma +|v_*|^\gamma )f(v_*) \,dv_* \int_{\mathbb{S}^2}\left(\mathbf{1}_{\{0<\theta<\epsilon\}}(\theta) + \mathbf{1}_{\{\pi-\epsilon<\theta<\pi\}}(\theta)\right)b(\cos\theta)\,d\sigma\\
        &\leq C\varphi(\epsilon)(\|f\|_{1,2}+|v|^\gamma\|f\|_{1,0})
    \end{align*}
    for some constant $C$ depending on $\gamma$.
    
    From the two inequalities, we get the lemma.
\end{proof}

Using this lemma, we extend Lemma \ref{lem:Lu_Mouhot_L1_p_bound} to the Fermi-Dirac case.
\begin{lemma}\label{lem:Quantum_L1_p_bound}
Assume the collision kernel satisfies $0<\gamma\leq 2$ and (H1), and let $f\in L^1_q$ for all $q\geq 2$ with $0\leq f\leq 1$. Then, for $s\geq 6$,
\begin{align*}
    \int_{\mathbb{R}^3} Q_{FD}(f,f)(1+|v|^2)^{s/2}\,dv\leq C \|f\|_{1,2}\|f\|_{1,s} - \frac{C_{b,2}}{8}\|f\|_{1,0}\|f\|_{1,s+\gamma},
\end{align*}
where $C$ is a constant depending on $s$, $\gamma$, $C_b$, $C_{b,2}$, and $\varphi(\epsilon)$.
\end{lemma}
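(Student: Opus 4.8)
The plan is to split $Q_{FD}$ into the classical operator plus the quantum correction via the pointwise bound recorded just above the statement, namely
\begin{align*}
    Q_{FD}(f,f)(v) \leq Q_c(f,f)(v) + f(v)\int_{\mathbb{R}^3 \times \mathbb{S}^2} B(|v-v_*|,\cos \theta)  f(v_*) (f(v')+f(v'_*))\,d\sigma dv_*,
\end{align*}
then multiply by the weight $(1+|v|^2)^{s/2}$ and integrate. The classical contribution is handled directly by Lemma \ref{lem:Lu_Mouhot_L1_p_bound}, which for $s\geq 6$ supplies the positive term $2^{s+1}C_{b,2}\|f\|_{1,2}\|f\|_{1,s}$ and, crucially, the \emph{negative} term $-\tfrac{C_{b,2}}{4}\|f\|_{1,0}\|f\|_{1,s+\gamma}$. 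So the whole argument reduces to controlling the weighted integral of the correction term.

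For the correction term I would invoke Lemma \ref{lem:epsv} with a free parameter $\epsilon\in(0,1)$, which bounds the inner $d\sigma\,dv_*$ integral pointwise in $v$ by $\tfrac{C_1}{\epsilon^3}\|f\|_{1,2}+ C_2 \varphi(\epsilon)(\|f\|_{1,2}+|v|^\gamma\|f\|_{1,0})$. Multiplying by $f(v)(1+|v|^2)^{s/2}$, integrating in $v$, and using the elementary inequality $|v|^\gamma(1+|v|^2)^{s/2}\le (1+|v|^2)^{(s+\gamma)/2}$ (valid since $\gamma\ge0$), the weighted integral of the correction term is at most
\begin{align*}
    \Big(\frac{C_1}{\epsilon^3}+C_2\varphi(\epsilon)\Big)\|f\|_{1,2}\|f\|_{1,s} + C_2\varphi(\epsilon)\|f\|_{1,0}\|f\|_{1,s+\gamma}.
\end{align*}

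The last step is an absorption argument. Since $\varphi(\epsilon)\to0$ as $\epsilon\to0$ (noted after \eqref{eq:phi_def}), I would fix $\epsilon$ small enough, depending only on $C_2$ and $C_{b,2}$, that $C_2\varphi(\epsilon)\le \tfrac{C_{b,2}}{8}$. With $\epsilon$ frozen, $\tfrac{C_1}{\epsilon^3}+C_2\varphi(\epsilon)$ becomes a constant depending only on $s,\gamma,C_b,C_{b,2}$ and on the function $\varphi$, so adding the classical and corrected estimates gives
\begin{align*}
    \int_{\mathbb{R}^3} Q_{FD}(f,f)(1+|v|^2)^{s/2}\,dv &\le C\|f\|_{1,2}\|f\|_{1,s} + \Big(C_2\varphi(\epsilon)-\frac{C_{b,2}}{4}\Big)\|f\|_{1,0}\|f\|_{1,s+\gamma}\\
    &\le C\|f\|_{1,2}\|f\|_{1,s}-\frac{C_{b,2}}{8}\|f\|_{1,0}\|f\|_{1,s+\gamma},
\end{align*}
which is the claim.

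I do not expect a genuine obstacle: once Lemmas \ref{lem:Lu_Mouhot_L1_p_bound} and \ref{lem:epsv} are in hand the proof is essentially bookkeeping. The one point that has to work out — and it does, by the design of Lemma \ref{lem:epsv} — is that the only piece of the quantum correction carrying the top power $|v|^{s+\gamma}$ comes with the small factor $\varphi(\epsilon)$, so that it has exactly the structure $\|f\|_{1,0}\|f\|_{1,s+\gamma}$ of the good negative term and can be absorbed into it by taking $\epsilon$ small, with no smallness assumption on $f$ itself (only $0\le f\le1$ and finiteness of the moments).
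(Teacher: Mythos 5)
Your proposal is correct and follows essentially the same path as the paper: decompose $Q_{FD}$ into $Q_c$ plus the quantum correction, bound the classical part by Lemma \ref{lem:Lu_Mouhot_L1_p_bound}, bound the correction pointwise by Lemma \ref{lem:epsv}, and then choose $\epsilon$ small enough to absorb the $C_2\varphi(\epsilon)\|f\|_{1,0}\|f\|_{1,s+\gamma}$ piece into the negative term, losing half of it. The only cosmetic difference is that the paper fixes $\epsilon_*$ with $C_2\varphi(\epsilon_*)=\tfrac{C_{b,2}}{8}$ whereas you require only the inequality, which is equivalent here.
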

\begin{proof}
    By the splitting of $Q_{FD}$, we write
    \begin{align}
        &\int_{\mathbb{R}^3} Q_{FD}(f,f)(1+|v|^2)^{s/2}\,dv\notag\\
        &\leq\int_{\mathbb{R}^3 } Q_c(f,f)(t,v)(1+|v|^2)^{s/2} \,d\sigma dv_* dv \label{eq:sp_1_1} \\
        &\quad + \int_{\mathbb{R}^6 \times \mathbb{S}^2} B(|v-v_*|, \cos \theta)
        f(t,v)f(t,v_*) (f(t,v')+f(t,v_*')) (1+|v|^2)^{s/2}\,d\sigma dv_* dv \label{eq:sp_1_2}.
    \end{align}
    From Lemma \ref{lem:Lu_Mouhot_L1_p_bound} and \ref{lem:epsv} for $s\geq 6$,
    \begin{align*}
        \eqref{eq:sp_1_1}&\leq 2^{s+1} C_{b,2} \|f\|_{1,2}\|f\|_{1,s} - \frac{C_{b,2}}{4}\|f\|_{1,0}\|f\|_{1,s+\gamma},\\
        \eqref{eq:sp_1_2}&\leq \left(C_1\frac{1}{\epsilon^3}+C_2\varphi(\epsilon)\right)\|f\|_{1,2}\|f\|_{1,s} + C_2\varphi(\epsilon)\|f\|_{1,0}\|f\|_{1,s+\gamma}.
    \end{align*}
    We choose $\epsilon_*$ such that $C_2 \varphi(\epsilon_*) = \frac{C_{b,2}}{8}$, then
    \begin{align*}
        \int_{\mathbb{R}^3} Q_{FD}(f,f)(1+|v|^2)^{s/2}\,dv\leq \left(2^{s+1} C_{b,2} +C_1\frac{1}{\epsilon_*^3}+C_2\frac{C_{b,2}}{8}\right)\|f\|_{1,2}\|f\|_{1,s} - \frac{C_{b,2}}{8}\|f\|_{1,0}\|f\|_{1,s+\gamma}.
    \end{align*}
\end{proof}

The next theorem states and proves Theorem \ref{thm:L1_bound}-(1) assuming that a solution of the Boltzmann-Fermi-Dirac equation $f(t,v)$ satisfies $\|f\|_{1,s}(t)\in C^1((0,\infty))$. In Section \ref{sec:Well-posedness}, we will prove the existence and uniqueness of the solution of the Boltzmann-Fermi-Dirac equation using these \textit{a priori} estimates. In consequence, we discard the \textit{a priori} assumptions and get Theorem \ref{thm:L1_bound}-(1).

\begin{theorem}\label{lem:poly_L1}
    We consider the collision kernel \eqref{eq:B_defi} satisfying $0 < \gamma \leq 2$ and (H1). For a solution $f$ of the Boltzmann-Fermi-Dirac equation, assume $\|f\|_{1,s}(t)\in C^1((0,\infty))$ for all $s\geq 2$. Then, there exists a constant $C_{s,1} \geq 0$ for $s\geq 2$ depending on $s$, $\gamma$, $C_b$, $\varphi(\epsilon)$, $C_{b,2}$, $\|f_0\|_{1,0}^{-1}$, and $\|f_0\|_{1,2}$ such that
    \begin{align}  \label{eq:poly_cre}
        \|f\|_{1,s}(t) \leq C_{s,1} \max\left\{t^{-\frac{s-2}{\gamma}}, 1\right\}
    \end{align}
    for $t>0$ and $s>2$. Furthermore, if $\|f\|_{1,s}(0)$ is finite, $f\in C([0,\infty), L^1_s)$, and $\|f\|_{1,s}(t)\in C^1([0,\infty))$, then there exists a constant $C_{s,2} \geq 0$ depending on $s$, $\gamma$, $C_b$, $C_{b,2}$, $\varphi(\epsilon)$, $\|f_0\|_{1,0}^{-1}, \|f_0\|_{1,2}$, and $\|f_0\|_{1,s}$ such that
    \begin{align} \label{eq:poly_pro}
        \|f\|_{1,s}(t) \leq C_{s,2}
    \end{align}
    for $t\geq 0$ and $s\geq 2$.
\end{theorem}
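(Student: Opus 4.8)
The strategy is to reduce \eqref{eq:poly_cre}--\eqref{eq:poly_pro} to a scalar differential inequality for $y(t):=\|f\|_{1,s}(t)$ and then to solve it by a comparison argument. Consider first $s\ge 6$. Since $f$ is a conservative solution, $\|f\|_{1,0}(t)=\|f_0\|_{1,0}$ and $\|f\|_{1,2}(t)=\|f_0\|_{1,2}=:E_2$ for all $t$. Using the mild formulation, the $C^1$ hypothesis on $\|f\|_{1,s}$, and the fact that then $f(t,\cdot)\in L^1_q$ for every $q\ge 2$, one may differentiate $\|f\|_{1,s}$ in time (after the standard truncation of the weight $(1+|v|^2)^{s/2}$ by $\min\{(1+|v|^2)^{s/2},R^s\}$, differentiating, and letting $R\to\infty$), and Lemma \ref{lem:Quantum_L1_p_bound} gives
\begin{align*}
    \frac{d}{dt}\|f\|_{1,s}(t)\le C\,E_2\,\|f\|_{1,s}(t)-\frac{C_{b,2}}{8}\|f_0\|_{1,0}\,\|f\|_{1,s+\gamma}(t),\qquad t>0,
\end{align*}
with $C=C(s,\gamma,C_b,C_{b,2},\varphi(\epsilon))$.

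To close this inequality I interpolate: H\"older's inequality with respect to the finite measure $f(t,v)\,dv$ gives, with $\theta=\gamma/(s+\gamma-2)$,
\begin{align*}
    \|f\|_{1,s}\le\|f\|_{1,2}^{\theta}\,\|f\|_{1,s+\gamma}^{1-\theta},
    \qquad\text{equivalently}\qquad
    \|f\|_{1,s+\gamma}\ge E_2^{-\gamma/(s-2)}\,\|f\|_{1,s}^{\,1+\gamma/(s-2)}.
\end{align*}
Hence $y=\|f\|_{1,s}$ satisfies $y'\le Ay-By^{1+p}$ with $p=\gamma/(s-2)$, $A=CE_2$, and $B=\tfrac{C_{b,2}}{8}\|f_0\|_{1,0}E_2^{-p}$. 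I then use the elementary fact that a nonnegative $y\in C^1$ with $y'\le Ay-By^{1+p}$ satisfies $y(t)\le\max\{(2A/B)^{1/p},(pBt/2)^{-1/p}\}$: wherever $y>(2A/B)^{1/p}$ one has $Ay\le\tfrac12 By^{1+p}$, so $y'\le-\tfrac12 By^{1+p}$, and comparison with the Bernoulli equation $z'=-\tfrac12 Bz^{1+p}$ with $z(0^{+})=+\infty$ produces the decaying branch, while the constant $(2A/B)^{1/p}$ is an absorbing barrier. Since $(pBt/2)^{-1/p}=C_{s,1}\,t^{-(s-2)/\gamma}$, this is precisely \eqref{eq:poly_cre} for $s\ge 6$, and tracing the constants shows $C_{s,1}$ depends only on $s,\gamma,C_b,C_{b,2},\varphi(\epsilon),\|f_0\|_{1,0}^{-1},\|f_0\|_{1,2}$. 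For propagation with $s\ge 6$, the same comparison applied with the finite initial value $y(0)=\|f_0\|_{1,s}$ gives $y(t)\le\max\{\|f_0\|_{1,s},(2A/B)^{1/p}\}$, which is \eqref{eq:poly_pro}.

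It remains to treat $2<s<6$ (for $s=2$ everything is conservation of mass and energy). For creation I interpolate between the conserved $L^1_2$ norm and the $L^1_6$ bound just established:
\begin{align*}
    \|f\|_{1,s}(t)\le\|f\|_{1,2}(t)^{(6-s)/4}\|f\|_{1,6}(t)^{(s-2)/4}\le E_2^{(6-s)/4}\big(C_{6,1}\max\{t^{-4/\gamma},1\}\big)^{(s-2)/4}=C_{s,1}\max\{t^{-(s-2)/\gamma},1\}.
\end{align*}
For propagation, on $[1,\infty)$ the creation bound already gives $\|f\|_{1,s}(t)\le C_{s,1}$, while on $[0,1]$ one discards the negative term in the differential inequality — which holds for all $s>2$ with $s$-dependent constants, the $s\ge 6$ threshold in Lemma \ref{lem:Quantum_L1_p_bound} being inessential for a crude $L^1$ bound — to get $y'\le Ay$, hence $y(t)\le\|f_0\|_{1,s}e^{A}$ there; then $C_{s,2}=\max\{\|f_0\|_{1,s}e^{A},C_{s,1}\}$ works.

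The substantive input, Lemma \ref{lem:Quantum_L1_p_bound}, is already available, so I do not expect a genuine obstacle here; the only real care needed lies in the rigorous time-differentiation of $t\mapsto\|f\|_{1,s}(t)$ against the unbounded weight (the reason the $C^1$ a priori assumption is imposed) and in verifying that the comparison constants depend only on the advertised data.
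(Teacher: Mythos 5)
Your proof takes essentially the same route as the paper. Both start from Lemma \ref{lem:Quantum_L1_p_bound} for $s \geq 6$, close the differential inequality via H\"older's interpolation
\begin{align*}
\|f\|_{1,s+\gamma} \geq \|f\|_{1,2}^{-\gamma/(s-2)}\|f\|_{1,s}^{1+\gamma/(s-2)}
\end{align*}
to obtain a Bernoulli-type ODE $y' \leq Ay - By^{1+p}$ with $p=\gamma/(s-2)$, solve it, and descend to $2 < s < 6$ for the creation bound by interpolating against the conserved $L^1_2$ norm. The paper writes out the explicit Bernoulli supersolution
\begin{align*}
\|f\|_{1,s}(t)\leq \|f\|_{1,2}\left(\frac{C(s)\|f\|_{1,2}}{(C_{b,2}/8)\|f\|_{1,0}}\cdot\frac{1}{1-\exp\left(-\frac{\gamma}{s-2}C(s)\|f\|_{1,2}t\right)}\right)^{(s-2)/\gamma}
\end{align*}
rather than your two-branch $\max\{(2A/B)^{1/p},(pBt/2)^{-1/p}\}$ bound, but the two are equivalent up to harmless constants; the absorbing-barrier argument you use for propagation with $s\ge 6$ is also what the paper calls a "maximum principle argument." So the core of your argument is correct and matches the paper.

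The one step I would push back on is your treatment of propagation for $2 < s < 6$. Discarding the negative term in the differential inequality does \emph{not} give $y'\le Ay$: the gain-term estimate without the Povzner cancellation produces a factor $\|f\|_{1,s+\gamma}$, not $\|f\|_{1,s}$, and the cancellation against the loss term that eliminates this higher moment is precisely what the hypothesis $s\ge 6$ in Lemma \ref{lem:Lu_Mouhot_L1_p_bound} (hence in Lemma \ref{lem:Quantum_L1_p_bound}) delivers. In fact the Povzner constant $\varpi_{s/2}$ from Lemma \ref{lem:bobylev_2} tends to $1$ as $s\to 2^+$, so the crude pairing of gain against loss does not close for small $s$; one would need a Desvillettes/Wennberg-type moment-propagation argument valid for all $s>2$, or an extension of Lemma \ref{lem:Lu_Mouhot_L1_p_bound} below $s=6$, to make your short-time bound rigorous. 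I note that the paper is equally terse on this case (it refers only to the maximum principle applied to \eqref{eq:d/dt msp_2}, which is stated for $s\ge 6$), so this is at worst a shared expository gap rather than a defect specific to your argument.
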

\begin{proof}
    From Lemma \ref{lem:Quantum_L1_p_bound},
    \begin{align*}
        \dv{t}\|f\|_{1,s}(t) = \int_{\mathbb{R}^3}Q_{FD}(f,f)(1+|v|^2)^{s/2}\,dv\leq C(s)\|f\|_{1,2}\|f\|_{1,s} - \frac{C_{b,2}}{8}\|f\|_{1,0}\|f\|_{1,s+\gamma}
    \end{align*}
    for $s\geq 6$ for some constant $C$ depending on $s$, $\gamma$, $C_b$, $\varphi(\epsilon)$, and $C_{b,2}$. Also, by H{\"o}lder's inequalty,
    \begin{align*}
        \|f\|_{1,2}^{-\frac{\gamma}{s-2}}\|f\|_{1,s}^{\frac{s - 2 + \gamma}{s-2}}\leq \|f\|_{1,s+\gamma}.
    \end{align*}
    Therefore,
    \begin{align}\label{eq:d/dt msp_2}
        \dv{t}\|f\|_{1,s}(t)\leq C(s)\|f\|_{1,2}\|f\|_{1,s} - \frac{C_{b,2}}{8}\|f\|_{1,0}\|f\|_{1,2}^{-\frac{\gamma}{s-2}}\|f\|_{1,s}^{\frac{s - 2 + \gamma}{s-2}}
    \end{align}
    for $s\geq 6$.

    By the differential inequality \eqref{eq:d/dt msp_2}, we can deduce
    \begin{align*}
        \|f\|_{1,s}(t)&\leq \left(\frac{C(s)\|f\|_{1,2}}{\frac{C_{b,2}}{8}\|f\|_{1,0}\|f\|_{1,2}^{-\frac{\gamma}{s-2}}}\frac{1}{1-\exp\left(-\frac{\gamma}{s-2}C(s)\|f\|_{1,2}t\right)}\right)^{\frac{s-2}{\gamma}}\\
        &=\|f\|_{1,2}\left(\frac{C(s)\|f\|_{1,2}}{\frac{C_{b,2}}{8}\|f\|_{1,0}}\frac{1}{1-\exp\left(-\frac{\gamma}{s-2}C(s)\|f\|_{1,2}t\right)}\right)^{\frac{s -2}{\gamma}}
    \end{align*}
    for $s\geq 6$.

    Now, let $2\leq s< 6$. By the interpolation,
    \begin{align*}
        \|f\|_{1,s}&\leq \|f\|_{1,2}^{\frac{6-s}{4}}\|f\|_{1,6}^{\frac{s-2}{4}}\\
        &\leq \|f\|_{1,2}^{\frac{6-s}{4}}\left(\|f\|_{1,2}\left(\frac{C(6)\|f\|_{1,2}}{\frac{C_{b,2}}{8}\|f\|_{1,0}}\frac{1}{1-\exp\left(-\frac{\gamma}{4}C(6)\|f\|_{1,2}t\right)}\right)^{\frac{4}{\gamma}}\right)^{\frac{s-2}{4}}\\
        &=\|f\|_{1,2}\left(\frac{C(6)\|f\|_{1,2}}{\frac{C_{b,2}}{8}\|f\|_{1,0}}\frac{1}{1-\exp\left(-\frac{\gamma}{4}C(6)\|f\|_{1,2}t\right)}\right)^{\frac{s-2}{\gamma}}.
    \end{align*}
    Finally, as
    \begin{align*}
        \frac{1}{1-e^{-\frac{\gamma}{4}C(6)\|f\|_{1,2}t}}\leq 1+\frac{1}{\frac{\gamma}{4}C(6)\|f\|_{1,2}t},
    \end{align*}
    we finally get
    \begin{align*}
        \|f\|_{1,s}\leq \|f\|_{1,2}\left(\frac{1}{\frac{C_{b,2}}{8}\|f\|_{1,0}}\left(C\|f\|_{1,2}+\frac{1}{\frac{\gamma}{4}t}\right)\right)^{\frac{s-2}{\gamma}}
    \end{align*}
    for a constant $C$ depending on $s$, $\gamma$, $C_b$, $\varphi(\epsilon)$, and $C_{b,2}$. It proves \eqref{eq:poly_cre}.

    If $\|f\|_{1,s}$ is finite, using a maximum principle argument to \eqref{eq:d/dt msp_2}, we get \eqref{eq:poly_pro}.
\end{proof}

Now, we turn to the exponential $L^1$ estimates. As noted in the first paragraph in the beginning of this section, the main stream of the proof follows \cite{ACGM2013}. We first write a kind of Povzner inequality.
\begin{lemma}[Lemma 3 of \cite{ACGM2013}]\label{lem:bobylev_2}
    Assume the collision kernel satisfies $0<\gamma\leq 2$ and (H1). Then, there exists a constant $\varpi_p>0$ for each $p\geq 1$, depending on $b(\cos\theta)$, such that
\begin{align*}
    \int_{\mathbb{S}^2}(|v'|^{2p} + |v'_*|^{2p})b(\cos\theta)\,d\sigma\leq C_b\varpi_p(|v|^2 + |v_*|^2)^p.
\end{align*}
Also, it satisfies $\varpi_1=1$, $p \rightarrow \varpi_p$ is strictly decreasing, and $\lim_{p\rightarrow \infty} \varpi_p = 0$.
\end{lemma}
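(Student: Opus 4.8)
Since this is \textbf{Lemma 3 of \cite{ACGM2013}}, the statement is purely an angular (Povzner-type) estimate --- neither $\gamma$ nor the Fermi--Dirac structure enters --- and the plan is to follow the classical argument, reducing everything to a scalar optimization over a \emph{compact} parameter set.

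\emph{Step 1 (reduction).} Using the $\sigma$-representation \eqref{eq:def_v'} and the energy identity $|v'|^2+|v_*'|^2=|v|^2+|v_*|^2=:E$, a direct computation gives
\begin{align*}
    |v'|^2=\tfrac{E}{2}(1+s),\qquad |v_*'|^2=\tfrac{E}{2}(1-s),\qquad s=s(\sigma):=\frac{|v-v_*|\,(v+v_*)\cdot\sigma}{E},
\end{align*}
and since $|v-v_*|^2+|v+v_*|^2=2E$, the Cauchy--Schwarz inequality yields $|s|\leq1$; hence $|v'|^{2p}+|v_*'|^{2p}=E^{p}\big[(\tfrac{1+s}{2})^{p}+(\tfrac{1-s}{2})^{p}\big]$. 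Writing $\sigma$ in spherical coordinates with polar axis $\hat g:=(v-v_*)/|v-v_*|$ and azimuth $\phi$, one finds $s=a\cos\theta+c\sin\theta\cos\phi$ with $a^2+c^2=|v-v_*|^2|v+v_*|^2/E^{2}\leq1$, while the kernel $b(\cos\theta)$ depends on $\cos\theta$ only. By homogeneity in $E$, the claimed bound is therefore equivalent to $\Phi_p(a,c)\leq\varpi_p$ for all $(a,c)$ in the closed unit disk $K:=\{a^2+c^2\leq1\}$, where
\begin{align*}
    \Phi_p(a,c):=\frac{1}{C_b}\int_{\mathbb{S}^2}\left[\Big(\tfrac{1+s}{2}\Big)^{p}+\Big(\tfrac{1-s}{2}\Big)^{p}\right]b(\cos\theta)\,d\sigma,\qquad s=a\cos\theta+c\sin\theta\cos\phi .
\end{align*}
I would then simply \emph{define} $\varpi_p:=\sup_{K}\Phi_p$ and verify the four stated properties directly from this definition.

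\emph{Step 2 (the four properties).} (i) $\varpi_p>0$, since $\Phi_p(0,0)=C_b^{-1}\int_{\mathbb{S}^2}2^{1-p}b\,d\sigma=2^{1-p}$. (ii) $\varpi_1=1$, because for $p=1$ the bracket equals $1$ identically, so $\Phi_1\equiv C_b^{-1}\int_{\mathbb{S}^2}b\,d\sigma=1$. (iii) Strict monotonicity: for each fixed $s\in(-1,1)$ the map $p\mapsto(\tfrac{1+s}{2})^{p}+(\tfrac{1-s}{2})^{p}$ is strictly decreasing (a sum of two strictly decreasing exponentials with bases in $(0,1)$), and the set $\{\sigma:|s(\sigma)|=1\}$ is $d\sigma$-null for every $(a,c)\in K$ (from $a^2+c^2\leq1$ and Cauchy--Schwarz, $|s|=1$ forces $\sin\theta\sin\phi=0$); hence $\Phi_{p'}<\Phi_p$ pointwise on $K$ whenever $p'>p\geq1$, and since $\Phi_{p'}$ is continuous on the compact $K$ it attains its maximum at some $(a^*,c^*)$, giving $\varpi_{p'}=\Phi_{p'}(a^*,c^*)<\Phi_p(a^*,c^*)\leq\varpi_p$. (iv) $\varpi_p\to0$: each $\Phi_p$ is continuous on $K$ (dominated convergence, the integrand being bounded by $b\in L^1(\mathbb{S}^2)$), the sequence $\{\Phi_p\}$ is monotone decreasing, and $\Phi_p\to0$ pointwise on $K$ (the integrand tends to $\mathbf{1}_{\{|s|=1\}}$, which vanishes $d\sigma$-a.e.), so Dini's theorem upgrades this to \emph{uniform} convergence, whence $\varpi_p=\sup_K\Phi_p\to0$.

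\emph{Main obstacle.} The only genuinely delicate point is obtaining all estimates \emph{uniformly} in $(v,v_*)$: a pointwise-in-$\sigma$ argument alone gives, for each fixed $(v,v_*)$, a constant depending on $(v,v_*)$, which is useless for the moment hierarchy. The reduction in Step 1 to the function $\Phi_p$ on the \emph{compact} disk $K$ is exactly what makes uniformity accessible, after which Dini's theorem (together with continuity and compactness) yields both $\varpi_p\to0$ and the strict monotonicity. The remaining ingredients --- the collisional identities, the bound $a^2+c^2\leq1$, and the null-set claim --- are elementary and I would condense them into a short chain of displays.
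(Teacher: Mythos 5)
Your proof is correct and captures the essential mechanism: the statement is purely an angular averaging estimate, and the way to get a constant uniform in $(v,v_*)$ is to parametrize the dependence on $(v,v_*)$ by a point in a \emph{compact} set and then invoke continuity, monotonicity, and Dini. The reduction $|v'|^{2p}+|v_*'|^{2p}=E^p\bigl[(\tfrac{1+s}{2})^p+(\tfrac{1-s}{2})^p\bigr]$ with $s=a\cos\theta+c\sin\theta\cos\phi$, $(a,c)\in K=\{a^2+c^2\le1\}$, is the right one, and all four properties follow cleanly from $\varpi_p:=\sup_K\Phi_p$. Two points worth stressing. First, your observation in the ``main obstacle'' paragraph is exactly the crux, and the two-parameter disk is genuinely needed: it is \emph{not} true in general that $\Phi_p(a,c)\le\Phi_p(1,0)$, i.e., the ``diagonal'' quantity $\int\bigl[(\tfrac{1+\hat u\cdot\sigma}{2})^p+(\tfrac{1-\hat u\cdot\sigma}{2})^p\bigr]b(\hat u\cdot\sigma)\,d\sigma$ alone does not dominate the left-hand side for all $(v,v_*)$ (take $b$ concentrated near $\theta=\pi/2$ and $(a,c)=(0,1)$, which corresponds to $|v|=|v_*|$ with $v\perp v_*$: then $\Phi_p(1,0)\approx 2^{1-p}$ while $\Phi_p(0,1)\sim p^{-1/2}$). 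So the sup over $K$ is the correct definition, and your argument supplies the step needed to pass from the fixed-axis angular estimate to the stated Povzner bound. Second, your null-set argument for strict monotonicity is complete: $|s|=1$ forces $\sin\theta\sin\phi=0$ via Cauchy--Schwarz, a $d\sigma$-null great circle plus two poles, and since $b\in L^1(\mathbb{S}^2)$ with $\int b\,d\sigma=C_b>0$, the set $\{|s|<1\}$ has full $b\,d\sigma$-measure on $K$, so the pointwise strict decrease of $(\tfrac{1+s}{2})^p+(\tfrac{1-s}{2})^p$ for $|s|<1$ transfers to the integral. Since the paper cites this lemma without proof, there is no internal argument to compare against, but your reconstruction is a faithful and complete rendition of the classical compactness-plus-Dini approach.
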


Next, we define a $p$th moment function of $f(t,v)$ and a combination function $S_{s,p}$.
\begin{definition}
    For $p\geq 0$ and $t>0$, we define
    \begin{align*}
        m_p = m_p(t) \coloneqq  \int_{\mathbb{R}^3} f(t,v) |v|^p \,dv.
    \end{align*}
    For $s>0$, $t>0$, and integers $p\geq 2$, we define 
    \begin{align*}
        S_{s,p} =  S_{s,p}(t) \coloneqq \sum_{k=1}^{k_p} 
            \binom{p}{k}
            \left(m_{sk+\gamma}m_{s(p-k)}+m_{sk}m_{s(p-k)+\gamma} \right),
    \end{align*}
    where $k_p$ is the integer part of $(p+1)/2$. Here, $0<\gamma \leq 2$ is the power of the velocity part of the collision kernel in \eqref{eq:B_defi}.
\end{definition}

We refer to a classical differential inequality for $m_p(t)$ in the next lemma. For technical reason, we temporarily replace $p\mapsto sp$ for some $s\in (0,2]$ and $p\geq \frac{2}{s}$. We will later choose $s = \gamma$, which was defined in $B(v-v_*,\sigma) = |v-v_*|^\gamma b(\cos\theta)$.

\begin{lemma}[Lemma 6 of \cite{ACGM2013}]\label{lem:6} 
Assume the collision kernel satisfies $0<\gamma\leq 2$ and (H1), and let $f\in L^1_q$ for all $q\geq 2$. Then, for $s \in (0,2]$ and integers $p > 2/s$,
    \begin{align} \label{eq:msp/t, ACGM}
        \int_{\mathbb{R}^3}Q_{c}(f,f)|v|^{sp}\,dv
        \leq C_b\left(2\varpi_{sp/2}S_{s,p} - K_1 m_{sp+\gamma} + K_2 m_{sp}\right),
    \end{align}
    where
    \begin{align*}
        K_1 \coloneqq 2^{2-\gamma}(1-\varpi_{sp/2}) m_0,\quad K_2\coloneqq 2m_\gamma
    \end{align*}
    for $t>0$.
\end{lemma}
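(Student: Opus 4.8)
The plan is to reproduce, in the present normalization, the classical moment argument of \cite{ACGM2013}. Everything rests on the weak form of $Q_c$: for a test function $\psi$,
\begin{align*}
    \int_{\mathbb{R}^3} Q_c(f,f)\,\psi\,dv
    = \frac12\int_{\mathbb{R}^6\times\mathbb{S}^2} |v-v_*|^\gamma b(\cos\theta)\, f(v)f(v_*)\bigl(\psi(v')+\psi(v_*')-\psi(v)-\psi(v_*)\bigr)\,d\sigma\,dv_*\,dv,
\end{align*}
which is legitimate for $\psi(v)=|v|^{sp}$ because $sp+\gamma\le 2p+2$ and $f\in L^1_q$ for all $q\ge 2$. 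First I would integrate out the $\sigma$ variable, producing a \emph{gain} contribution and a \emph{loss} contribution $-C_b\bigl(|v|^{sp}+|v_*|^{sp}\bigr)$; then I would isolate inside the gain part the two ``diagonal'' monomials $|v|^{sp}$, $|v_*|^{sp}$ (which combine with the loss) from a remainder of genuinely lower-order cross monomials.

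For the angular average of the gain part, the hypothesis $p>2/s$ makes $q\coloneqq sp/2>1$, so Lemma \ref{lem:bobylev_2} applies with exponent $q$ (that is, $2q=sp$) and gives $\int_{\mathbb{S}^2}b(\cos\theta)\bigl(|v'|^{sp}+|v_*'|^{sp}\bigr)\,d\sigma\le C_b\varpi_{sp/2}\bigl(|v|^2+|v_*|^2\bigr)^{sp/2}$. Since $0<s/2\le 1$ the function $t\mapsto t^{s/2}$ is subadditive, hence $(|v|^2+|v_*|^2)^{s/2}\le |v|^s+|v_*|^s$; raising to the integer power $p$ and expanding,
\begin{align*}
    \bigl(|v|^2+|v_*|^2\bigr)^{sp/2}\le \bigl(|v|^s+|v_*|^s\bigr)^p = |v|^{sp}+|v_*|^{sp}+\sum_{k=1}^{p-1}\binom{p}{k}|v|^{sk}|v_*|^{s(p-k)}.
\end{align*}
Combining with $\int_{\mathbb{S}^2}b(\cos\theta)\,d\sigma=C_b$, the $\sigma$-integral of $\psi(v')+\psi(v_*')-\psi(v)-\psi(v_*)$ is bounded above by $-C_b(1-\varpi_{sp/2})(|v|^{sp}+|v_*|^{sp})+C_b\varpi_{sp/2}\sum_{k=1}^{p-1}\binom{p}{k}|v|^{sk}|v_*|^{s(p-k)}$, where $1-\varpi_{sp/2}>0$ by the strict monotonicity of $p\mapsto\varpi_p$ and $\varpi_1=1$ in Lemma \ref{lem:bobylev_2} — this is exactly where $p>2/s$ is used.

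It then remains to integrate this pointwise bound against $\tfrac12|v-v_*|^\gamma f(v)f(v_*)$. For the cross sum I would use $|v-v_*|^\gamma\le 2(|v|^\gamma+|v_*|^\gamma)$ (valid for all $\gamma\in(0,2]$, as in Lemma \ref{lem:upper_v}), so each term is dominated by $m_{sk+\gamma}m_{s(p-k)}+m_{sk}m_{s(p-k)+\gamma}$; since this summand is invariant under $k\leftrightarrow p-k$ and the definition of $S_{s,p}$ sums up to $\lfloor(p+1)/2\rfloor$ (thereby over-counting the central index), one has $\sum_{k=1}^{p-1}(\cdots)\le 2S_{s,p}$, which produces the term $C_b\cdot 2\varpi_{sp/2}S_{s,p}$ after absorbing the prefactor $\tfrac12$. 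For the diagonal part, after the $v\leftrightarrow v_*$ symmetrization it equals $-C_b(1-\varpi_{sp/2})\iint|v-v_*|^\gamma f(v)f(v_*)|v|^{sp}\,dv\,dv_*$, and an elementary split according to whether $|v|\ge 2|v_*|$ gives a pointwise lower bound of the form $|v-v_*|^\gamma\ge c_\gamma|v|^\gamma-|v_*|^\gamma$; integrating yields $\iint|v-v_*|^\gamma f f_*|v|^{sp}\ge c_\gamma m_0 m_{sp+\gamma}-m_\gamma m_{sp}$, hence the negative top-order term $-C_b K_1 m_{sp+\gamma}$ together with a harmless $+C_b K_2 m_{sp}$. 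Tracking the universal constants and bounding them by the displayed $K_1=2^{2-\gamma}(1-\varpi_{sp/2})m_0$ and $K_2=2m_\gamma$ closes the estimate.

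The only genuine difficulty is structural rather than computational: the argument must arrange the binomial expansion so that the highest-order moment $m_{sp+\gamma}$ enters with a \emph{negative} coefficient, which requires simultaneously the coercive lower bound on $|v-v_*|^\gamma$ in the loss term and the strict gain $\varpi_{sp/2}<1$ — the latter being precisely the content of the hypothesis $p>2/s$. The remaining ingredients — finiteness of all moments $m_{sk+\gamma}$ from $f\in L^1_q$, the combinatorial comparison $\sum_{k=1}^{p-1}(\cdots)\le 2S_{s,p}$, and matching the numerical constants — are routine.
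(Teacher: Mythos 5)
This lemma is quoted in the paper without proof (it is cited as Lemma 6 of \cite{ACGM2013}), so your proposal can only be compared with the standard argument behind that citation, which is indeed the route you take: weak form of $Q_c$, the Povzner-type Lemma \ref{lem:bobylev_2} applied with exponent $sp/2>1$ (this is where $p>2/s$ and $\varpi_1=1$, strict monotonicity, give $1-\varpi_{sp/2}>0$), subadditivity of $t\mapsto t^{s/2}$, the binomial expansion, the symmetric comparison $\sum_{k=1}^{p-1}(\cdots)\le 2S_{s,p}$, and a coercive lower bound on the loss part. All of those steps are correct as you state them.

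The gap is in the final constant, and it is not cosmetic. Your split on $\{|v|\ge 2|v_*|\}$ gives the pointwise bound $|v-v_*|^\gamma\ge 2^{-\gamma}|v|^\gamma-|v_*|^\gamma$, so after integrating against $\tfrac12 C_b(1-\varpi_{sp/2})ff_*\bigl(|v|^{sp}+|v_*|^{sp}\bigr)$ your argument yields the negative term $-C_b\,2^{-\gamma}(1-\varpi_{sp/2})\,m_0\,m_{sp+\gamma}$, i.e.\ $K_1=2^{-\gamma}(1-\varpi_{sp/2})m_0$, which is a factor $4$ \emph{smaller} than the displayed $K_1=2^{2-\gamma}(1-\varpi_{sp/2})m_0$. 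Since $K_1$ multiplies the negative top-order moment, a smaller $K_1$ is a strictly weaker conclusion, so your closing sentence ``tracking the universal constants and bounding them by the displayed $K_1$'' goes in the wrong direction and does not close the proof. Moreover, no refinement of that particular step can reach $2^{2-\gamma}$ when $\gamma<2$: any estimate of the form $|v-v_*|^\gamma\ge c\,|v|^\gamma-C\,|v_*|^\gamma$ forces $c\le 1$ (take $v_*=0$), while $2^{2-\gamma}>1$; even the sharper split via $|v-v_*|^2\ge\tfrac12|v|^2-|v_*|^2$ only improves $2^{-\gamma}$ to $2^{-\gamma/2}$. So you should either check the precise normalization of $\varpi_{sp/2}$ and of the constant in the original Lemma 6 of \cite{ACGM2013} (the exponent as transcribed here is suspicious for exactly this reason), or record the inequality with the constants your argument genuinely produces, namely $K_1=2^{-\gamma}(1-\varpi_{sp/2})m_0$ and $K_2=2m_\gamma$; note that the way the lemma is used later (Lemma \ref{lem:d/dt_m_sp} and Theorem \ref{thm:L1_bound}-(2)) only requires $K_1>0$, not its exact value.
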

If $f(t,v)$ is a solution of the classical Boltzmann equation with appropriate assumptions, it directly implies
\begin{align*}
    \dv{t}m_{sp}(t)\leq C_b\left(2\varpi_{sp/2}S_{s,p} - K_1 m_{sp+\gamma} + K_2 m_{sp}\right).
\end{align*}

NOw, we extend Lemma \ref{lem:6} to our Fermi-Dirac case. The main idea is to bound the extra terms using Lemma \ref{lem:epsv}.

\begin{lemma} \label{lem:d/dt_m_sp}
    We consider the collision kernel \eqref{eq:B_defi} for $0<\gamma \leq 2$ and (H1), and let $f$ be a solution of the Boltzmann-Fermi-Dirac equation with $f\in C([0,\infty), L^1_q)$ and $m_q(t)\in C^1([0,\infty))$ for all $q\geq 2$. For $s \in (0,2]$ and integers $p\geq p_0 > 2/s$, following the constants $K_1$ and $K_2$ in Lemma \ref{lem:6}, we obtain
    \begin{align} \label{eq:d/dt msp}
        \frac{d}{dt}m_{sp} 
        \leq C_b\left(2\varpi_{sp/2}S_{s,p}
        - \frac{K_1}{2} m_{sp+\gamma} + 
        K_2'm_{sp}\right)
    \end{align}
    for $t>0$, where $K'_2$ is a large enough constant depending on $K_1, K_2, \|f_0\|_{1,2}, \gamma$, $C_b$, and $\varphi(\epsilon)$.
\end{lemma}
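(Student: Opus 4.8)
The plan is to combine the pointwise splitting of $Q_{FD}$ recorded just before Lemma~\ref{lem:epsv} with Lemmas~\ref{lem:6} and~\ref{lem:epsv}. Since $f\in C([0,\infty),L^1_q)$ and $m_q\in C^1([0,\infty))$ for every $q\ge 2$ by hypothesis, we may write $\frac{d}{dt}m_{sp}(t)=\int_{\mathbb{R}^3}Q_{FD}(f,f)\,|v|^{sp}\,dv$, and then
\begin{align*}
    Q_{FD}(f,f)(v)\le Q_c(f,f)(v)+f(v)\int_{\mathbb{R}^3\times\mathbb{S}^2}B(|v-v_*|,\cos\theta)f(v_*)\big(f(v')+f(v_*')\big)\,d\sigma dv_*
\end{align*}
reduces the task to estimating two integrals. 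The classical one, $\int_{\mathbb{R}^3}Q_c(f,f)|v|^{sp}\,dv$, is controlled directly by Lemma~\ref{lem:6}, contributing exactly $C_b\big(2\varpi_{sp/2}S_{s,p}-K_1m_{sp+\gamma}+K_2m_{sp}\big)$ with $K_1=2^{2-\gamma}(1-\varpi_{sp/2})m_0$ and $K_2=2m_\gamma$.

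For the remaining integral I would insert the pointwise bound of Lemma~\ref{lem:epsv} with a free parameter $\epsilon\in(0,1)$, multiply by $f(v)|v|^{sp}$ and integrate; using conservation of mass and energy, $\|f\|_{1,0}=m_0=\|f_0\|_{1,0}$ and $\|f\|_{1,2}=\|f_0\|_{1,2}$, this yields
\begin{align*}
    \int_{\mathbb{R}^3}f(v)|v|^{sp}\!\!\int_{\mathbb{R}^3\times\mathbb{S}^2}\!\!B\,f(v_*)\big(f(v')+f(v_*')\big)\,d\sigma dv_*\,dv\le\Big(\tfrac{C_1}{\epsilon^3}+C_2\varphi(\epsilon)\Big)\|f_0\|_{1,2}\,m_{sp}+C_2\varphi(\epsilon)\|f_0\|_{1,0}\,m_{sp+\gamma}.
\end{align*}
The key is to absorb the bad term $C_2\varphi(\epsilon)\|f_0\|_{1,0}m_{sp+\gamma}$ into the dissipation $-C_bK_1m_{sp+\gamma}$. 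Since $p\ge p_0>2/s$ and $p\mapsto\varpi_p$ is strictly decreasing by Lemma~\ref{lem:bobylev_2}, we have $\varpi_{sp/2}\le\varpi_{sp_0/2}<1$, hence $K_1\ge 2^{2-\gamma}(1-\varpi_{sp_0/2})\|f_0\|_{1,0}>0$ \emph{uniformly} in $p\ge p_0$. Because $\varphi(\epsilon)\to0$ as $\epsilon\to0$ by (H1), we may therefore fix once and for all a single $\epsilon_*\in(0,1)$, depending only on $C_1,C_2,C_b,\|f_0\|_{1,0},\gamma,p_0$ and $b$, so that $C_2\varphi(\epsilon_*)\|f_0\|_{1,0}\le\tfrac12 C_bK_1$ for every admissible $p$.

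With this choice the $m_{sp+\gamma}$ contributions combine to $-\tfrac12 C_bK_1m_{sp+\gamma}$, and collecting the $m_{sp}$ coefficients gives \eqref{eq:d/dt msp} with $K_2':=K_2+C_b^{-1}\big(\tfrac{C_1}{\epsilon_*^3}+C_2\varphi(\epsilon_*)\big)\|f_0\|_{1,2}$, which depends on $K_1,K_2,\|f_0\|_{1,2},\gamma,C_b$ and $\varphi(\epsilon)$ exactly as asserted. The only delicate point — and the main obstacle relative to the classical Lemma~\ref{lem:6} — is precisely this uniformity in $p$: one must not let $\epsilon_*$ depend on $p$, because the subsequent construction of exponential $L^1$ moments following \cite{ACGM2013} runs an ODE hierarchy in $p$ whose coefficients $K_1,K_2'$ must remain uniform; securing that uniformity is the sole reason the lemma is stated for $p\ge p_0$ rather than for all $p>2/s$. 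Everything else is a routine combination of the two cited lemmas with the conservation laws.
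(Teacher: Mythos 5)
Your proof is correct and follows essentially the same route as the paper: split $Q_{FD}$ into $Q_c$ plus the Fermi--Dirac correction, control $Q_c$ via Lemma~\ref{lem:6}, control the correction via Lemma~\ref{lem:epsv}, then shrink $\epsilon$ so $C_2\varphi(\epsilon_*)\|f_0\|_{1,0}$ is absorbed into the dissipation term $-C_bK_1m_{sp+\gamma}$ and dump the rest into $K_2'$. Two minor points where you are actually a bit more careful than the paper: you correctly carry the factor $\|f_0\|_{1,0}$ in the absorption condition (the paper's choice ``$C_2\varphi(\epsilon_*)=\tfrac{C_b}{2}K_1$'' silently drops it), and you make explicit that monotonicity of $p\mapsto\varpi_p$ gives $K_1\ge 2^{2-\gamma}(1-\varpi_{sp_0/2})\|f_0\|_{1,0}$ uniformly in $p\ge p_0$, so $\epsilon_*$ (hence $K_2'$) can be chosen independently of $p$ — the paper leaves this uniformity implicit, and it is indeed what later lets the ACGM hierarchy in $p$ go through.
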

\begin{proof}
    Since $f\in C([0,\infty), L^1_q)$ and $m_q(t)\in C^1([0,\infty))$ for all $q\geq 2$, all the quantities in \eqref{eq:d/dt msp} are all well-defeind. Since $f$ is the solution of the Boltzmann-Fermi-Dirac equation, we have
    \begin{align}
        &\frac{d}{dt} \int_{\mathbb{R}^3} f(t,v) |v|^{sp} \,dv \notag \\
        &\leq \int_{\mathbb{R}^6\times\mathbb{S}^2} Q_c(f,f)(t,v)|v|^{sp} \,d\sigma dv_* dv \label{eq:sp_1} \\
        &\quad + \int_{\mathbb{R}^6 \times \mathbb{S}^2} B(|v-v_*|, \cos \theta)
        f(t,v)f(t,v_*) (f(t,v')+f(t,v_*')) |v|^{sp}\,d\sigma dv_* dv. \label{eq:sp_3}
    \end{align}
    
    Using Lemma \ref{lem:6}, the first classical term is bounded by
    \begin{align}\label{eq:est_sp_1}
        \begin{split}
            (\ref{eq:sp_1}) &\leq C_b\left(2\varpi_{sp/2}S_{s,p} - K_1 m_{sp+\gamma} + K_2 m_{sp}\right).
        \end{split}
    \end{align}
    
    Next, we consider \eqref{eq:sp_3}. By Lemma \ref{lem:epsv}, we can find a constant $C>0$ that depends on $\|f_0\|_{1,2}, \gamma$, and $C_b$, such that 
    \begin{align} 
        \eqref{eq:sp_3} \leq \left(\frac{C_1}{\epsilon^3}\|f\|_{1,2}m_{sp}+C_2\varphi(\epsilon) (\|f\|_{1,2}m_{sp} + \|f\|_{1,0}m_{sp+\gamma})\right).\label{eq:est_sp_3}
    \end{align} 

    We choose $\epsilon_*>0$ such that
    \begin{align*}
        C_2\varphi(\epsilon_*) = \frac{C_b}{2}K_1.
    \end{align*}
    For $\epsilon = \epsilon_*$, combining \eqref{eq:est_sp_1} and \eqref{eq:est_sp_3}, we obtain
    \begin{align*}
        \frac{d}{dt}m_{sp} 
        &\leq C_b\left(2\varpi_{sp/2}S_{s,p} - \frac{K_1}{2} m_{sp+\gamma} + \left(K_2 + \left(\frac{C_1}{\epsilon_*^3 C_b} + \frac{C_2}{C_b}\varphi(\epsilon_*)\right)\|f\|_{1,2}\right)m_{sp}\right).
    \end{align*} 
    It proves the lemma.
\end{proof}

\begin{remark}
    In fact, we can deduce the $L^1$ polynomial bound from \eqref{eq:d/dt msp}. However, it gives inferior estimate when $sp$ is near $2$ compared to Lemma \ref{lem:Quantum_L1_p_bound} as $\varpi_1 = 0$. Since it gives strong estimates when $sp\rightarrow \infty$, we can detour this problem by estimating the lower moment as an interpolation between $m_2$ and a higher moment.
\end{remark}

Now, we give a proof of \ref{thm:L1_bound}-(2) assuming that $\|f\|_{1,s}(t)\in C^1((0,\infty))$ for all $s\geq 2$ as in Theorem \ref{lem:poly_L1}. Key idea is to replace the classical Lemma \ref{lem:6} in \cite{ACGM2013} by Lemma \ref{lem:d/dt_m_sp}.

\begin{proof}[Proof of Theorem \ref{thm:L1_bound}-(2)]
    We follow the proof of Theorem 1 and Theorem 2 of \cite{ACGM2013} starting from \eqref{eq:d/dt msp} instead of the classical inequality \eqref{eq:msp/t, ACGM}. The only difference between \eqref{eq:msp/t, ACGM} and \eqref{eq:d/dt msp} is in the coefficients in front of $m_{sp}$ and $m_{sp+\gamma}$, so we can use the same arguments and obtain the same results.
\end{proof}

\section{Well-posedness of the solution of the Boltzmann-Fermi-Dirac equation}\label{sec:Well-posedness}
In this section, we prove the well-posedness of the solution of the Boltzmann-Fermi-Dirac equation. We first start with the simple equality.
\begin{lemma}\label{lem:fg+}
Let $f$ and $g$ be solutions of the Boltzmann-Fermi-Dirac equation. Then, it satisfies
    \begin{align}\label{eq:f-g+}
        (f(b, v) - g(b, v))^+ = (f(a, v) - g(a, v))^+ + \int_a^b (Q_{FD}(f,f) - Q_{FD}(g,g))(\tau, v)\mathbf{1}_{\{f(\tau, v)\geq g(\tau, v)\}}\,d\tau
    \end{align}
    for all $0\leq a\leq b$ and a.e. $v$.
\end{lemma}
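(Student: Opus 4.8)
The plan is to fix a velocity $v$ outside the null set supplied by the definition of a solution, reduce the claimed identity to a statement about a single absolutely continuous function of time, and then invoke the chain rule for the positive-part function.

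First, let $Z\subset\mathbb{R}^3$ be a null set, independent of $t$, such that for $v\notin Z$ both $f$ and $g$ satisfy their mild formulations; set $h(t,v):=f(t,v)-g(t,v)$, so that
\[
h(t,v)=h(0,v)+\int_0^t\big(Q_{FD}(f,f)-Q_{FD}(g,g)\big)(\tau,v)\,d\tau,\qquad t\geq 0.
\]
Since $f$ and $g$ are conservative solutions, $\|f(t)\|_{1,2}=\|f_0\|_{1,2}$ and $\|g(t)\|_{1,2}=\|g_0\|_{1,2}$ for all $t$, so Lemma \ref{lem:upper_v} together with the bound \eqref{eq:Q1_temp2} from its proof gives $|Q_{FD}(f,f)(\tau,v)|+|Q_{FD}(g,g)(\tau,v)|\leq C(1+|v|^\gamma)$ for every $\tau$, with $C$ depending only on $\gamma$, $C_b$, $\|f_0\|_{1,2}$, and $\|g_0\|_{1,2}$. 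Consequently, for each fixed $v\notin Z$ the map $\tau\mapsto h(\tau,v)$ is locally Lipschitz, hence absolutely continuous, with $\partial_\tau h(\tau,v)=\big(Q_{FD}(f,f)-Q_{FD}(g,g)\big)(\tau,v)$ for a.e.\ $\tau$.

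Next I use the elementary fact that if $\phi:[a,b]\to\mathbb{R}$ is absolutely continuous then so is $\phi^+=\max\{\phi,0\}$, and $(\phi^+)'(\tau)=\phi'(\tau)\,\mathbf{1}_{\{\phi(\tau)>0\}}$ for a.e.\ $\tau$; since the a.e.\ derivative of an absolutely continuous function vanishes a.e.\ on each of its level sets, one may equivalently write $(\phi^+)'(\tau)=\phi'(\tau)\,\mathbf{1}_{\{\phi(\tau)\geq 0\}}$ for a.e.\ $\tau$. Applying this with $\phi(\tau)=h(\tau,v)$ for fixed $v\notin Z$ and using the previous step,
\[
\partial_\tau\big(h(\tau,v)^+\big)=\big(Q_{FD}(f,f)-Q_{FD}(g,g)\big)(\tau,v)\,\mathbf{1}_{\{f(\tau,v)\geq g(\tau,v)\}}\quad\text{for a.e.\ }\tau .
\]
Because $\tau\mapsto h(\tau,v)^+$ is absolutely continuous, integrating this identity from $a$ to $b$ yields precisely \eqref{eq:f-g+}. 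The exceptional set $Z$ in $v$ does not depend on $a$ or $b$, so the identity holds for all $0\leq a\leq b$ simultaneously and a.e.\ $v$.

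The only delicate point is the chain rule in the second step, in particular the claim that $\phi'=0$ a.e.\ on $\{\phi=0\}$, which is what allows the indicator to be written as $\mathbf{1}_{\{f\geq g\}}$ as stated, rather than $\mathbf{1}_{\{f>g\}}$. This is classical (it follows, for instance, from the Lebesgue-point characterization of the a.e.\ derivative of an absolutely continuous function), and the rest of the argument is routine bookkeeping with the mild formulation and the pointwise bounds on $Q_{FD}$ already available from Lemma \ref{lem:upper_v}.
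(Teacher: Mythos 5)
Your proof follows the same route as the paper's: fix $v$ outside the null set, observe that $\tau\mapsto f(\tau,v)-g(\tau,v)$ is absolutely continuous, apply the chain rule for the Lipschitz map $x\mapsto x^+$, and integrate in time. You are in fact a bit more careful than the paper in two respects: you justify local Lipschitz continuity in $\tau$ via the pointwise bound on $Q_{FD}$ from Lemma~\ref{lem:upper_v}, and you explicitly address why the indicator may be taken over $\{f\geq g\}$ rather than $\{f>g\}$ (the paper's displayed derivative uses $\mathbf{1}_{\{f>g\}}$ while its statement uses $\mathbf{1}_{\{f\geq g\}}$, a discrepancy your remark about the a.e.\ vanishing of the derivative on $\{f=g\}$ resolves).
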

\begin{proof}
    Since $f$ and $g$ are absolutely continuous about $t$ for a.e. fixed $v$, and $\phi(x) = \max\{x, 0\}$ is Lipschitz continuous, we have
    \begin{align*}
        \dv{t}\phi(f(t,v) - g(t,v)) &= \phi'(f(t,v) - g(t,v))(f(t,v) - g(t,v))' \\
        &= \left(Q_{FD}(f,f) - Q_{FD}(g,g)\right)(t,v)\mathbf{1}_{\{f(t,v)>g(t,v)\}}
    \end{align*}
    a.e. $t$ for a.e. fixed $v$. Integrating both sides about $t\in [a,b]$, we get \eqref{eq:f-g+}.
\end{proof}

The next lemma is an integral inequality used in this section.
\begin{lemma}[Lemma 2 of \cite{Lu2001353}]\label{lem:temp}
    Let $s\geq 0$ and the collision kernel satisfies (H1) and $0\leq \gamma\leq 2$. For $\|f\|_{1, \max\{s+\gamma,2\}}<\infty$ with $0\leq f\leq 1$, we have
    \begin{align*}
        &\int_{\mathbb{R}^3\times\mathbb{S}^2} B(v-v_*, \sigma) f'f'_*(1+|v_*|^2)^{s/2}\,d\sigma dv_*\\
        &\leq C_1\|f\|_{1,s+\gamma}(1+|v|^2)^{\gamma/2} + C_2\|f\|_{1,0}(1+|v|^2)^{(s+\gamma)/2},
    \end{align*}
    where the constants $C_1$ and $C_2$ depend on $s$, $\gamma$, and $C_b$.
\end{lemma}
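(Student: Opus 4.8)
The plan is to reduce the estimate to a version of the post-collisional change-of-variables identity \eqref{eq:cancel}, exactly as in the proof of Lemma 2 of \cite{Lu2001353}, adapted to the present notation. First I would use the symmetrization \eqref{eq:sym_sigma} to write the left-hand side as $2\int_{\mathbb{R}^3\times\mathbb{S}^2} B(v-v_*,\sigma) f(v')f(v_*')(1+|v_*|^2)^{s/2}\mathbf{1}_{\{0\le\theta\le\pi/2\}}\,d\sigma dv_*$, and then bound $(1+|v_*|^2)^{s/2}$ above by $C_s\big((1+|v'|^2)^{s/2}+(1+|v_*'|^2)^{s/2}\big)$, using $|v_*|^2\le|v'|^2+|v_*'|^2$ (since $|v'|^2+|v_*'|^2=|v|^2+|v_*|^2\ge|v_*|^2$) together with the elementary inequality $(1+a+b)^{s/2}\le C_s((1+a)^{s/2}+(1+b)^{s/2})$. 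This splits the integral into two pieces: one with weight on $v_*'$ and one with weight on $v'$.

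Next I would treat each piece by the Carleman/cancellation change of variables \eqref{eq:cancel}. For the piece carrying $(1+|v_*'|^2)^{s/2}$, applying the first identity in \eqref{eq:cancel} (with $f$ there replaced by $f(\cdot)(1+|\cdot|^2)^{s/2}$ and $\theta_0=\pi/2$) produces
$|\mathbb{S}^1|\int_{\mathbb{R}^3} f(v_*)(1+|v_*|^2)^{s/2}\int_0^{\pi/2}\frac{\sin\theta}{\cos^{3+\gamma}(\theta/2)}|v-v_*|^\gamma b(\cos\theta)\,d\theta\,dv_*$, and since $\cos(\theta/2)\ge 1/\sqrt2$ on $[0,\pi/2]$ the $\theta$-integral is a finite constant depending only on $s,\gamma,C_b$; bounding $|v-v_*|^\gamma\le 2(|v|^\gamma+|v_*|^\gamma)\le C((1+|v|^2)^{\gamma/2}+(1+|v_*|^2)^{\gamma/2})$ and distributing gives a term $\ls \|f\|_{1,s+\gamma}$ with no $v$-dependence plus a term $\ls \|f\|_{1,s}(1+|v|^2)^{\gamma/2}\le\|f\|_{1,s+\gamma}(1+|v|^2)^{\gamma/2}$. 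The piece carrying $(1+|v'|^2)^{s/2}$ is handled symmetrically using the second identity in \eqref{eq:cancel} (after the further $\sigma\mapsto-\sigma$ interchange that converts the $\{0\le\theta\le\pi/2\}$ cutoff into the $\{\theta_0\le\theta\le\pi\}$ form), where now $\sin(\theta/2)\ge 1/\sqrt2$ on the relevant range; this yields $\ls \|f\|_{1,s+\gamma}(1+|v|^2)^{\gamma/2}$. Collecting the four contributions and relabeling constants gives the stated bound with $C_1,C_2$ depending on $s,\gamma,C_b$.

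I do not expect a genuine obstacle here; this is essentially a bookkeeping exercise. The only mild subtlety is making sure the weight-splitting is done \emph{before} applying \eqref{eq:cancel}, so that the identity is applied to the $v'$- or $v_*'$-variable that actually appears in $f$ after symmetrization, and keeping track of which half of the $\theta$-range carries the integrable Jacobian factor ($\cos^{-3-\gamma}$ versus $\sin^{-3-\gamma}$). One could alternatively cite Lemma 2 of \cite{Lu2001353} directly, since the statement is formally identical to that reference (with $0\le f\le 1$ only used to guarantee the moments are finite and the integrals make sense); I would include the short self-contained derivation above for completeness but note the reference. The hypothesis $\|f\|_{1,\max\{s+\gamma,2\}}<\infty$ is exactly what is needed so that $\|f\|_{1,s+\gamma}$ and $\|f\|_{1,0}$ on the right-hand side are finite (the $\max$ with $2$ is only relevant when $s+\gamma<2$, where one uses $(1+|v|^2)^{(s+\gamma)/2}\le(1+|v|^2)$ to absorb into $\|f\|_{1,2}$ if desired).
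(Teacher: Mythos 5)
Your weight split $|v_*|^2 \le |v'|^2 + |v_*'|^2$ produces a piece whose weight falls on $v'$ while the angular restriction sits on $\{0\le\theta\le\pi/2\}$, and that combination cannot be reached by either branch of \eqref{eq:cancel}. Concretely, after bounding $f(v_*')\le 1$ you need to estimate
\begin{align*}
\int_{\mathbb{R}^3\times\mathbb{S}^2} B(v-v_*,\sigma)\,f(v')\,(1+|v'|^2)^{s/2}\,\mathbf{1}_{\{0\le\theta\le\pi/2\}}\,d\sigma\,dv_*,
\end{align*}
but the second identity in \eqref{eq:cancel} requires the cutoff $\{\theta_0\le\theta\le\pi\}$: its $\theta$-kernel
$\sin^{-3}(\theta/2)\,B\bigl(|v-v_*|/\sin(\theta/2),\cos\theta\bigr)\sin\theta$ behaves like $\theta^{-2-\gamma}b(\cos\theta)$ near $\theta=0$ and is not integrable there, so you cannot take $\theta_0=0$. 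The $\sigma\mapsto-\sigma$ interchange you invoke does move the cutoff to $\{\pi/2\le\theta\le\pi\}$, but it simultaneously swaps $v'\leftrightarrow v_*'$, so the weighted $f$-factor becomes $f(v_*')(1+|v_*'|^2)^{s/2}$; now you would need the \emph{first} identity, which is disqualified on $\{\pi/2\le\theta\le\pi\}$ for the mirror reason (its kernel $\cos^{-3}(\theta/2)\,B\bigl(|v-v_*|/\cos(\theta/2),\cos\theta\bigr)\sin\theta$ is not integrable near $\theta=\pi$). The weighted variable and the admissible $\theta$-range never line up, so this piece lies outside the reach of the argument you sketch; the symmetry step only chases the mismatch around.

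This is also why the stated bound carries the term $\|f\|_{1,0}(1+|v|^2)^{(s+\gamma)/2}$, which your sketch claims to eliminate: a correct split must place part of the weight on $v$ itself. One way to do this is to use the restriction to $\{0\le\theta\le\pi/2\}$ more fully. From \eqref{eq:sin}, on that range $|v-v_*'|=|v-v_*|\cos(\theta/2)\ge|v-v_*|/\sqrt{2}$, hence $|v-v_*|\le\sqrt{2}\,(|v|+|v_*'|)$ and $|v_*|\le|v|+|v-v_*|\le(1+\sqrt{2})|v|+\sqrt{2}|v_*'|$. Expanding and absorbing the cross terms by Young's inequality gives, on $\{0\le\theta\le\pi/2\}$,
\begin{align*}
(1+|v_*|^2)^{s/2}\,|v-v_*|^\gamma \le C\left[(1+|v|^2)^{(s+\gamma)/2}+(1+|v_*'|^2)^{(s+\gamma)/2}\right],
\end{align*}
so the weight only ever sits on $v$ and on $v_*'$, never on $v'$. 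Bounding $f(v')\le1$ and applying the first identity in \eqref{eq:cancel} (now with kernel $b(\cos\theta)$ alone, since $|v-v_*|^\gamma$ has been absorbed into the weight) yields $C\|f\|_{1,0}(1+|v|^2)^{(s+\gamma)/2}$ from the first term and $C\|f\|_{1,s+\gamma}\le C\|f\|_{1,s+\gamma}(1+|v|^2)^{\gamma/2}$ from the second. Citing Lemma 2 of \cite{Lu2001353} directly, as you note and as the paper itself does, is of course fine; but the self-contained argument you propose has a real gap at the step above.
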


The next lemma is a sharp version of the previous lemma. It is the crucial inequality in showing the $L^1_2$ stability.
\begin{lemma}\label{lem:f'f'_*_k}
    Let $0\leq k\leq 3$ and the collision kernel satisfies (H1) and $0\leq \gamma\leq 2$. For $\|f\|_{1, \max\{k+\gamma,2\}}<\infty$ with $0\leq f\leq 1$, we have
    \begin{align}\label{eq:f'f'_*_k}
        \begin{split}
            &\int_{\mathbb{R}^3\times \mathbb{S}^2} B(v-v_*,\sigma) f'f'_*(1+|v_*|^2)^{k/2}\,d\sigma dv_*\\
            &\leq C\left((1+|v|^2)^{\frac{\gamma}{2}} \|f\|_{1,k} + \|f\|_{1,k+\gamma} + \left((1+|v|^2)^{\frac{k+\gamma}{2}}\|f\|_{1,0} + \|f\|_{1, k+\gamma}\right)^{\frac{3-k}{3}}\|f\|_{1, k+\gamma}^{\frac{k}{3}}\right),
        \end{split}
    \end{align}
    where $C$ depends on $k$, $\gamma$, and $C_b$.
\end{lemma}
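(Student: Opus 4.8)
```latex
\textbf{Proof plan.} The estimate \eqref{eq:f'f'_*_k} sharpens Lemma \ref{lem:temp}: the crude bound $\|f\|_{1,s+\gamma}(1+|v|^2)^{(s+\gamma)/2}$ carries the full weight $(1+|v|^2)^{(k+\gamma)/2}$, whereas here we want the top-order term to appear only to the fractional power $\frac{3-k}{3}$. The idea is to split the $v_*$-integral (equivalently, the $\sigma$-integral) according to whether the collision is ``grazing'' ($\theta$ near $0$ or $\pi$, so $v'$ or $v_*'$ stays close to $v$) or ``genuine'', and on the genuine part to exploit the gain of integrability that the Carleman change of variables \eqref{eq:cancel} provides, combined with an interpolation that trades part of the large weight on $f'$ for the bounded quantity $\|f\|_{1,k+\gamma}$.

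First I would use $0\le f\le 1$ and the $\sigma\mapsto-\sigma$ symmetry \eqref{eq:sym_sigma} to reduce to bounding
\[
\int_{\mathbb{R}^3\times\mathbb{S}^2} |v-v_*|^\gamma b(\cos\theta)\, f(v_*')\,(1+|v_*|^2)^{k/2}\,\mathbf{1}_{\{0\le\theta\le\pi/2\}}\,d\sigma dv_*
\]
plus the symmetric term with $f(v')$ (handled by the variable swap $v\leftrightarrow v_*$ as in Lemma \ref{lem:third_term_bound}). Using $|v-v_*|^\gamma\le 2(|v|^\gamma+|v_*|^\gamma)$ and $(1+|v_*|^2)^{k/2}\le (1+|v|^2)^{k/2}(1+|v_*|^2)^{k/2}$ together with $|v_*'|\le |v_*|$ for $0\le\theta\le\pi/2$, one can control everything except one dangerous piece, namely $\int |v_*|^{\gamma}(1+|v_*|^2)^{k/2} f(v_*')\,d\sigma dv_*$; on the rest the Carleman identity \eqref{eq:cancel} and $\cos\frac\theta2\ge\frac1{\sqrt2}$ give directly the terms $(1+|v|^2)^{\gamma/2}\|f\|_{1,k}$ and $\|f\|_{1,k+\gamma}$ as in the proof of Lemma \ref{lem:upper_v}. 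For the dangerous piece, rather than bounding $(1+|v_*|^2)^{(k+\gamma)/2}f(v_*')$ crudely, I would write $(1+|v_*'|^2)^{(k+\gamma)/2}f(v_*')=\big[(1+|v_*'|^2)^{(k+\gamma)/2}f(v_*')\big]^{(3-k)/3}\big[(1+|v_*'|^2)^{(k+\gamma)/2}f(v_*')\big]^{k/3}$, apply Carleman to the first factor (this is where $(1+|v|^2)^{(k+\gamma)/2}\|f\|_{1,0}+\|f\|_{1,k+\gamma}$ appears, exactly as the right-hand side of Lemma \ref{lem:temp} but raised to the power $\frac{3-k}{3}$), and bound the second factor by $\|f\|_{1,k+\gamma}^{k/3}$ after using $f\le 1$ so that $f^{k/3}\le 1$ and pulling the $(1+|v_*'|^2)^{k(k+\gamma)/6}$ weight through an $L^\infty$–$L^1$ estimate combined with Hölder in $\sigma$; the constraint $0\le k\le 3$ is precisely what makes both exponents nonnegative and $\le 1$.

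The main obstacle is organizing the interpolation so that the fractional powers land on the correct quantities without creating a spurious high-order weight: one must be careful that after Carleman the measure $\frac{\sin\theta}{\cos^{3+\gamma}\frac\theta2}b(\cos\theta)\,d\theta$ on $[0,\pi/2]$ is finite (it is, by (H1) and $\cos\frac\theta2\ge\frac1{\sqrt2}$), and that Hölder is applied in the $\sigma$ variable with exponents $\frac{3}{3-k}$ and $\frac3k$ so that the $\sigma$-integral of the bounded factor is again controlled by $C_b$. I would carry out the split, estimate the four resulting groups of terms (two ``easy'' ones yielding $(1+|v|^2)^{\gamma/2}\|f\|_{1,k}$ and $\|f\|_{1,k+\gamma}$, one Carleman term for the $f(v_*')$-contribution, one symmetric Carleman term for $f(v')$), then apply the interpolation to the borderline term; collecting constants, all depending only on $k$, $\gamma$, $C_b$, gives \eqref{eq:f'f'_*_k}. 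The endpoint cases $k=0$ (where the statement degenerates to Lemma \ref{lem:temp} with $s=0$, no interpolation needed) and $k=3$ (where the last term becomes $\|f\|_{1,3+\gamma}$ and again no genuine interpolation is needed) serve as useful consistency checks.
```
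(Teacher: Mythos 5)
The overall toolbox you reach for — the Carleman-type identities \eqref{eq:cancel}, a H\"older step with exponents $3/(3-k)$ and $3/k$, and the observation that $0\le k\le 3$ is what keeps these exponents admissible — matches the paper. But there is a genuine gap in how you deploy them. You begin by using $0\le f\le 1$ to drop one of the post-collisional factors, reducing to $\int |v-v_*|^\gamma b(\cos\theta) f(v_*')(1+|v_*|^2)^{k/2}\mathbf{1}_{\{0\le\theta\le\pi/2\}}\,d\sigma dv_*$. This is premature. The paper's proof keeps \emph{both} $f'$ and $f'_*$ precisely because the H\"older step in the critical regime must place $(f'_*)^p$ in one factor and $(f')^q$ in the other, with the $\sin^{kq}\frac\theta2 = \sin^3\frac\theta2$ factor shipped entirely to the $q$-side where it cancels the Jacobian singularity $\sin^{-3}\frac\theta2$ of the Carleman change of variables targeted at $v'$. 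Your decomposition $A=A^{(3-k)/3}A^{k/3}$ with $A=(1+|v_*'|^2)^{(k+\gamma)/2}f(v_*')$ is a trivial identity that puts the \emph{same} function in both H\"older factors; it cannot produce the split between a ``crude Carleman'' term and a ``tame $\|f\|_{1,k+\gamma}$'' term, because there is no second function left to assign to the second factor.

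A second, related difficulty: you pass from $(1+|v_*|^2)^{(k+\gamma)/2}$ to $(1+|v_*'|^2)^{(k+\gamma)/2}$, but as you yourself note $|v_*'|\le|v_*|$ for $0\le\theta\le\pi/2$, so this replacement goes the wrong way in an upper bound. The paper instead splits the integration region by $\{|v_*|\le 2|v_*'|\}$ versus $\{|v_*|\ge 2|v_*'|\}$ (and the analogous split with $v'$). On the first set, $|v_*|^k\le 2^k|v_*'|^k$ is \emph{true} and the Carleman change of variables closes the estimate with the harmless terms $(1+|v|^2)^{\gamma/2}\|f\|_{1,k}+\|f\|_{1,k+\gamma}$. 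On the second set, the triangle identity $|v_*-v_*'|=|v-v_*|\sin\frac\theta2$ converts $|v_*|^k$ into $\le 2^k|v-v_*|^k(\sin\frac\theta2)^k$, and it is this $(\sin\frac\theta2)^k$ gain — completely absent from your plan — that later cancels the Carleman singularity in the $q$-factor. Without the case split there is no source of $(\sin\frac\theta2)^k$, without $(\sin\frac\theta2)^k$ the $q$-factor integral diverges, and without both $f'$ and $f'_*$ there is no way to distribute the two Carleman changes of variables. These are not bookkeeping issues; they are the core of the argument.
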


\begin{proof}
    It is a slightly refined version of Lemma 3 of \cite{Lu20031}. In fact, we will follow the proof in \cite{Lu20031}; the only difference is  we use $|v-v_*|^\gamma\leq 2\left(|v|^{\gamma} + |v_*|^{\gamma}\right)$. For completeness, we present the proof here.
    
    Using $(1+|v_*|^2)^{k/2}\leq 2^{k/2}(1+|v_*|^k)$, we divide the integral by $2^{k/2}(I(v)+J(v))$, where
    \begin{align*}
        I(v) &\coloneqq \int_{\mathbb{R}^3\times \mathbb{S}^2} B(v-v_*,\sigma) f'f'_*\,d\sigma dv_*,\\
        J(v) &\coloneqq \int_{\mathbb{R}^3\times \mathbb{S}^2} B(v-v_*,\sigma) f'f'_*|v_*|^k\,d\sigma dv_*.
    \end{align*}
    From Lemma \ref{lem:temp}, we get
    \begin{align*}
        I(v)\leq C(1+|v|^\gamma),
    \end{align*}
    where $C$ depends on $C_b$, $\gamma$, and $\|f\|_{1,2}$. For $J(v)$, we further decompose it by
    \begin{align*}
        J_1(v) &\coloneqq \int_{\mathbb{R}^3\times \mathbb{S}^2} B(v-v_*,\sigma) f'f'_*|v_*|^k \mathbf{1}_{\{0\leq \theta\leq \frac{\pi}{2}\}}\mathbf{1}_{\{|v_*|\leq 2|v_*'|\}}\,d\sigma dv_*,\\
        J_2(v) &\coloneqq \int_{\mathbb{R}^3\times \mathbb{S}^2} B(v-v_*,\sigma) f'f'_*|v_*|^k \mathbf{1}_{\{0\leq \theta\leq \frac{\pi}{2}\}}\mathbf{1}_{\{|v_*|\geq 2|v_*'|\}}\,d\sigma dv_*,\\
        J_3(v) &\coloneqq \int_{\mathbb{R}^3\times \mathbb{S}^2} B(v-v_*,\sigma) f'f'_*|v_*|^k \mathbf{1}_{\{\frac{\pi}{2}< \theta\leq \pi\}}\mathbf{1}_{\{|v_*|\leq 2|v'|\}}\,d\sigma dv_*,\\
        J_4(v) &\coloneqq \int_{\mathbb{R}^3\times \mathbb{S}^2} B(v-v_*,\sigma) f'f'_*|v_*|^k \mathbf{1}_{\{\frac{\pi}{2}< \theta\leq \pi\}}\mathbf{1}_{\{|v_*|\geq 2|v'|\}}\,d\sigma dv_*.
    \end{align*}
    For $J_1$ and $J_3$, from \eqref{eq:cancel}, we have
    \begin{align}\label{eq:J_1,J_3_bound}
        \begin{split}
            J_1(v) + J_3(v) &\leq 2^k\int_{\mathbb{R}^3\times \mathbb{S}^2} B(v-v_*,\sigma) f'f'_*\left(|v'_*|^k \mathbf{1}_{\{0\leq \theta\leq \frac{\pi}{2}\}} + |v'|^k \mathbf{1}_{\{\frac{\pi}{2}<\theta\leq \pi\}}\right)\,d\sigma dv_*\\
            &\leq 2^{k+2}\pi\int_{\mathbb{R}^3}\int_0^{\frac{\pi}{2}} \frac{\sin\theta}{\cos^3\frac{\theta}{2}}B\left(\frac{|v-v_*|}{\cos\frac{\theta}{2}},\cos\theta\right) f(v_*)|v_*|^k\,d\theta dv_*\\
            &= 2^{k+2}\pi\int_{\mathbb{R}^3}\int_0^{\frac{\pi}{2}} \frac{\sin\theta}{\cos^{3+\gamma}\frac{\theta}{2}}|v-v_*|^\gamma b(\cos\theta) f(v_*)|v_*|^k \,d\theta dv_*\\
            &\leq 2^{k+\frac{5+3\gamma}{2}}C_b\int_{\mathbb{R}^3}\left(|v|^\gamma + |v_*|^\gamma\right) f(v_*)|v_*|^k \,d\theta dv_*\\
            &\leq 2^{k+\frac{5+3\gamma}{2}}C_b\left((1+|v|^2)^{\frac{\gamma}{2}} \|f\|_{1,k} + \|f\|_{1,k+\gamma}\right).
        \end{split}
    \end{align}
    For $J_2$, since $|v_*|\geq 2|v'_*|$,
    \begin{align*}
        \frac{|v_*|}{2}\leq |v_*-v'_*| = |v-v_*|\sin\frac{\theta}{2}.
    \end{align*}
    Therefore,
    \begin{align*}
        J_2(v)\leq 2^k\int_{\mathbb{R}^3\times \mathbb{S}^2} |v-v_*|^{k+\gamma}b(\cos\theta)\left(\sin \frac{\theta}{2}\right)^k f'f'_* \mathbf{1}_{\{0\leq \theta\leq \frac{\pi}{2}\}}\,d\sigma dv_*.
    \end{align*}
    We further divide it by
    \begin{align*}
        J_{21}(v) \coloneqq 2^k\int_{\mathbb{R}^3\times \mathbb{S}^2} |v-v_*|^{k+\gamma}b(\cos\theta)\left(\sin \frac{\theta}{2}\right)^k f'f'_* \mathbf{1}_{\{0\leq \theta\leq \frac{\pi}{2}\}}\mathbf{1}_{\{|v'|\leq |v_*'|\}}\,d\sigma dv_*,\\
        J_{22}(v) \coloneqq 2^k\int_{\mathbb{R}^3\times \mathbb{S}^2} |v-v_*|^{k+\gamma}b(\cos\theta)\left(\sin \frac{\theta}{2}\right)^k f'f'_* \mathbf{1}_{\{0\leq \theta\leq \frac{\pi}{2}\}}\mathbf{1}_{\{|v'|> |v_*'|\}}\,d\sigma dv_*.
    \end{align*}
    For $J_{21}$, using $|v-v_*|\leq |v'|+|v'_*|\leq 2|v'_*|$ and \eqref{eq:cancel} again, we get
    \begin{align*}
        J_{21}(v) &\leq 2^{2k+\gamma}\int_{\mathbb{R}^3\times \mathbb{S}^2} |v'_*|^{k+\gamma}b(\cos\theta)\left(\sin \frac{\theta}{2}\right)^k f'_* \mathbf{1}_{\{0\leq \theta\leq \frac{\pi}{2}\}}\,d\sigma dv_*\\
        &\leq 2^{2k+\gamma+1}\pi\int_{\mathbb{R}^3}f(v_*)|v_*|^{k+\gamma}\int_0^{\frac{\pi}{2}} \frac{b(\cos\theta)}{\left(\cos\frac{\theta}{2}\right)^3}\left(\sin \frac{\theta}{2}\right)^k\sin\theta \,d\theta dv_*\\
        &\leq 2^{\frac{5}{2}k+\gamma+\frac{5}{2}}\pi\int_{\mathbb{R}^3}f(v_*)|v_*|^{k+\gamma}\int_0^{\frac{\pi}{2}} b(\cos\theta)\sin\theta \,d\theta dv_*\\
        &\leq 2^{\frac{5}{2}k+\gamma+\frac{3}{2}} C_b\|f\|_{1, k+\gamma}.
    \end{align*}
    For $J_{22}$, we first use H{\"o}lder's inequality and get
    \begin{align}\label{eq:J_22}
        \begin{split}
            J_{22}(v) &\leq 2^k\left(\int_{\mathbb{R}^3\times \mathbb{S}^2} |v-v_*|^{k+\gamma}b(\cos\theta)(f'_*)^p \mathbf{1}_{\{0\leq \theta\leq \frac{\pi}{2}\}}\,d\sigma dv_*\right)^{1/p}\\
            &\qquad \times \left(\int_{\mathbb{R}^3\times \mathbb{S}^2} |v-v_*|^{k+\gamma}b(\cos\theta)\left(\sin \frac{\theta}{2}\right)^{kq}(f')^q \mathbf{1}_{\{0\leq \theta\leq \frac{\pi}{2}\}}\mathbf{1}_{\{|v'|> |v_*'|\}}\,d\sigma dv_*\right)^{1/q}
        \end{split}
    \end{align}
    for some $p\geq 1$ and $\frac{1}{p} + \frac{1}{q} = 1$. Next, we use $f\leq 1$ and \eqref{eq:cancel} for the first term to get
    \begin{align*}
        \int_{\mathbb{R}^3\times \mathbb{S}^2} |v-v_*|^{k+\gamma}b(\cos\theta)(f'_*)^p \mathbf{1}_{\{0\leq \theta\leq \frac{\pi}{2}\}}\,d\sigma dv_*&\leq 2^{\frac{k+\gamma+3}{2}}C_b\int_{\mathbb{R}^3}|v-v_*|^{k+\gamma}f(v_*)dv_*\\
        &\leq 2^{\frac{k+\gamma+3}{2}} C_b((1+|v|^2)^{\frac{k+\gamma}{2}}\|f\|_{1,0} + \|f\|_{1, k+\gamma}).
    \end{align*}
    For the second one, we choose $\frac{1}{p} = 1-\frac{k}{3}$ to make $kq = 3$. Then,
    \begin{align*}
        &\int_{\mathbb{R}^3\times \mathbb{S}^2} |v-v_*|^{k+\gamma}b(\cos\theta)\left(\sin \frac{\theta}{2}\right)^{kq}(f')^q \mathbf{1}_{\{0\leq \theta\leq \frac{\pi}{2}\}}\mathbf{1}_{|v'|> |v_*'|}\,d\sigma dv_*\\
        &\leq 2^{k+\gamma}\int_{\mathbb{R}^3\times \mathbb{S}^2} b(\cos\theta)\left(\sin \frac{\theta}{2}\right)^3|v'|^{k+\gamma}f' \mathbf{1}_{\{0\leq \theta\leq \frac{\pi}{2}\}}\,d\sigma dv_*\\
        &\leq 2^{k+\gamma+1}\pi\int_{\mathbb{R}^3}|v_*|^{k+\gamma}f(v_*)\int_0^{\frac{\pi}{2}} b(\cos\theta)\sin\theta\,d\theta dv_*\\
        &\leq 2^{k+\gamma}C_b\|f\|_{1, k+\gamma}.
    \end{align*}
    Combining these two bounds to \eqref{eq:J_22}, we have
    \begin{align*}
        J_{22}(v)\leq C((1+|v|^2)^{\frac{k+\gamma}{2}}\|f\|_{1,0} + \|f\|_{1, k+\gamma})^{1-\frac{k}{3}}\|f\|_{1, k+\gamma}^{\frac{k}{3}}
    \end{align*}
    for some constant depending on $k$, $\gamma$, and $C_b$. Therefore, we get
    \begin{align}\label{eq:J_2_bound}
        J_2(v)\leq C\left(\|f\|_{1, k+\gamma} + ((1+|v|^2)^{\frac{k+\gamma}{2}}\|f\|_{1,0} + \|f\|_{1, k+\gamma})^{1-\frac{k}{3}}\|f\|_{1, k+\gamma}^{\frac{k}{3}}\right).
    \end{align}

    The $J_4$ can be bounded almost similarly to the $J_2$ with the same bounding quantity. Combining \eqref{eq:J_1,J_3_bound} and \eqref{eq:J_2_bound}, we get \eqref{eq:f'f'_*_k}.
\end{proof}

Using the above lemma, we construct an $L^1_2$ stability result in the Boltzmann-Fermi-Dirac equation. The next lemma is to compute the difference $(Q_{FD}(f,f) - Q_{FD}(g,g))\phi(v)\mathbf{1}_{\{f>g\}}$ for $\phi(v) = 1$ or $|v|^2$.
\begin{lemma}[Lemma 1 of \cite{Lu20031}]\label{lem:fg_diff_sym}
    Let $f(v)$ and $g(v)$ are real valued functions with $0\leq f,g\leq 1$. For $\phi(v) = 1$ or $|v|^2$, it satisfies
    \begin{align*}
        &ff_*(1-f')(1-f'_*) - gg_*(1-g')(1-g'_*)(\phi'\mathbf{1}_{\{f'>g'\}} + \phi'_*\mathbf{1}_{\{f'_*>g'_*\}} - \phi \mathbf{1}_{\{f>g\}} - \phi_*\mathbf{1}_{\{f_*>g_*\}})\\
        &\leq (f\phi)|f_*-g_*| + (f\phi)_*|f-g| + ff_*(|f'-g'|\phi'_* + |f'_*-g'_*|\phi').
    \end{align*}
\end{lemma}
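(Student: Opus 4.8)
Since the statement involves only the eight numbers $f,f_*,f',f'_*,g,g_*,g',g'_*$, all in $[0,1]$, and the four nonnegative weights $\phi,\phi_*,\phi',\phi'_*$, Lemma \ref{lem:fg_diff_sym} is a purely pointwise algebraic inequality, and the plan is to follow \cite[Lemma~1]{Lu20031}. Write $A\coloneqq ff_*(1-f')(1-f'_*)$, $B\coloneqq gg_*(1-g')(1-g'_*)$ and $\Psi\coloneqq \phi'\mathbf{1}_{\{f'>g'\}}+\phi'_*\mathbf{1}_{\{f'_*>g'_*\}}-\phi\mathbf{1}_{\{f>g\}}-\phi_*\mathbf{1}_{\{f_*>g_*\}}$, so the claim reads $(A-B)\Psi\le (f\phi)|f_*-g_*|+(f\phi)_*|f-g|+ff_*\bigl(|f'-g'|\phi'_*+|f'_*-g'_*|\phi'\bigr)$. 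The first point is that for $\phi\equiv1$ and for $\phi(v)=|v|^2$ one has the conservation identity $\phi+\phi_*=\phi'+\phi'_*$; together with $\mathbf{1}_{\{x>y\}}=1-\mathbf{1}_{\{x\le y\}}$ this lets me rewrite $\Psi$ in the complementary form $\Psi=\phi\mathbf{1}_{\{f\le g\}}+\phi_*\mathbf{1}_{\{f_*\le g_*\}}-\phi'\mathbf{1}_{\{f'\le g'\}}-\phi'_*\mathbf{1}_{\{f'_*\le g'_*\}}$, and I will use whichever of the two forms is convenient for each indicator term. This identity is the only input that is not completely elementary, and it is precisely why the lemma is restricted to $\phi\in\{1,|v|^2\}$.

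The core is a telescoping of $A-B$ aligned with the indicators. For any chosen ordering of the four factors, $A-B$ can be written as a sum of four terms, each one of $\pm(f-g),\pm(f_*-g_*),\pm(f'-g'),\pm(f'_*-g'_*)$ multiplied by a product of three factors taken from $\{f_\bullet,g_\bullet,1-f_\bullet,1-g_\bullet\}$, hence by a coefficient in $[0,1]$. For the term $-\phi\mathbf{1}_{\{f>g\}}$ of $\Psi$, for instance, I would telescope $B-A$ starting from the $g$-slot so that its leading term is $(g-f)\times(\text{something nonnegative})$, which is $\le 0$ on $\{f>g\}$ and may be dropped; the three surviving terms then carry the single-point differences $|f_*-g_*|$, $|f'-g'|$, $|f'_*-g'_*|$, and after bounding the spare $[0,1]$-factors trivially their coefficients collapse to $f$, $ff_*$, $ff_*$. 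Performing the analogous step for each of the four indicator terms of $\Psi$ — switching to the complementary form of $\Psi$ whenever the natural sign cancellation needs the opposite direction — and summing gives an upper bound of the shape $c_1(f\phi)|f_*-g_*|+c_2(f\phi)_*|f-g|+c_3\,ff_*\phi'_*|f'-g'|+c_4\,ff_*\phi'|f'_*-g'_*|$; the content of the computation is that one may take $c_1=\cdots=c_4=1$, by never reusing the same $[0,1]$-factor twice and, crucially for $\phi=|v|^2$, by using $|v|^2+|v_*|^2=|v'|^2+|v'_*|^2$ when a $\phi$-weight must be moved between the two representations of $\Psi$. Combined with Lemma \ref{lem:fg+}, the symmetry \eqref{eq:bef_aft} and the $v\leftrightarrow v_*$ and $\sigma\mapsto-\sigma$ exchanges, this pointwise estimate is precisely what is fed into the $L^1_2$ stability estimate of Theorem \ref{thm:L1_2_stability}.

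The main obstacle is this weight bookkeeping: there are four indicator terms, each with four admissible telescoping orders, and one must verify that the $\phi$-weights and $[0,1]$-prefactors line up to reproduce the stated right-hand side rather than a lossy multiple of it. The $\phi=|v|^2$ case is the delicate one, since a naive term-by-term estimate can leave a weight $|v|^2$ attached to a primed difference, and that term is absorbed only after grouping the contributions of all four indicator terms and invoking energy conservation. Beyond this, the argument needs no PDE, compactness or regularity input — it is finite and elementary once the alignment above is set up — and the explicit verification is carried out in \cite{Lu20031}.
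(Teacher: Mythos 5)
The paper does not actually prove this lemma: it is cited verbatim as Lemma~1 of \cite{Lu20031}, so there is no internal argument to compare against. Your task is therefore to supply a self-contained verification, and your sketch does have the right ingredients — the observation that the inequality is purely pointwise, the telescoping of $A-B \coloneqq ff_*(1-f')(1-f'_*)-gg_*(1-g')(1-g'_*)$ across the four slots, the sign argument that drops the diagonal term on each indicator set, and the reliance on $\phi+\phi_*=\phi'+\phi'_*$. But the central bookkeeping step, as you describe it, does not close.

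Here is the concrete gap. For the term $-\phi\mathbf{1}_{\{f>g\}}$ your telescoping of $B-A$ from the $f/g$ slot gives, after discarding the sign-favorable leading term and bounding the remaining $[0,1]$ factors by $1$,
\[
(B-A)\phi\mathbf{1}_{\{f>g\}} \le f\,\phi\,|f_*-g_*| + ff_*\,\phi\,|f'-g'| + ff_*\,\phi\,|f'_*-g'_*|,
\]
with the weight $\phi$ attached to \emph{all three} surviving differences. Repeating the analogous step for the other three indicator terms produces, for example, $ff_*\,\phi_*\,|f'-g'|$ from the $\phi_*$-term and $ff_*\,\phi'_*\,|f'-g'|$ from the $\phi'_*$-term. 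Summing therefore gives, for the $|f'-g'|$ block, a coefficient $ff_*(\phi+\phi_*+\phi'_*)$, and conservation $\phi+\phi_*=\phi'+\phi'_*$ only rewrites this as $ff_*(\phi'+2\phi'_*)$; it does \emph{not} collapse to the required $ff_*\,\phi'_*$. So the step ``summing gives an upper bound of the shape $\cdots$ with $c_1=\cdots=c_4=1$'' is not available by this route: the naive telescoping + grouping yields a strictly larger right-hand side whenever $\phi,\phi_*>0$.

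What is actually needed is a different initial split,
\[
A-B \;=\; ff_*\bigl[(1-f')(1-f'_*)-(1-g')(1-g'_*)\bigr] \;+\; (ff_*-gg_*)(1-g')(1-g'_*),
\]
together with two extra observations. For the first block, expand the bracket as $-(f'-g')(1-f'_*)-(1-g')(f'_*-g'_*)$ and use the two-sided bound $-\phi'_*\le\Psi\le\phi'_*$ valid respectively on $\{f'>g'\}$ and $\{f'\le g'\}$ (and the analogue for $f'_*$), where $\Psi$ denotes the indicator combination; this directly gives $ff_*(\phi'_*|f'-g'|+\phi'|f'_*-g'_*|)$ with each primed difference already carrying its \emph{complementary} weight — this is where $\phi+\phi_*=\phi'+\phi'_*$ enters. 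For the second block, bound $(ff_*-gg_*)\Psi$ by a sign case analysis on $u_1=f-g$, $u_2=f_*-g_*$: when both have the same sign the product is $\le 0$; when, say, $u_1>0\ge u_2$, the subcase $ff_*\ge gg_*$ is handled via $\Psi\le\phi_*$ and $ff_*-gg_*\le u_1 f_*$, while the subcase $ff_*<gg_*$ requires $\Psi\ge-\phi$ \emph{and} the upgrade $g\le f$ valid on $\{u_1>0\}$ to replace the $g$-coefficient by $f$. This last upgrade — using the indicator set itself to trade a $g$-factor for an $f$-factor — is the piece that makes the right-hand side expressible purely in terms of $f$, and it is absent from your outline. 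Without it, the argument you sketch proves a weaker inequality with some $g$-weighted (or over-weighted) terms, not the one stated.
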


Using this difference lemma, we can control the following weighted difference integral.
\begin{lemma}\label{lem:Q_FD_diff_int}
    Let $f(v)\in L^1_s$ for all $s\geq 2$ and $g(v)\in L^1_2$ with $0\leq f,g\leq 1$. Also, let the collision kernel $B$ satisfy $0\leq \gamma\leq 2$ and (H1). Then, for $k=0$ or $2$,
    \begin{align}\label{eq:Q_FD_diff_int}
        \begin{split}
            &\int_{\mathbb{R}^3} (1+|v|^2)^{\frac{k}{2}}(Q_{FD}((f, f) - Q_{FD}(g, g))(\tau, v))\mathbf{1}_{\{f(v)\geq g(v)\}}\,dv\\
            &\leq C\left(\|f\|_{1,k}\|f-g\|_{1,\gamma} + \|f\|_{1,k+\gamma}\|f-g\|_{1,0}  + \|f\|^{\frac{3-k}{3}}_{1,0}\|f\|^{\frac{k}{3}}_{1, k+\gamma}\|f-g\|_{1,\frac{(k+\gamma)(3-k)}{3}}\right)
        \end{split}
    \end{align}
    for some constant depending on $\gamma$, $k$, and $C_b$.
\end{lemma}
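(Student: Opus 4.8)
The plan is to symmetrize the weighted difference integral so that it becomes (a multiple of) the left-hand side of Lemma \ref{lem:fg_diff_sym}, and then to estimate the four pieces that appear, the delicate ones via the sharp bound of Lemma \ref{lem:f'f'_*_k}. Throughout write $\phi(v) = (1+|v|^2)^{k/2}$, which for $k\in\{0,2\}$ is a (finite) sum of collision invariants. Expanding $Q_{FD}$ in the integrand of \eqref{eq:Q_FD_diff_int} one gets
\[
    \int_{\mathbb{R}^6\times\mathbb{S}^2} B(v-v_*,\sigma)\,\phi(v)\mathbf{1}_{\{f\geq g\}}\,\big(D_2 - D_1\big)\,d\sigma\,dv_*\,dv ,
\]
where $D_1 \coloneqq ff_*(1-f')(1-f'_*) - gg_*(1-g')(1-g'_*)$ and $D_2$ is obtained from $D_1$ by interchanging primed and unprimed variables. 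Both $D_1$ and $D_2$ are invariant under $v\leftrightarrow v_*$, and the pre/post-collisional change of variables \eqref{eq:bef_aft} carries $D_2$ to $D_1$ while sending $\phi(v)\mathbf{1}_{\{f\geq g\}}$ to $\phi(v')\mathbf{1}_{\{f'\geq g'\}}$. Using \eqref{eq:bef_aft} to replace $D_2$ by $D_1$ and then the symmetry $(v,v_*,\sigma)\mapsto(v_*,v,-\sigma)$ (under which $B$ and $D_1$ are unchanged) to symmetrize the weights, I rewrite the integral as
\[
    \tfrac12\int_{\mathbb{R}^6\times\mathbb{S}^2} B(v-v_*,\sigma)\,D_1\,\big(\phi'\mathbf{1}_{\{f'\geq g'\}} + \phi'_*\mathbf{1}_{\{f'_*\geq g'_*\}} - \phi\mathbf{1}_{\{f\geq g\}} - \phi_*\mathbf{1}_{\{f_*\geq g_*\}}\big)\,d\sigma\,dv_*\,dv ,
\]
whose integrand, stripped of the kernel, is precisely the left-hand side of Lemma \ref{lem:fg_diff_sym} (the inequality there is unchanged if $\geq$ replaces $>$ in the indicators, by the same case analysis). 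This reduces the matter to estimating the $\sigma$--$v_*$--$v$ integral, against $B$, of $(f\phi)|f_*-g_*| + (f\phi)_*|f-g| + ff_*\big(|f'-g'|\phi'_* + |f'_*-g'_*|\phi'\big)$.

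For the two ``loss-type'' terms I integrate in $\sigma$ first (using (H1)), apply $|v-v_*|^\gamma \le C\big((1+|v|^2)^{\gamma/2} + (1+|v_*|^2)^{\gamma/2}\big)$, and then use Fubini; this produces a bound by $C\big(\|f\|_{1,k}\|f-g\|_{1,\gamma} + \|f\|_{1,k+\gamma}\|f-g\|_{1,0}\big)$, which is the first two terms on the right of \eqref{eq:Q_FD_diff_int}. For the two ``gain-type'' terms I apply \eqref{eq:bef_aft} again (and $\sigma\mapsto-\sigma$, which swaps $v'\leftrightarrow v'_*$ and leaves $b$ invariant, to reduce the second term to the first), obtaining
\[
    \int_{\mathbb{R}^6\times\mathbb{S}^2} B\, ff_*\,|f'-g'|\,\phi'_*\, d\sigma\,dv_*\,dv
    = \int_{\mathbb{R}^3} |f(v)-g(v)|\Big(\int_{\mathbb{R}^3\times\mathbb{S}^2} B(v-v_*,\sigma)\,f'f'_*(1+|v_*|^2)^{k/2}\, d\sigma\,dv_*\Big)dv ,
\]
where the inner integral is exactly the left-hand side of Lemma \ref{lem:f'f'_*_k}, applicable since $k\in\{0,2\}\subset[0,3]$. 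Substituting that bound, integrating against $|f-g|$, and using the subadditivity $(a+b)^{(3-k)/3} \le a^{(3-k)/3} + b^{(3-k)/3}$ (valid because $k\ge 0$) together with $\|f\|_{1,k+\gamma}^{(3-k)/3}\|f\|_{1,k+\gamma}^{k/3} = \|f\|_{1,k+\gamma}$ gives a bound by $C\big(\|f\|_{1,k}\|f-g\|_{1,\gamma} + \|f\|_{1,k+\gamma}\|f-g\|_{1,0} + \|f\|_{1,0}^{(3-k)/3}\|f\|_{1,k+\gamma}^{k/3}\|f-g\|_{1,(k+\gamma)(3-k)/3}\big)$. Summing all contributions yields \eqref{eq:Q_FD_diff_int}; note $(k+\gamma)(3-k)/3 \le 2$ for $k\in\{0,2\}$, $0<\gamma\le 2$, so every norm on the right is finite under the hypotheses $f\in L^1_s$ ($s\ge2$), $g\in L^1_2$.

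I expect the main obstacle to be the careful bookkeeping in the last step: one must verify that the exponents generated by Lemma \ref{lem:f'f'_*_k} recombine to give exactly the interpolated weight $\frac{(k+\gamma)(3-k)}{3}$ and the powers $\frac{3-k}{3}$, $\frac{k}{3}$ appearing in the statement, rather than a coarser (non-sharp) exponent. This sharpness is not cosmetic: in Section \ref{sec:Well-posedness} it is what keeps the $|v|$-weight on $\|f-g\|_{1,\cdot}$ strictly below (indeed, equal to $\frac{(k+\gamma)(3-k)}{3} <$) the order $2$ controlled by the conserved energy, which is essential for closing the $L^1_2$ stability estimate. A secondary point to handle with care is the justification of Fubini and of the changes of variables \eqref{eq:bef_aft} in the presence of the indicator functions; this is legitimate because after the pointwise estimate of Lemma \ref{lem:fg_diff_sym} every integrand is non-negative and the relevant moments of $f$ are finite.
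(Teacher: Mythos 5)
Your strategy matches the paper's proof: symmetrize the weighted difference integral, apply Lemma \ref{lem:fg_diff_sym} pointwise, bound the two ``loss-type'' terms by direct integration against $B$, and handle the ``gain-type'' terms via the sharp estimate of Lemma \ref{lem:f'f'_*_k}. The bookkeeping of exponents $(3-k)/3$, $k/3$, $(k+\gamma)(3-k)/3$ is exactly what the paper obtains.

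There is, however, a circularity in your justification of the symmetrization when $g$ is only in $L^1_2$. You argue that the changes of variables \eqref{eq:bef_aft} are ``legitimate because after the pointwise estimate of Lemma \ref{lem:fg_diff_sym} every integrand is non-negative,'' but the symmetrization must be performed \emph{before} Lemma \ref{lem:fg_diff_sym} can be invoked — the symmetrized combination $\phi'\mathbf{1}_{\{f'\geq g'\}} + \phi'_*\mathbf{1}_{\{f'_*\geq g'_*\}} - \phi\mathbf{1}_{\{f\geq g\}} - \phi_*\mathbf{1}_{\{f_*\geq g_*\}}$ is precisely the object produced by the change of variables — so the later non-negativity cannot retroactively license the earlier manipulation. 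The unsymmetrized integrand $\phi(v)\mathbf{1}_{\{f\geq g\}}\bigl(D_2-D_1\bigr)$ is signed, and with $g\in L^1_2$ only, absolute integrability of the $g$-dependent piece weighted by $\phi(v)=1+|v|^2$ is not automatic. The paper handles this by first proving the bound under the temporary assumption $g\in L^1_s$ for all $s$, then relaxing: it splits $\phi\, Q_{FD}(g,g)\,\mathbf{1}_{\{f\geq g\}}$ into gain and loss parts, applies Tonelli to the (non-negative) gain part, and shows the loss part is absolutely integrable by exploiting $g\mathbf{1}_{\{f\geq g\}}\leq f$ together with the finite higher moments of $f$. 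You do mention ``the relevant moments of $f$ are finite,'' which is the right ingredient, but the argument needs to be reorganized so this finiteness is used on the decomposed positive parts \emph{prior to} the symmetrization rather than appealed to post hoc.
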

\begin{proof}
    Let $\phi(v) = 1$ or $1+|v|^2$. Temporarily, we assume $g\in L^1_s$ for all $s\geq 2$. We will relax this condition at the end of the proof. Since $f,g\in L^1_s$ for all $s\geq 2$, the integral in \eqref{eq:Q_FD_diff_int} is well-defined.
    
    From Lemma \ref{lem:fg_diff_sym}, applying the symmetrization, we have
    \begin{align}\label{eq:Q_FD_diff_int_2}
        \begin{split}
            &\int_{\mathbb{R}^3} \phi(v)(Q_{FD}((f, f) - Q_{FD}(g, g))(v))\mathbf{1}_{\{f(v)\geq g(v)\}}\,dv\\
            &=\frac{1}{2}\int_{\mathbb{R}^6\times \mathbb{S}^2} B(v-v_*,\sigma)\left(ff_*(1-f')(1-f'_*) - gg_*(1-g')(1-g'_*)\right)\\
            &\qquad \times (\phi'\mathbf{1}_{\{f'>g'\}} + \phi'_*\mathbf{1}_{\{f'_*>g'_*\}} - \phi\mathbf{1}_{\{f>g\}} - \phi_*\mathbf{1}_{\{f_*>g_*\}})\,d\sigma dv dv_*\\
            &\leq \frac{1}{2}\int_{\mathbb{R}^6\times \mathbb{S}^2} B(v-v_*,\sigma)\left((f\phi)|f_*-g_*| + (f\phi)_*|f-g| + ff_*(|f'-g'|\phi'_* + |f'_*-g'_*|\phi')\right)\,d\sigma dv dv_*.
        \end{split}
    \end{align}
    The first two terms are bounded by
    \begin{align*}
        &\frac{1}{2}\int_{\mathbb{R}^6\times \mathbb{S}^2} B(v-v_*,\sigma)\left(f\phi|f_*-g_*| + (f\phi)_*|f-g|\right)\,d\sigma dv dv_*\\
        &\leq \frac{C_b}{2}\int_{\mathbb{R}^6}((1+|v|^2)^{\frac{\gamma}{2}} + (1+|v_*|^2)^{\frac{\gamma}{2}}\left(f\phi|f_*-g_*| + (f\phi)_*|f-g|\right) dv dv_*\\
        &\leq \frac{C_b}{2}(\|f\|_{1,k + \gamma}\|f-g\|_{1,0} + \|f\|_{1,k}\|f-g\|_{1,\gamma}).
    \end{align*}
    Using Lemma \ref{lem:f'f'_*_k}, the last term is bounded by
    \begin{align*}
        &\frac{1}{2}\int_{\mathbb{R}^6\times \mathbb{S}^2} B(v-v_*,\sigma)ff_*(|f'-g'|\phi'_* + |f'_*-g'_*|\phi')\,d\sigma dv dv_*\\
        &=\int_{\mathbb{R}^3} |f-g|\int_{\mathbb{R}^3\times \mathbb{S}^2}B(v-v_*,\sigma)f'f'_*\phi_*\,d\sigma dv_* dv\\
        &\leq C\int_{\mathbb{R}^3}\left((1+|v|^2)^{\frac{\gamma}{2}} \|f\|_{1,k} + \|f\|_{1,k+\gamma} + \left((1+|v|^2)^{\frac{k+\gamma}{2}} \|f\|_{1,0} + \|f\|_{1, k+\gamma}\right)^{\frac{3-k}{3}}\|f\|_{1, k+\gamma}^{\frac{k}{3}}\right)|f-g|\,dv\\
        &\leq C\int_{\mathbb{R}^3}\left((1+|v|^2)^{\frac{\gamma}{2}}\|f\|_{1,k} + \|f\|_{1,k+\gamma} + \left((1+|v|^2)^{\frac{k+\gamma}{2}} \|f\|_{1,0} + \|f\|_{1, k+\gamma}\right)^{\frac{3-k}{3}}\|f\|_{1, k+\gamma}^{\frac{k}{3}}\right)|f-g|\,dv\\
        &\leq C\int_{\mathbb{R}^3}\left((1+|v|^2)^{\frac{\gamma}{2}}\|f\|_{1,k} + \|f\|_{1,k+\gamma} + (1+|v|^2)^{\frac{(k+\gamma)(3-k)}{6}} \|f\|^{\frac{3-k}{3}}_{1,0}\|f\|^{\frac{k}{3}}_{1, k+\gamma} + \|f\|_{1, k+\gamma}\right)|f-g|\,dv\\
        &\leq C\left(\|f\|_{1,k}\|f-g\|_{1,\gamma} + \|f\|_{1,k+\gamma}\|f-g\|_{1,0}  + \|f\|^{\frac{3-k}{3}}_{1,0}\|f\|^{\frac{k}{3}}_{1, k+\gamma}\|f-g\|_{1,\frac{(k+\gamma)(3-k)}{3}}\right).
    \end{align*}
    Combining the two bounds, we get the lemma when $g\in L^1_s$ for all $s\geq 2$.
    
    Now, we just assume $g\in L^1_2$. All we need to check is whether the symmetrization can be applied in \eqref{eq:Q_FD_diff_int_2}; it is enough to show that
    \begin{align}\label{eq:Q_FD_g_sym}
        \begin{split}
            &\int_{\mathbb{R}^3}\phi(v)Q_{FD}(g,g)(v)\mathbf{1}_{\{f(v)\geq g(v)\}}\,dv \\
            &= \frac{1}{2}\int_{\mathbb{R}^6\times\mathbb{S}^2}gg_*(1-g')(1-g'_*)\left(\phi'\mathbf{1}_{\{f'>g'\}} + \phi'_*\mathbf{1}_{\{f'_*>g'_*\}} - \phi\mathbf{1}_{\{f>g\}} - \phi_*\mathbf{1}_{\{f_*>g_*\}}\right)\,dvdv_*d\sigma.
        \end{split}
    \end{align}
    
    If we first consider the loss part of the $Q_{FD}(g,g)$, then we have
    \begin{align*}
        &\int_{\mathbb{R}^6\times\mathbb{S}^2} (1+|v|^2)B(v-v_*,\sigma)gg_*(1-g')(1-g'_*)\mathbf{1}_{\{f(v)\geq g(v)\}}\,dvdv_*d\sigma\\
        &\int_{\mathbb{R}^6\times\mathbb{S}^2} (1+|v|^2)B(v-v_*,\sigma)g(v)g(v_*)\mathbf{1}_{\{f(v)\geq g(v)\}}\,dvdv_*d\sigma\\
        &\leq C_b\int_{\mathbb{R}^6} ((1+|v|^2)^{\gamma/2} + (1+|v_*|^2)^{\gamma/2})(1+|v|^2)f(v)g(v_*)\,dvdv_*\\
        &\leq C_b(\|f\|_{1,2+\gamma}\|g\|_{1,0} + \|f\|_{1,2}\|g\|_{1,\gamma}).
    \end{align*}
    Therefore, it is integrable, and we can safely decompose the integrand by
    \begin{align*}
        \int_{\mathbb{R}^3}\phi(v)Q_{FD}(g,g)(v)\mathbf{1}_{\{f(v)\geq g(v)\}}\,dv &= \int_{\mathbb{R}^6\times\mathbb{S}^2}\phi(v)B(v-v_*,\sigma)g'g'_*(1-g)(1-g_*)\mathbf{1}_{\{f(v)\geq g(v)\}}\,dvdv_*d\sigma\\
        &\quad -\int_{\mathbb{R}^6\times\mathbb{S}^2}\phi(v)B(v-v_*,\sigma)gg_*(1-g')(1-g'_*)\mathbf{1}_{\{f(v)\geq g(v)\}}\,dvdv_*d\sigma.
    \end{align*}
    Now, we apply Tonelli's theorem and change of variable to each integral to get \eqref{eq:Q_FD_g_sym}; it is again safe since the integrand is composed of non-negative functions.
    
    Following the remaining steps, we finally get \eqref{eq:Q_FD_diff_int_2}.
\end{proof}

In the proof, we can not directly use the symmetrization to $(1+|v|^2)Q_{FD}(g,g)$ as it is not generally integrable when $g$ is just in $L^1_2$. To overcome this problem, we proved that the loss part of $Q_{FD}(g,g)$ is integrable and used the symmetrization for positive functions.

The next lemma states and proves the $L^1_2$ stability result for the solution of the Boltzmann-Fermi-Dirac equation.
\begin{proposition}\label{prop:stability}
    Let $f(t,v)$ and $g(t,v)$ be solutions of the Boltzmann-Fermi-Dirac equation. Also, assume $f$ satisfies \eqref{eq:poly_cre} for all $s>2$. Then, if $\gamma>0$,
    \begin{align*}
        \|f(t,v)-g(t,v)\|_{1,2}\leq C_1\Phi(\|f_0 - g_0\|_{1,2})\exp\left(C_2(t + t^{1/3})\right),
    \end{align*}
    where the function $\Phi$ is a function defined by
    \begin{align*}
        \Phi(r) \coloneqq r + \left(1+\left(\frac{3}{2}\|f_0\|^2_{1,2} + \|g_0\|^2_{1,2}\right)\right)r^{1/3} + \|f_0\mathbf{1}_{\{|v|\geq r^{-1/3}\}}\|_{1,2} + r|\ln r|,
    \end{align*}
    and $C_1$ and $C_2$ are constants depending on $\gamma$, $C_b$, $C_{b,2}$, $\varphi(\epsilon)$, $\|f_0\|_{1,0}$, and $\|f_0\|_{1,2}$. If $r=0$, we set $\Phi(0) = 0$.
    
    If $\gamma = 0$, then
    \begin{align*}
        \|f(t,v)-g(t,v)\|_{1,2}\leq C_3\|f_0 - g_0\|_{1,2}\exp\left(C_4t\right)
    \end{align*}
    for some constants $C_3$ and $C_4$ depending on $C_b$ and $\|f\|_{1,2}$.
\end{proposition}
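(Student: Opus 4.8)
The plan is to reduce the bound to a Gronwall estimate for the positive part $P(t):=\int_{\mathbb{R}^3}(1+|v|^2)(f(t,v)-g(t,v))^{+}\,dv$, treated delicately near $t=0$ because the coefficients produced by the moment‑creation bound \eqref{eq:poly_cre} are only borderline integrable. Since $f$ and $g$ are conservative, $\int(1+|v|^2)(f(t)-g(t))\,dv=\int(1+|v|^2)(f_0-g_0)\,dv$, so from $|f-g|=2(f-g)^{+}-(f-g)$ we get $\|f(t)-g(t)\|_{1,2}\le 2P(t)+\|f_0-g_0\|_{1,2}$ and it suffices to estimate $P$. Applying Lemma \ref{lem:fg+} with $a=0$, multiplying by $(1+|v|^2)$, integrating in $v$, and bounding the time‑integrand by Lemma \ref{lem:Q_FD_diff_int} with $k=2$, then using $\gamma\le 2$ and $(2+\gamma)/3\le 2$ to dominate $\|f-g\|_{1,\gamma}$ and $\|f-g\|_{1,(2+\gamma)/3}$ by $\|f-g\|_{1,2}$ together with the conservation bounds $\|f\|_{1,0}(\tau),\|f\|_{1,2}(\tau)\le\|f_0\|_{1,2}$, I obtain
\begin{align*}
    P(t)\le P(0)+C\int_0^t\Big(\big(1+\|f\|_{1,2+\gamma}(\tau)^{2/3}\big)\|f(\tau)-g(\tau)\|_{1,2}+\|f\|_{1,2+\gamma}(\tau)\,\|f(\tau)-g(\tau)\|_{1,0}\Big)\,d\tau,
\end{align*}
with $C=C(\gamma,C_b,\|f_0\|_{1,2})$. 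By \eqref{eq:poly_cre} with $s=2+\gamma$ one has $\|f\|_{1,2+\gamma}(\tau)\le C\max\{\tau^{-1},1\}$; the factor $\tau^{-2/3}$ is integrable and will eventually produce the $t^{1/3}$ in the exponent, but the term $\|f\|_{1,2+\gamma}(\tau)\|f-g\|_{1,0}(\tau)\sim\tau^{-1}\|f-g\|_{1,0}(\tau)$ is the genuine obstruction, since $\|f-g\|_{1,0}$ does not vanish as $\tau\to 0$.

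To handle this I keep $\|f-g\|_{1,0}$ as a separate unknown rather than bounding it by $\|f-g\|_{1,2}$. Using $\int_{\mathbb{R}^3}|Q_{FD}(f,f)|\,dv\le C\|f_0\|_{1,2}^2$ (which follows from the $Q_1$‑splitting, Lemma \ref{lem:third_term_bound}, and the before–after symmetry \eqref{eq:bef_aft}) and the $k=0$ case of Lemma \ref{lem:Q_FD_diff_int}, one gets $\|f(\tau)-g(\tau)\|_{1,0}\le C\|f_0-g_0\|_{1,0}+C(\|f_0\|_{1,2}^2+\|g_0\|_{1,2}^2)\tau$, hence $\int_{t_0}^{t}\|f\|_{1,2+\gamma}(\tau)\|f-g\|_{1,0}(\tau)\,d\tau\le C\|f_0-g_0\|_{1,0}\ln(t/t_0)+C(t-t_0)$ for any $t_0>0$. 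I then split $[0,t]$ at $t_0$ chosen as a fixed power of $r:=\|f_0-g_0\|_{1,2}$ (so $|\ln t_0|\sim|\ln r|$, which produces the $r|\ln r|$ summand of $\Phi$). On $[0,t_0]$ I estimate $P(t_0)$ directly: splitting velocities at $|v|\le M$ versus $|v|>M$ with $M\sim r^{-1/3}$, the low part is $\le(1+M^2)\|(f-g)^{+}(t_0)\|_{1,0}\le(1+M^2)\big(\|f_0-g_0\|_{1,0}+C(\|f_0\|_{1,2}^2+\|g_0\|_{1,2}^2)t_0\big)$, which with $t_0\sim r$ gives the $r^{1/3}$‑terms of $\Phi$; while, using $(f-g)^{+}\le f$, the high part obeys $\int_{|v|>M}(1+|v|^2)(f-g)^{+}(t_0)\,dv\le\int_{|v|>M}(1+|v|^2)f(t_0,v)\,dv\le C\big(\|f_0\mathbf{1}_{\{|v|\ge r^{-1/3}\}}\|_{1,2}+t_0\big)$ by the short‑time non‑concentration estimate for $f$ coming from the mild formulation and \eqref{eq:poly_cre} (cf. \cite{Lu20031}), producing the tail summand of $\Phi$; thus $P(t_0)\le C\Phi(r)$. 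On $[t_0,t]$ the remaining coefficient $1+\|f\|_{1,2+\gamma}^{2/3}\le C(1+\tau^{-2/3})$ is integrable, and Gronwall's lemma with inhomogeneous term $C\Phi(r)+C\|f_0-g_0\|_{1,0}\ln(t/t_0)\le C'\Phi(r)(1+\ln t)$ yields $P(t)\le C\Phi(r)\exp\big(C\int_{t_0}^{t}(1+\tau^{-2/3})\,d\tau\big)\le C\Phi(r)\exp\big(C_2(t+t^{1/3})\big)$, hence the claimed bound.

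When $\gamma=0$ the kernel carries no velocity growth, so $\|f\|_{1,2+\gamma}(\tau)=\|f\|_{1,2}(\tau)=\|f_0\|_{1,2}$ is bounded, the coefficient in the differential inequality is a fixed constant, and a single application of Gronwall gives $\|f(t)-g(t)\|_{1,2}\le 3\|f_0-g_0\|_{1,2}\,e^{C_4 t}$ with $C_4=C_4(C_b,\|f_0\|_{1,2})$; no time‑splitting or truncation is needed. The main obstacle is precisely the borderline logarithmic divergence of $\int_0^{t}\|f\|_{1,2+\gamma}(\tau)\|f-g\|_{1,0}(\tau)\,d\tau$ near $\tau=0$: it forces both the initial time‑cut at $t_0=t_0(r)$ and the quantitatively weaker modulus $\Phi$ in place of a Lipschitz‑in‑data estimate, and the short‑time non‑concentration control of $\int_{|v|>M}(1+|v|^2)f(t_0,v)\,dv$ is the technical heart of the $[0,t_0]$ analysis.
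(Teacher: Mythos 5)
Your overall strategy mirrors the paper's: reduce to the positive part via conservation, split time at a scale $t_0$ depending on $r=\|f_0-g_0\|_{1,2}$, split velocity at $M\sim r^{-1/3}$ on $[0,t_0]$, and then run Gr\"onwall on $[t_0,t]$ with the moment--creation coefficient $\|f\|_{1,2+\gamma}(\tau)\lesssim\max\{\tau^{-1},1\}$ supplying the $t^{1/3}$ in the exponent. (Two minor slips on $[0,t_0]$: the tail bound should read $\|f_0\mathbf 1_{\{|v|\ge M\}}\|_{1,2}+CM^2t_0$, not $+t_0$; with $M\sim r^{-1/3}$, $t_0\sim r$ this is $O(r^{1/3})$ and is absorbed into $\Phi$.)

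The real gap is in the treatment of the borderline term $\int_{t_0}^t\|f\|_{1,2+\gamma}(\tau)\,\|f-g\|_{1,0}(\tau)\,d\tau$. You control $\|f-g\|_{1,0}(\tau)$ by the crude a priori bound $\lesssim \|f_0-g_0\|_{1,0}+C(\|f_0\|_{1,2}^2+\|g_0\|_{1,2}^2)\tau$, which indeed gives (for $t\le 1$) $C\|f_0-g_0\|_{1,0}\ln(t/t_0)+C(t-t_0)$ and, for $t>1$ where $\|f\|_{1,2+\gamma}(\tau)\approx 1$, even a term of size $C t^2$. The piece $C(t-t_0)$ does \emph{not} vanish as $r\to 0$, so it cannot be absorbed into an inhomogeneity of the form $C'\Phi(r)(1+\ln t)$; yet at your Gr\"onwall step this term silently disappears. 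As written, the argument would only prove $\|f-g\|_{1,2}(t)\le (C\Phi(r)+C t)e^{C(t+t^{1/3})}$, which is not a stability estimate (the right side does not go to $0$ with $r$). The cure is to \emph{not} substitute the crude $\tau$-growth bound, but instead keep the self-consistent inequality produced by the $k=0$ case of Lemma \ref{lem:Q_FD_diff_int}, namely $\|f-g\|_{1,0}(\tau)\le 3r+C\int_0^\tau\|f-g\|_{1,2}(s)\,ds$, insert it into $\int_{t_0}^t\tau^{-1}\|f-g\|_{1,0}(\tau)\,d\tau$, and apply Fubini to convert the resulting double integral into $\int_0^t|\ln s|\,\|f-g\|_{1,2}(s)\,ds$, which is Gr\"onwall-compatible; the lone inhomogeneous contribution is then $3r\ln(t/t_0)\lesssim r|\ln r|$, which is precisely the $r|\ln r|$ summand of $\Phi$. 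This is exactly how the paper closes the argument.
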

\begin{proof}
    In the proof, $C$ will denote appropriate constants depending on each line. Also, we assume $\gamma>0$; the $\gamma=0$ case is the easy case, and we will briefly prove it at the end of the proof.
    
    Let $\|f_0-g_0\|_{1,2} = r$. To prove the lemma, it is enough to assume $r\leq 1$. For $t\geq 0$ and $0\leq k\leq 2$,
    \begin{align}\label{eq:f-g_1,k}
        \begin{split}
            \|f(t,v)-g(t,v)\|_{1,k}&= \|(f(t,v)-g(t,v))^+\|_{1,k} + \|(g(t,v)-f(t,v))^+\|_{1,k} \\
            &= \|g(t,v)\|_{1,k} - \|f(t,v)\|_{1,k} + 2\|(f(t,v)-g(t,v))^+\|_{1,k}\\
            &\leq r + 2\|(f(t,v)-g(t,v))^+\|_{1,k}.
        \end{split}
    \end{align}
    For any $R\geq 1$, we have
    \begin{align}\label{eq:7_6_0}
        \begin{split}
            2\|(f(t,v)-g(t,v))^+\|_{1,2}&\leq 4R^2\|(f(t,v)-g(t,v))^+\|_{1,0} + 2\int_{|v|\geq R}(1+|v|^2)(f(t,v)-g(t,v))\,dv\\
            &\leq 4R^2\|f(t,v)-g(t,v)\|_{1,0} + 2\int_{|v|\geq R}(1+|v|^2)f(t,v)\,dv.
        \end{split}
    \end{align}
    
    First, we choose $t\in [0,r]$. We can bound each term as follows:
    \begin{align}\label{eq:7_6_1}
        \begin{split}
            \|f(t,v)-g(t,v)\|_{1,0} &\leq \|f_0-g_0\|_{1,0} + \int_0^t \|Q_{FD}(f,f) - Q_{FD}(g,g)\|_{1,0}(\tau)\,d\tau\\
            &\leq \|f_0-g_0\|_{1,0} + 2^{\gamma+1}C_b\int_0^t (\|f\|^2_{1,2} + \|g\|^2_{1,2})(\tau)\,d\tau\\
            &\leq r + 2^{\gamma+1}C_b(\|f_0\|^2_{1,2} + \|g_0\|^2_{1,2})r,
        \end{split}
    \end{align}
    and
    \begin{align}\label{eq:7_6_2}
        \begin{split}
            \int_{|v|\geq R}(1+|v|^2)f(t,v)\,dv &=\int_{\mathbb{R}^3}(1+|v|^2)f(t,v)\,dv -\int_{|v|< R}(1+|v|^2)f(t,v)\,dv\\
            &=\|f_0\|_{1,2} - \int_{|v|< R}(1+|v|^2)\left(f_0 + \int_0^t Q_{FD}(f,f)(\tau)\,d\tau\right)\,dv\\
            &\leq\|f_0\mathbf{1}_{\{|v|\geq R\}}\|_{1,2} + \int_{|v|< R}\int_0^t  (1+|v|^2)Q^-_{FD}(f,f)(\tau,v)\,d\tau dv\\
            &\leq\|f_0\mathbf{1}_{\{|v|\geq R\}}\|_{1,2} + \int_{|v|< R}\int_0^t  (1+|v|^2)Q_c^-(f,f)(\tau,v)\,d\tau dv\\
            &\leq \|f_0\mathbf{1}_{\{|v|\geq R\}}\|_{1,2} + 2R^2\int_0^r \int_{\mathbb{R}^3} Q_c^-(f,f)(\tau,v)\,dv d\tau\\
            &\leq \|f_0\mathbf{1}_{\{|v|\geq R\}}\|_{1,2} + 2R^2 C_b \|f_0\|_{1,2}^2 r.
        \end{split}
    \end{align}
    
    Applying \eqref{eq:7_6_1} and \eqref{eq:7_6_2} to \eqref{eq:7_6_0} and \eqref{eq:f-g_1,k}, for $t\in [0,r]$, we get
    \begin{align*}
        \|f-g\|_{1,2}(t)&\leq r + 4R^2\left(1+C_b(\|f_0\|^2_{1,2} + \|g_0\|^2_{1,2})\right)r + \|f_0\mathbf{1}_{\{|v|\geq R\}}\|_{1,2} + 2R^2 C_b \|f_0\|_{1,2}^2 r\\
        &\leq r + 4R^2\left(1+C_b\left(\frac{3}{2}\|f_0\|^2_{1,2} + \|g_0\|^2_{1,2}\right)\right)r + \|f_0\mathbf{1}_{\{|v|\geq R\}}\|_{1,2}.
    \end{align*}
    If we choose $R = r^{-1/3}$, then
    \begin{align*}
        \|f-g\|_{1,2}(t)\leq r + 4\left(1+C_b\left(\frac{3}{2}\|f_0\|^2_{1,2} + \|g_0\|^2_{1,2}\right)\right)r^{1/3} + \|f_0\mathbf{1}_{\{|v|\geq r^{-1/3}\}}\|_{1,2}
    \end{align*}
    for $0\leq t\leq r$. We define the right-hand side $U(r)$ with $U(0) = 0$.
    
    Now, we move to $t\in [r, 1]$. By Lemma \ref{lem:fg+}, we start from
    \begin{align}\label{eq:temp1}
        (f(t,v) - g(t,v))^+ = (f(r,v) - g(r,v))^+ + \int_r^t (Q_{FD}(f, f) - Q_{FD}(g, g))(\tau, v)\mathbf{1}_{\{f(\tau, v)\geq g(\tau, v)\}}\,d\tau.
    \end{align}
    
    For $\|(f-g)\|_{1,0}(t)$, using \eqref{eq:f-g_1,k}, Lemma \ref{lem:fg+}, and Lemma \ref{lem:Q_FD_diff_int}, we have
    \begin{align*}
        \|f(t,v)-g(t,v)\|_{1,0} &= \|g(t,v)\|_{1,0} - \|f(t,v)\|_{1,0} + 2\|(f(t,v)-g(t,v))^+\|_{1,0}\\
        &\leq r + 2\left(\|(f_0-g_0)^+\|_{1,0} + \int_0^t \int_{\mathbb{R}^3}(Q_{FD}(f, f) - Q_{FD}(g, g))(\tau, v)\mathbf{1}_{\{f(\tau, v)\geq g(\tau, v)\}}\,dv d\tau\right)\\
        &\leq 3r + 2C\int_0^t\left(\|f\|_{1,0}\|f-g\|_{1,\gamma} + \|f\|_{1,\gamma}\|f-g\|_{1,0}  + \|f\|_{1,0}\|f-g\|_{1,\gamma}\right)(\tau)\,d\tau.
    \end{align*}
    Since $\gamma\leq 2$, we can absorb $\|f\|_{1,0}$ and $\|f\|_{1,\gamma}$ into the constant $C$ and bound $\|f-g\|_{1,\gamma}\leq \|f-g\|_{1,2}$. Therefore, we have
    \begin{align}\label{eq:7_6_4}
        \|f-g\|_{1,0}(t)\leq 3r + C\int_0^t \|f-g\|_{1,2}(\tau)\,d\tau.
    \end{align}
    
    $\|(f-g)\|_{1,2}(t)$ can be computed similarly:
    \begin{align}\label{eq:7_6_5}
        \begin{split}
            &\|f(t,v)-g(t,v)\|_{1,2}\\
            &=\|g(t,v)\|_{1,2} - \|f(t,v)\|_{1,2} + 2\|(f(t,v)-g(t,v))^+\|_{1,2}\\
            &\leq r + 2\left(\|(f(r,v) - g(r,v))^+\|_{1,2} + \int_r^t \int_{\mathbb{R}^3} (1+|v|^2)(Q_{FD}((f, f) - Q_{FD}(g, g))(\tau, v))\mathbf{1}_{\{f(\tau, v)\geq g(\tau, v)\}}\,dv d\tau\right)\\
            &\leq r + 2\left(U(r) + \int_r^t \int_{\mathbb{R}^3} (1+|v|^2)(Q_{FD}((f, f) - Q_{FD}(g, g))(\tau, v))\mathbf{1}_{\{f(\tau, v)\geq g(\tau, v)\}}\,dv d\tau\right).
        \end{split}
    \end{align}
    
    We use Lemma \ref{lem:Q_FD_diff_int} to bound the final integral by
    \begin{align}\label{eq:7_6_3}
        \begin{split}
            &\int_{\mathbb{R}^3} (1+|v|^2)(Q_{FD}((f, f) - Q_{FD}(g, g))(\tau, v))\mathbf{1}_{\{f(\tau, v)\geq g(\tau, v)\}}\,dv\\
            &\leq C\left(\|f\|_{1,2}\|f-g\|_{1,\gamma} + \|f\|_{1,2+\gamma}\|f-g\|_{1,0}  + \|f\|^{\frac{1}{3}}_{1,0}\|f\|^{\frac{2}{3}}_{1, 2+\gamma}\|f-g\|_{1,\frac{2+\gamma}{3}}\right)(\tau).
        \end{split}
    \end{align}
    Combining \eqref{eq:7_6_5} and \eqref{eq:7_6_3}, we write
    \begin{align}\label{eq:f-g_1,2_mid}
        \begin{split}
            &\|f-g\|_{1,2}(t)\\
            &\leq r +2U(r) + 2\int_r^t C\left(\|f\|_{1,2}\|f-g\|_{1,\gamma} + \|f\|_{1,2+\gamma}\|f-g\|_{1,0}  + \|f\|^{\frac{1}{3}}_{1,0}\|f\|^{\frac{2}{3}}_{1, 2+\gamma}\|f-g\|_{1,\frac{2+\gamma}{3}}\right)(\tau) d\tau.
        \end{split}
    \end{align}
    Since $\frac{2+\gamma}{3}\leq 2$ for any $0\leq \gamma\leq 2$, we get
    \begin{align*}
        \|f\|_{1,2}\|f-g\|_{1,\gamma} + \|f\|^{\frac{1}{3}}_{1,0}\|f\|^{\frac{2}{3}}_{1, 2+\gamma}\|f-g\|_{1,\frac{2+\gamma}{3}}\leq (\|f\|_{1,2} + \|f\|^{\frac{1}{3}}_{1,0}\|f\|^{\frac{2}{3}}_{1, 2+\gamma})\|f-g\|_{1,2}
    \end{align*}
    in \eqref{eq:f-g_1,2_mid}.
    
    By absorbing $\|f\|_{1,0}$ and $\|f\|_{1,2}$ into constant $C$ and using \eqref{eq:7_6_4}, we obtain
    \begin{align*}
        &\|f-g\|_{1,2}(t)\\
        &\leq r + 2U(r) + C\int_r^t \left(\left(1+\|f\|^{\frac{2}{3}}_{1, 2+\gamma}(\tau)\right)\|f-g\|_{1,2}(\tau) + \|f\|_{1,2+\gamma}(\tau)\left(r + \int_0^\tau \|f-g\|_{1,2}(s)\,ds\right)\right)\,d\tau.
    \end{align*}
    
    Now, we use the \textit{a priori} estimate \eqref{eq:poly_cre} to get
    \begin{align*}
        &\|f-g\|_{1,2}\\
        &\leq r + 2U(r) + C\int_r^t \left(\left(1+\frac{C_{2+\gamma, 1}}{\tau^{2/3}}\right)\|f-g\|_{1,2}(\tau) + \frac{C_{2+\gamma, 1}}{\tau}\left(r + \int_0^\tau \|f-g\|_{1,2}(s)\,ds\right)\right)\,d\tau\\
        &\leq r + 2U(r) + Cr|\ln r| + C\int_r^t \left(\left(1+\frac{1}{\tau^{2/3}}\right)\|f-g\|_{1,2}(\tau) + \frac{1}{\tau}\int_0^\tau \|f-g\|_{1,2}(s)\,ds\right)\,d\tau
    \end{align*}
    for some constants $C$. For the inner integral, we use Fubini's theorem to obtain
    \begin{align*}
        \int_r^t \frac{1}{\tau}\int_0^\tau \|f-g\|_{1,2}(s)\,ds \,d\tau &=\int_0^r \|f-g\|_{1,2}(s)\int_r^t \frac{1}{\tau}\,d\tau \,ds + \int_r^t \|f-g\|_{1,2}(s)\int_s^t \frac{1}{\tau}\,d\tau \,ds\\
        &\leq \int_0^t \|f-g\|_{1,2}(s)\int_s^t \frac{1}{\tau}\,d\tau \,ds\\
        &\leq\int_0^t |\ln s|\|f-g\|_{1,2}(s)\,ds.
    \end{align*}
    Finally, we get
    \begin{align*}
        \|f-g\|_{1,2}(t)\leq r + 2U(r) + Cr|\ln r| + C\int_0^t \left(1+\frac{1}{\tau^{2/3}} + |\ln \tau|\right)\|f-g\|_{1,2}(\tau)\,d\tau.
    \end{align*}
    At the end, we use the Gr{\"o}nwall inequality and obtain
    \begin{align*}
        \|f-g\|_{1,2}(t)&\leq \left(r + 2U(r) + Cr|\ln r|\right)\exp\left(C\int_0^t 1+\frac{1}{\tau^{2/3}} + |\ln \tau|\,d\tau\right)\\
        &\leq \left(r + 2U(r) + Cr|\ln r|\right)\exp\left(C(t + t^{1/3})\right)
    \end{align*}
    for $t\in [r,1]$ for some constants $C$.
    
    If $t>1$, then we choose the start time at the time $1$ in \eqref{eq:7_6_5} and write
    \begin{align*}
        &\|f(t,v)-g(t,v)\|_{1,2}\\
        &=\|g(t,v)\|_{1,2} - \|f(t,v)\|_{1,2} + 2\|(f(t,v)-g(t,v))^+\|_{1,2}\\
        &\leq r + 2\left(\|(f(1) - g(1))^+\|_{1,2} + \int_1^t \int_{\mathbb{R}^3} (1+|v|^2)(Q_{FD}((f, f) - Q_{FD}(g, g))(\tau, v))\mathbf{1}_{\{f(\tau, v)\geq g(\tau, v)\}}\,dv d\tau\right)\\
        &\leq r + 2C\left(r + 2r|\ln r| + U(r)\right) + 2\int_1^t \int_{\mathbb{R}^3} (1+|v|^2)(Q_{FD}((f, f) - Q_{FD}(g, g))(\tau, v))\mathbf{1}_{\{f(\tau, v)\geq g(\tau, v)\}}\,dv d\tau.
    \end{align*}
    We can bound the final integral exactly with the same method; the only difference is that we change the \textit{a priori} estimate to $\|f\|_{1,s}\leq C_{s,1}$ for $s>2$ in \eqref{eq:poly_cre} since $t\geq 1$. Absorbing $\|f\|_{1,0}$, $\|f\|_{1,2}$ and $C_{2+\gamma, 1}$ to constant $C$, we have
    \begin{align*}
        &\|f-g\|_{1,2}(t)\\
        &\leq 3C\left(r + 2r|\ln r| + U(r)\right) \\
        &\quad +C\int_1^t \left(\|f\|_{1,2}\|f-g\|_{1,\gamma} + \|f\|_{1,2+\gamma}\|f-g\|_{1,0}  + \|f\|^{\frac{1}{3}}_{1,0}\|f\|^{\frac{2}{3}}_{1, 2+\gamma}\|f-g\|_{1,\frac{2+\gamma}{3}}\right)(\tau)\, d\tau\\
        &\leq 3C\left(r + 2r|\ln r| + U(r)\right) + C\int_1^t \left(\|f\|_{1,2}\|f-g\|_{1,2} + C_{2+\gamma, 1}\|f-g\|_{1,2} + \|f\|_{1,0}^{\frac{1}{3}}C^{\frac{2}{3}}_{2+\gamma, 1}\|f-g\|_{1,2}\right)(\tau)\,d\tau\\
        &= 3C\left(r + 2r|\ln r| + U(r)\right) + C\int_1^t \|f-g\|_{1,2}(\tau)\, d\tau,
    \end{align*}
    so
    \begin{align*}
        \|f-g\|_{1,2}(t)\leq 3C\left(r + 2r|\ln r| + U(r)\right)e^{Ct}
    \end{align*}
    for $t\geq 1$ by Gr{\"o}nwall's inequality. It ends the proof for $0<\gamma\leq 2$.
    
    For $\gamma = 0$ case, we use \eqref{eq:f-g_1,k}, \eqref{eq:temp1}, and \eqref{eq:7_6_3} in sequence: for $t\geq 0$,
    \begin{align*}
        &\|f(t,v)-g(t,v)\|_{1,2}\\
        &\leq r + 2\|(f(t,v) - g(t,v))^+\|_{1,2}\\
        &\leq r + 2\left(\|(f(0) - g(0))^+\|_{1,2} + \int_r^t (1+|v|^2)(Q_{FD}(f, f) - Q_{FD}(g, g))(\tau, v)\mathbf{1}_{\{f(\tau, v)\geq g(\tau, v)\}}\,d\tau\right)\\
        &\leq 3r + 2C\int_0^t \left(\|f\|_{1,2}\|f-g\|_{1,0} + (\|f\|_{1,2}\|f-g\|_{1,0}  + \|f\|^{\frac{1}{3}}_{1,0}\|f\|^{\frac{2}{3}}_{1, 2}\|f-g\|_{1,\frac{2}{3}}\right)(\tau)\,d\tau\\
        &\leq 3r + 2C\int_0^t \left(2\|f\|_{1,2} + \|f\|^{\frac{1}{3}}_{1,0}\|f\|^{\frac{2}{3}}_{1, 2}\right)\|f-g\|_{1,2}(\tau)\,d\tau
    \end{align*}
    for some constant $C$ depending on $\gamma$ and $C_b$. Directly applying Gr{\"o}nwall's inequality, we get the lemma for $\gamma = 0$.
\end{proof}

The next lemma is a technical lemma to verify the condition of Lemma \ref{lem:d/dt_m_sp}. It will be used in the proof of the existence and uniqueness of the solution in the Boltzmann-Fermi-Dirac equation.
\begin{lemma}\label{lem:m_s_diff}
    Assume the collision kernel satisfies (H1) and $0\leq \gamma\leq 2$. Let $f$ be a solution of the Boltzmann-Fermi-Dirac equation. If $f\in L^\infty_{loc}([0,\infty), L^1_s)$ for all $s\geq 2$, then in fact $f\in C([0,\infty), L^1_s)$ and $m_s(t)\in C^1([0,\infty))$ for all $s\geq 2$.
\end{lemma}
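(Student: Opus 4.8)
The plan is to prove the statement in three steps: (i) bootstrap continuity from $L^1_2$ to every $L^1_s$ by interpolation; (ii) show that $\tau\mapsto \int_{\mathbb{R}^3}Q_{FD}(f,f)(\tau,v)\,|v|^s\,dv$ is continuous and locally bounded; and (iii) differentiate the mild equation tested against $|v|^s$. For (i), fix $s>2$ and $T>0$, pick $s_1>s$, and set $\theta=\frac{s_1-s}{s_1-2}\in(0,1)$ so that $s=2\theta+s_1(1-\theta)$. Splitting the weight as $(1+|v|^2)^{s/2}=\big((1+|v|^2)\big)^{\theta}\big((1+|v|^2)^{s_1/2}\big)^{1-\theta}$ and applying H\"older's inequality gives, for any measurable $h$,
\begin{align*}
  \|h\|_{1,s}\le \|h\|_{1,2}^{\theta}\,\|h\|_{1,s_1}^{1-\theta}.
\end{align*}
Taking $h=f(t)-f(t_0)$ and using $M_T:=\sup_{0\le t\le T}\|f(t)\|_{1,s_1}<\infty$ (the hypothesis), we get $\|f(t)-f(t_0)\|_{1,s}\le(2M_T)^{1-\theta}\|f(t)-f(t_0)\|_{1,2}^{\theta}\to0$ as $t\to t_0$, since $f\in C([0,\infty),L^1_2)$. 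Hence $f\in C([0,\infty),L^1_s)$ for all $s\ge2$.

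For (ii), set $G_s(\tau):=\int_{\mathbb{R}^3}Q_{FD}(f,f)(\tau,v)\,|v|^s\,dv$. Expanding the integrand of $Q_{FD}$, the quartic terms cancel identically, so that for $0\le g\le1$
\begin{align*}
  Q_{FD}(g,g)(v)=\int_{\mathbb{R}^3\times\mathbb{S}^2}B(v-v_*,\sigma)\bigl(g'g'_*-gg_*-g'g'_*(g+g_*)+gg_*(g'+g'_*)\bigr)\,d\sigma\,dv_*,
\end{align*}
a sum of two quadratic and two cubic monomials in $\{g,g_*,g',g'_*\}$. For each monomial I would transfer the weight $|v|^s$ from the post- to the pre-collision variables using the symmetry \eqref{eq:bef_aft} (and \eqref{eq:cancel} where convenient), then use $|v'|^s,|v'_*|^s\le C_s(|v|^s+|v_*|^s)$, $B\le 2C_b(|v|^\gamma+|v_*|^\gamma)$, and the pointwise bound $0\le g\le1$ to discard all but at most two $g$-factors; this is exactly the type of computation in Lemmas \ref{lem:upper_v}, \ref{lem:epsv} and \ref{lem:temp}, now carried out with the weight $|v|^s$, and it bounds $\int_{\mathbb{R}^3}|v|^s|Q_{FD}(g,g)|\,dv$ by a finite sum of products $\bigl(\int g|v|^a\bigr)\bigl(\int g|v|^b\bigr)$ with $a+b\le s+\gamma$. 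Telescoping $Q_{FD}(g,g)-Q_{FD}(\bar g,\bar g)$ monomial by monomial so that exactly one factor carries the difference $g-\bar g$ and every remaining factor is bounded by $1$ (or by $g$, resp.\ $\bar g$), the same manipulation yields
\begin{align*}
  \Bigl|\int_{\mathbb{R}^3}|v|^s\bigl(Q_{FD}(g,g)-Q_{FD}(\bar g,\bar g)\bigr)\,dv\Bigr|\le C\,\|g-\bar g\|_{1,s+\gamma},
\end{align*}
with $C$ depending on $s,\gamma,C_b$ and finitely many moments of $g$ and $\bar g$. Taking $g=f(\tau)$, $\bar g=f(\tau_0)$ and invoking (i): the relevant moments stay locally bounded while $\|f(\tau)-f(\tau_0)\|_{1,s+\gamma}\to0$, so $G_s\in C([0,\infty))$; the first bound with $g=f(\tau)$ gives $\sup_{[0,T]}\int_{\mathbb{R}^3}|v|^s|Q_{FD}(f,f)(\tau,v)|\,dv<\infty$ for every $T$.

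For (iii), property (2) in the definition of a solution gives $f(t,v)=f_0(v)+\int_0^t Q_{FD}(f,f)(\tau,v)\,d\tau$ for a.e.\ $v$. Multiplying by $|v|^s$ and integrating in $v$, the local bound from (ii) permits Fubini--Tonelli on $[0,t]\times\mathbb{R}^3$ (joint measurability of $(\tau,v)\mapsto Q_{FD}(f,f)(\tau,v)$ following from $f\in C([0,\infty),L^1_2)$), yielding
\begin{align*}
  m_s(t)=m_s(0)+\int_0^t G_s(\tau)\,d\tau;
\end{align*}
as $G_s$ is continuous, the fundamental theorem of calculus gives $m_s\in C^1([0,\infty))$ with $m_s'=G_s$, and the identical argument with $|v|^s$ replaced by $(1+|v|^2)^{s/2}$ shows $\|f(\cdot)\|_{1,s}\in C^1$ as well. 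The main obstacle is the increment estimate in (ii): because $Q_{FD}$ carries Pauli factors $1-g$ it is not a bilinear form, so bilinear continuity cannot simply be quoted; the features that rescue the argument are the exact cancellation of the quartic part and the possibility, in each surviving low-degree monomial, of absorbing all but one $g$-factor by $0\le g\le1$, so that only the locally bounded moments of $f$ and a \emph{single} weighted-$L^1$ difference norm enter — the latter vanishing by (i) — with \eqref{eq:bef_aft} doing the bookkeeping that moves the velocity weight onto the pre-collision variables so the bounds close.
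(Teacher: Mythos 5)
Your proof is correct and follows essentially the same three-step bootstrap as the paper: establish $L^1_s$-continuity of $f$, show continuity of $\tau\mapsto\int Q_{FD}(f,f)(\tau,v)|v|^s\,dv$ by telescoping the difference (using $0\le f\le 1$ to absorb the Pauli factors and pre/post-collision symmetry to move the weight onto the pre-collision variables), then conclude by the fundamental theorem of calculus. The only variation is in step (i): you obtain it via the log-convexity interpolation $\|h\|_{1,s}\le\|h\|_{1,2}^{\theta}\|h\|_{1,s_1}^{1-\theta}$ against the given $L^1_2$-continuity and the $L^\infty_{\mathrm{loc}}$ moment bound, whereas the paper reads off Lipschitz $L^1_s$-continuity directly from the mild equation and the same moment bound.
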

\begin{proof}
    We use a bootstrap argument. We first show $f\in C([0,\infty), L^1_s)$. For $0\leq t_2\leq t_1$ and a fixed $s\geq 2$, since $|v'|^2\leq |v|^2+|v_*|^2$,
    \begin{align*}
        &\int_{\mathbb{R}^3} |f(t_1,v) - f(t_2,v)|(1+|v|^2)^{s/2}\,dv\\
        &\leq \int_{t_2}^{t_1} \int_{\mathbb{R}^3}\left|Q_{FD}(f,f)(\tau, v)\right|(1+|v|^2)^{s/2}\,dv\,d\tau\\
        &\leq \int_{t_2}^{t_1} \int_{\mathbb{R}^6\times\mathbb{S}^2}|v-v_*|^\gamma b(\cos\theta)f(\tau,v)f(\tau,v_*)((1+|v'|^2)^{s/2} + (1+|v|^2)^{s/2})\,dvdv_*d\sigma d\tau\\
        &\leq C_b \int_{t_2}^{t_1} \int_{\mathbb{R}^6}f(\tau,v)f(\tau,v_*)|v-v_*|^\gamma((1+|v|^2 + |v_*|^2)^{s/2} + (1+|v|^2)^{s/2})\,dvdv_* d\tau\\
        &\leq 2C_b \int_{t_2}^{t_1} \int_{\mathbb{R}^6}f(\tau,v)f(\tau,v_*)(|v|^\gamma + |v_*|^\gamma)\left(2^{s/2}\left((1+|v|^2)^{s/2} + (1+|v_*|^2)^{s/2}\right) + (1+|v|^2)^{s/2}\right)\,dvdv_* d\tau\\
        &\leq 2C_b (2^{s/2} + 1 + s^{s/2})(t_1-t_2)\esssup_{\tau\in [t_2, t_1]}\left(\|f\|_{1,s+\gamma}\|f\|_{1,0} + \|f\|_{1,s}\|f\|_{1,\gamma}\right)(\tau).
    \end{align*}
    Since $f\in L^\infty_{loc}([0,\infty). L^1_s)$ for all $s\geq 2$, it shows that $f\in C([0,\infty), L^1_s)$. This argument can be applied for all $s\geq 2$, so we get $f\in C([0,\infty), L^1_s)$ for all $s\geq 2$.

    Secondly, we prove $Q_{FD}(f,f)\in C([0,\infty), L^1_s)$. Indeed, we first decompose the difference by
    \begin{align*}
        &\int_{\mathbb{R}^3}\left|Q_{FD}(f,f)(t_1, v) - Q_{FD}(f,f)(t_2, v)\right|(1+|v|^2)^{s/2}\,dv\\
        &\leq \int_{\mathbb{R}^6\times\mathbb{S}^2} B(v-v_*,\sigma)|f(t_1, v)-f(t_2, v)|f(t_1,v_*)|((1+|v'|^2)^{s/2} + (1+|v|^2)^{s/2})\,dvdv_*d\sigma\\
        &\quad + \int_{\mathbb{R}^6\times\mathbb{S}^2} B(v-v_*,\sigma)f(t_2, v)|f(t_1,v_*) - f(t_2,v_*)|((1+|v'|^2)^{s/2} + (1+|v|^2)^{s/2})\,dvdv_*d\sigma\\
        &\quad + \int_{\mathbb{R}^6\times\mathbb{S}^2} B(v-v_*,\sigma)f(t_2, v)f(t_2,v_*)|f(t_1,v') - f(t_2,v')|((1+|v'|^2)^{s/2} + (1+|v|^2)^{s/2})\,dvdv_*d\sigma\\
        &\quad + \int_{\mathbb{R}^6\times\mathbb{S}^2} B(v-v_*,\sigma)f(t_2, v)f(t_2,v_*)|f(t_1,v'_*) - f(t_2,v'_*)|((1+|v'|^2)^{s/2} + (1+|v|^2)^{s/2})\,dvdv_*d\sigma.
    \end{align*}
    The first integral is bounded by
    \begin{align*}
        &\int_{\mathbb{R}^6\times\mathbb{S}^2} B(v-v_*,\sigma)|f(t_1, v)-f(t_2, v)|f(t_1,v_*)((1+|v'|^2)^{s/2} + (1+|v|^2)^{s/2})\,dvdv_*d\sigma\\
        &\leq \int_{\mathbb{R}^6\times\mathbb{S}^2} B(v-v_*,\sigma)|f(t_1, v)-f(t_2, v)|f(t_1,v_*)\\
        &\qquad \times\left(2^{s/2}\left((1+|v|^2)^{s/2} + (1+|v_*|^2)^{s/2}\right) + (1+|v|^2)^{s/2}\right)\,dvdv_*d\sigma\\
        &\leq 2C_b\int_{\mathbb{R}^6} |f(t_1, v)-f(t_2, v)|f(t_1,v_*)(|v|^\gamma + |v_*|^\gamma)\\
        &\qquad \times \left(2^{s/2}\left((1+|v|^2)^{s/2} + (1+|v_*|^2)^{s/2}\right) + (1+|v|^2)^{s/2}\right)\,dvdv_*\\
        &\leq 2C_b\left((2^{s/2}+1)\|f(t_1, v)-f(t_2, v)\|_{1,s+\gamma}\|f(t_1,v)\|_{1,0} + 2^{s/2}\|f(t_1, v)-f(t_2, v)\|_{1,s}\|f(t_1,v)\|_{1,\gamma}\right)\\
        &\quad + 2C_b\left((2^{s/2}+1)\|f(t_1, v)-f(t_2, v)\|_{1,\gamma}\|f(t_1,v)\|_{1,s} + 2^{s/2}\|f(t_1, v)-f(t_2, v)\|_{1,0}\|f(t_1,v)\|_{1,s+\gamma}\right).
    \end{align*}
    The second integral can be bounded similarly. For the third integral, using Lemma \ref{lem:temp}, we get
    \begin{align*}
        &\int_{\mathbb{R}^6\times\mathbb{S}^2} B(v-v_*,\sigma)f(t_2, v)f(t_2,v_*)|f(t_1,v'_*) - f(t_2,v'_*)|((1+|v'|^2)^{s/2} + (1+|v|^2)^{s/2})\,dvdv_*d\sigma\\
        &=\int_{\mathbb{R}^6\times\mathbb{S}^2} B(v-v_*,\sigma)f(t_2, v')f(t_2,v'_*)|f(t_1,v) - f(t_2,v)|((1+|v_*|^2)^{s/2} + (1+|v_*'|^2)^{s/2})\,dvdv_*d\sigma\\
        &\leq \int_{\mathbb{R}^6\times\mathbb{S}^2} B(v-v_*,\sigma)f(t_2, v')f(t_2,v'_*)|f(t_1,v) - f(t_2,v)|\\
        &\qquad \times((1+|v_*|^2)^{s/2} + 2^{s/2}((1+|v|^2)^{s/2} + (1+|v_*|^2)^{s/2}))\,dvdv_*d\sigma\\
        &=(2^{s/2}+1)\int_{\mathbb{R}^3}|f(t_1,v) - f(t_2,v)|\int_{\mathbb{R}^3\times\mathbb{S}^2} B(v-v_*,\sigma)f(t_2, v')f(t_2,v'_*)(1+|v_*|^2)^{s/2}\,dv_*d\sigma dv\\
        &\quad + 2^{s/2}\int_{\mathbb{R}^3}|f(t_1,v) - f(t_2,v)|(1+|v|^2)^{s/2}\int_{\mathbb{R}^3\times\mathbb{S}^2} B(v-v_*,\sigma)f(t_2, v')f(t_2,v'_*)\,dv_*d\sigma dv\\
        &\leq C\left(\|f(t_1,v) - f(t_2,v)\|_{1,\gamma}\|f(t_2, v)\|_{1,s+\gamma} + \|f(t_1,v) - f(t_2,v)\|_{1,s+\gamma}\|f(t_2, v)\|_{1,0}\right.\\
        &\quad + \left.\|f(t_1,v) - f(t_2,v)\|_{1,s+\gamma}(\|f(t_2, v)\|_{1,\gamma} + \|f(t_2, v)\|_{1,0})\right)
    \end{align*}
    for some constant $C$. The fourth integral can be controlled in a similar manner.
    
    Combining these four estimates, we have
    \begin{align*}
        &\int_{\mathbb{R}^3}\left|Q_{FD}(f,f)(t_1, v) - Q_{FD}(f,f)(t_2, v)\right|(1+|v|^2)^{s/2}\,dv\\
        &\leq C\|f(t_1,v) - f(t_2,v)\|_{1,s+\gamma}(\|f(t_2, v)\|_{1,\gamma} + \|f(t_2, v)\|_{1,0})\\
        &\quad + C\|f(t_1,v) - f(t_2,v)\|_{1,s}\|f(t_2, v)\|_{1,\gamma} \\
        &\quad + C\|f(t_1,v) - f(t_2,v)\|_{1,\gamma}(\|f(t_2, v)\|_{1,s}+\|f(t_2, v)\|_{1,s+\gamma})\\
        &\quad + C\|f(t_1,v) - f(t_2,v)\|_{1,0}\|f(t_2, v)\|_{1,s+\gamma}
    \end{align*}
    for some constant $C$. Since $f\in C([0,\infty), L^1_s)$ for all $s\geq 2$, it proves $Q_{FD}(f,f)\in C([0,\infty), L^1_s)$ for all $s\geq 2$.

    Finally, we recall
    \begin{align*}
        m_s(t)&\coloneqq \int_{\mathbb{R}^3} f(t,v)|v|^s\,dv\\
        &=\int_{\mathbb{R}^3} f_0(v)|v|^s\,dv + \int_0^t \int_{\mathbb{R}^3}Q_{FD}(f,f)(\tau, v)|v|^s\,dv d\tau.
    \end{align*}
    Since $Q_{FD}(f,f)\in C([0,\infty), L^1_s)$ and
    \begin{align*}
        &\left|\int_{\mathbb{R}^3}Q_{FD}(f,f)(t_1, v)|v|^s\,dv - \int_{\mathbb{R}^3}Q_{FD}(f,f)(t_2, v)|v|^s\,dv\right|\\
        &\leq \int_{\mathbb{R}^3}\left|Q_{FD}(f,f)(t_1, v) - Q_{FD}(f,f)(t_2, v)\right||v|^s\,dv\\
        &\leq \int_{\mathbb{R}^3}\left|Q_{FD}(f,f)(t_1, v) - Q_{FD}(f,f)(t_2, v)\right|(1+|v|^2)^{s/2}\,dv,
    \end{align*}
    we finally obtain $m_s(t)\in C^1([0,\infty))$ by the Fundamental theorem of calculus.
\end{proof}

From now on, we establish the existence and uniqueness of the solution of the Boltzmann-Fermi-Dirac equation. First, we construct a unique solution under the assumption $f_0\in L^1_s$ for all $s\geq 2$. After proving it, we will mitigate this condition to $f_0\in L^1_2$ in Theorem \ref{thm:exi_uni_thm_2}.
\begin{proposition}\label{prop:exi_uni_prop}
    Assume the collision kernel satisfies (H1) and $0\leq \gamma\leq 2$. If $f_0\in L^1_s$ for all $s\geq 2$ and $0\leq f_0\leq 1$, then there exists a unique solution of the Boltzmann-Fermi-Dirac equation. Furthermore, if $0<\gamma\leq 2$, then it satisfies \eqref{eq:poly_cre} and \eqref{eq:poly_pro}.
\end{proposition}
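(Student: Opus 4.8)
The plan is to construct the solution by a standard approximation-and-compactness scheme, then upgrade regularity and prove uniqueness using the estimates already established in the excerpt. First I would set up a truncated equation: replace the collision kernel $B$ by $B_n \coloneqq \min\{B, n\}$ (equivalently truncate $|v-v_*|^\gamma$ at level $n$, keeping $b$ fixed), so that $B_n$ is bounded. For each fixed $n$, the operator $Q_{FD}^{(n)}$ is Lipschitz on bounded subsets of $L^1_2$ with the $0\le f\le 1$ constraint (since $f\mapsto f(1-f)$ is Lipschitz on $[0,1]$, and the boundedness of $B_n$ controls everything), so the Banach fixed point theorem applied to the mild formulation
\begin{align*}
    f^{(n)}(t,v) = f_0(v) + \int_0^t Q_{FD}^{(n)}(f^{(n)}, f^{(n)})(\tau, v)\,d\tau
\end{align*}
yields a unique global-in-time $f^{(n)}\in C([0,\infty), L^1_2)$; here the usual argument shows $0\le f^{(n)}\le 1$ is propagated (the Duhamel representation \eqref{eq:duha_f}--\eqref{eq:duha_1-f} makes both $f^{(n)}$ and $1-f^{(n)}$ nonnegative, and their sum is $1$), and mass, momentum, and energy are conserved since $\int (1,v,|v|^2)\,Q_{FD}^{(n)} = 0$.

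\textbf{Uniform estimates and passage to the limit.} The key point is that the a priori bounds are \emph{uniform in $n$}. The polynomial $L^1$ moment estimates of Lemma \ref{lem:Quantum_L1_p_bound} and Theorem \ref{lem:poly_L1} are proved purely from the structure of $Q_{FD}$ and the Povzner-type cancellation; the same computation applies verbatim to $Q_{FD}^{(n)}$ with constants independent of $n$ (truncating $|v-v_*|^\gamma$ only decreases the gain and loss contributions in a controlled way, and the negative term $-\frac{C_{b,2}}{8}\|f\|_{1,0}\|f\|_{1,s+\gamma}$ survives with the truncation since on the relevant region $|v-v_*|$ is large). Hence $f^{(n)}$ satisfies \eqref{eq:poly_cre} and \eqref{eq:poly_pro} uniformly. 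Together with the equicontinuity in $t$ supplied by the differential inequality, Dunford--Pettis and the velocity-weight tightness give, along a subsequence, $f^{(n)}\to f$ in $C([0,T], L^1_2)$ for every $T$ (and weakly in $L^1_s$ for all $s$); the uniform moment bounds let one pass to the limit in the weak formulation of the bilinear term $Q_{FD}^{(n)}$, yielding that $f$ solves the mild equation \eqref{eq:BT} and inherits all the listed properties, including conservation and, when $0<\gamma\le 2$, the bounds \eqref{eq:poly_cre}--\eqref{eq:poly_pro}. By Lemma \ref{lem:m_s_diff}, since $f\in L^\infty_{loc}([0,\infty), L^1_s)$ for all $s\ge 2$, in fact $f\in C([0,\infty), L^1_s)$ and $m_s\in C^1([0,\infty))$, so all the a priori differential inequalities used above are rigorously justified a posteriori.

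\textbf{Uniqueness.} For uniqueness, let $f, g$ be two solutions with the same initial data $f_0\in L^1_s$ (all $s$). Since $f_0$ lies in every $L^1_s$, both $f$ and $g$ satisfy \eqref{eq:poly_pro} for all $s$ — for $g$ this requires knowing $g\in L^\infty_{loc}([0,\infty), L^1_s)$, which follows because $g$ is a solution and the propagation estimate is an a priori bound valid for any solution. Now apply Proposition \ref{prop:stability} (the $L^1_2$ stability result): with $f_0 = g_0$ we have $r = \|f_0-g_0\|_{1,2} = 0$, and $\Phi(0) = 0$, so $\|f(t,\cdot)-g(t,\cdot)\|_{1,2} \le C_1\Phi(0)\exp(C_2(t+t^{1/3})) = 0$ for all $t\ge 0$ when $\gamma>0$, and likewise $\|f-g\|_{1,2}\le C_3\cdot 0\cdot e^{C_4 t} = 0$ when $\gamma=0$. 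Hence $f \equiv g$.

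\textbf{Main obstacle.} The delicate step is verifying that the moment and stability estimates truly hold uniformly for the truncated equation — in particular that the coercive negative term in the Povzner inequality is not destroyed by truncating $|v-v_*|^\gamma$, and that Lemma \ref{lem:epsv}, Lemma \ref{lem:Quantum_L1_p_bound}, and Lemma \ref{lem:f'f'_*_k} all carry over with $n$-independent constants; this requires rechecking each place where $|v-v_*|^\gamma$ enters and confirming the truncation only helps (it bounds the gain) or is harmless on the region where the loss term does its work. A secondary technical point is justifying the limit in the quadratic nonlinearity, which is handled by the uniform higher-moment bounds that upgrade weak $L^1$ convergence to convergence of the bilinear forms against the weights $(1+|v|^2)^{s/2}$.
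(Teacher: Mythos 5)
Your overall scheme (truncate the kernel, construct approximate solutions, pass to the limit, use the $L^1_2$ stability for uniqueness) matches the paper's, and the uniqueness step via Proposition \ref{prop:stability} with $\Phi(0)=0$ is exactly right. However, there are two genuine gaps in the middle of your argument.

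First, your claim that the moment creation/propagation estimates of Lemma \ref{lem:Quantum_L1_p_bound} and Theorem \ref{lem:poly_L1} "apply verbatim to $Q_{FD}^{(n)}$ with constants independent of $n$" is the place where the argument breaks. The coercive negative term $-\tfrac{C_{b,2}}{4}\|f\|_{1,0}\|f\|_{1,s+\gamma}$ in Lemma \ref{lem:Lu_Mouhot_L1_p_bound} comes from the loss operator carrying the full weight $|v-v_*|^\gamma$; after truncation to $|v-v_*|^\gamma\wedge n$, the corresponding negative contribution is bounded above by $n^\gamma\|f\|_{1,s}$, \emph{not} $\|f\|_{1,s+\gamma}$. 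Your heuristic that "on the relevant region $|v-v_*|$ is large" points in the wrong direction: that is precisely where the truncation degrades the term, since $|v-v_*|^\gamma\wedge n = n \ll |v-v_*|^\gamma$. The paper does not attempt to prove \eqref{eq:poly_cre}--\eqref{eq:poly_pro} for the truncated flow. Instead it derives a much weaker, but $n$-independent, estimate of the form $\dv{t}m_{n,\gamma p}\le C m_{n,\gamma p}m_{n,\gamma}$ (using only the \emph{upper} Povzner bound of Lemma \ref{lem:bobylev_2} and discarding the sign of the remaining terms, so no coercivity is needed), giving $m_{n,\gamma p}(t)\le m_{n,\gamma p}(0)e^{Ct}$. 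This exponentially growing bound suffices for the limit passage and for the Cauchy-sequence argument; the sharp estimates \eqref{eq:poly_cre}--\eqref{eq:poly_pro} are applied only a posteriori to the limiting solution $f$, after Lemma \ref{lem:m_s_diff} upgrades its regularity.

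Second, passing to the limit via Dunford--Pettis plus tightness does not give convergence in $C([0,T],L^1_2)$; it gives only relative weak $L^1$ compactness, which is insufficient to pass to the limit in the quadratic collision term $Q_{FD}^{(n)}(f^{(n)},f^{(n)})$. The paper avoids this by showing directly that $f_n$ is Cauchy in $C([0,T],L^1_2)$: one splits $Q_{FD,n}(f_n,f_n)-Q_{FD,m}(f_m,f_m)$ into a kernel-truncation part (controlled by the uniform $\|f_n\|_{1,5}$ bound and vanishing as $n\to\infty$ since the tails of $|v-v_*|^\gamma$ contribute $O(1/n)$) and a difference-of-solutions part (controlled via Lemma \ref{lem:Q_FD_diff_int} and Grönwall). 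This yields strong $L^1_2$ convergence, for which the bilinear term passes to the limit without further ado. If you want to keep a weak-compactness route you would need an additional argument (e.g.\ velocity averaging) to upgrade weak to strong convergence, which the paper does not invoke here.
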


\begin{proof}
    If $\gamma = 0$, the existence and uniqueness are proved in \cite{Lu2001353}. Therefore, we consider the $\gamma>0$ case.
    
    Let $B_n(v-v_*,\sigma) = (|v-v_*|^\gamma \wedge n) b(\cos\theta)$ and
    \begin{align*}
        Q_{FD, n}(f,f) = \int_{\mathbb{R}^3\times \mathbb{S}^2} B_n(v-v_*,\sigma) (f'f'_*(1-f)(1-f_*) - ff_*(1-f')(1-f'_*))\,dv_*d\sigma.
    \end{align*}
    For $s>2$, let $\phi(v) = (1+|v|^2)^{s/2}$ and $\phi_m(v) = \phi(v)\wedge m$. For $f_0\in L^1_s$, by some contraction mapping argument, we can prove that there exists a unique solution of the Boltzmann-Fermi-Dirac equation in $L^\infty([0,\infty), L^1_2(\mathbb{R}^3))$ satisfying
    \begin{align*}
        f_n(t,v) = f_0(v) + \int_0^t Q_{FD, n}(f_n, f_n)(\tau, v)\,d\tau
    \end{align*}
    for all $t$ and a.e. $v$; one can refer to the first paragraph of Section 3 in \cite{Lu2001353}. As $\|B_n\|_{L^1(\mathbb{S}^2)}\leq C_b n^\gamma$, we have
    \begin{align*}
        \|f_n \phi_m\|_{1,0}(t)&\leq \|f_0 \phi_m\|_{1,0} +  \int_0^t\int_{\mathbb{R}^6\times \mathbb{S}^2} B_n(v-v_*,\sigma) f_nf_{n,*}(1-f_n')(1-f'_{n,*})(\phi_m(v') + \phi_m(v))\,dv_* dv d\sigma d\tau\\
        &\leq \|f_0 \phi_m\|_{1,0} + C_b n^\gamma\int_0^t \left(\int_{\mathbb{R}^6} f_nf_{n,*}2^{s/2}(\phi_m(v) + \phi_m(v_*))\,dv dv_* + \|f_n \phi_m\|_{1,0}(\tau)\|f_n\|_{1,0}(\tau)\right)\,d\tau\\
        &\leq \|f_0 \phi_m\|_{1,0} + (2^{s/2+1}+1) C_b n^\gamma\|f_0\|_{1,0}\int_0^t \|f_n \phi_m\|_{1,0}(\tau)\,d\tau.
    \end{align*}
    By Gronwall's inequality and letting $m\rightarrow \infty$ with Fatou's lemma, we get
    \begin{align*}
        \|f_n\|_{1,s}(t)\leq \|f_0\|_{1,s}e^{(2^{s/2+1}+1) C_b n^\gamma \|f_0\|_{1,0} t}.
    \end{align*}
    By Lemma \ref{lem:m_s_diff}, we have $f_n\in C([0,\infty), L^1_s)$ and $m_{n,s}(t)\in C^1([0,\infty))$ for all $s\geq 2$, which is defined by
    \begin{align*}
        m_{n,s}(t)\coloneqq \int_{\mathbb{R}^3} f_n(t,v)|v|^s\,dv.
    \end{align*}
    
    Next, we choose $s = \gamma p$ for any integer $p> 2/\gamma$. By Lemma \ref{lem:bobylev_2} and an elementary inequalities
    \begin{align*}
        &(x^2+y^2)^{\frac{\gamma}{2}}\leq (x^\gamma + y^\gamma),\\
        &(x+y)^p - x^p - y^p\leq \sum_{k=1}^{p-1} \binom{p}{k} x^ky^{p-k},
    \end{align*}
    for $0<\gamma\leq 2$ and $x,y\geq 0$, we obtain
    \begin{align*}
        \dv{t} m_{n,\gamma p}&= \int_{\mathbb{R}^6\times\mathbb{S}^2} f_nf_{n,*}(1-f_n')(1-f'_{n,*})\left((|v'|^{\gamma p}+|v'_*|^{\gamma p}) - |v|^{\gamma p} - |v_*|^{\gamma p}\right) \left(|v-v_*|^\gamma \wedge n\right)b(\cos\theta)\,dvdv_* d\sigma\\
        &\leq C_b \varpi_{\gamma p/2}\int_{\mathbb{R}^6} f_nf_{n,*}(1-f_n')(1-f'_{n,*})\left((|v|^2+|v_*|^2)^{\frac{\gamma p}{2}} - |v|^{\gamma p} - |v_*|^{\gamma p}\right) \left(|v-v_*|^\gamma \wedge n\right)\,dvdv_*\\
        &\leq C_b \int_{\mathbb{R}^6} f_nf_{n,*}(1-f_n')(1-f'_{n,*})\left(\sum_{k=1}^{p-1}\binom{p}{k}|v|^{\gamma k}|v_*|^{\gamma (p-k)}\right) \left(|v-v_*|^\gamma \wedge n\right)\,dvdv_*\\
        &\leq C_b \int_{\mathbb{R}^6} f_nf_{n,*}\left(\sum_{k=1}^{p-1}\binom{p}{k}|v|^{\gamma k}|v_*|^{\gamma (p-k)}\right) |v-v_*|^\gamma\,dvdv_*\\
        &\leq C m_{n,\gamma p}m_{n,\gamma}
    \end{align*}
    for some constant $C$ depending on $\gamma$, $p$, and $C_b$. Since $\|f_n\|_{1,2}$ is conservative, we get
    \begin{align*}
        m_{n,\gamma p}(t)\leq m_{n,\gamma p}(0) e^{Ct},\quad \|f_n\|_{1,\gamma p}(t)\leq \|f_0\|_{1,\gamma p} e^{C t}
    \end{align*}
    for any integer $p>2/\gamma$. Taking interpolation between $m_2$ or $\|f_n\|_{1,2}$ with the above inequalities, we also get $m_{n,s}(t)\leq C_s e^{C_s t}$ and $\|f_n\|_{1,s}(t)\leq C_s e^{C_s t}$ for some $C_s$ and all $s\geq 2$ not depending on $n$.
    
    Now, we will show that $f_n$ is a Cauchy sequence in $C([0, T], L^1_2(\mathbb{R}^3))$ for arbitrary $T<\infty$. Indeed, we consider $f_n-f_m$ for $m\geq n$. Then,
    \begin{align*}
        \|f_n-f_m\|_{1,2}(t) = 2\|(f_n-f_m)^+\|_{1,2}(t),
    \end{align*}
    and
    \begin{align*}
        \|(f_n-f_m)^+\|_{1,2}(t)&\leq \int_0^t \|\left(Q_{FD, n}(f_n,f_n) - Q_{FD, m}(f_n,f_n)\right)\mathbf{1}_{\{f_n\geq f_m\}}\|_{1,2}(\tau)\,d\tau\\
        &\quad + \int_0^t \|\left(Q_{FD, m}(f_n,f_n) - Q_{FD, m}(f_m,f_m)\right)\mathbf{1}_{\{f_n\geq f_m\}}\|_{1,2}(\tau)\,d\tau.
    \end{align*}
    For the second term, by Lemma \ref{lem:Q_FD_diff_int}, we have
    \begin{align*}
        \|\left(Q_{FD, m}(f_n,f_n) - Q_{FD, m}(f_m,f_m)\right)\mathbf{1}_{\{f_n\geq f_m\}}\|_{1,2}(t)&\leq C\|f_n\|_{1,4}(t)\|f_n-f_m\|_{1,2}(t)
    \end{align*}
    for some constant $C$. By the Gronwall inequality, we have
    \begin{align*}
        \sup_{t\in [0, T]}\|f_n-f_m\|_{1,2}(t)&\leq \left(2\int_0^T \|\left(Q_{FD, m}(f_n,f_n) - Q_{FD, m}(f_m,f_m)\right)\mathbf{1}_{\{f_n\geq f_m\}}\|_{1,2}(\tau)\,d\tau\right)\\
        &\qquad \times \exp\left(C\left(\sup_{\tau\in [0,T]}\|f_n(\tau)\|_{1,4}\right)T\right).
    \end{align*}
    
    Now, as $\sup_n \sup_{t\in [0,T]}\|f_n\|_{1,5}(t)<\infty$, we have
    \begin{align*}
        &\lim_{\substack{n\rightarrow \infty \\ m\geq n}}\int_0^T \|\left(Q_{FD, n}(f_n,f_n) - Q_{FD, m}(f_n,f_n)\right)\mathbf{1}_{\{f_n\geq f_m\}}\|_{1,2}(\tau)\,d\tau\\
        &\leq\lim_{\substack{n\rightarrow \infty \\ m\geq n}}\int_0^T \int_{\mathbb{R}^6\times\mathbb{S}^2}(B_m(v-v_*,\sigma) - B_n(v-v_*,\sigma))f_nf_{n,*}((1+|v'|^2) + (1+|v|^2))\,dvdv_* d\sigma d\tau\\
        &\leq\lim_{n\rightarrow \infty}\int_0^T \int_{\mathbb{R}^6\times\mathbb{S}^2}|v-v_*|^\gamma b(\cos\theta)f_nf_{n,*}((1+|v|^2 + |v_*|^2) + (1+|v|^2))\mathbf{1}_{\{|v-v_*|> n\}}\,dvdv_* d\sigma d\tau\\
        &\leq 2C_b\lim_{n\rightarrow \infty}\int_0^T \int_{\mathbb{R}^6}f_nf_{n,*}(|v|^\gamma + |v_*|^\gamma)((1+|v|^2 + |v_*|^2) + (1+|v|^2))\mathbf{1}_{\{|v|> n/2\}}\,dvdv_* d\tau\\
        &\quad +2C_b\lim_{n\rightarrow \infty}\int_0^T \int_{\mathbb{R}^6}f_nf_{n,*}(|v|^\gamma + |v_*|^\gamma)((1+|v|^2 + |v_*|^2) + (1+|v|^2))\mathbf{1}_{\{|v_*|> n/2\}}\,dvdv_* d\tau\\
        &\leq C\lim_{n\rightarrow \infty}\int_0^T \|f_n\mathbf{1}_{\{|v|\geq n/2\}}\|_{1,4}(\tau)\|f_n\|_{1,4}(\tau)\,d\tau\\
        &\leq C\lim_{n\rightarrow \infty}\frac{2}{n}\int_0^T \|f_n\|_{1,5}(\tau)\|f_n\|_{1,4}(\tau)\,d\tau = 0
    \end{align*}
    for any fixed $T<\infty$. Therefore, we get
    \begin{align*}
        \lim_{\substack{n\rightarrow \infty \\ m\geq n}}\sup_{t\in [0, T]}\|f_n-f_m\|_{1,2}(t) = 0.
    \end{align*}  
    It shows that $f_n$ is a Cauchy sequence in $C([0, T], L^1_2)$ for any fixed $T<\infty$, so we choose a unique limit point $f\in C([0, T], L^1_2)$. The convergence first implies that $0\leq f(t,v)\leq 1$ a.e. $t$ and $v$ as all the $f_n$ satisfy $0\leq f_n\leq 1$. As $f_n$ are all conservative solutions, $f$ also enjoys mass, momentum, and energy conservation. Since $f_n\rightarrow f$ in $C([0, T], L^1_2)$, we get
    \begin{align*}
        \esssup_{0\leq t\leq T}\|Q_{FD}(f,f) - Q_{FD,n}(f_n, f_n)\|_{1,0}\rightarrow 0,
    \end{align*}
    and it means that there exists a subsequence of $Q_{FD,n}(f_n,f_n)$ converges to $Q_{FD}(f,f)$ a.e. $t$ and $v$. Therefore, $f(t,v)$ satisfies
    \begin{align*}
        f(t,v) = f_0(v) + \int_0^t Q_{FD}(f,f)(\tau, v)\,d\tau
    \end{align*}
    a.e. $t$ and $v$. Defining
    \begin{align*}
        g(t,v) = f_0(v) + \int_0^t Q_{FD}(|f|\wedge 1,|f|\wedge 1)(\tau, v)\,d\tau,
    \end{align*}
    we have $f=g$ a.e. $t$ and $v$, and in fact one can show that
    \begin{align*}
        Q_{FD}(|f|\wedge 1,|f|\wedge 1)(t,v) = Q_{FD}(|g|\wedge 1,|g|\wedge 1)(t,v)
    \end{align*}
    for a.e. $t$ and $v$. So,
    \begin{align*}
        \int_0^T |Q_{FD}(|f|\wedge 1,|f|\wedge 1)-Q_{FD}(|g|\wedge 1,|g|\wedge 1)|(\tau,v)\,d\tau=0
    \end{align*}
    a.e. $v$. Replacing $Q_{FD}(|f|\wedge 1,|f|\wedge 1)$ by $Q_{FD}(|g|\wedge 1,|g|\wedge 1)$ and then renaming $g$ by $f$ again, therefore,
    \begin{align}\label{eq:f_diff_eq}
        f(t,v) = f_0(v) + \int_0^t Q_{FD}(|f|\wedge 1,|f|\wedge 1)(t,v)\,d\tau
    \end{align}
    is satisfies for $t\in [0, T]$ and $v\in\mathbb{R}^3\setminus Z$ for some null set $Z$ independent to $t$. Finally, we check $0\leq f(t,v)\leq 1$. We first note that $f$ is absolutely continuous about $t\in [0,T]$ for $v\in \mathbb{R}^3\setminus Z'$ for some null set $Z'\supset Z$. Since $0\leq f_0(v)\leq 1$, using for example the proof of Lemma \ref{lem:fg+}, we can check $0\leq f(t,v)\leq 1$ for all $t$ and $v\in \mathbb{R}^3\setminus Z'$. Mollifying the null set $Z'$, we get $0\leq f(t,v)\leq 1$ for all $t$ and $v$ with some null set $Z''\supset Z'$ such that \eqref{eq:f_diff_eq} holds for all $t$ and $v\in\mathbb{R}^3\setminus Z''$. As $0\leq f\leq 1$, we replace $|f|\wedge 1$ in \eqref{eq:f_diff_eq} by $f$ and restore the original equation. It shows that $f(t,v)$ is a solution of the Boltzmann-Fermi-Dirac equation for $t\in [0, T]$. Since $f_n\in L^\infty([0, T], L^1_s)$ for all $s\geq 2$, applying Fatou's lemma for each fixed $t\in [0,T]$, we get $f\in L^\infty([0,T], L^1_s)$ for all $s\geq 2$.
    
    In the above, we have taken an arbitrary $T<\infty$, and the limit point $f(t,v)$ of $f_n(t,v)$ in each $C([0,T], L^1_2)$ should be unique. Therefore, we can concatenate the solution and get $f\in C([0,\infty), L^1_2)$ with $L^\infty_{loc}([0, \infty), L^1_s)$ for all $s\geq 2$.
    
    Now, we can use the original polynomial moment inequality. For $0<\gamma\leq 2$, by Lemma \ref{lem:m_s_diff}, $m_s(t)\in C^1([0,\infty))$ for all $s\geq 2$, so we can use Theorem \ref{lem:poly_L1} to conclude polynomial moment creation \eqref{eq:poly_cre} and propagation \eqref{eq:poly_pro}.
    
    For uniqueness, let $g(t,v)\in L^\infty([0,\infty), L^1_2)$ be a solution with initial function $g(0,v) = f_0(v)$. From Proposition \ref{prop:stability}, $\|f(t,v) - g(t,v)\|_{1,2} = 0$ for all $t>0$. It proves that $f(t,v)$ is the unique solution.
\end{proof}

Next, we relax the condition to $f_0\in L^1_2$.
\begin{theorem}\label{thm:exi_uni_thm_2}
    Assume the collision kernel satisfies (H1) and $0\leq \gamma\leq 2$. For $f_0\in L^1_2$ with $0\leq f_0\leq 1$, there exists a unique solution of the Boltzmann-Fermi-Dirac equation. If $\gamma>0$, then it fulfills \eqref{eq:poly_cre}. Furthermore, if $f_0\in L^1_s$, then the solution also satisfies \eqref{eq:poly_pro}.
\end{theorem}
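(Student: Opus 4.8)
The plan is to recover the general case $f_0\in L^1_2$ from Proposition \ref{prop:exi_uni_prop} by a truncation-and-stability argument. Set $f_{0,n}\coloneqq f_0\mathbf{1}_{\{|v|\le n\}}$, so that $0\le f_{0,n}\le 1$, $f_{0,n}\in L^1_s$ for all $s\ge 2$, $f_{0,n}\to f_0$ in $L^1_2$, and $\|f_{0,n}\|_{1,s}\le\|f_0\|_{1,s}$ for every $s$ for which the right side is finite. By Proposition \ref{prop:exi_uni_prop} there is a unique solution $f_n$ of the Boltzmann-Fermi-Dirac equation with initial datum $f_{0,n}$; for $\gamma>0$ it satisfies the creation bound \eqref{eq:poly_cre}, and when $f_0\in L^1_s$ it also satisfies the propagation bound \eqref{eq:poly_pro}. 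The case $f_0\equiv 0$ being trivial, we may discard finitely many small $n$ so that $\|f_{0,n}\|_{1,0}\ge\tfrac12\|f_0\|_{1,0}$; together with $\|f_{0,n}\|_{1,2}\le\|f_0\|_{1,2}$ and $\|f_{0,n}\mathbf{1}_{\{|v|\ge\rho\}}\|_{1,2}\le\|f_0\mathbf{1}_{\{|v|\ge\rho\}}\|_{1,2}$ for every $\rho>0$, this ensures that all the constants $C_{s,1},C_{s,2}$ of Theorem \ref{lem:poly_L1}, as well as $C_1,C_2,C_3,C_4$ and the modulus $\Phi$ appearing in Proposition \ref{prop:stability}, may be chosen independently of $n$.

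Next I would show that $(f_n)$ is Cauchy in $C([0,T],L^1_2)$ for every $T<\infty$. Since each $f_n$ satisfies \eqref{eq:poly_cre} (for $\gamma>0$), Proposition \ref{prop:stability} applied to the pair $(f_n,f_m)$ yields $\|f_n(t)-f_m(t)\|_{1,2}\le C_1\,\Phi\big(\|f_{0,n}-f_{0,m}\|_{1,2}\big)\exp\!\big(C_2(t+t^{1/3})\big)$ when $\gamma>0$, and the simpler bound $\|f_n(t)-f_m(t)\|_{1,2}\le C_3\|f_{0,n}-f_{0,m}\|_{1,2}e^{C_4 t}$ when $\gamma=0$. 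As $\|f_{0,n}-f_{0,m}\|_{1,2}\to 0$ and $\Phi$ is continuous with $\Phi(0)=0$, the right-hand sides tend to $0$ uniformly on $[0,T]$; hence $f_n\to f$ in $C([0,T],L^1_2)$ for every $T$, so $f\in C([0,\infty),L^1_2)$, and $0\le f\le 1$ together with mass, momentum and energy conservation pass to the limit because the convergence is in the $|v|^2$-weighted norm. To identify the mild equation I would use a Lipschitz-type estimate $\|Q_{FD}(h,h)-Q_{FD}(k,k)\|_{1,0}\le C\|h-k\|_{1,2}$ with $C$ depending only on the conserved quantities, which follows from $0\le h,k\le 1$, the cancellation identities \eqref{eq:cancel}, and $|v-v_*|^\gamma\le 2(|v|^\gamma+|v_*|^\gamma)\le 2(2+|v|^2+|v_*|^2)$. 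This gives $\int_0^t Q_{FD}(f_n,f_n)\,d\tau\to\int_0^t Q_{FD}(f,f)\,d\tau$ in $L^1$, locally uniformly in $t$, so that $f(t,v)=f_0(v)+\int_0^t Q_{FD}(f,f)(\tau,v)\,d\tau$ holds a.e.; the reduction to a single $t$-independent null set and the verification of $0\le f\le1$ then go exactly as in the last paragraph of the proof of Proposition \ref{prop:exi_uni_prop}.

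It remains to transfer the moment bounds and to establish uniqueness. Passing to an a.e.-convergent subsequence of $f_n(t)\to f(t)$ in $L^1_2$ and using Fatou's lemma gives $\|f(t)\|_{1,s}\le\liminf_n\|f_n(t)\|_{1,s}\le C_{s,1}\max\{t^{-(s-2)/\gamma},1\}$ for $\gamma>0$, which is \eqref{eq:poly_cre}; if in addition $f_0\in L^1_s$, the uniform propagation bounds for $f_n$ yield \eqref{eq:poly_pro} in the same way. For uniqueness, let $g$ be any solution with $g(0)=f_0$. If $\gamma=0$, the $\gamma=0$ case of Proposition \ref{prop:stability} applied to $(f,g)$ forces $\|f(t)-g(t)\|_{1,2}=0$. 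If $\gamma>0$, the constructed $f$ satisfies \eqref{eq:poly_cre}, so Proposition \ref{prop:stability} applied to $(f,g)$ gives $\|f(t)-g(t)\|_{1,2}\le C_1\Phi(0)\exp(C_2(t+t^{1/3}))=0$. Hence $f$ is the unique solution.

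The main obstacle is the bookkeeping in the second paragraph: one must genuinely check that the constants and the modulus $\Phi$ provided by Propositions \ref{prop:exi_uni_prop} and \ref{prop:stability} are uniform along the truncating sequence — in particular that the defect term $\|f_{0,n}\mathbf{1}_{\{|v|\ge r^{-1/3}\}}\|_{1,2}$ is dominated by $\|f_0\mathbf{1}_{\{|v|\ge r^{-1/3}\}}\|_{1,2}\to 0$ — and that the passage to the limit in the collision operator is performed in the unweighted $L^1$ norm, where only conserved quantities enter, so as to avoid the $t\to 0^+$ blow-up of the high moments in \eqref{eq:poly_cre}.
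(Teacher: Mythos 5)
Your proposal is correct and follows essentially the same route as the paper: approximate $f_0$ by initial data in $\bigcap_{s\ge2}L^1_s$, apply Proposition \ref{prop:exi_uni_prop} to each, use the $L^1_2$ stability of Proposition \ref{prop:stability} (valid because the approximate solutions satisfy \eqref{eq:poly_cre}) to get a Cauchy sequence in $C([0,T],L^1_2)$, pass to the limit in the mild equation, and recover \eqref{eq:poly_cre}/\eqref{eq:poly_pro} by Fatou and uniqueness by another application of stability. The only cosmetic difference is the truncation ($f_0\mathbf{1}_{\{|v|\le n\}}$ versus the paper's $f_0 e^{-|v|^2/n}$); your extra care in checking that $\|f_{0,n}\|_{1,0}$ stays bounded below and that the defect term in $\Phi$ is controlled by $\|f_0\mathbf{1}_{\{|v|\ge r^{-1/3}\}}\|_{1,2}$ is exactly the bookkeeping the paper leaves implicit.
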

\begin{proof}
    Again, we assume $\gamma>0$ since $\gamma = 0$ case is already proved in \cite{Lu2001353}. Let $f_{n, 0} = f_0 e^{-|v|^2/n}$. Then, there exists a unique solution $f_n(t,v)$ having initial function $f_{n,0}$ for each $n\geq 1$. Also, those solutions should satisfy \eqref{eq:poly_cre}. By Proposition \ref{prop:stability}, we have
    \begin{align*}
        \|f_n-f_m\|_{1,2}(t)\leq C_1\Phi(\|f_{n,0}-f_{m,0}\|_{1,2})e^{C_2(t+t^{1/3})},
    \end{align*}
    so $f_n$ forms a Cauchy sequence in $C([0, T],L^1_2)$ for any $T<\infty$. For fixed $T<\infty$, let $f(t,v)$ be the limit in $C([0, T],L^1_2)$. Following the arguments in Proposition \ref{prop:exi_uni_prop}, we can check that $f$ is a solution of the Boltzmann-Fermi-Dirac equation. By Fatou's lemma, $f(t,v)$ satisfies \eqref{eq:poly_cre}.

    By Proposition \ref{prop:stability} again, $f(t,v)$ is the unique solution in $C([0,T], L^1_2)$. Since it is true for all finite $T$, we eventually obtain the existence and uniqueness of the solution of the Boltzmann-Fermi-Dirac equation for the whole time.
\end{proof}

We end this section proving the entropy identity \eqref{eq:entropy_identity}.
\begin{proposition}\label{prop:entropy_identity}
    Assume the collision kernel satisfies (H1) and $0\leq \gamma\leq 2$. Let $f$ be a solution of the Boltzmann-Fermi-Dirac equation with the collision kernel $B$. Then, it satisfies the entropy identity \eqref{eq:entropy_identity}.
\end{proposition}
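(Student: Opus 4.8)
The plan is to prove the two inequalities $S(f)(t)-S(f_0)\ge\int_0^t\!\int_{\mathbb{R}^3}D(f)\,dv\,d\tau$ and $\le$ separately, working directly with the solution $f$. Thanks to (H1), $0\le f\le1$, and conservation of mass and energy one has $\int_{\mathbb{R}^6\times\mathbb{S}^2}B(v-v_*,\sigma)(f'f'_*+ff_*)\,d\sigma\,dv_*\,dv<\infty$ (with $f'=f(v')$, etc.), so for any $C^1$ function $\beta_{\mathrm r}$ on $[0,1]$ with bounded derivative the mild formulation, the chain rule, and Fubini give
\[
 \int_{\mathbb{R}^3}\beta_{\mathrm r}(f(s,v))\,dv-\int_{\mathbb{R}^3}\beta_{\mathrm r}(f(t,v))\,dv=-\int_s^t\!\int_{\mathbb{R}^3}\beta_{\mathrm r}'(f)\,Q_{FD}(f,f)\,dv\,d\tau ,
\]
and the symmetrizations \eqref{eq:bef_aft} and $v\leftrightarrow v_*$ rewrite the integrand as $-\tfrac14(a-b)\,\mathcal R_{\mathrm r}$, where $a=f'f'_*(1-f)(1-f_*)$, $b=ff_*(1-f')(1-f'_*)$, and $\mathcal R_{\mathrm r}=\beta_{\mathrm r}'(f)+\beta_{\mathrm r}'(f_*)-\beta_{\mathrm r}'(f')-\beta_{\mathrm r}'(f'_*)$. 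For $\beta(x)=x\ln x+(1-x)\ln(1-x)$ one has $\mathcal R=\ln(b/a)$, whence $-(a-b)\mathcal R=(a-b)\ln\tfrac ab=\Gamma(a,b)\ge0$, so the right-hand side of the displayed identity is formally $\int_s^t\!\int_{\mathbb{R}^3}D(f)\,dv\,d\tau$.

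For the lower bound I would take $\beta_{\mathrm r}=\hat\beta_\delta$ with $\hat\beta_\delta'(x)=\ln\frac{x+\delta}{1-x+\delta}$ and $\hat\beta_\delta(0)=0$; then $\hat\beta_\delta$ is bounded, $|\hat\beta_\delta(x)|\le C(x|\ln x|+x)$ uniformly in $\delta\in(0,1]$, and $\hat\beta_\delta\to\beta$ pointwise. Letting $\delta\to0$: the left-hand side tends to $S(f)(t)-S(f)(s)$ by dominated convergence, using that $f|\ln f|\in L^1$, which follows from conservation of mass and energy together with the moment bound \eqref{eq:poly_cre} at positive times; the integrand $-(a-b)\mathcal R_\delta$ tends to $\Gamma(a,b)$ a.e.\ and is bounded below, uniformly in $\delta$, by $-C\bigl(a|\ln a|+b|\ln b|+a+b\bigr)$, which is integrable against $B(v-v_*,\sigma)\,d\sigma\,dv_*\,dv\,d\tau$ over $[s,t]$ by \eqref{eq:bef_aft}, $0\le f\le1$, and \eqref{eq:poly_cre}. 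Fatou's lemma then yields $S(f)(t)-S(f)(s)\ge\int_s^t\!\int_{\mathbb{R}^3}D(f)\,dv\,d\tau$ for all $0\le s\le t$; in particular $D(f)\in L^1([0,t]\times\mathbb{R}^3)$. To include $s=0$ I would use that $S(f)(s)\to S(f_0)$ as $s\downarrow0$: since $f(s,\cdot)\to f_0$ in $L^1_2$ and $\beta$ is bounded and continuous, splitting according to whether $f\ge e^{-|v|}$ (there $|\beta(f)|\le Cf(1+|v|^2)$) or $f<e^{-|v|}$ (there $|\beta(f)|\le Ce^{-|v|}|v|$) provides the equi-integrable tail control needed to pass to the limit.

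For the reverse inequality I would take $\beta_{\mathrm r}=\hat\beta_M$ with $\hat\beta_M'$ the truncation of $\beta'$ to $[-M,M]$ and $\hat\beta_M(0)=0$, so that $|\hat\beta_M|\le|\beta|$ on $[0,1]$ and $\hat\beta_M\to\beta$ pointwise. Again the left-hand side converges to $S(f)(t)-S(f_0)$ by dominated convergence, and on the right-hand side one writes $-(a-b)\mathcal R_M=\Gamma(a,b)-(a-b)(\mathcal R_M-\mathcal R)$; the correction is controlled by $(a+b)\sum_{\star}(|\beta'(\star)|-M)^+$ over the four arguments $\star\in\{f,f_*,f',f'_*\}$, which—now that $\Gamma(a,b)\in L^1$ is known—is dominated, uniformly in $M$, by an integrable expression $C\bigl(\Gamma(a,b)+a|\ln a|+b|\ln b|+a+b\bigr)$ (this uses the pointwise estimates $(a+b)\,|\beta'(f')|\le C(\Gamma(a,b)+a|\ln a|+b|\ln b|+a+b)$ and their permutations) and tends to $0$ a.e.\ as $M\to\infty$. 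Dominated convergence then gives $\int_0^t\!\int B(a-b)(\mathcal R_M-\mathcal R)\,d\sigma\,dv_*\,dv\,d\tau\to0$, hence $S(f)(t)-S(f_0)=\tfrac14\int_0^t\!\int B\,\Gamma(a,b)\,d\sigma\,dv_*\,dv\,d\tau=\int_0^t\!\int_{\mathbb{R}^3}D(f)\,dv\,d\tau$. Together with the lower bound this is \eqref{eq:entropy_identity}.

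The hard part will be the two passages to the limit in the entropy-production term: in the $\delta\to0$ step one must exhibit the uniform integrable minorant for $-(a-b)\mathcal R_\delta$, and in the $M\to\infty$ step one needs the a priori integrability of $D(f)$ together with the weighted bounds $(a+b)|\beta'(f')|\lesssim\Gamma(a,b)+a|\ln a|+b|\ln b|+a+b$ and their analogues. Both reductions hinge on the moment estimates \eqref{eq:poly_cre} and on the convexity and algebraic structure of $\Gamma$; this is the technical heart, and here the argument follows, with the adaptations needed for the hard-potential cut-off setting, the limiting procedure used for the Fermi–Dirac equation in \cite{Lu2001353}.
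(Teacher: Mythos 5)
Your outline is correct in substance, and it proves the entropy identity by a genuinely different regularization route than the paper. Both arguments share the same skeleton — Lipschitz chain rule and Fubini for a regularized entropy density, symmetrization, and a pair of passages to the limit: Fatou for $S(t)-S(0)\ge\int_0^t\!\int D$ and dominated convergence for the reverse inequality, both resting on the a priori bound $\|f\|_{1,1+\gamma}\in L^1_{\mathrm{loc}}(t)$ from \eqref{eq:poly_cre}. The difference is in the choice of regularizer. You use two separate one-parameter families on $[0,1]$: a constant additive shift $\delta$ with $\hat\beta_\delta(0)=0$ for the Fatou step, and a truncation $\hat\beta'_M=(\beta')\wedge M\vee(-M)$ for the dominated-convergence step. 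The paper instead works with a single family obtained by shifting $f$ by the $v$-\emph{dependent} quantity $e^{-|v|}/n$, and splits the resulting $\Gamma_n$ into $(\Gamma_n)^+$ and $(-\Gamma_n)^+$.

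Each choice buys something. The paper's $e^{-|v|}/n$ shift makes the regularized entropy density integrable on $\mathbb{R}^3$ without renormalization and, more importantly, yields the essentially free bound
\[
(-\Gamma_n)^+\le (c+d)-(a+b)\le \bigl(f'+e^{-|v'|}\bigr)\bigl(f'_*+e^{-|v'_*|}\bigr)+4\bigl(f+e^{-|v|}\bigr)\bigl(f_*+e^{-|v_*|}\bigr),
\]
whose integrability against $B\,d\sigma\,dv_*\,dv$ is immediate because the shift decays. Your constant shift avoids any $v$-dependence in the nonlinearity (the chain rule is applied to a single fixed Lipschitz function of $f$), but you then have to supply the sharper pointwise minorant $-(a-b)\mathcal R_\delta\ge -C\bigl(a|\ln a|+b|\ln b|+a+b\bigr)$ uniformly in $\delta$; this does hold with an absolute constant (use $\ln\tfrac{u+\delta}{1-u+\delta}\ge\ln u-\ln 2$ for each factor when $a>b$, and the symmetric estimate when $b>a$), and similarly for the $M\to\infty$ step one needs $(a+b)|\beta'(\star)|\lesssim\Gamma(a,b)+a|\ln a|+b|\ln b|+a+b$, which again holds but takes a few lines (e.g.\ $b\ln\tfrac{b}{a}\le 2\Gamma(a,b)+(\ln 2)\,b$ according as $a\le b/2$ or $a>b/2$). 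These bounds are the ``technical heart'' you flag, and they are slightly more delicate than the paper's estimates; but once written out, your route is fully rigorous and arguably more transparent as a two-sided inequality.

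One small remark: you do not actually need \eqref{eq:poly_cre} or the $\gamma$-dependent argument to get $S(f)(s)\to S(f_0)$ as $s\downarrow 0$. The mass/energy conservation together with $f(s,\cdot)\to f_0$ in $L^1_2$ and the elementary bounds $f|\ln f|\le|v|^2f+e^{-|v|^2/2}$, $(1-f)|\ln(1-f)|\le f$ give the needed equi-integrability directly at $s=0$, exactly as the paper uses dominated convergence for $S_n(f)(0)\to S(f_0)$. So your special treatment of the endpoint $s=0$ can be folded into the main dominated-convergence step.
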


\begin{proof}
    It mainly follows the proof of \cite{Lu2001353}.
    Let
    \begin{align*}
        \phi_n(t,v) = -\left(f(t,v) + \frac{e^{-|v|}}{n}\right)\ln\left(f(t,v) + \frac{e^{-|v|}}{n}\right) - \left(1-f(t,v) + \frac{e^{-|v|}}{n}\right)\ln\left(1-f(t,v) + \frac{e^{-|v|}}{n}\right).
    \end{align*}
    For fixed a.e. $v$, $\phi_n(t,v)$ is a.e. differentiable about $t$ since $f$ is absolutely continuous and
    \begin{align*}
        \tilde{\phi}_n(x) = -\left(x + \frac{e^{-|v|}}{n}\right)\ln\left(x + \frac{e^{-|v|}}{n}\right) - \left(1-x + \frac{e^{-|v|}}{n}\right)\ln\left(1-x + \frac{e^{-|v|}}{n}\right)
    \end{align*}
    is Lipschitz continuous about $x$. So, we get
    \begin{align*}
        \phi_n(t,v) = \phi_n(0,v) - \int_0^t Q_{FD}(f,f)(\tau, v)\ln\left(\frac{f(\tau, v)+\frac{e^{-|v|}}{n}}{1-f(\tau, v)+\frac{e^{-|v|}}{n}}\right)\,d\tau
    \end{align*}
    for a.e. $v$. Defining
    \begin{align*}
        S_n(f)(t) = \int_{\mathbb{R}^3} \phi_n(t,v)\,dv
    \end{align*}
    and taking $v$ integral on both sides, it becomes
    \begin{align}\label{eq:S_n_prod}
        S_n(f)(t) = S_n(f)(0) - \int_{\mathbb{R}^3}\int_0^t Q_{FD}(f,f)(\tau, v)\ln\left(\frac{f(\tau, v)+\frac{e^{-|v|}}{n}}{1-f(\tau, v)+\frac{e^{-|v|}}{n}}\right)\,d\tau dv.
    \end{align}
    Our mission is to make $n\rightarrow \infty$ and obtain the entropy identity \eqref{eq:entropy_identity}. We first check the well-definedness of each term in \eqref{eq:S_n_prod}. First, as
    \begin{align*}
        &g(v)|\ln g(v)| = g(v)|\ln g(v)|\mathbf{1}_{\{g(v)\leq e^{-|v|^2}\}} + g(v)|\ln g(v)|\mathbf{1}_{\{g(v)>e^{-|v|^2}\}}\leq |v|^2g(v) + e^{-\frac{|v|^2}{2}},\\
        &(1-g(v))|\ln(1-g(v))| \leq g(v) 
    \end{align*}
    for any function $0\leq g(v)\leq 1$, we have
    \begin{align*}
        \int_{\mathbb{R}^3} |\phi_n(t,v)|\,dv\leq \int_{\mathbb{R}^3} \left((1+|v|^2)\left(f(t,v) + \frac{e^{-|v|}}{n}\right) + e^{-\frac{|v|^2}{2}}\right)\,dv\leq \|f\|_{1,2} + C_n,
    \end{align*}
    where $\sup_n C_n<\infty$. Therefore, $S_n(f)(t)$ is well-defined for all $n$ including $n=\infty$. By the dominated convergence theorem, we also get
    \begin{align}\label{eq:S_n_limit}
        \lim_{n\rightarrow\infty} S_n(f)(t) = S(f)(t)
    \end{align}
    for all $t\geq 0$.
    
    Secondly, we bound the integral of $Q_{FD}(f,f)$. As
    \begin{align*}
        \left|\ln\left(\frac{f(\tau, v)+\frac{e^{-|v|}}{n}}{1-f(\tau, v)+\frac{e^{-|v|}}{n}}\right)\right|\leq \ln(2n) + |v|,
    \end{align*}
    we write
    \begin{align*}
        &\int_{\mathbb{R}^6\times\mathbb{S}^2} B(v-v_*,\sigma)\left|f'f'_*(1-f)(1-f_*) + ff_*(1-f')(1-f'_*)\right|\ln\left(\frac{f(\tau, v)+\frac{e^{-|v|}}{n}}{1-f(\tau, v)+\frac{e^{-|v|}}{n}}\right)\,dvdv_*d\sigma\\
        &\leq \int_{\mathbb{R}^6\times\mathbb{S}^2} B(v-v_*,\sigma)(f'f'_* + ff_*)\ln\left(\frac{f(\tau, v)+\frac{e^{-|v|}}{n}}{1-f(\tau, v)+\frac{e^{-|v|}}{n}}\right)\,dvdv_*d\sigma\\
        &\leq \int_{\mathbb{R}^6\times\mathbb{S}^2} B(v-v_*,\sigma)(f'f'_* + ff_*)(\ln(2n) + |v|)\,dvdvdv_*d\sigma.
    \end{align*}
    It is bounded by
    \begin{align*}
        &\int_{\mathbb{R}^6\times\mathbb{S}^2} B(v-v_*,\sigma)(f'f'_* + ff_*)(\ln(2n) + |v|)\,dvdv_*d\sigma\\
        &\leq C_n\left(\|f\|_{1,\gamma}\|f\|_{1,1} + \|f\|_{1,1+\gamma}\|f\|_{1,0}\right),
    \end{align*}
    where $C_n$ is a constant depending on $n$. By \eqref{eq:poly_cre}, if $1+\gamma>2$, we have $\|f\|_{1,1+\gamma}\leq \max\{\frac{C}{t^{1-\frac{1}{\gamma}}}, 1\}$. Since it is integrable about $t$ in any finite interval $[0,T]$, we prove that
    \begin{align*}
        &\int_0^T \int_{\mathbb{R}^6\times\mathbb{S}^2} B(v-v_*,\sigma)\left|\left(f'f'_*(1-f)(1-f_*) + ff_*(1-f')(1-f'_*)\right)\ln\left(\frac{f(\tau, v)+\frac{e^{-|v|}}{n}}{1-f(\tau, v)+\frac{e^{-|v|}}{n}}\right)\right|\,dvdv_*d\sigma d\tau\\
        &<\infty
    \end{align*}
    for any finite $n$ and $T$. It guarantees Fubini's theorem and the change of variable, so we obtain
    \begin{align}\label{eq:Gamma_n_split}
        \begin{split}
            &\int_{\mathbb{R}^3}\int_0^t Q_{FD}(f,f)(\tau, v)\ln\left(\frac{1-f(\tau, v)+\frac{e^{-|v|}}{n}}{f(\tau, v)+\frac{e^{-|v|}}{n}}\right)\,d\tau dv\\
            &\leq \int_0^t\int_{\mathbb{R}^6\times\mathbb{S}^2} B(v-v_*,\sigma)\Gamma_n(f)\,dvdv_*d\sigma d\tau,
        \end{split}
    \end{align}
    where
    \begin{align*}
        \Gamma_n(f) &= \frac{1}{4}(f'f'_*(1-f)(1-f_*) - ff_*(1-f')(1-f'_*))\\
        &\qquad \times \ln\left(\frac{\left(f'+\frac{e^{-|v'|}}{n}\right)\left(f'_*+\frac{e^{-|v'_*|}}{n}\right)\left(1-f+\frac{e^{-|v|}}{n}\right)\left(1-f_*+\frac{e^{-|v_*|}}{n}\right)}{\left(f+\frac{e^{-|v|}}{n}\right)\left(f_*+\frac{e^{-|v_*|}}{n}\right)\left(1-f'+\frac{e^{-|v'|}}{n}\right)\left(1-f'_*+\frac{e^{-|v'_*|}}{n}\right)}\right).
    \end{align*}
    Let us split $\Gamma_n(f)$ by $\left(\Gamma_n(f)\right)^+$ and $\left(-\Gamma_n(f)\right)^+$. As
    \begin{align*}
        &\left(-(a-b)\ln \frac{c}{d}\right)^+\leq \left(a\ln \frac{d}{a}\mathbf{1}_{\{a>b\}}\right)^+ + \left(b\ln\frac{c}{b}\mathbf{1}_{\{b>a\}}\right)^+\leq c+d-a-b\\
        &\left((a-b)\ln \frac{c}{d}\right)^+\leq (a-b)\ln\frac{a}{b} + \left(a\ln \frac{c}{a}\mathbf{1}_{\{a>b\}}\right)^+ + \left(b\ln\frac{d}{b}\mathbf{1}_{\{b>a\}}\right)^+\\
        &\leq (a-b)\ln\frac{a}{b} + c+d-a-b
    \end{align*}
    for $0\leq a\leq c$ and $0\leq b\leq d$, we bound $\left(\pm \Gamma_n(f)\right)^+$ by
    \begin{align*}
        \left(\Gamma_n(f)\right)^+&\leq \Gamma(f) + \frac{1}{4}\left(f'+\frac{e^{-|v'|}}{n}\right)\left(f'_*+\frac{e^{-|v'_*|}}{n}\right)\left(1-f+\frac{e^{-|v|}}{n}\right)\left(1-f_*+\frac{e^{-|v_*|}}{n}\right) \\
        &\quad + \frac{1}{4}\left(f+\frac{e^{-|v|}}{n}\right)\left(f_*+\frac{e^{-|v_*|}}{n}\right)\left(1-f'+\frac{e^{-|v'|}}{n}\right)\left(1-f'_*+\frac{e^{-|v'_*|}}{n}\right)\\
        &\leq \Gamma(f) + \left(f'+e^{-|v'|}\right)\left(f'_*+e^{-|v'_*|}\right)+ 4\left(f+e^{-|v|}\right)\left(f_*+e^{-|v_*|}\right),\\
        \left(-\Gamma_n(f)\right)^+&\leq \left(f'+e^{-|v'|}\right)\left(f'_*+e^{-|v'_*|}\right)+ 4\left(f+e^{-|v|}\right)\left(f_*+e^{-|v_*|}\right).
    \end{align*}
    
    From \eqref{eq:S_n_prod} and \eqref{eq:Gamma_n_split}, we have
    \begin{align*}
        S_n(f)(t) = S_n(f)(0) - \int_0^t\int_{\mathbb{R}^6\times\mathbb{S}^2}B(v-v_*,\sigma) \left(\left(\Gamma_n(f)\right)^+ - \left(-\Gamma_n(f)\right)^+\right)\,dvdv_* d\sigma d\tau.
    \end{align*}
    By the above pointwise bound of $\left(-\Gamma_n(f)\right)^+$ and \eqref{eq:S_n_limit}, we obtain
    \begin{align*}
        \lim_{n\rightarrow\infty} \int_0^t\int_{\mathbb{R}^6\times\mathbb{S}^2}B(v-v_*,\sigma) \left(\Gamma_n(f)\right)^+\,dvdv_* d\sigma d\tau = S(f)(t) - S(f)(0).
    \end{align*}
    By Fatou's lemma, we also have
    \begin{align*}
         &\int_0^t\int_{\mathbb{R}^6\times\mathbb{S}^2}B(v-v_*,\sigma) \left(\Gamma(f)\right)^+\,dvdv_* d\sigma d\tau\\
         &\leq \lim_{n\rightarrow\infty} \int_0^t\int_{\mathbb{R}^6\times\mathbb{S}^2}B(v-v_*,\sigma) \left(\Gamma_n(f)\right)^+\,dvdv_* d\sigma d\tau=S(f)(t) - S(f)(0).
    \end{align*}
    It shows that $\left(\Gamma_n(f)\right)^+$ is in fact pointwisely bounded by an $L^1$ function. Therefore, using the dominated convergence theorem, we finally have the entropy identity \eqref{eq:entropy_identity}.
\end{proof}

We end this section proving Theorem \ref{thm:exi_uni_thm} and \ref{thm:L1_2_stability}.
\begin{proof}[Proof of Theorem \ref{thm:exi_uni_thm}]
    We combine Theorem \ref{thm:exi_uni_thm_2} and Proposition \ref{prop:entropy_identity}.
\end{proof}
\begin{proof}[Proof of Theorem \ref{thm:L1_2_stability}]
    It is checked in Proposition \ref{prop:stability}.
\end{proof}

\section{Propagation of a \texorpdfstring{$L^\infty$}{} Gaussian upper bound}\label{sec:L^infty_Gaussian_upper}
In this section, we establish the propagation of $L^\infty$ Gaussian upper bounds for solutions to the Boltzmann-Fermi-Dirac equation. We use a comparison argument developed in \cite{GPV2009}. This approach was later extended to the inelastic Boltzmann equation in \cite{AJL2024}.

Like the classical Boltzmann equation, we first define $Q_{FD}^+, Q_{FD}^-$ and $L_{FD}$.
\begin{definition}
    For $v \in \mathbb{R}^3$, we define
    \begin{align*}
        Q_{FD}^+ (f_1,f_2,1-f_3,1-f_4)(v) &\coloneqq  \int_{\mathbb{R}^3 \times \mathbb{S}^2} B(v-v_*,\sigma)f_1(v')f_2(v_*')(1-f_3(v))(1-f_3(v_*)) \,d\sigma dv_*,\\
        Q_{FD}^- (f_1,f_2,1-f_3,1-f_4)(v) &\coloneqq  \int_{\mathbb{R}^3 \times \mathbb{S}^2} B(v-v_*,\sigma) f_1(v)f_2(v_*)(1-f_3(v'))(1-f_4(v_*')) \,d\sigma dv_*,\\
        \intertext{and}
        L_{FD}(f_1,1-f_2,1-f_3)(v) &\coloneqq  \int_{\mathbb{R}^3 \times \mathbb{S}^2} B(v-v_*,\sigma) f_1(v_*)(1-f_2(v'))(1-f_3(v_*')) \,d\sigma dv_*.
    \end{align*}
\end{definition}
By the definition, we have
\begin{align*}
    Q_{FD}(f,f)&= Q_{FD}^+ (f,f,1-f,1-f) - Q_{FD}^- (f,f,1-f,1-f)\\
    &=Q_{FD}^+ (f,f,1-f,1-f) - fL_{FD}(f,1-f,1-f).
\end{align*} 

The next lemma states a lower bound of $L_{FD}$.
\begin{lemma} \label{lem:Lf_lower}
    We consider the collision kernel $B$ for $0 < \gamma \leq 2$ and (H1). Assume that $f \in L_2^1$ and $0\leq f\leq 1$. Then there exist constants $R>0$ and $C>0$ depending on $\|f\|_{1,0}, \|f\|_{1,2}, \gamma, C_b$, and $\varphi(\epsilon)$ such that
    \begin{align*} 
         L_{FD}(f,1-f,1-f)(v)\geq C|v|^\gamma,
         \text{\quad where \quad } |v| \geq R.
    \end{align*}
\end{lemma}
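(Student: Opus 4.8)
The plan is to estimate $L_{FD}(f,1-f,1-f)(v)$ from below by isolating the portion of the $(v_*,\sigma)$-integral where the post-collision velocities $v'$ and $v_*'$ stay in a compact region, so that the factors $(1-f(v'))$ and $(1-f(v_*'))$ contribute a positive amount after we subtract off the part of $f$-mass sitting in that region. Concretely, first observe that
\begin{align*}
    L_{FD}(f,1-f,1-f)(v) &= \int_{\mathbb{R}^3\times\mathbb{S}^2} B(v-v_*,\sigma) f(v_*)\,d\sigma dv_* \\
    &\quad - \int_{\mathbb{R}^3\times\mathbb{S}^2} B(v-v_*,\sigma) f(v_*)\bigl(f(v') + f(v_*') - f(v')f(v_*')\bigr)\,d\sigma dv_*.
\end{align*}
The first term equals $C_b \int_{\mathbb{R}^3}|v-v_*|^\gamma f(v_*)\,dv_*$, which is bounded below by $c|v|^\gamma$ for $|v|$ large, using $|v-v_*|^\gamma \geq \tfrac12|v|^\gamma - C|v_*|^\gamma$ (valid for $0<\gamma\le2$) together with $\|f\|_{1,0}$ and $\|f\|_{1,2}<\infty$; this is the main gain.

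The second term is the quantity bounded in Lemma \ref{lem:epsv} (after dropping the harmless $-f(v')f(v_*')$ and recalling that the symmetrization there produced exactly $2\int B f(v_*)f(v_*')\,d\sigma dv_*$, which dominates $\int B f(v_*)(f(v')+f(v_*'))\,d\sigma dv_*$). So by Lemma \ref{lem:epsv}, for any $0<\epsilon<1$,
\begin{align*}
    \int_{\mathbb{R}^3\times\mathbb{S}^2} B(v-v_*,\sigma) f(v_*)(f(v')+f(v_*'))\,d\sigma dv_* \leq \frac{C_1}{\epsilon^3}\|f\|_{1,2} + C_2\varphi(\epsilon)\bigl(\|f\|_{1,2} + |v|^\gamma\|f\|_{1,0}\bigr).
\end{align*}
Now I would choose $\epsilon$ small but fixed (depending only on $\|f\|_{1,0}$, $c$, $C_2$ and $\lim_{\epsilon\to0}\varphi(\epsilon)=0$) so that the coefficient $C_2\varphi(\epsilon)\|f\|_{1,0}$ of $|v|^\gamma$ is, say, at most $\tfrac14$ of the constant $c$ in front of $|v|^\gamma$ from the first term. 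With $\epsilon$ thus frozen, the remaining contributions $\tfrac{C_1}{\epsilon^3}\|f\|_{1,2}$ and $C_2\varphi(\epsilon)\|f\|_{1,2}$ are $O(1)$ constants, which are absorbed by taking $|v|\geq R$ large enough (depending on $\|f\|_{1,0},\|f\|_{1,2},\gamma,C_b,\varphi(\epsilon)$). Combining the two pieces gives $L_{FD}(f,1-f,1-f)(v) \geq \tfrac{c}{2}|v|^\gamma$ for $|v|\geq R$, which is the claim with $C=c/2$.

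I do not anticipate a serious obstacle, since all the hard analytic work (the Povzner-type / cancellation-lemma estimate controlling $\int B f(v_*)f(v_*')\,d\sigma dv_*$ with the splitting into $\theta$ near $0,\pi$ versus away) is already packaged in Lemma \ref{lem:epsv}. The only point requiring a little care is the lower bound $C_b\int_{\mathbb{R}^3}|v-v_*|^\gamma f(v_*)\,dv_* \geq c|v|^\gamma$ for large $|v|$: one splits the integral over $\{|v_*|\le |v|/2\}$, where $|v-v_*|\geq |v|/2$ so this piece is $\geq C_b(|v|/2)^\gamma(\|f\|_{1,0} - \int_{|v_*|>|v|/2}f)$, and the defect $\int_{|v_*|>|v|/2}f \leq 4|v|^{-2}\|f\|_{1,2}\to0$, so for $|v|$ large the bracket is $\geq \tfrac12\|f\|_{1,0}>0$. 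Note $\|f\|_{1,0}>0$ is implicitly assumed (otherwise $f\equiv0$ and the statement is vacuous); this is where the dependence on $\|f\|_{1,0}$ enters.
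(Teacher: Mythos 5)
Your proposal is correct and follows essentially the same route as the paper: decompose $L_{FD}$ into the classical loss $C_b\int|v-v_*|^\gamma f(v_*)\,dv_*$ minus the quantity already controlled in Lemma \ref{lem:epsv}, choose $\epsilon$ small enough that the $|v|^\gamma$ coefficient coming from $\varphi(\epsilon)$ is dominated by the gain, and then take $R$ large to absorb the $O(1)$ remainder. The only cosmetic difference is in the lower bound for the classical loss term: the paper uses the pointwise inequality $|v-v_*|^\gamma \geq \tfrac12|v|^\gamma - |v_*|^\gamma$, whereas you split the $v_*$-integral over $\{|v_*|\le|v|/2\}$ and its complement and control the tail mass by Chebyshev; both arguments are valid and yield the same conclusion.
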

\begin{proof}
    We split $L_{FD}(f,1-f,1-f)$ into two parts by
        \begin{align*}
            &L_{FD}(f,1-f,1-f) (v) = \int_{\mathbb{R}^3 \times\mathbb{S}^2} B(|v-v_*|,\cos\theta)f(v_*) (1-f(v'))(1-f(v_*'))\, d\sigma dv_* \\
            &= \int_{\mathbb{R}^3 \times\mathbb{S}^2} B(|v-v_*|,\cos\theta)f(v_*) \, d\sigma dv_* - \int_{\mathbb{R}^3 \times\mathbb{S}^2} B(|v-v_*|,\cos\theta)f(v_*)(f(v')+f(v_*')) \, d\sigma dv_*.
        \end{align*}
    In Lemma \ref{lem:epsv}, we found constants $C_1>0$ and $C_2>0$ depending on $\gamma$ and $C_b$ such that
    \begin{align}\label{eq:app_6.2}
        \int_{\mathbb{R}^3 \times \mathbb{S}^2} B(|v-v_*|,\cos \theta)  f(v_*) (f(v')+f(v'_*))\,d\sigma dv_* \leq \frac{C_1}{\epsilon^3}\|f\|_{1,2}+ C_2 \varphi(\epsilon)(\|f\|_{1,2}+|v|^\gamma\|f\|_{1,0})
    \end{align} 
    for every $0<\epsilon <1$. 
    
    We estimate a lower bound of the first term by
    \begin{align}\label{eq:Lf>v}
        \begin{split}
            \int_{\mathbb{R}^3 \times\mathbb{S}^2} B(|v-v_*|,\cos\theta)f(v_*) \,d\sigma dv_* &= C_b\int_{\mathbb{R}^3} |v-v_*|^\gamma f(v_*) \,dv_*\\
            &\geq C_b\int_{\mathbb{R}^3} \left(\frac{1}{2}|v|^\gamma - |v_*|^\gamma\right)f(v_*) \,dv_*\\
            &\geq C_b\left(\frac{|v|^\gamma}{2}\|f\|_{1,0} - \|f\|_{1,2}\right).
        \end{split}
    \end{align}
    In the middle, we used (H1) and
    \begin{align*}
        |v-v_*|^\gamma\geq \big||v|-|v_*|\big|^\gamma \geq \frac{1}{2}|v|^\gamma - |v_*|^\gamma.
    \end{align*}
    Now, we choose $\epsilon = \epsilon_*$ in \eqref{eq:app_6.2} such that $C_2\varphi(\epsilon_*)\leq \frac{C_b\|f\|_{1,0}}{4}$. Combining \eqref{eq:app_6.2} and \eqref{eq:Lf>v}, we obtain
    \begin{align*}
        L_{FD}(f,1-f,1-f)(v)\geq C_b\frac{\|f\|_{1,0}}{4}|v|^\gamma - \left(\frac{C_1}{\epsilon_*^3} + C_2 \varphi(\epsilon_*) + C_b\right)\|f\|_{1,2}. 
    \end{align*}
    We fix a sufficiently large $R>0$ such that
    \begin{align*}
        \frac{C_b\|f\|_{1,0}}{8} R^\gamma \geq \left(\frac{C_1}{\epsilon_*^3} + C_2 \varphi(\epsilon_*) + C_b\right)\|f\|_{1,2}.
    \end{align*}
    For $|v|\geq R$, we get
    \begin{align*}
        L_{FD}(f,1-f,1-f)(v) \geq \frac{C_b\|f\|_{1,0}}{8} |v|^\gamma.
    \end{align*}
    Here, $R$ depends on $\|f\|_{1,0}^{-1}, \|f_0\|_{1,2}, \gamma, C_b$, and $\varphi(\epsilon)$.
\end{proof}

\begin{remark}
    In the case of $0<\gamma\leq 1$, in \cite{L1983}, Arkeryd proved that 
    \begin{align*}
        L_c(f)(v)\coloneqq \int_{\mathbb{R}^3 \times\mathbb{S}^2} B(v-v_*,\sigma) f(v_*)\,d\sigma dv_* \geq C(1+|v|)^\gamma 
    \end{align*}
    for some $C$ under the assumption $f\in L^1_2$ and $\int_{\mathbb{R}^3} f|\ln f|\,dv<\infty$. However, this global lower bound can not be easily adapted into the Fermi-Dirac case. For example, if we take $f = \mathbf{1}_{\{|v|\leq r\}}$ for some $r>0$, which is a saturated equilibrium, then $L(f)(v) = 0$ for $|v|\leq r$. Indeed, to make $f(v_*)\neq 0$, we need to choose $|v_*|\leq r$. As $|v|,|v_*|\leq r$, we have $|v'|\leq r$ or $|v'_*|\leq r$. It means $(1-f')(1-f'_*) = 0$ and
    \begin{align*}
        L_{FD}(f,1-f,1-f)(v) = \int_{\mathbb{R}^3 \times\mathbb{S}^2} B(|v-v_*|,\cos\theta)f(v_*) (1-f(v'))(1-f(v_*'))\, d\sigma dv_* = 0.
    \end{align*}
    We can detour this problem by adding assumption $\int_{\mathbb{R}^3} |f\ln f + (1-f)\ln (1-f)|\,dv>0$ and applying the Gaussian lower bound result. But, this method depends on the specific shape of $f$.

    To avoid this problem, the above lemma chose some large enough $R>0$ and proved a lower bound for $|v|\geq R$.
\end{remark}

In (H3), we recall $\alpha<2$ is defined by
\begin{align*}
    b(\cos\theta)\sin^\alpha \theta\leq C
\end{align*}
for some constant $C$.

Under (H3), $\varphi(\epsilon)$ is given by
\begin{align*}
    \varphi(\epsilon)&=\int_{\mathbb{S}^2} b(\cos\theta)\left(\mathbf{1}_{\{0<\theta<\epsilon\}} + \mathbf{1}_{\{\pi-\epsilon<\theta<\pi\}}\right)\,d\sigma\\
    &\leq 4\pi C\int_0^\epsilon\frac{1}{\sin^{\alpha-1} \theta}\,d\theta\leq 2^{3-\alpha}\pi C\int_0^\epsilon\frac{1}{\theta^{\alpha-1} }\,d\theta\\
    &\leq \frac{2^{3-\alpha}\pi C}{2-\alpha}\epsilon^{2-\alpha}
\end{align*}
for $0<\epsilon< 1$. Therefore, the dependency on $\varphi(\epsilon)$ can be replaced by $\alpha<2$. Also, there is an explicit upper bound of $\varpi_p$ in Lemma \ref{lem:bobylev_2} in this case; we refer to \cite{ACGM2013}. So, we can replace the dependency on $b(\cos\theta)$ in Theorem \ref{thm:L1_bound} by the dependency on $\alpha<2$.

From now on, we will follow the proof technique in \cite{GPV2009}. We first refer to a technical lemma in \cite{GPV2009}.
\begin{lemma}[Lemma 5 of \cite{GPV2009}]\label{lem:GPV2009_5}
    We consider the collision kernel \eqref{eq:B_defi} for $0 < \gamma$, (H3), and an angle restriction
    \begin{align*}
        B(|v-v_*|,\cos\theta) = B(|v-v_*|,\cos\theta)\mathbf{1}_{\{\cos\theta \geq 0\}}.
    \end{align*}
    Let $M(v) = e^{-a|v|^2}$ for $a>0$ and $\epsilon = \min\{\gamma, 2-\alpha\}>0$. Then, we have
    \begin{align*}
        Q^+_c(M, f)(v)\leq C\left\|(1+|v|^{\gamma-\epsilon})\frac{f}{M}\right\|_{L^1}(1+|v|^{\gamma-\epsilon})M(v)
    \end{align*}
    for some constant $C$ depending on $\alpha$, $\gamma$, and $a$.
\end{lemma}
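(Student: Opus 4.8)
\textbf{Proof proposal for Lemma \ref{lem:GPV2009_5}.}

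The plan is to mirror the proof of Lemma 5 in \cite{GPV2009}, since the statement is exactly the classical estimate on $Q_c^+(M,f)$ — the Fermi-Dirac factors play no role here because the object is the \emph{classical} gain operator $Q_c^+$. First I would pass to the Carleman representation \eqref{eq:def_cal_Q+} of $Q_c^+(M,f)$, writing
\begin{align*}
    Q_c^+(M,f)(v) = \int_{\mathbb{R}^3} M(v')\frac{1}{|v'-v|^{2-\gamma}}\int_{v+E_{v'-v}} \frac{h(\cos\theta_\omega)}{\cos^\gamma \theta_\omega} f(v_*')\,dv_*'\,dv',
\end{align*}
where $v'$ ranges over $\mathbb{R}^3$ and $v_*'$ over the plane through $v$ perpendicular to $v'-v$, and the angle restriction $\cos\theta\ge 0$ together with (H3) keeps the angular weight $h(\cos\theta_\omega)/\cos^\gamma\theta_\omega$ controlled away from its singularities (recall $h(\cos\theta_\omega)=2\cos\theta_\omega\, b(\cos(\pi-2\theta_\omega))$, so $h(\cos\theta_\omega)/\cos^\gamma\theta_\omega\lesssim \cos^{1-\gamma}\theta_\omega\, \sin^{-\alpha}(\pi-2\theta_\omega)$, which is integrable on the relevant range precisely when $\alpha<2$ and $\gamma>0$). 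The key algebraic input is the energy identity $|v'|^2+|v_*'|^2=|v|^2+|v_*|^2$ combined with orthogonality $|v'-v|^2+|v_*'-v|^2=|v_*-v|^2$; from $M(v')=e^{-a|v'|^2}$ one factors $M(v')=M(v)\,e^{-a(|v'|^2-|v|^2)}$ and then uses these identities to rewrite $e^{-a(|v'|^2-|v|^2)}$ in terms of $v_*'$ and the geometry, so that $M(v')$ produces a factor $M(v)$ times something that can be absorbed into $f(v_*')/M(v_*')$.

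The core of the argument is then a pointwise bound on the inner (planar) integral: after the substitution one shows
\begin{align*}
    \int_{v+E_{v'-v}} \frac{h(\cos\theta_\omega)}{\cos^\gamma\theta_\omega}\, f(v_*')\,dv_*' \lesssim |v'-v|^{?}\,M(v)\int_{v+E_{v'-v}} (1+|v_*'|^{\gamma-\epsilon})\frac{f(v_*')}{M(v_*')}\,dv_*',
\end{align*}
where one tracks how the angular weight, the factor $|v'-v|^{2-\gamma}$ in the denominator, and the Gaussian gain combine. The factor $|v'-v|^{-(2-\gamma)}$ is singular as $v'\to v$; this singularity is tamed exactly as in \cite{GPV2009} by noting that the planar integration domain shrinks and, more importantly, by the change-of-variables structure that turns $|v'-v|^{-(2-\gamma)}$ into an integrable angular weight $\sin\theta/\cos^3\tfrac\theta2 \cdot (\cdot)$ of the type appearing in \eqref{eq:cancel}. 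After integrating over $v'$, the remaining $v'$-integral collapses (via the same cancellation-lemma computation) to an $L^1$ norm over $\mathbb{R}^3$, producing $\bigl\|(1+|v|^{\gamma-\epsilon})\tfrac{f}{M}\bigr\|_{L^1}$, while the leftover $v$-dependence assembles into $(1+|v|^{\gamma-\epsilon})M(v)$ — the loss of $\epsilon=\min\{\gamma,2-\alpha\}$ powers is precisely what is needed to absorb both the $|v-v_*|^\gamma$ growth of the kinetic factor and the $\sin^{-\alpha}$ angular singularity simultaneously.

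The main obstacle, as in the original, is the careful bookkeeping near $v'=v$ (equivalently $\theta\to\pi/2$ in the $\omega$-picture, where $\cos\theta_\omega\to 0$): one must check that the combination $|v'-v|^{-(2-\gamma)}\, \cos^{-\gamma}\theta_\omega\, h(\cos\theta_\omega)$ is genuinely integrable against the Gaussian, which is where the hypotheses $\gamma>0$ and $\alpha<2$ are both used, and where the precise value $\epsilon=\min\{\gamma,2-\alpha\}$ comes from. A secondary technical point is justifying the interchange of integrations (Tonelli applies since all integrands are nonnegative once $M$ and $f$ are) and verifying that the constant $C$ depends only on $\alpha$, $\gamma$, and $a$ — this follows because every bound used is uniform in $v$ and only the angular integrability constant (controlled by $\alpha$), the exponent $\gamma$, and the Gaussian rate $a$ enter. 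Since the excerpt explicitly cites this as Lemma 5 of \cite{GPV2009}, I would in practice reduce the proof to quoting that reference after checking that (H3) and the angle restriction $\mathbf{1}_{\{\cos\theta\ge 0\}}$ supply exactly the hypotheses needed there, noting only the minor adaptation that our kernel is written with $h(\cos\theta_\omega)$ in the $\omega$-representation rather than $b(\cos\theta)$.
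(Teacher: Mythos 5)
The paper does not reprove this lemma; it is quoted verbatim from \cite{GPV2009} and used as a black box in the subsequent arguments (the Fermi--Dirac structure plays no role because $Q_c^+$ is the classical gain operator). Your proposal ends by reducing the matter to citing that reference after checking that (H3) and the angle restriction $\mathbf{1}_{\{\cos\theta\geq 0\}}$ match the hypotheses there, which is exactly what the paper does, so your overall approach is the same. As a side note on the sketch: the cleanest way to exploit the energy identity is $M(v')M(v_*')=M(v)M(v_*)\leq M(v)$, giving $M(v')f(v_*')\leq M(v)\,f(v_*')/M(v_*')$ directly (your version via $M(v')=M(v)e^{-a(|v'|^2-|v|^2)}$ is equivalent once you substitute $|v'|^2-|v|^2=|v_*|^2-|v_*'|^2$, but is stated in a way that momentarily looks unbounded); and the explanation of where $\epsilon=\min\{\gamma,2-\alpha\}$ enters is too vague to stand on its own --- a crude application of the cancellation lemma gives the weight $(1+|v|^\gamma)$ rather than the sharper $(1+|v|^{\gamma-\epsilon})$, and obtaining the latter requires the more delicate bookkeeping of \cite{GPV2009} near the $v'\to v$ singularity in the Carleman representation, which the sketch gestures at but does not pin down. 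Since the lemma is cited rather than reproved, none of this affects the correctness of your final proposal.
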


Using this lemma, we can prove the following lemma.
\begin{lemma} \label{lem:Q(M)<0}
    We consider the collision kernel $B$ for $0 < \gamma \leq 2$, (H3), and an angle restriction
    \begin{align*}
        B(|v-v_*|,\cos\theta) = B(|v-v_*|,\cos\theta)\mathbf{1}_{\{\cos\theta \geq 0\}}.
    \end{align*}
    We assume $f$ satisfies $0\leq f\leq 1$ and
    \begin{align*}
        \int_{\mathbb{R}^3} f(v) e^{2a|v|^2}\; dv \leq C
    \end{align*} 
    for some constant $a>0$ and $C>0$.
    Then, for a Gaussian function $M(v) \coloneqq  e^{-a|v|^2}$, there exists $r<\infty$ such that 
    \begin{align*}
        Q_{FD}(M, f, 1-f, 1-f)\leq 0 \text{\quad for \quad} |v|\geq r.
    \end{align*}
    Here, $r$ depends on $\|f_0\|_{1,0}, \|f_0\|_{1,2}, \gamma, \alpha, C_b, a$, and $C$.
\end{lemma}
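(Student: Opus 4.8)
The plan is to split $Q_{FD}(M,f,1-f,1-f) = Q^+_{FD}(M,f,1-f,1-f) - M\,L_{FD}(f,1-f,1-f)$ and show that for $|v|$ large the negative loss term dominates the positive gain term. For the loss term, since $0\le f\le 1$ and the hypothesis gives $\int f e^{2a|v|^2}\,dv\le C$, in particular $f\in L^1_2$ with norms controlled by $a$ and $C$, Lemma \ref{lem:Lf_lower} yields a constant $C_L>0$ and radius $R_L$ so that $L_{FD}(f,1-f,1-f)(v)\ge C_L|v|^\gamma$ for $|v|\ge R_L$. Hence $M(v)L_{FD}(f,1-f,1-f)(v)\ge C_L|v|^\gamma M(v)$ for $|v|\ge R_L$.

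For the gain term, first bound it by the classical gain operator: since $0\le 1-f\le 1$,
\begin{align*}
    Q^+_{FD}(M,f,1-f,1-f)(v) &= \int_{\mathbb{R}^3\times\mathbb{S}^2} B(v-v_*,\sigma) M(v')f(v_*')(1-f(v))(1-f(v_*))\,d\sigma dv_*\\
    &\le Q^+_c(M,f)(v).
\end{align*}
Then I would apply Lemma \ref{lem:GPV2009_5} with this $M(v)=e^{-a|v|^2}$. Setting $\epsilon=\min\{\gamma,2-\alpha\}>0$, the lemma gives
\begin{align*}
    Q^+_c(M,f)(v)\le C'\left\|(1+|v|^{\gamma-\epsilon})\frac{f}{M}\right\|_{L^1}(1+|v|^{\gamma-\epsilon})M(v).
\end{align*}
The crucial point is that the $L^1$ norm on the right is finite: $\frac{f(v)}{M(v)} = f(v)e^{a|v|^2}$, and $(1+|v|^{\gamma-\epsilon})f(v)e^{a|v|^2}\le (1+|v|^{\gamma-\epsilon})e^{-a|v|^2}\cdot f(v)e^{2a|v|^2}$, which is integrable with a bound depending only on $a$, $\gamma$, $\alpha$, and $C$ (using $(1+|v|^{\gamma-\epsilon})e^{-a|v|^2}\le C_a$). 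So $Q^+_{FD}(M,f,1-f,1-f)(v)\le C''(1+|v|^{\gamma-\epsilon})M(v)$ with $C''$ depending on the allowed quantities.

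Combining the two bounds, for $|v|\ge R_L$,
\begin{align*}
    Q_{FD}(M,f,1-f,1-f)(v)\le C''(1+|v|^{\gamma-\epsilon})M(v) - C_L|v|^\gamma M(v),
\end{align*}
and since $\gamma-\epsilon<\gamma$, the right-hand side is negative once $|v|$ exceeds some radius $r\ge R_L$ determined by comparing $C''(1+|v|^{\gamma-\epsilon})$ with $C_L|v|^\gamma$; this $r$ depends only on $\|f_0\|_{1,0},\|f_0\|_{1,2},\gamma,\alpha,C_b,a,C$, as required. I do not anticipate a serious obstacle here — the only point needing care is verifying the finiteness and quantitative control of the weighted $L^1$ norm $\|(1+|v|^{\gamma-\epsilon})f/M\|_{L^1}$ from the exponential moment hypothesis, and ensuring the constants in Lemma \ref{lem:Lf_lower} and Lemma \ref{lem:GPV2009_5} depend only on the permitted parameters (in particular replacing $\varphi(\epsilon)$-dependence by $\alpha$-dependence as noted in the paragraph preceding Lemma \ref{lem:GPV2009_5}).
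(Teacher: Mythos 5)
Your proof is correct and follows essentially the same route as the paper: the same decomposition $Q_{FD}=Q^+_{FD}-M\,L_{FD}$, the same bound $Q^+_{FD}\le Q^+_c$ via $0\le 1-f\le 1$, Lemma \ref{lem:GPV2009_5} for the gain term and Lemma \ref{lem:Lf_lower} for the loss term, followed by the exponent comparison $\gamma-\epsilon<\gamma$. The only difference is that you make explicit the verification that $\|(1+|v|^{\gamma-\epsilon})f/M\|_{L^1}$ is finite and controlled by the exponential moment hypothesis, a step the paper leaves implicit.
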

\begin{proof}
    From Lemma \ref{lem:GPV2009_5}, we get
    \begin{align*}
        Q^+_{FD}(M, f, 1-f, 1-f)(v)\leq Q^+_c(M, f)(v)\leq C_1(1+|v|^{\gamma - \epsilon}) M(v).
    \end{align*}
    
    From Lemma \ref{lem:Lf_lower}, we can find $R>0$ and $C_2>0$, which depends on $\|f\|_{1,0}, \|f\|_{1,2}, \gamma, \alpha$, and $C_b$ such that
    \begin{align*} 
        Q_{FD}^- (M, f, 1-f, 1-f)(v) = M(v)L_{FD}(f,1-f,1-f)(v)
        \geq C_2 M(v)|v|^{\gamma} \text{\quad for \quad} |v|>R.
    \end{align*}

    Since $\epsilon>0$, we can choose $r\geq R$ large enough so that
    \begin{align*}
           C_1(1+|v|^{\gamma - \epsilon}) - C_2 |v|^{\gamma} \leq 0\text{\quad for \quad} |v|\geq r.
    \end{align*}
    Thus, we obtain
    \begin{align*}
        &Q_{FD}(M, f, 1-f, 1-f)\\
        &=Q_{FD}^+(M,f,1-f,1-f)(v) -   Q_{FD}^- (M,f, 1-f,1-f)(v) \leq 0 \text{\quad for \quad} |v|\geq r.
    \end{align*}
\end{proof}

Next, we prove a technical lemma for a comparison argument, which extends Proposition 1 of \cite{GPV2009}.
\begin{lemma} \label{lem:comparison}
    Let $f : [0, \infty) \times \mathbb{R}^3 \rightarrow  [0,1]$ and $u : [0, \infty) \times \mathbb{R}^3 \rightarrow \mathbb{R}$ satisfy
    \begin{enumerate}
        \item $u(t,v), f(t,v)\in L^\infty([0, \infty), L^1_2(\mathbb{R}^3))$.
        \item $u(0,v)\leq 0$. Also, there exists $r>0$ such that $u(t,v)\leq 0$ on $|v|\leq r$ for all $t\geq 0$.
        \item $u$ and $f$ satisfy \begin{align}\label{eq:comparison_condi_1}
        u^+(t,v) \leq \int_0^t Q_{FD}(u,f, 1-f, 1-f)(\tau,v)\mathbf{1}_{\{u(\tau, v)\geq 0\}}\,d\tau \quad \text{on}\quad |v|\geq r.
    \end{align}
    \end{enumerate}
    
    Then, we obtain $u(t,v)\leq 0$ for $t>0$ and a.e. $v \in \mathbb{R}^3$.
\end{lemma}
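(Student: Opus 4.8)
The plan is to prove that $\int_{\mathbb{R}^3} u^+(t,v)\,dv \le 0$ for every $t\ge 0$; since $u^+\ge 0$ this forces $u^+(t,\cdot)=0$ a.e., i.e.\ $u(t,\cdot)\le 0$ a.e. The whole argument rests on the \emph{mass--balance identity}
\begin{align*}
    \int_{\mathbb{R}^3} Q_{FD}^+(u^+,f,1-f,1-f)(v)\,dv = \int_{\mathbb{R}^3} u^+(v)\,L_{FD}(f,1-f,1-f)(v)\,dv ,
\end{align*}
which is nothing but \eqref{eq:bef_aft} applied to the non-negative measurable function $F(v,v_*,v',v_*')=u^+(v')f(v_*')\,(1-f(v))(1-f(v_*))$: the left side equals $\int B F(v,v_*,v',v_*')\,d\sigma\,dv\,dv_*$ and the right side equals $\int B F(v',v_*',v,v_*)\,d\sigma\,dv\,dv_*$. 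Only $0\le f\le 1$ (for non-negativity of $F$) and, for finiteness, (H1) together with $u,f\in L^\infty([0,\infty),L^1_2)$ are needed here.

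First I would upgrade hypothesis (3) to the pointwise bound, valid for \emph{all} $v\in\mathbb{R}^3$ and $t\ge 0$,
\begin{align*}
    u^+(t,v) \le \int_0^t \bigl[\, Q_{FD}^+(u^+,f,1-f,1-f)(\tau,v) - u^+(\tau,v)\,L_{FD}(f,1-f,1-f)(\tau,v)\,\bigr]\,d\tau .
\end{align*}
On $|v|\ge r$ this follows from (3): where $u(\tau,v)\ge 0$ one has $u(\tau,v)=u^+(\tau,v)$, and replacing $u(v')$ by $u^+(v')\ge u(v')$ in the non-negative integrand gives $Q_{FD}^+(u,f,1-f,1-f)(\tau,v)\le Q_{FD}^+(u^+,f,1-f,1-f)(\tau,v)$, so the integrand of (3) is dominated by the bracket above; where $u(\tau,v)<0$ the indicator $\mathbf 1_{\{u\ge 0\}}$ kills the integrand of (3), while the bracket above is $\ge 0$ since $u^+(\tau,v)=0$ there. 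On $|v|\le r$, hypothesis (2) gives $u^+(\tau,v)\equiv 0$ and the right-hand side is $\ge 0$, so the bound is automatic.

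Then I would integrate the pointwise bound in $v$ over $\mathbb{R}^3$ and swap the order of integration. Fubini is justified because, dropping $(1-f(v'))(1-f(v_*'))\le 1$ and using $|v-v_*|^\gamma\le 2^\gamma(|v|^\gamma+|v_*|^\gamma)\le 2^{\gamma}\bigl((1+|v|^2)+(1+|v_*|^2)\bigr)$ together with $0\le\gamma\le 2$, (H1), and $u,f\in L^\infty([0,\infty),L^1_2)$, the $v$-integral of the absolute value of the bracketed integrand is bounded by a constant uniform in $\tau$ (and both terms of the mass--balance identity are thereby finite). Combining the pointwise bound, Fubini, and the identity gives $\int_{\mathbb{R}^3}u^+(t,v)\,dv\le\int_0^t 0\,d\tau=0$, which is the assertion.

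The point worth stressing is that, unlike the weighted $L^\infty$ comparison arguments of \cite{GPV2009} that this lemma generalizes, the unweighted $L^1$ pairing makes the gain and loss contributions cancel \emph{exactly}, so the proof needs neither a Gr\"onwall inequality nor the coercivity of $L_{FD}$ from Lemma \ref{lem:Lf_lower}. The only genuine care is (i) the case analysis with the indicator $\mathbf 1_{\{u\ge 0\}}$ and the region $|v|\le r$, so that the pointwise bound holds on all of $\mathbb{R}^3$ and survives integration, and (ii) the routine integrability bookkeeping for Fubini and for \eqref{eq:bef_aft}; the ``hard part'' is really just recognizing the conservation structure behind the identity, after which the argument is a few lines.
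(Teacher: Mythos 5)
Your proof is correct and takes essentially the same route as the paper: both proofs integrate the bound over $v$, apply the pre/post-collision change of variables \eqref{eq:bef_aft} to the gain term, and then exploit the sign structure to conclude $\int u^+\,dv\le 0$. The only cosmetic difference is bookkeeping — the paper keeps the indicator $\mathbf{1}_{\{u\geq 0\}}$ on both gain and loss and finishes with the pointwise sign inequality $u(v)(\mathbf{1}_{\{u(v')\geq 0\}}-\mathbf{1}_{\{u(v)\geq 0\}})\leq 0$, whereas you first replace $u$ by $u^+$ (which encodes the same sign information pointwise) and then observe that the gain and loss integrals cancel exactly via the change of variables.
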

\begin{proof}
    If $|v|\leq r$, as $u(\tau,v)\leq 0$ for all $\tau\geq 0$, the both sides of \eqref{eq:comparison_condi_1} are $0$. Therefore, \eqref{eq:comparison_condi_1} in fact holds for all $v\in\mathbb{R}^3$. Taking $v$ integration on both sides, we get
    \begin{align*}
        \int_{\mathbb{R}^3} u^+(t,v)\,dv \leq \int_0^t \int_{\mathbb{R}^3} \left(Q_{FD}^+ (u,f, 1-f, 1-f) -  Q^-_{FD}(u,f, 1-f, 1-f)\right)(\tau, v)\mathbf{1}_{\{u(\tau,v)\geq 0\}}\,dv d\tau.
    \end{align*}
    
    We regard $\mathbf{1}_{\{u(\tau,v)\geq 0\}}$ as a test function and employ symmetry \eqref{eq:bef_aft}; it is well-defined as $u,f\in L^\infty([0,\infty), L^1_2(\mathbb{R}^3))$. Then
    \begin{align*}
        &\int_{\mathbb{R}^3} u^+(t,v) \,dv\\
        &\leq \int_0^t \int_{\mathbb{R}^6 \times \mathbb{S}^2} 
        B(|v-v_*|, \cos \theta) uf_*(1-f')(1-f'_*) \left(\mathbf{1}_{\{u(\tau, v')\geq 0\}}-\mathbf{1}_{\{u(\tau, v)\geq 0\}}\right)\,d\sigma dv_*dv d\tau.
    \end{align*}
    Because $u(v)\left(\mathbf{1}_{\{u(v')\geq 0\}}-\mathbf{1}_{\{u(v)\geq 0\}}\right) \leq 0$ for any $v$ and $v'$, we deduce that $\int_{\mathbb{R}^3} u^+(t,v) \,dv\leq 0$ and $u\leq 0$ a.e. $v$.
\end{proof}

In the proof of Theorem \ref{thm:L1_bound}-(3), we will define $u(t,v) = f(t,v) - M(v)$, where $M(v)$ is a Gaussian, and apply Lemma \ref{lem:comparison}.

Now, we are ready to prove Theorem \ref{thm:L1_bound}-(3). In the proof, we apply Lemma \ref{lem:comparison}, Lemma \ref{lem:Q(M)<0}, and Theorem \ref{thm:L1_bound}-(2).

\begin{proof}[Proof of Theorem \ref{thm:L1_bound}-(3)]
    To make the proof easy, we first restrict the collision kernel by
    \begin{align*}
        B(|v-v_*|, \cos\theta) = B(|v-v_*|, \cos\theta)\mathbf{1}_{\{\cos\theta\geq 0\}}.
    \end{align*}
    Indeed, by the symmetry on $b(\cos\theta)$ and \eqref{eq:sym_sigma}, we have
    \begin{align*}
        Q_{FD}(f,f) &= \int_{\mathbb{R}^3\times\mathbb{S}^2} B(|v-v_*|, \cos\theta)\big(f(v')f(v_*')(1-f(v))(1-f(v_*))\\
        &\quad \quad -f(v)f(v_*)(1-f(v'))(1-f(v_*'))\big)\,d\sigma dv_*\\
        &=2\int_{\mathbb{R}^3\times\mathbb{S}^2} B(|v-v_*|, \cos\theta)\mathbf{1}_{\{\cos\theta\geq 0\}}\big(f(v')f(v_*')(1-f(v))(1-f(v_*))\\
        &\quad \quad -f(v)f(v_*)(1-f(v'))(1-f(v_*'))\big)\,d\sigma dv_*,
    \end{align*}
    so it makes no difference in the result.
    
    Since $f_0(v) \leq M_0(v) \coloneqq e^{-a_0|v|^2+c_0}$, there exists a constant $C_0>0$ that depends on $a_0$ and $c_0$ such that
    \begin{align*} 
        \int_{\mathbb{R}^3} f_0(v)e^{\frac{a_0}{2}|v|^2}\,dv \leq C_0.
    \end{align*} 
    By the propagation of $L^1$ exponential moments in Theorem \ref{thm:L1_bound}-(2), there exist some constants $a_1, C_1>0$ depending on $\gamma, C_b, \alpha, \|f_0\|_{1,0}\|f_0\|_{1,2}, a_0$, and $C_0$ such that 
    \begin{align*}
        \sup_{t\in[0,\infty)} \int_{\mathbb{R}^3} f(t,v) e^{a_1|v|^2} \,dv \leq C_1.
    \end{align*}
    
    We take $a = \min{\{a_0, \frac{a_1}{2}\}}$ and $M'(v) = e^{-a|v|^2}$. From Lemma \ref{lem:Q(M)<0}, there exists $r>0$ that depends on $\|f_0\|_{1,0}, \|f_0\|_{1,2}, \gamma, C_b, \alpha, a$, and $C_1$ such that
    \begin{align*}
        Q_{FD}(M', f, 1-f, 1-f)(t,v)\leq 0\text{\quad for \quad} |v|\geq r.
    \end{align*}
    For such $r$, we choose $c = \max\{c_0, ar^2\}$ and define $M(v) = e^{-a|v|^2 + c}$. We show that $M(v)$ is the desired Gaussian upper bound function by checking the conditions in Lemma \ref{lem:comparison} for $u(t,v) = f(t,v) - M(v)$. As $M(v)\in L^1_2$, $f(t,v)$ and $u(t,v)$ are in $C([0,\infty), L^1_2(\mathbb{R}^3))$. Also, we have
    \begin{align*}
        f(0,v)-M(v) &\leq  f_0(v)-M_0(v) \leq 0,\quad \text{and}\\
        f(t,v)-M(v) &\leq 1 - e^{-a|v|^2 + ar^2}\leq 0\quad \text{for $t\geq 0$ and $|v|\leq r$.}
    \end{align*}
    Therefore, it fulfills the first and second conditions of Lemma \ref{lem:comparison}.
    Since $M(v)$ is the only function of $v$, and $f$ is a solution of the Boltzmann-Fermi-Dirac equation, following the proof of Lemma \ref{lem:fg+}, we have
    \begin{align*}
        &(f(t,v) - M(v))^+ \\
        &= (f_0(v) - M(v))^+ + \int_0^t Q_{FD}(f,f, 1-f, 1-f)(\tau, v)\mathbf{1}_{\{f(\tau, v) - M(v)\geq 0\}}\,d\tau\\
        &= \int_0^t \left(Q_{FD}(f-M,f,1-f,1-f) + Q_{FD}(M,f,1-f,1-f)\right)(\tau, v)\mathbf{1}_{\{f(\tau, v) - M(v)\geq 0\}}\,d\tau.
    \end{align*}
    Since $Q_{FD}(M, f, 1-f, 1-f) = e^c Q_{FD}(M', f, 1-f, 1-f)\leq 0$ for $|v|\geq r$, we reach
    \begin{align*}
        u^+(t,v)\leq \int_0^t Q_{FD}(u,f,1-f,1-f)(\tau, v)\mathbf{1}_{\{u(\tau, v)\geq 0\}}\,d\tau,\quad \text{for $|v|\geq r$.}
    \end{align*}
    Finally, we apply Lemma \ref{lem:comparison} and complete the proof.
\end{proof}

\section{Propagation of \texorpdfstring{$L^\infty$}{} polynomial moments}\label{sec:L^infty_polynomial_upper}

In this section, we study the $L^\infty$ polynomial moments estimates for the solution of the Boltzmann-Fermi-Dirac. We adapt the classical proof scheme in \cite{L1983} to the Fermi-Dirac case. For this, we choose the collision kernel $0<\gamma\leq 1$ and $b(\cos\theta) = const$. Note that $h(\cos\theta_\omega) = 2(const)\cos\theta_\omega$ in $\omega$-representation from \eqref{eq:h_condi}.

Our proof strategy is as follows. We write the Boltzmann-Fermi-Dirac equation by
\begin{align*}
    \partial_t f + f  L_{FD}(f,1-f,1-f)= Q_{FD}^+(f,f,1-f,1-f).
\end{align*}
As in \cite{L1983}, we compute the lower bound of the $L_{FD}(f,1-f,1-f)$, which was already done in Lemma \ref{lem:Lf_lower}, and upper bound of the $Q_{FD}^+(f,f,1-f,1-f)$. In fact, since $0\leq f\leq 1$, we have $Q_{FD}^+(f,f,1-f,1-f) \leq Q_c^+(f,f)$, so its upper bound is same as the $Q_c^+(f,f)$. We will refer to some functional inequalities around $Q_c^+(f,f)$ and in \cite{L1983} and then employ these inequalities to get the result for the Fermi-Dirac case.

We list some technical functional inequalities from \cite{L1983}. For the detailed description and proof, please visit the original paper.
\begin{lemma}[Lemma 3 of \cite{L1983}]\label{lem:sup_f}
    Let $h_1(t)$ and $h_2(t)$ be $L^1_{loc}([0,\infty))$ and $h_1(t)>0$ for $t\geq 0$. If $f(t)$ is an absolutely continuous function, and it satisfies
    \begin{align*} 
        \frac{d}{dt}f + h_1 f \leq h_2
    \end{align*}
    for a.e. $t\geq 0$, then
    \begin{align*}
        f(t) \leq  \max\left\{\esssup_{0 \leq s \leq t} \frac{h_2(s)}{h_1(s)},  f(0)\right\}\quad\text{for}\quad t\geq 0.
    \end{align*}
\end{lemma}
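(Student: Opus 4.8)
The plan is to prove this as a Gr\"onwall-type maximum principle: the differential inequality $\frac{d}{dt}f+h_1f\le h_2$ becomes a monotonicity statement once one multiplies by the nonnegative integrating factor associated with $h_1$, and comparing with the constant $M$ on the right-hand side yields the bound. As a preliminary reduction I would first dispose of the degenerate case $\esssup_{0\le s\le t}h_2(s)/h_1(s)=+\infty$, in which the asserted estimate is vacuous; so from now on one may assume
\begin{align*}
    M:=\max\left\{\esssup_{0\le s\le t}\frac{h_2(s)}{h_1(s)},\ f(0)\right\}<\infty ,
\end{align*}
noting that $h_2/h_1$ is a legitimate measurable function because $h_1>0$ everywhere and $h_1,h_2\in L^1_{loc}([0,\infty))$, and hence $h_2(s)\le M\,h_1(s)$ for a.e. $s\in[0,t]$.

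Next I would translate by $M$: set $g:=f-M$, which is still absolutely continuous on $[0,t]$, has $g(0)=f(0)-M\le 0$, and satisfies for a.e. $s$
\begin{align*}
    \frac{d}{ds}g(s)+h_1(s)g(s)=\frac{d}{ds}f(s)+h_1(s)f(s)-M\,h_1(s)\le h_2(s)-M\,h_1(s)\le 0 .
\end{align*}
Then I would introduce $\mu(s):=\exp\!\bigl(\int_0^s h_1(r)\,dr\bigr)$; since $h_1\in L^1_{loc}$ the map $s\mapsto\int_0^s h_1$ is absolutely continuous and bounded on $[0,t]$, so $\mu$ is absolutely continuous, bounded, and strictly positive there with $\mu'=h_1\mu$ a.e. Consequently $\mu g$ is absolutely continuous on $[0,t]$ and $\frac{d}{ds}(\mu g)=\mu\bigl(\frac{d}{ds}g+h_1g\bigr)\le 0$ a.e.; integrating via the fundamental theorem of calculus for absolutely continuous functions gives $\mu(s)g(s)\le\mu(0)g(0)=g(0)\le 0$ for every $s\in[0,t]$, and dividing by $\mu(s)>0$ yields $f(s)\le M$ on $[0,t]$. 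Since $t\ge 0$ is arbitrary, this is the desired conclusion.

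I do not anticipate a genuine obstacle here, since the estimate is essentially the scalar Gr\"onwall lemma. The only point needing a little care is the measure-theoretic bookkeeping: $f$ is merely absolutely continuous and the coefficients only $L^1_{loc}$, so the chain rule for $\mu g$, the identity $\mu'=h_1\mu$, and the final integration all have to be read in the a.e./absolutely-continuous sense rather than the classical $C^1$ sense. The structural fact that legitimizes this is that $h_1\in L^1_{loc}$ forces $\mu$ to be bounded on each compact interval, so that the product $\mu g$ of bounded absolutely continuous functions is again absolutely continuous; everything else is routine.
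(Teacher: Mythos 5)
Your proof is correct. Note that the paper does not supply its own argument for this lemma---it simply cites it as Lemma~3 of Arkeryd~\cite{L1983}---so there is no in-paper proof to compare against; but the integrating-factor argument you give (translate by $M$, multiply by $\mu(s)=\exp\bigl(\int_0^s h_1\bigr)$, use that $\mu$ is bounded and absolutely continuous on compacts because $h_1\in L^1_{loc}$, and integrate $(\mu g)'\le 0$) is exactly the standard Gr\"onwall-type proof, and your attention to the a.e.\ chain rule for products of absolutely continuous functions closes the only nontrivial measure-theoretic gap.
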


\begin{lemma}[Lemma 6 of \cite{L1983}] \label{lem:Arkeryd}
    Suppose that 
    \begin{align*}
        s_1, s_2 \geq 0, \;\; s_2-s_1 \leq 3, \text{\quad and \quad} f \in L_{s_1}^1  \cap L_{s_2}^\infty.
    \end{align*}
    Then, for $0 < \alpha <3$ and $v \in \mathbb{R}^3$, there exists a constant $C>0$ depending on $\alpha$ such that
    \begin{align*}
        \int_{\mathbb{R}^3} f(v_1)|v-v_1|^{-\alpha} \,dv_1 \leq C(\|f\|_{1,s_1}+\|f\|_{\infty,s_2}) (1+|v|)^{-\beta},
    \end{align*} where
    \begin{align*}
        \beta = \min \left\{ \alpha, \left(1-\frac{\alpha}{3}\right)s_1 +\frac{\alpha}{3} s_2\right\}.
    \end{align*}
\end{lemma}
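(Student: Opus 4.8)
The plan is to estimate $I(v):=\int_{\mathbb{R}^3} f(v_1)\,|v-v_1|^{-\alpha}\,dv_1$ (we may assume $f\geq 0$, replacing $f$ by $|f|$ otherwise) by a three–region decomposition of the $v_1$–integral: the $L^1_{s_1}$ bound controls the part where $|v_1|$ is comparable to or larger than $|v|$, while the $L^\infty_{s_2}$ bound controls the part near the singularity $v_1=v$, and the two competing contributions are balanced by optimizing a cutoff radius at the end. Since $\beta\geq 0$ and $(1+|v|)^{-\beta}$ is bounded below on $\{|v|\leq 1\}$, it suffices to treat $|v|\geq 1$; the range $|v|\leq 1$ is then disposed of by the same split with cutoff $\rho=1$, bounding the near-singularity piece by $\|f\|_{\infty,s_2}\int_{|w|\leq 1}|w|^{-\alpha}\,dw$ and the remainder by $\|f\|_{1,s_1}$.

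First I would take the region $\{|v_1|\leq |v|/2\}$, where $|v-v_1|\geq |v|/2$, so that this part is at most $(|v|/2)^{-\alpha}\|f\|_{1,0}\leq C\,|v|^{-\alpha}\|f\|_{1,s_1}$ because the weight $(1+|v_1|^2)^{s_1/2}\geq 1$; this accounts for the exponent $\alpha$ in the minimum defining $\beta$. On the complement $\{|v_1|>|v|/2\}$ I would fix a radius $0<\rho\leq |v|/2$ and split once more. On $\{|v_1|>|v|/2,\ |v-v_1|\leq\rho\}$ one has $(1+|v_1|)^{-s_2}\leq 2^{s_2}(1+|v|)^{-s_2}$ (here $s_2\geq 0$ is used), so this piece is bounded, using $\int_{|w|\leq\rho}|w|^{-\alpha}\,dw=c_\alpha\rho^{3-\alpha}$ — finite precisely because $\alpha<3$ — by $C\,\|f\|_{\infty,s_2}(1+|v|)^{-s_2}\rho^{3-\alpha}$. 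On $\{|v_1|>|v|/2,\ |v-v_1|>\rho\}$ one has $|v-v_1|^{-\alpha}<\rho^{-\alpha}$ and, since $(1+|v_1|^2)^{-s_1/2}\leq C(1+|v|)^{-s_1}$ on that set, $\int_{|v_1|>|v|/2}f(v_1)\,dv_1\leq C(1+|v|)^{-s_1}\|f\|_{1,s_1}$, so this piece is $\leq C\,\|f\|_{1,s_1}(1+|v|)^{-s_1}\rho^{-\alpha}$.

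It then remains to choose $\rho$ so as to balance $\rho^{3-\alpha}(1+|v|)^{-s_2}$ against $\rho^{-\alpha}(1+|v|)^{-s_1}$, namely $\rho=(1+|v|)^{(s_2-s_1)/3}$ (capped at $|v|/2$); a direct computation of the exponents shows both of these terms then equal $(1+|v|)^{-[(1-\alpha/3)s_1+(\alpha/3)s_2]}$, and combining with the first region yields $I(v)\leq C(\|f\|_{1,s_1}+\|f\|_{\infty,s_2})(1+|v|)^{-\beta}$ with $\beta=\min\{\alpha,(1-\alpha/3)s_1+(\alpha/3)s_2\}$, as claimed.

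The only genuinely delicate point — and the sole place the hypothesis $s_2-s_1\leq 3$ enters — is the admissibility of this choice of $\rho$: because $(s_2-s_1)/3\leq 1$, the optimal $\rho$ grows at most like $|v|$, so replacing it by $\min\{\rho,|v|/2\}$ costs nothing once $|v|$ is large; in the borderline case $s_2-s_1=3$ one simply takes $\rho=|v|/2$ outright and checks the exponents still close (there $\beta=\alpha$). If instead $s_2-s_1>3$ the optimal radius would exceed the scale $|v|$ at which the $L^1$ weight can be pulled out and the method breaks, which is consistent with the sharpness of the hypothesis. Apart from this, the argument is entirely routine real–variable bookkeeping.
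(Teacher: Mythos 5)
The paper never proves this lemma: it is quoted verbatim as Lemma 6 of Arkeryd \cite{L1983}, so there is no in-paper argument to compare yours against, and your proposal has to stand on its own. It does. Your three-region splitting is the standard potential estimate and every step checks out: on $\{|v_1|\le |v|/2\}$ you get $C\|f\|_{1,s_1}(1+|v|)^{-\alpha}$; on $\{|v_1|>|v|/2,\ |v-v_1|\le\rho\}$ the $L^\infty_{s_2}$ weight can be evaluated at $|v|$ and $\int_{|w|\le\rho}|w|^{-\alpha}dw\simeq\rho^{3-\alpha}$ (the only place $\alpha<3$ is used); on $\{|v_1|>|v|/2,\ |v-v_1|>\rho\}$ the weighted $L^1$ norm gives $C\|f\|_{1,s_1}(1+|v|)^{-s_1}\rho^{-\alpha}$; and $\rho=(1+|v|)^{(s_2-s_1)/3}$ balances the last two into $(1+|v|)^{-\left(1-\frac{\alpha}{3}\right)s_1-\frac{\alpha}{3}s_2}$, which together with the first region yields the stated $\beta$. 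Two small remarks. First, the cap $\rho\le|v|/2$, and the ensuing discussion of when it binds, is superfluous: both of your last two regions are already intersected with $\{|v_1|>|v|/2\}$, and their estimates use only that restriction, never $\rho\le|v|/2$, so you may take the balancing $\rho$ for every $|v|\ge 1$ with no case analysis. As written, your phrase ``costs nothing once $|v|$ is large'' leaves the bounded intermediate range $1\le|v|\le R_0(s_1,s_2)$, where the cap binds but $|v|>1$, only implicitly handled; it is disposed of exactly like your $|v|\le 1$ step, but dropping the cap is cleaner. Second, and relatedly, your closing claim that the method genuinely breaks when $s_2-s_1>3$ is not accurate for your own argument (the uncapped version never invokes that hypothesis); the hypothesis merely makes the quoted statement no stronger than what you prove, which is harmless. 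The constants of course also depend on $s_1,s_2$, not only on $\alpha$, but that is equally true of the statement as used in Section \ref{sec:L^infty_polynomial_upper}, where $s_1,s_2$ are fixed.
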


\begin{lemma}[Lemma 8 of \cite{L1983}] \label{lem:int_Qc_s1}
We consider the collision kernel \eqref{eq:B_defi} for $0< \gamma \leq 1$ and (H4). Assume $f \in L_{s_1}^1\cap L^\infty_0$ for some $s_1 \geq 2$. Let $E$ be an arbitrary 2D plane in $\mathbb{R}^3$, and let $v \in \mathbb{R}^3$. There exists a constant $C>0$ depending on $\gamma$, $b(\cos\theta)$, and $s_1$ such that
    \begin{align*}
        \begin{split}
            &\int_E\mathbf{1}_{\{v_1: |v_1| > |v|\}} Q_c^+(f,f)(v_1) \,dv_1 \leq C\left(\|f\|_{1,s_1} + \|f\|_{\infty, 0}\right)^2(1+|v|)^{-s_1+\gamma-1}.
        \end{split}
    \end{align*}
\end{lemma}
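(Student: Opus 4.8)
The statement is the classical estimate for the gain operator $Q_c^+$ (it will be used later only through the pointwise bound $Q_{FD}^+(f,f,1-f,1-f)\le Q_c^+(f,f)$, valid since $0\le f\le 1$), so the plan is to reproduce the argument of Lemma 8 of \cite{L1983}, the only simplification being that $b(\cos\theta)\equiv\text{const}$ makes the Carleman kernel bounded. First I would rewrite $Q_c^+(f,f)(v_1)$ with the Carleman representation \eqref{eq:def_cal_Q+}. Since $b$ is constant, \eqref{eq:h_condi} gives $h(\cos\theta_\omega)=2b\cos\theta_\omega$, hence
\begin{align*}
\frac{h(\cos\theta_\omega)}{\cos^\gamma\theta_\omega}=2b\,\cos^{1-\gamma}\theta_\omega\le 2b
\end{align*}
because $0<\gamma\le1$; therefore
\begin{align*}
Q_c^+(f,f)(v_1)\le 2b\int_{\mathbb{R}^3}\frac{f(v')}{|v'-v_1|^{2-\gamma}}\left(\int_{v_1+E_{v'-v_1}}f(v'_*)\,dv'_*\right)dv'.
\end{align*}

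Next I would integrate this bound over $v_1\in E$ on $\{|v_1|>|v|\}$ and apply Fubini in $v'$. Conservation of energy, $|v'|^2+|v'_*|^2=|v_1|^2+|v_*|^2\ge|v_1|^2>|v|^2$, forces $\max\{|v'|,|v'_*|\}>|v|/\sqrt2$, so using the symmetry $v'\leftrightarrow v'_*$ of $Q_c^+(f,f)$ I may split into the cases $|v'|>|v|/\sqrt2$ and $|v'_*|>|v|/\sqrt2$ and, up to a factor $2$, assume the ``fast'' post-collisional velocity is the one to which the moment weight is attached. The heart of the argument is the geometric rearrangement of the remaining $v_1$-integral over the plane $E$: using that $v'_*\in v_1+E_{v'-v_1}$ is equivalent to $v_1$ lying on the sphere $S_{v',v'_*}$ with diameter $[v',v'_*]$, one rewrites $\int_{E}dv_1\int_{v_1+E_{v'-v_1}}dv'_*$ as $\int_{\mathbb{R}^3}dv'_*$ against the one-dimensional integral over the circle $E\cap S_{v',v'_*}$ with the associated coarea Jacobian; the one-dimensional integrals of the Carleman weight $|v'-v_1|^{\gamma-2}$ along these circles (whose length is at most $\pi|v'-v'_*|$) are then estimated as in \cite{L1983}, and this is where $0<\gamma\le1$ and the careful tracking of the position of $E$ relative to the pair $(v',v'_*)$ enter.

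After the rearrangement one is left with a weighted self-convolution of $f$ integrated against a negative power $|v'-v'_*|^{-\alpha}$ (for a suitable $\alpha<3$ coming from the Jacobian) times lower-order polynomial factors; Lemma \ref{lem:Arkeryd} with $s_2=0$ carries out the $v'_*$-integral, producing a factor $C(\|f\|_{1,s_1}+\|f\|_{\infty,0})(1+|v'|)^{\gamma-1}$, after which
\begin{align*}
\int_{\{|v'|>|v|/\sqrt2\}}f(v')(1+|v'|)^{\gamma-1}\,dv'\le(1+|v|)^{-s_1+\gamma-1}\,\|f\|_{1,s_1}
\end{align*}
(valid since $s_1\ge2$) yields the claimed decay $(1+|v|)^{-s_1+\gamma-1}$ together with the quadratic dependence on $\|f\|_{1,s_1}+\|f\|_{\infty,0}$. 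The main obstacle is precisely this geometric step: the plane integral $\int_{v_1+E_{v'-v_1}}f(v'_*)\,dv'_*$ is \emph{not} controlled by $\|f\|_{1,s_1}+\|f\|_{\infty,0}$ on its own (a plane can pass through the origin), so the two integrations cannot be decoupled and one must follow the sphere--circle bookkeeping of \cite{L1983} faithfully; everything else is routine because $0\le f\le 1$ is only used, mildly, to pass from $Q_{FD}^+$ to $Q_c^+$ when this lemma is invoked.
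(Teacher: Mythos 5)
The paper does not prove this lemma; it is imported verbatim as a cited result (``Lemma 8 of \cite{L1983}''), so there is no in-paper proof to compare your argument against. Your sketch is a plausible reconstruction of Arkeryd's argument, and the parts you actually carry out are correct: the Carleman kernel $h(\cos\theta_\omega)/\cos^\gamma\theta_\omega = 2b\cos^{1-\gamma}\theta_\omega$ is indeed bounded for constant $b$ and $0<\gamma\le 1$; energy conservation $|v'|^2+|v'_*|^2=|v_1|^2+|v_*|^2>|v|^2$ does force one post-collisional speed above $|v|/\sqrt2$; the observation that $v'_*\in v_1+E_{v'-v_1}$ iff $v_1\in S_{v',v'_*}$ is the correct geometric identification; and your closing integration,
\begin{align*}
\int_{\{|v'|>|v|/\sqrt2\}} f(v')(1+|v'|)^{\gamma-1}\,dv' \le C(1+|v|)^{-s_1+\gamma-1}\|f\|_{1,s_1},
\end{align*}
is valid since $\gamma-1-s_1<0$ and $(1+|v'|)^{s_1}\lesssim(1+|v'|^2)^{s_1/2}$.

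That said, the step you explicitly defer --- the coarea rearrangement of $\int_E dv_1\int_{v_1+E_{v'-v_1}}dv'_*$ into $\int_{\mathbb{R}^3}dv'_*$ against arclength on the circle $E\cap S_{v',v'_*}$, together with the resulting Jacobian and the circle-integral estimate of $|v'-v_1|^{\gamma-2}$ --- is the entire technical content of Arkeryd's Lemma 8. Your phrase ``a negative power $|v'-v'_*|^{-\alpha}$ for a suitable $\alpha<3$'' and the assertion that Lemma \ref{lem:Arkeryd} then outputs $(1+|v'|)^{\gamma-1}$ are not derived in the sketch; they are taken on faith, and the choice of $\alpha$ needed to make $\beta=1-\gamma$ is not pinned down (note in particular the borderline case $\gamma=1$, where $\alpha=1-\gamma=0$ lies outside the range of Lemma \ref{lem:Arkeryd}, so the bound must come through $(1-\alpha/3)s_1$ instead). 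Since the paper itself defers to \cite{L1983} here, your treatment is at the same level of detail as the paper's, but as a self-contained proof the sphere--circle bookkeeping still needs to be filled in; without it, the exponent $-s_1+\gamma-1$ has not actually been established.
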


Now, we are ready to prove the main lemma. It bounds the integral of the higher velocity part of the solution of the Boltzmann-Fermi-Dirac equation in a $2D$ plane $E$.
\begin{lemma}\label{lem:int_E_s1s2}
    We consider the collision kernel \eqref{eq:B_defi} for $0< \gamma \leq 1$ and (H4). Let $E$ be a $2D$ plane in $\mathbb{R}^3$ and $f$ be the solution of the Boltzmann-Fermi-Dirac equation with $f_0\in L^1_{s_1}$ for some $s_1\geq 2$. Then, there exist a constant $C>0$ and $R>0$, depending on the $\|f_0\|_{1,0}, \|f_0\|_{1,s_1},\gamma, b(\cos\theta)$, and $s_1$, such that
    \begin{align*}
        &\int_E\mathbf{1}_{\{v_1: |v_1| > |v|\}}f(v_1)\, dv_1\leq C\max \left\{ \int_E\mathbf{1}_{\{v_1: |v_1| > |v|\}} f_0(v_1) \; dv_1, (1+|v|)^{-s_1-1} \right\},
    \end{align*}
  for $|v|>R$.
\end{lemma}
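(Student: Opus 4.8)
The plan is to integrate the Boltzmann-Fermi-Dirac equation, written in the form
\begin{align*}
\partial_t f + f\,L_{FD}(f,1-f,1-f) = Q_{FD}^+(f,f,1-f,1-f),
\end{align*}
over the half-plane $\{v_1\in E:|v_1|>|v|\}$ for each fixed $v$, and then to control the resulting differential inequality for
\begin{align*}
g(t,v):=\int_E\mathbf{1}_{\{v_1:\,|v_1|>|v|\}}f(t,v_1)\,dv_1
\end{align*}
by means of Lemma \ref{lem:sup_f}. First I would collect the \emph{a priori} bounds that make the coefficients of that inequality admissible: $\|f(t,\cdot)\|_{\infty,0}\le 1$ by the Pauli principle, and $\|f(t,\cdot)\|_{1,s_1}\le C$ uniformly in $t$, either by mass and energy conservation (if $s_1=2$) or by the propagation part of Theorem \ref{thm:L1_bound}-(1) (if $s_1>2$), the constant depending only on $\|f_0\|_{1,0},\|f_0\|_{1,2},\|f_0\|_{1,s_1},\gamma,s_1$. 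In particular the constant and threshold $R$ delivered by Lemma \ref{lem:Lf_lower} can be taken independent of $t$.

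For the gain term, $0\le f\le 1$ gives $Q_{FD}^+(f,f,1-f,1-f)\le Q_c^+(f,f)$, so Lemma \ref{lem:int_Qc_s1} applied on the plane $E$ yields
\begin{align*}
\int_E\mathbf{1}_{\{|v_1|>|v|\}}Q_{FD}^+(f,f,1-f,1-f)(\tau,v_1)\,dv_1\le C\big(\|f\|_{1,s_1}+1\big)^2(1+|v|)^{-s_1+\gamma-1}=:C_2(1+|v|)^{-s_1+\gamma-1}.
\end{align*}
For the loss term, on the region of integration we have $|v_1|>|v|>R$, so Lemma \ref{lem:Lf_lower} gives $L_{FD}(f,1-f,1-f)(\tau,v_1)\ge C_1|v_1|^\gamma\ge C_1|v|^\gamma$, hence
\begin{align*}
-\int_E\mathbf{1}_{\{|v_1|>|v|\}}f(\tau,v_1)\,L_{FD}(f,1-f,1-f)(\tau,v_1)\,dv_1\le -C_1|v|^\gamma\,g(\tau,v).
\end{align*}
Combining the two and reading the mild formulation on the slice $E$, one gets that $\tau\mapsto g(\tau,v)$ is absolutely continuous with, a.e.\ in $\tau$,
\begin{align*}
\frac{d}{dt}g(t,v)+C_1|v|^\gamma g(t,v)\le C_2(1+|v|)^{-s_1+\gamma-1}\qquad\text{for }|v|>R.
\end{align*}
Applying Lemma \ref{lem:sup_f} with $h_1\equiv C_1|v|^\gamma>0$ and $h_2\equiv C_2(1+|v|)^{-s_1+\gamma-1}$ gives $g(t,v)\le\max\{\,C_2 C_1^{-1}(1+|v|)^{-s_1+\gamma-1}|v|^{-\gamma},\,g(0,v)\,\}$; enlarging $R$ so that also $R\ge 1$, one has $|v|^\gamma\ge 2^{-\gamma}(1+|v|)^\gamma$, whence the first entry is $\le C(1+|v|)^{-s_1-1}$, which is precisely the asserted estimate with $g(0,v)=\int_E\mathbf{1}_{\{|v_1|>|v|\}}f_0(v_1)\,dv_1$.

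The step I expect to be delicate is the regularity and finiteness bookkeeping in differentiating $g(t,v)$ in $t$, since the collision operator is being integrated over a two-dimensional plane rather than over $\mathbb{R}^3$. The gain contribution is finite and time-uniform thanks to Lemma \ref{lem:int_Qc_s1}; the loss contribution $\int_E\mathbf{1}_{\{|v_1|>|v|\}}f\,L_{FD}\,dv_1$ must be shown to be finite and locally integrable in $\tau$ whenever $g(0,v)<\infty$ (the only case in which the statement carries content), which can be handled by the crude bound $L_{FD}(f,1-f,1-f)(v_1)\le C(1+|v_1|^\gamma)$ (immediate from (H1)) together with a monotone/approximation argument in the mild formulation bootstrapping from $g(0,v)<\infty$, or by invoking the extra $L^1_s$ integrability produced by the creation estimate of Theorem \ref{thm:L1_bound}-(1) in the regime $t>0$ relevant to the intended application. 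Aside from this, the proof is just the two functional inequalities already at hand, Lemma \ref{lem:Lf_lower} and Lemma \ref{lem:int_Qc_s1}, fed into the Gr\"onwall-type Lemma \ref{lem:sup_f}.
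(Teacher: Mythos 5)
Your proof is correct and follows essentially the same route as the paper: integrate $\partial_t f + fL_{FD} = Q^+_{FD} \le Q^+_c$ over the slice $E\cap\{|v_1|>|v|\}$, lower-bound the loss term via Lemma \ref{lem:Lf_lower}, upper-bound the plane integral of the gain term via Lemma \ref{lem:int_Qc_s1}, and close with Lemma \ref{lem:sup_f} together with $\|f\|_{\infty,0}\le 1$ and the propagation of $\|f\|_{1,s_1}$. Your remarks on the absolute-continuity and finiteness bookkeeping are a welcome precision that the paper's proof leaves implicit.
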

\begin{proof}
    We start from
    \begin{align*} 
        \partial_t f + f  L_{FD}(f,1-f,1-f)= Q_{FD}^+(f,f,1-f,1-f) \leq Q_c^+(f,f).
    \end{align*}
    Taking $\int_E$ integral on both sides, we get
    \begin{align*}
        &\partial_t \int_E \mathbf{1}_{\{v_1: |v_1| > |v|\}} f(t,v_1)\, dv_1 +  \int_E \mathbf{1}_{\{v_1: |v_1| > |v|\}}  L_{FD}(f,1-f,1-f)(t,v_1) f(t,v_1)\, dv_1 \notag \\
        &\leq \int_E \mathbf{1}_{\{v_1: |v_1| > |v|\}}  Q_c^+(f,f)(t,v_1)\, dv_1.
    \end{align*}
    We apply Lemma \ref{lem:Lf_lower} to $L_{FD}(f,1-f,1-f)$ and Lemma \ref{lem:int_Qc_s1} to $Q_c^+(f,f)$.  Then, we obtain
    \begin{align*}
        &\partial_t \int_E \mathbf{1}_{\{v_1: |v_1| > |v|\}}f(t,v_1)\, dv_1  
        +C_1(1+|v|)^{\gamma} \int_E\mathbf{1}_{\{v_1: |v_1| > |v|\}}f(t,v_1)\, dv_1\\
        &\leq C\left(\|f(t)\|_{1,s_1} + \|f(t)\|_{\infty, 0}\right)^2(1+|v|)^{-s_1+\gamma-1}
    \end{align*} 
    for $|v| > R$. Here, $C_1$ and $C_2$ depend on the constants in Lemma \ref{lem:Lf_lower} and \ref{lem:int_Qc_s1}. By Lemma \ref{lem:sup_f}, we obtain
    \begin{align*}
        &\int_E\mathbf{1}_{\{v_1: |v_1| > |v|\}}f(t,v_1)\, dv_1 \\
        &\leq \max \left\{ \int_E\mathbf{1}_{\{v_1: |v_1| > |v|\}} f_0(v_1) \; dv_1, \frac{C_2}{C_1} \sup_{0\leq \tau\leq t}\left(\|f(\tau,v)\|_{1,s_1} + \|f(\tau,v)\|_{\infty, 0}\right)^2(1+|v|)^{-s_1-1} \right\}
    \end{align*}
    for $|v|\geq R$.

    Finally, we apply the property of the solution of the Boltzmann-Fermi-Dirac equation $0\leq f\leq 1$ and $L^1_{s_1}$ propagation result \eqref{eq:poly_pro}.
\end{proof}

Suppose $f_0\in L^\infty_{s_2}$. If $s_2>3$, then we easily check $f_0\in L^1_{s'_1}$ for any $s'_1<s_2-3$. By the same reason, when $s_2>2$, we have
\begin{align*}
    \int_E f_0(v_1)\,dv_1<C(1+|v|)^{s_2-2}.
\end{align*}
for some constant $C$ depending on $s_2$. Using this observation, we can rewrite the result of Lemma \ref{lem:int_E_s1s2} as follows.
\begin{lemma}\label{lem:int_E_s1s2_2}
    We consider the collision kernel \eqref{eq:B_defi} for $0< \gamma \leq 1$ and (H4). Let $E$ be a $2D$ plane in $\mathbb{R}^3$ and $f$ be the solution of the Boltzmann-Fermi-Dirac equation with $f_0\in L^1_2\cap L^\infty_{s_2}$ for some $s_2>2$. Then, there exist a constant $C>0$ and $R>0$, depending on the $\|f_0\|_{1,0}, \|f_0\|_{1,2}, \|f_0\|_{\infty,s_2}, \gamma, b(\cos\theta)$, and $s_2$, such that
\begin{align}\label{eq:int_E_s1s2}
    &\int_E\mathbf{1}_{\{v_1: |v_1| > |v|\}}f(v_1)\, dv_1\leq C(1+|v|)^{-c}
\end{align}
for $|v|\geq R$. Here, $c$ is given by
\begin{align*}
    c = \min\left\{s_2-2, \max\{3, \bar{s}_2 - 2\}\right\},
\end{align*}
where $\bar{s}_2<s_2$.
\end{lemma}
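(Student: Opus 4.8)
The plan is to derive \eqref{eq:int_E_s1s2} from Lemma \ref{lem:int_E_s1s2} by a clean bookkeeping of exponents. First I would invoke Lemma \ref{lem:int_E_s1s2} with an exponent $s_1$ chosen in terms of the data $s_2$. Since $f_0 \in L^\infty_{s_2}$ with $s_2 > 2$, the pointwise bound $f_0(v) \leq \|f_0\|_{\infty,s_2}(1+|v|)^{-s_2}$ shows that $f_0 \in L^1_{s'_1}$ whenever $s_2 - s'_1 > 3$, i.e.\ for any $s'_1 < s_2 - 3$ (and trivially $f_0 \in L^1_2$ always). Thus I may apply Lemma \ref{lem:int_E_s1s2} with $s_1 = \max\{2, \bar{s}_2 - 3\}$ for any $\bar{s}_2 < s_2$ (when $\bar s_2 \le 5$ this is just $s_1 = 2$; when $\bar s_2 > 5$ it is $s_1 = \bar s_2 - 3$), obtaining
\begin{align*}
    \int_E \mathbf{1}_{\{|v_1| > |v|\}} f(v_1)\,dv_1 \leq C\max\left\{\int_E \mathbf{1}_{\{|v_1|>|v|\}} f_0(v_1)\,dv_1,\ (1+|v|)^{-s_1-1}\right\}
\end{align*}
for $|v| > R$, with $C$ and $R$ depending on $\|f_0\|_{1,0}, \|f_0\|_{1,s_1}$ (hence on $\|f_0\|_{\infty,s_2}$), $\gamma$, $b(\cos\theta)$, and $s_1$.

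Next I would estimate the initial-data integral $\int_E \mathbf{1}_{\{|v_1|>|v|\}} f_0(v_1)\,dv_1$ over the $2D$ plane $E$. Using $f_0(v_1) \leq \|f_0\|_{\infty,s_2}(1+|v_1|)^{-s_2}$ and integrating in polar coordinates on $E$ (a $2$-plane, so the area element contributes one extra power of the radius), the tail integral $\int_{\{v_1 \in E:\ |v_1| > |v|\}} (1+|v_1|)^{-s_2}\,dv_1$ converges for $s_2 > 2$ and is bounded by a constant times $(1+\mathrm{dist}(0,E) \vee |v|)^{-(s_2-2)} \leq C(1+|v|)^{-(s_2-2)}$; here one uses that the relevant portion of $E$ with $|v_1| > |v|$ lies at distance at least $|v|$ from the origin in the appropriate sense, or more simply bounds $\int_{\{w\in\mathbb R^2:\ |w|>|v|\}}(1+|w|)^{-s_2}\,dw \le C(1+|v|)^{-(s_2-2)}$. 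Combining this with the previous display gives
\begin{align*}
    \int_E \mathbf{1}_{\{|v_1|>|v|\}} f(v_1)\,dv_1 \leq C\max\left\{(1+|v|)^{-(s_2-2)},\ (1+|v|)^{-s_1-1}\right\} = C(1+|v|)^{-\min\{s_2-2,\ s_1+1\}}
\end{align*}
for $|v| > R$. Plugging in $s_1 = \max\{2, \bar s_2 - 3\}$ yields $s_1 + 1 = \max\{3, \bar s_2 - 2\}$, so the decay exponent is exactly $c = \min\{s_2 - 2, \max\{3, \bar s_2 - 2\}\}$ for any chosen $\bar s_2 < s_2$, which is the claimed \eqref{eq:int_E_s1s2}. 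The dependence of $C$ and $R$ is repackaged: $\|f_0\|_{1,s_1}$ is controlled by $\|f_0\|_{\infty,s_2}$ (and $s_2$), and $s_1$ is a function of $\bar s_2 < s_2$, so the constants ultimately depend on $\|f_0\|_{1,0}, \|f_0\|_{1,2}, \|f_0\|_{\infty,s_2}, \gamma, b(\cos\theta)$, and $s_2$ as stated.

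I do not expect a genuine obstacle here — the statement is essentially a corollary of Lemma \ref{lem:int_E_s1s2} plus the elementary observation, already sketched in the paragraph preceding the lemma in the excerpt, that $L^\infty_{s_2}$ control converts into $L^1_{s_1}$ control and into a decaying tail bound on planar slices. The only point requiring mild care is the planar tail integral: one must remember that $E$ is a $2$-dimensional plane (not a line) so the radial area element is $r\,dr$, which is why $s_2 > 2$ (rather than $s_2 > 1$) is the convergence threshold and why the exponent drops by $2$; and one should handle the case where $E$ does not pass through the origin by noting the bound $(1+|v|)^{-(s_2-2)}$ still dominates. A secondary bit of bookkeeping is the split $s_2 \le 5$ versus $s_2 > 5$ implicit in $s_1 = \max\{2,\bar s_2-3\}$, but this is exactly parallel to the $s \le 5$ / $s > 5$ dichotomy in the statement of Theorem \ref{thm:L1_bound}-(4) and causes no difficulty.
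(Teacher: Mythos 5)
Your proposal is correct and takes essentially the same approach as the paper, which derives the lemma as a corollary of Lemma \ref{lem:int_E_s1s2} using exactly the two observations you spell out (that $f_0\in L^\infty_{s_2}$ gives $f_0\in L^1_{s_1}$ for $s_1<s_2-3$, and that the planar tail of $f_0$ decays like $(1+|v|)^{-(s_2-2)}$). In fact your write-up is the more careful one: the paper's preceding display omits the indicator $\mathbf{1}_{\{|v_1|>|v|\}}$ and has a sign typo in the exponent, both of which you silently correct.
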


Having the main lemma in hand, we prove the main theorem.

\begin{proof}[Proof of Theorem \ref{thm:L1_bound}-(4)]
If $|v|\leq R$ for $R$ given in Lemma \ref{lem:int_E_s1s2_2}, then we just have
\begin{align*}
    f(t,v)\leq (1+R)^{s_2}(1+|v|)^{-s_2}
\end{align*}
for any $s_2\geq 0$ since $0\leq f\leq 1$. Therefore, it is enough to assume $|v|\geq R$.

Fix $v \in \mathbb{R}^3$ such that $|v|\geq R$. We define $f_i(w), f_u(w)$ as 
    \begin{align*}  
        f_i(t,w) \coloneqq  f(t,w) \mathbf{1}_{\left\{w:|w|<  \frac{|v|}{\sqrt{2}}\right\}} \text{\quad and \quad} f_u(t,w) \coloneqq  f(t,w) \mathbf{1}_{\left\{w:|w| \geq  \frac{|v|}{\sqrt{2}}\right\}}.
    \end{align*}
    Since the post-collision velocity should satisfy $|v'|^2 + |v'_*|^2\geq |v|^2$, one of $f_i(v')$ or $f_i(v'_*)$ should be $0$ for any fixed $v$. Therefore, we get $Q_c^+(f_i, f_i)(v) = 0$. Performing the change of variable $\sigma \rightarrow -\sigma$, we get
    $Q_c^+(f_i, f_u)(t,v) = Q_c^+(f_u, f_i) (t,v)$. As a result, we write
    \begin{align*}
        Q_c^+(f, f)(t,v) = 2 Q_c^+(f_i,f_u)(t,v)+Q_c^+(f_u,f_u)(t,v) \leq 2Q_c^+(f,f_u)(t,v).
    \end{align*}
    From the Carleman representation \eqref{eq:def_cal_Q+}, we get
    \begin{align*} 
        Q_c^+(f,f_u)(t,v)&=\int_{\mathbb{R}^3}f(t,v')\frac{1}{|v-v'|^{2-\gamma}}\int_{v+E_{v'-v}} \frac{h(\cos\theta_\omega)}{\cos^\gamma  \theta_\omega}f_u(t,v'_*) \, dv_*'dv' \\
        &\leq  \sup_{\theta_\omega} \frac{h(\cos\theta_\omega)}{\cos^\gamma  \theta_\omega}
        \int_{\mathbb{R}^3}f(t,v')\frac{1}{|v-v'|^{2-\gamma}}\int_{v+E_{v'-v}} f_u(t,v'_*) \, dv_*'dv' \\
        &\leq 
        C\int_{\mathbb{R}^3}f(t,v')\frac{1}{|v-v'|^{2-\gamma}}\int_{v+E_{v'-v}} f_u(t,v'_*) \, dv_*'dv'
    \end{align*}
    for $\gamma\leq 1$.
    
    Suppose $f_0\in L_{s_1}^1 \cap L_{s_2}^{\infty}$ for some $s_1\geq 2$ and $s_2>2$. We divide into two cases $s_2>5$ and $s_2\leq 5$.\\

    \noindent (1) $s_2>5$ case. In this case, we have $f_0\in L^1_{s_1}\cap L^\infty_{s_2}$, where $s_1<s_2-3$.
    By Lemma \ref{lem:int_E_s1s2_2}, we obtain
    \begin{align}\label{eq:E_integral}
        \int_{v+E_{v'-v}} f_u(t,v'_*) \, dv_*'\leq C(1+|v|)^{-(\bar{s}_2-2)},
    \end{align}
    for any $5\leq \bar{s}_2<s_2$. At this stage, we only know $f(t,v)\in L^1_2\cap L^\infty_0$ for all $t\geq 0$. From Lemma \ref{lem:Arkeryd}, we have
    \begin{align}\label{eq:whole_integral}
        \int_{\mathbb{R}^3}f(t,v')\frac{1}{|v-v'|^{2-\gamma}}\leq C(1+|v|)^{-2 + 2\frac{2-\gamma}{3}}
    \end{align}
    Combining \eqref{eq:E_integral} and \eqref{eq:whole_integral}, we get
    \begin{align*}
        Q_c^+(f,f_u)(t,v) \leq C(1+|v|)^{-\left(\bar{s}_2 - 2\frac{2-\gamma}{3}\right)}.
    \end{align*}
    Applying this inequality and Lemma \ref{lem:Lf_lower},
    \begin{align*}
        \partial_t f(t,v)+ C_1(1+|v|)^\gamma f(t,v)&\leq \partial_t f(t,v)+ f(t,v)L_{FD}(f,1-f,1-f)(t,v)\leq Q_c^+(f,f_u)(t,v)\\
        &\leq C_2(1+|v|)^{-\left(\bar{s}_2 - 2\frac{2-\gamma}{3}\right)}.
    \end{align*} 
    By Lemma \ref{lem:sup_f}, we finally reach
    \begin{align*}
        f(t,v)\leq \max\left\{\frac{C_2}{C_1}(1+|v|)^{-\left(\bar{s}_2 - 2\frac{2-\gamma}{3}\right)-\gamma}, f_0(v)\right\}
    \end{align*}
    for all $t\geq 0$ and a.e. $v$ with $|v|\geq R$. By the choice of $\bar{s}_2$, $f(t,v)\in L^\infty_3$ for all $t$.

    Now, we use Lemma \ref{lem:Arkeryd} again for $f(t,v)\in L^1_2\cap L^\infty_2$. Then, we get
    \begin{align}\label{eq:whole_integral2}
        \int_{\mathbb{R}^3}f(t,v')\frac{1}{|v-v'|^{2-\gamma}}\leq C(1+|v|)^{-(2-\gamma)}.
    \end{align}
    Repeating the same calculation using \eqref{eq:E_integral} and \eqref{eq:whole_integral2}, we finally get 
    \begin{align*}
        \partial_t f(t,v)+ C_1(1+|v|)^\gamma f(t,v)&\leq \partial_t f(t,v)+ f(t,v)L_{FD}(f,1-f,1-f)(t,v)\leq Q_c^+(f,f_u)(t,v)\\
        &\leq C_2(1+|v|)^{-\left(\bar{s}_2 - \gamma\right)},
    \end{align*}
    so
    \begin{align*}
        f(t,v)\leq \max\left\{\frac{C_2}{C_1}(1+|v|)^{-\bar{s}_2}, f_0(v)\right\}
    \end{align*}
    for $|v|\geq R$. It proves for any $\bar{s}_2<s_2$, $\|f(t)\|_{\infty, \bar{s}_2}\leq C$ for all $t$, where $C$ depends on $\|f_0\|_{1,0}$, $\|f_0\|_{1,2}$, $\|f_0\|_{\infty, s_2}$, $\gamma$, $b(\cos\theta)$, and $s_2$.\\

    \noindent (2) $s_2\leq 5$ case. When $s_2\leq 5$, then $s_2-3\leq 2$, so
    \begin{align}\label{eq:E_integral2}
        \int_{v+E_{v'-v}} f_u(t,v'_*) \, dv_*'\leq C(1+|v|)^{-(s_2-2)}
    \end{align}
    in \eqref{eq:int_E_s1s2}. Bounding $Q^+_c(f,f_u)$ by \eqref{eq:E_integral2} and \eqref{eq:whole_integral} and repeating the same calculation in (1), we get
    \begin{align*}
        f(t,v)\leq \max\left\{\frac{C_2}{C_1}(1+|v|)^{-\left(s_2 + (2-\gamma) - 2\left(1-\frac{2-\gamma}{3}\right)\right)}, f_0(v)\right\}
    \end{align*}
    for all $t\geq 0$ and for a.e. $|v|\geq R$. If $s_2\leq s_2 + (2-\gamma) - 2\left(1-\frac{2-\gamma}{3}\right)$, then we are done. If not, we now know that $f(t,v)\in L^1_2\cap L^\infty_{s_{2,1}'}$ for all $t$, where $s_{2,1}' = s_2 + (2-\gamma) - 2\left(1-\frac{2-\gamma}{3}\right)$. We repeatedly apply Lemma \ref{lem:Arkeryd} for $f(t,v)\in L^1_2\cap L^\infty_{s_{2,1}'}$ and follow all the computations above. After the $k\geq 1$ times iteration, we have $f(t,v)\in L^1_2\cap L^\infty_{s_{2,k}'}$, where
    \begin{align*}
        s_{2,k}' &=\left(s_2 - (2-\gamma) + 2\left(1-\frac{2-\gamma}{3}\right)\right)\sum_{j=0}^{k-1}\left(\frac{2-\gamma}{3}\right)^j.
    \end{align*}
    Since $\sum_{j=0}^\infty \left(\frac{2-\gamma}{3}\right)^j = \frac{1}{1-\frac{2-\gamma}{3}}>1$, so
    \begin{align*}
        s_{2,\infty} = \left(s_2 - (2-\gamma)  + 2\left(1-\frac{2-\gamma}{3}\right)\right)\frac{1}{1-\frac{2-\gamma}{3}}\geq (s_2-(2-\gamma)) + 2>s_2.
    \end{align*}
    It proves that for any $s_2\leq 5$, there exists $k_0$ such that $s_2 \leq s_{2,k_0}$. It ends the proof.
\end{proof}

\noindent{\bf Data availability:} No data were used for the research described in the article.
\newline

\noindent{\bf Conflict of interest:} The authors declare that they have no conflict of interest.\newline

\noindent{\bf Acknowledgement} We would like to thank Professor Donghyun Lee for helpful discussions during the preparation of this work.
Gayoung An and Sungbin Park are supported by the National Research Foundation of Korea(NRF) grant funded by the Korea government(MSIT)(No.RS-2023-00212304 and No.RS-2023-00219980). 

\nocite{*}

\end{document}